\documentclass[11pt,letterpaper]{amsart}

\usepackage{graphicx}
\usepackage{epstopdf}
\usepackage{subfigure}

\usepackage{amsmath,amsfonts,amsthm,mathrsfs,amssymb}  

\usepackage{array} 

\usepackage{cite}
\usepackage{hyperref}
\hypersetup{hypertex=true,colorlinks=true,linkcolor=blue,anchorcolor=blue,citecolor=blue}

\usepackage[usenames]{color}
\usepackage{xcolor}

\newtheorem{thm}{Theorem}[section]

\newtheorem{lem}{Lemma}[section]
\newtheorem{prop}{Proposition}[section]
\theoremstyle{definition}
\newtheorem{defn}{Definition}[section]
\newtheorem{asm}{Assumption}
\theoremstyle{remark}
\newtheorem{rem}{Remark}[section]

\numberwithin{equation}{section}

\theoremstyle{remark}

\usepackage{tikz}
\usetikzlibrary{arrows.meta,positioning}

\allowdisplaybreaks
\title[Stably determining an impedance obstacle]{Stably Determining a generalised Impedance Obstacle from a Single Far-Field Pattern}

\author{Huaian Diao}
\address{School of Mathematics, Jilin University, Changchun 130012, China}
\email{diao@jlu.edu.cn, hadiao@gmail.com}

\author{Hongyu Liu}
\address{Department of Mathematics, City University of Hong Kong, Kowloon, Hong Kong SAR, China}
\email{hongyu.liuip@gmail.com, hongyliu@cityu.edu.hk}

\author{Longyue Tao}
\address{Department of Mathematics, City University of Hong Kong, Kowloon, Hong Kong SAR, China}
\email{sdyctly@163.com, longyue.tao@my.cityu.edu.hk}

\begin{document}

\begin{abstract}

Inverse scattering focuses on recovering unknown scatterers from wave measurements. A fundamental challenge is determining whether an inverse obstacle problem can be resolved from a single far-field measurement, a task particularly demanding for non-convex polytope obstacles under generalized impedance boundary conditions and closely linked to the long-standing Schiffer problem. 

In this paper, we develop a novel \emph{Artificial Test Domain} (ATD) framework for single-measurement inverse scattering of impenetrable polytope obstacles. Based on microlocal analysis near exterior-visible flat boundary patches, this approach transcends traditional methods reliant on observable corners. The ATD framework establishes two primary conceptual advancements: a unified \emph{generalized impedance hyperplane (GIH) exclusion mechanism}, which clarifies the structural role of uniqueness mechanisms, and a unified \emph{qualitative--quantitative principle} for the generalized impedance setting. 

Quantitatively, the method yields a \emph{far-field--geometry relation} where geometric discrepancy is controlled by far-field error, scaled by a leading ATD coefficient. Qualitatively, the non-vanishing of this coefficient reduces to the exclusion of exterior generalized impedance hyperplanes. Once uniqueness is established, this relation produces sharp stability estimates. Within this framework, the classical stability estimates for the sound-soft and sound-hard cases are recovered as special instances of a much more general stability theory. At the same time, we obtain several new sharp stability results that are of significant importance. These results unify currently available single-measurement uniqueness regimes for polytope geometry and provide new insights into the Schiffer problem across multiple generalized impedance settings.

\medskip
\noindent\textbf{Keywords.} inverse obstacle scattering; single far-field measurement; generalized impedance; polytope geometry; Hausdorff distance; sharp stability.

\medskip
\noindent\textbf{Mathematics Subject Classification (2020).} 35R30; 35P25; 78A46.

\end{abstract}

\maketitle
\section{Introduction}

In this paper, we study the stable determination of impenetrable polytope obstacles from a single far-field pattern in a generalized impedance setting.
The problem is particularly challenging under such limited data, especially for non-convex geometries and for boundary regimes beyond the classical sound-soft and sound-hard cases, and is closely connected with the long-standing Schiffer problem in inverse scattering (cf. \cite{lax1990scattering,colton2019inverse,colton2018looking,diao2023spectral}).
Our aim is to quantify this single-measurement determination problem and thereby establish, in this general setting, a unified relation between unique identifiability and sharp stability.

\subsection{The direct scattering problem}\label{subsec Direct}


We first introduce the direct scattering problem for an impenetrable obstacle $D \subset \mathbb{R}^n,\ n=2,3$ in the generalized impedance setting, which will be defined in Definition~\ref{defn:gen-imp}. We assume that $D$ consists of finitely many pairwise disjoint solid components, namely,
$$
D = \bigcup_{\ell=1}^N D_\ell, \quad N \in \mathbb{N}, \quad D_{\ell_1} \cap D_{\ell_2} = \emptyset \quad \text{for all } \ell_1 \ne \ell_2 \in \{1,\ldots,N\},
$$
where each $D_\ell$ is a simply connected bounded Lipschitz domain in $\mathbb{R}^n$, for $\ell = 1,\ldots,N$. We assume that the exterior domain of $D$,
\begin{equation}\label{eq exterior}
\mathbf{G} := \mathbb{R}^n \setminus \overline{D}, \quad n=2,3
\end{equation}
is connected.


We take the incident field to be a plane wave with wavenumber $k=\frac{\omega}{c_0}\in\mathbb{R}_+$ and incident direction $\mathbf{p}\in\mathbb{S}^{n-1}$, namely
\begin{equation}\label{eq incident}
u^i(\mathbf{x}) = e^{\mathrm{i}k\mathbf{x}\cdot\mathbf{p}}.
\end{equation}
Other types of incident fields can also be considered in this paper, for example, a point source of the form
\begin{align}
\label{eq:point source}
u^i(\mathbf{x}; \mathbf{z}_0)
=
\frac{\mathrm{i}}{4}
\left(\frac{k}{2\pi |\mathbf{x}-\mathbf{z}_0|}\right)^{\frac{n-2}{2}}
H^{(1)}_{\frac{n-2}{2}}(k|\mathbf{x}-\mathbf{z}_0|),
\end{align}
associated with the wavenumber $k\in\mathbb{R}_+$. Here $H_\mu^{(1)}$ denotes the Hankel function of the first kind of order $\mu$, and $\mathbf{z}_0\in\mathbb{R}^n\setminus\overline{D}$ denotes the location of the point source.




Let the incident field $u^i$ impinge on $D$, thereby generating the scattered field $u^s$ in $\mathbf{G}$. The total field $u := u^i + u^s$ belongs to $H^1_{\mathrm{loc}}(\mathbf{G})$ and satisfies the following direct scattering problem:
\begin{equation}\label{eq main system}
\begin{cases}
\Delta u+k^2u=0 & \mbox{in }\mathbf{G},\\
\partial_\nu u+\eta u=0 & \mbox{on }\partial D,\\
\displaystyle{\lim_{r\to\infty}r^{\frac{n-1}{2}}\left(\frac{\partial u^s}{\partial r}-\mathrm{i}ku^s\right)=0} & \mbox{with }r=|\mathbf{x}|.
\end{cases}
\end{equation}
Here, $\nu$ denotes the exterior unit normal to $D$ on $\partial D$. To describe the boundary conditions considered in this paper in a unified manner, we introduce the following terminology.
\begin{defn}\label{defn:gen-imp}
The boundary condition
$$
\partial_\nu u + \eta u = 0
\quad \text{on } \partial D
$$
is called a \emph{generalized impedance boundary condition} if it is understood in one of the following senses:
\begin{enumerate}
\item
$\eta \in L^\infty(\partial D)$, possibly complex-valued, in which case it is an impedance boundary condition;

\item
$\eta \equiv 0$ on $\partial D$, in which case it reduces to the sound-hard boundary condition;

\item
$\eta \equiv +\infty$ on $\partial D$, understood formally, in which case it reduces to the sound-soft boundary condition $u=0$ on $\partial D$;

\item
with respect to a Lipschitz partition of $\partial D$, $\eta$ may be of type \textnormal{(1)}, \textnormal{(2)}, or \textnormal{(3)} on each component of the partition, in which case the boundary condition is said to be of mixed type.
\end{enumerate}
\end{defn}

Thus, the present definition includes the classical  sound-soft, sound-hard, impedance, and mixed obstacles considered in the literature (cf.\cite{colton2019inverse,diao2023spectral}). For the purposes of this paper, we treat all these classes within the unified framework of generalized impedance obstacles.






The Sommerfeld radiation condition yields the following far-field expansion for the scattered field \(u^s\) (cf. \cite{colton2019inverse,mclean2000strongly}):
\begin{equation}\label{eq SRC}
u^s(\mathbf{x})=\frac{e^{\mathrm{i}kr}}{r^{\frac{n-1}{2}}}\left\{u_\infty(\hat{\mathbf{x}})+\mathcal O\left(\frac{1}{r}\right)\right\},
\quad r=\vert\mathbf{x}\vert\to\infty,
\end{equation}
uniformly for $\hat{\mathbf{x}}:=\mathbf{x}/\vert\mathbf{x}\vert\in\mathbb{S}^{n-1}$.
The far-field pattern $u_\infty$ contains information about both the geometry of $D$ and the impedance parameter $\eta$.

\subsection{The inverse scattering problem}

The inverse scattering problem concerns the recovery of the shape and location of an unknown obstacle \(D\), together with the impedance parameter \(\eta\), from far-field measurements.
This problem has been extensively studied in the literature; see, e.g., \cite{colton2019inverse,colton2013integral,isakov2017inverse,diao2023spectral}.
The corresponding inverse problem can be formulated as
\begin{equation}\label{eq IP}
\mathfrak{F}(D,\eta)=u_\infty(\hat{\mathbf x};D,\eta,u^i),
\qquad \hat{\mathbf x}\in\mathbb S^{n-1},
\end{equation}
where \(u^i\) denotes the incident wave introduced above.
It is well known that the inverse problem \eqref{eq IP} is nonlinear and severely ill-posed.

When the incident wave $u^i$ is fixed, the corresponding far-field pattern $u_\infty(\hat{\mathbf{x}};D,\eta,u^i)$ is called a \emph{single far-field measurement}. More precisely, this means that, in the case of a plane incident wave given by \eqref{eq incident}, both the incident direction $\mathbf{p}\in\mathbb{S}^{n-1}$ and the wavenumber $k$ are fixed, while, in the case of a point source given by \eqref{eq:point source}, both the source point $\mathbf{z}_0$ and the wavenumber $k$ are fixed. If any of the parameters $k$, $\mathbf{p}$, or $\mathbf{z}_0$ is allowed to vary, then one obtains multiple far-field measurements. In the single-measurement setting, two basic issues arise naturally: \emph{uniqueness} and \emph{stability}. These correspond, respectively, to the \emph{qualitative} and \emph{quantitative} aspects of the inverse problem \eqref{eq IP}.

\smallskip\noindent
\emph{Uniqueness.}
The uniqueness result establishes a one-to-one correspondence between an obstacle, together with its surface impedance parameter, and its associated far-field pattern.
More precisely, for a fixed incident field $u^i$, the mapping $(D,\eta)\mapsto u_\infty(\hat{\mathbf{x}};D,\eta,u^i)$ is injective.
This means that if two admissible pairs $(D_1,\eta_1)$ and $(D_2,\eta_2)$ generate the same far-field pattern for all observation directions $\hat{\mathbf{x}}\in\mathbb{S}^{n-1}$, then the two pairs must coincide:
\begin{equation}\label{eq uniqueness form}
\begin{aligned} u_\infty(\hat{\mathbf{x}};D_1,\eta_1,u^i)=u_\infty(\hat{\mathbf{x}};D_2,\eta_2,u^i),
\ \forall\,\hat{\mathbf{x}}\in\mathbb{S}^{n-1} 
\ \Longleftrightarrow \quad (D_1,\eta_1)=(D_2,\eta_2).
\end{aligned}
\end{equation}
Thus, the far-field pattern uniquely determines both the shape $D$ of the obstacle and the impedance function $\eta$ on its boundary.

This qualitative aspect is closely connected with the long-standing Schiffer problem in inverse scattering; see \cite{colton2018looking,colton2019inverse} for further discussion.
In fact, from a cardinality point of view, a single far-field pattern is defined on $\mathbb{S}^{n-1}$ and therefore carries $(n-1)$-cardinality observation data, which matches the cardinality scale of the geometric information needed to characterize the shape and position of the obstacle.
In this sense, the geometric inverse problem \eqref{eq uniqueness form}, namely, the determination of the shape and position of the obstacle from a single far-field measurement, is formally determined.
This makes the problem mathematically optimal and at the same time highly challenging in inverse scattering.


\smallskip\noindent
\emph{Stability.}
The corresponding stability issue addresses the quantitative aspect of the uniqueness result.
It asks whether, for the same incident wave $u^i$, one can estimate a geometric distance between two obstacles in terms of the discrepancy between their far-field patterns.
In other words, we seek an estimate of the form
\begin{equation}\label{eq stability form}
{\sf GM}(D_1,D_2)\leq \psi\!\left(\|u_\infty(\hat{\mathbf x};D_1,\eta_1,u^i)-u_\infty(\hat{\mathbf x};D_2,\eta_2,u^i)\|_{L^2(\mathbb S^{n-1})}\right),
\end{equation}
where ${\sf GM}(D_1,D_2)$ denotes a suitable geometric distance between the two obstacles $D_1$ and $D_2$ (for instance, the Hausdorff distance or the Lebesgue measure of their symmetric difference).
The function $\psi$ is a modulus of continuity, assumed to be nondecreasing and to satisfy
\begin{equation}\label{eq modulus condition}
\lim_{\mu\to0^+}\psi(\mu)=0.
\end{equation}
Condition \eqref{eq modulus condition} ensures that, as the far-field patterns become arbitrarily close in the $L^2$-norm, the geometric distance between the corresponding obstacles also tends to zero.
Thus, a stability estimate of this type provides continuous dependence of the obstacle, together with its impedance, on the far-field data, and hence complements the uniqueness statement with quantitative control.

\smallskip\noindent
\emph{Qualitative--quantitative principle.}
In this sense, \eqref{eq stability form} together with \eqref{eq modulus condition} indicates an intrinsic relation between the quantitative and qualitative aspects of the single-measurement inverse problem: uniqueness in the sense of \eqref{eq uniqueness form} provides the qualitative basis for stability, while stability may be understood as a quantitative expression of uniqueness.
Accordingly, the study of this relation can also be viewed as a quantitative response to the Schiffer problem.
The present work is devoted to clarifying this relation in the generalized impedance setting, which we refer to as the \emph{qualitative--quantitative principle}.
Further discussion of this viewpoint is given in the next subsection.

\subsection{Background, challenges, and a new framework for the Schiffer problem}

A central question behind single-measurement inverse scattering is whether highly limited wave data can still encode sufficient geometric information for the recovery of an unknown obstacle.
At a broad conceptual level, this is reminiscent of the \emph{holographic principle}, in the sense that lower-dimensional data may encode higher-dimensional structure, although the mathematical setting here is entirely different from that of holography in quantum gravity~\cite{Susskind1995,Maldacena1998,Bousso2002}.
In the present context, a single far-field pattern is precisely such a minimal data set, while the obstacle geometry is the unknown structure to be recovered.
This viewpoint leads naturally to the Schiffer problem in time-harmonic inverse scattering, namely, whether the shape of an unknown obstacle can be determined from a single far-field measurement~\cite{colton2018looking,colton2019inverse}.

When full far-field data are available at a fixed frequency, uniqueness results have been established for sound-soft, sound-hard, and impedance obstacles; see \cite{lax1990scattering,colton1983uniqueness,colton2019inverse} and the references therein.
Under suitable a priori assumptions, limited far-field measurements may already suffice for unique recovery.
For sound-soft or sound-hard polytope obstacles, uniqueness under limited measurements has been proved by combining geometric arguments with the reflection principle for the Helmholtz equation~\cite{alessandrini2005determining,cheng2003uniqueness,elschner2006uniqueness,liu2006uniqueness,hu2014unique}.
More recently, uniqueness results for polygonal and polyhedral obstacles have also been established without prior knowledge of the boundary condition, based on geometric properties of Laplacian eigenfunctions~\cite{cao2020nodal,cao2021novel}.
Furthermore, a convex polytope impedance obstacle can be uniquely determined by at most two far-field measurements~\cite{cao2022two}.
For mixed or partially coated polyhedral scatterers, single-measurement uniqueness results were obtained in \cite{liu2007unique}.
In two dimensions, single-measurement uniqueness for polygonal obstacles with constant impedance was also obtained in \cite{hu2020uniqueness}.

In inverse scattering, stability is the quantitative expression of uniqueness, aiming to estimate the Hausdorff-type distance between two obstacles in terms of the discrepancy between their far-field data.
Such estimates are typically of logarithmic type, reflecting the severe ill-posedness of the inverse problem, and this behavior is in general essential optimal even in the presence of multiple measurements~(see \cite{dicristo2003examples} for more details).
For polyhedral sound-soft obstacles, sharp and essential optimal logarithmic stability estimates from a single far-field measurement were established in \cite{rondi2008stable}.
For sound-hard polyhedral scatterers, related stability results based on a minimal number of scattering measurements were later obtained in \cite{liu2017stable}.
For purely constant impedance polygonal obstacles, sharp single-measurement stability estimates were obtained in \cite{diao2024stable} under an a priori convexity assumption, including the simultaneous recovery of constant impedance parameters.
By contrast, the reflection principle and path argument underlying the sound-soft and sound-hard stability results do not apply to purely impedance scatterers; instead, the analysis in \cite{diao2024stable} relies essentially on the corner singularity in the impedance case.
Consequently, none of the above results directly covers the single-measurement stability problem for generalized impedance obstacles of polytope geometry in the non-convex and variable-impedance setting considered here.
The obstruction comes from both the non-convex geometry and the presence of variable impedance.

In the non-convex setting, the relevant geometric discrepancy may occur along an exterior-visible flat portion without producing any useful corner contribution, so arguments based on observable corners are no longer sufficient.
Variable impedance creates an additional difficulty, since the direct scattering problem must remain uniformly well posed under admissible perturbations of both the obstacle and the impedance parameter.
These two difficulties require different but compatible ingredients in the analysis.

To overcome these difficulties, we develop a unified \emph{Artificial Test Domain} (ATD) approach in the generalized impedance setting for inverse obstacle problems of polytope geometry, carried out here for polygonal and polyhedral obstacles.
To capture the relevant geometric discrepancy, we work with a modified distance adapted to exterior visibility, which is quantitatively equivalent to the Hausdorff distance on the admissible class; see Subsection~\ref{subsec DRO}.
The ATD construction is then localized near a single exterior-visible edge in two dimensions or a single exterior-visible face in three dimensions; see Sections~\ref{sec 2D} and \ref{sec proof}.
We then use a quantitative propagation-of-smallness argument to transfer the far-field error from infinity to the corresponding local region; see Section~\ref{sec 4}.
For variable-impedance boundary conditions, we further establish a uniform direct scattering framework based on sharp a priori estimates and Mosco convergence; see Subsection~\ref{subsec math}, Appendix~\ref{app:mosco}, and also \cite{liu2019mosco,menegatti2013stability}.

These ingredients lead to the main quantitative result of the present approach, namely Theorem~\ref{th relation}, a unified \emph{far-field--geometry relation} in the generalized impedance setting.
More precisely, the ATD analysis yields a quantitative estimate in which the geometric discrepancy between two obstacles is controlled by the far-field error up to a \emph{leading ATD coefficient}, namely, the first nontrivial coefficient extracted from the local ATD identity near a chosen exterior-visible flat portion.
This coefficient is not merely a pointwise local quantity, but is determined by the local boundary behavior of the total field, the underlying boundary condition, and the actual geometric mismatch between the two scatterers along the chosen exterior-visible flat boundary portion.

The next issue is therefore structural rather than quantitative: one needs to understand what the vanishing of the leading ATD coefficient actually implies.
This is carried out in Section~\ref{sec:nondeg}, where we show that if the leading ATD coefficient vanished, then the total field would necessarily satisfy a Dirichlet, Neumann, or Robin relation on an exterior flat portion, or equivalently, the exterior domain would contain the corresponding exterior hyperplane.
Accordingly, the non-vanishing needed for stability is reduced to the exclusion of the corresponding exterior hyperplane.
This leads to a further conceptual point of the present work, namely, that the relevant single-measurement uniqueness mechanisms can be organized through a unified \emph{generalized impedance hyperplane (GIH) exclusion mechanism}.
Thus, Theorem~\ref{thm:qualitative-hyperplane-exclusion} is not merely a reformulation of existing uniqueness results, but identifies the structural mechanism through which the far-field--geometry relation closes into the sharp stability estimate. From this viewpoint, the existing uniqueness results for polytope obstacles \cite{liu2006uniqueness,liu2007unique,hu2014unique,hu2020uniqueness}, although proved by variants of the reflection principle and path argument, can be interpreted in a unified way as GIH exclusion mechanisms. In particular, the appearance of the corresponding Dirichlet, Neumann, or Robin hyperplane is the first step in carrying out this reflection-based mechanism for solving the Schiffer problem in inverse scattering by polytope obstacles. In this sense, the present paper clarifies the common underlying structure of these arguments by introducing the GIH exclusion mechanism.

The hyperplane-exclusion viewpoint also clarifies the role of singularity in the corresponding uniqueness results; see \cite{cao2020nodal,cao2021novel,cao2022two,honda2013analytic}.
Beyond the reflection-based uniqueness results in \cite{liu2006uniqueness,liu2007unique,hu2014unique,hu2020uniqueness}, other approaches to the Schiffer problem may likewise be understood, in some sense, through singularity analysis \cite{cao2020nodal,cao2021novel,cao2022two,honda2013analytic}.
In the existing uniqueness results for polytope obstacles, the exclusion of the relevant exterior hyperplanes is typically supported by a geometric singularity, for instance a corner singularity arising from the structure of Laplacian eigenfunctions \cite{cao2020nodal,cao2021novel,cao2022two}.
By contrast, in the nowhere-analytic impedance regime considered here, the required singularity is provided by the impedance coefficient itself, which leads to the exclusion of the corresponding exterior Robin hyperplane; see Theorem~\ref{thm:Xi2-excludes-degeneracy}.
This should be compared with the geometric non-analyticity mechanism in \cite{honda2013analytic}.
In particular, Theorem~\ref{thm:Xi2-excludes-degeneracy} establishes single-measurement uniqueness for purely impedance polygonal and polyhedral obstacles whose impedance parameters are nowhere analytic, and thus solves the corresponding Schiffer problem in this regime.
From this viewpoint, the essential issue behind the GIH mechanism is the presence of a suitable geometric or parameter singularity.
We expect that this perspective can also be useful for extending the GIH exclusion mechanism, and hence the corresponding stability theory, to more general geometric settings.

This hyperplane-exclusion viewpoint is precisely what connects uniqueness with stability.
Once the corresponding uniqueness mechanism is valid, the leading ATD coefficient is forced to be nonzero, and Theorem~\ref{th relation} immediately yields  the sharp stability estimate of Theorem~\ref{cor stability}.
In this way, the qualitative and quantitative aspects of the single-measurement inverse problem are brought into a unified framework.
On the qualitative side, the relevant uniqueness results are organized through the generalized impedance hyperplane exclusion mechanism.
On the quantitative side, Theorem~\ref{th relation} provides the unified far-field--geometry relation.
Accordingly, once the corresponding uniqueness mechanism is established, the far-field--geometry relation immediately incudes a   sharp stability.
This is the \emph{qualitative--quantitative principle} described above: in the generalized impedance setting, it provides a unified mechanism through which single-measurement uniqueness gives rise to sharp stability.

The main results are presented in two parts.
The first part establishes a unified quantitative relation between the far-field discrepancy and the geometric distance in the generalized impedance setting.
The second part establishes sharp stability in all regimes where the corresponding uniqueness mechanism is available.
In particular, within the admissible geometric class proposed in this paper, this provides a unified derivation of the sharp stability results for the sound-soft and sound-hard cases \cite{rondi2008stable,liu2017stable}, and further yields new sharp stability results for the nowhere-analytic impedance regime, the mixed sound-soft/finite-impedance regime, and the two-dimensional constant-impedance regime.


\subsection*{Organization of the paper}

Section~\ref{sec main results} presents the main quantitative results, including Theorem~\ref{th relation} on the unified far-field--geometry relation and Theorem~\ref{cor stability} on the corresponding sharp stability estimates.

Section~\ref{sec pre} introduces the notation and the admissible classes.
Subsection~\ref{subsec DRO} presents the geometric setup and the relevant metrics, and proves the quantitative equivalence between the exterior-visible distance and the Hausdorff distance on the admissible class.

Section~\ref{sec 4} constructs exterior visibility paths and propagates far-field smallness to neighborhoods of exterior-visible boundary patches.
Section~\ref{sec 2D} develops the two-dimensional ATD extraction and the corresponding relation analysis.
Section~\ref{sec proof} develops the three-dimensional ATD geometry near an exterior-visible face and proves the corresponding three-dimensional relation.

Section~\ref{sec:nondeg} establishes the regime-dependent non-degeneracy conditions and completes the proof of Theorem~\ref{cor stability}.
Appendix~\ref{app:classical-relation} supplements the relation analysis for the classical sound-soft and sound-hard regimes.
Appendix~\ref{app:mosco} collects the Mosco convergence framework and the uniform estimates used in the argument.

\section{Main results}\label{sec main results}

We state the main results that clarify, in precise mathematical terms, the relation between uniqueness and stability in the single-measurement inverse obstacle problem under generalized impedance boundary conditions.
Two main viewpoints emerge from this relation.
The first is that the relevant single-measurement uniqueness mechanisms can be organized through a unified generalized impedance hyperplane exclusion mechanism.
The second is the qualitative--quantitative principle, namely, that once the corresponding uniqueness mechanism is available, the far-field--geometry relation closes into sharp stability.
\subsection{The far-field--geometry relation}\label{subsec part1}

We first state the quantitative part of the qualitative--quantitative principle in the single-measurement setting.
Throughout this paper, the incident plane wave $u^i$ in \eqref{eq incident} is fixed, with fixed incident direction $\mathbf{p}\in\mathbb{S}^{n-1}$ and fixed wavenumber $k\in\mathbb{R}_+$.
The following theorem establishes the unified quantitative relation between the far-field error and the geometric discrepancy.

\begin{thm}\label{th relation}
Let $(D,\eta)$ and $(D',\eta')$ be two scattering configurations in the generalized impedance setting of Definition~\ref{defn:gen-imp}.
Assume that $D$ and $D'$ are admissible polygonal or polyhedral obstacles in the sense of Definition~\ref{defn admissible class}.
Let $u_\infty$ and $u'_\infty$ be the far-field patterns generated by $(D,\eta)$ and $(D',\eta')$, respectively, under the same incident plane wave $u^i$.
Assume that $n\in\{2,3\}$ and that
\[
\|u_\infty-u'_\infty\|_{L^2(\mathbb S^{n-1})}\leq \varepsilon
\]
for some $0<\varepsilon\leq \varepsilon_m<e^{-1}$, where $\varepsilon_m$ is a sufficiently small constant depending only on the a priori parameters.
Then there exist constants $p\geq 1$, $\kappa_0>0$, and $C>0$, depending only on the a priori parameters, such that for any corresponding ATD construction with leading ATD coefficient $C_A$, one has
\begin{equation}\label{eq relation main}
|C_A|\,(\mathrm d_H(D,D'))^p \leq C\,(\log|\log(1/\varepsilon)|)^{-\kappa_0}.
\end{equation}
\end{thm}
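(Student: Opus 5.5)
The plan follows the four-step architecture announced in the introduction: geometric reduction, propagation of smallness, local ATD identity, and extraction of the leading coefficient.

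\emph{Step 1: geometric reduction.} Set $h:=\mathrm d_H(D,D')$ and assume $h>0$. Using the exterior-visible distance of Subsection~\ref{subsec DRO}, which is quantitatively equivalent to $\mathrm d_H$ on the admissible class, I locate an exterior-visible flat portion $\Gamma_h$ of one obstacle, say an edge or face of $D'$ lying in $\mathbf G$ and at distance comparable to $h$ from $D$. Up to a power $h^{q}$ coming from this equivalence, a small box $Q_h$ can then be placed with $Q_h\subset\mathbf G$, $Q_h$ attached to $\Gamma_h$, and $\mathrm{dist}(Q_h,\partial D)\gtrsim h$. This box is the artificial test domain.

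\emph{Step 2: propagation of smallness.} I bound $w:=u-u'$ near $Q_h$.
\begin{itemize}
\item Rellich's lemma together with the standard far-field-to-near-field stability estimate gives $\|w\|_{L^\infty}\lesssim(\log(1/\varepsilon))^{-c}$ on a large sphere.
\item Along an exterior visibility path (Section~\ref{sec 4}) from infinity to $Q_h$, a chain of three-sphere inequalities propagates this bound inward. The path has tube radius $\sim h^{q}$, and the number of balls depends on $h$, so each step loses a power.
\item To cope with the unknown $h$ one balances in $h$. This produces the characteristic double-logarithmic rate: $\|w\|_{H^1(Q_h)}\le C\,\omega(\varepsilon)^{h^{c'}}$-type smallness, which after the balancing yields $(\log|\log(1/\varepsilon)|)^{-\kappa_0}$-type control.
\end{itemize}
Uniform $H^1$ and Lipschitz bounds on $u,u'$ are needed here. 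For variable impedance these come from the Mosco-convergence framework of Appendix~\ref{app:mosco}, which yields a priori bounds independent of the specific $(D,\eta)$ in the class.

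\emph{Step 3: the ATD identity.} On $Q_h$, Green's identity is applied to $u'$ and a CGO/exponential test function $v$ harmonic for $\Delta+k^2$, chosen with phase $e^{\tau(\mathbf d+\mathrm i\mathbf d^\perp)\cdot\mathbf x}$ decaying away from $\Gamma_h$. Since $u'$ satisfies $\partial_\nu u'+\eta' u'=0$ on $\Gamma_h$, the boundary integral over $\Gamma_h\cap\partial Q_h$ becomes
\[
\int (\eta' u' v+u'\partial_\nu v)\,ds .
\]
Its small-scale asymptotic expansion in $h$ (equivalently in $\tau$) has a first nonvanishing term $C_A h^{p}$, built from the boundary trace of $u$, or of $\partial_\nu u$ in the soft or hard cases, together with $\eta'$ and the mismatch geometry. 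The remaining boundary pieces of $\partial Q_h$ lie in $\mathbf G$ away from $\partial D$, where $u'=u-w$ and $u$ solves the Helmholtz equation on both sides. Those pieces are therefore bounded by $\|w\|_{H^1(Q_h)}$ plus exponentially small CGO tails $e^{-c\tau h}$ plus higher-order remainders $O(h^{p+\delta})$.

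\emph{Step 4: conclusion.} Choosing $\tau\sim h^{-1}|\log h|$, I obtain
\[
|C_A|h^p\le C\bigl(\|w\|_{H^1(Q_h)}+h^{p+\delta}\bigr),
\]
where the remainder is absorbed, since otherwise the estimate is trivial by tuning the box scale. Inserting Step 2 and re-expressing through the Hausdorff equivalence gives \eqref{eq relation main}.

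\emph{Main obstacle.} I expect the hardest part to be Step 3: obtaining a uniform expansion of the ATD boundary integral, with remainder bounds uniform over the admissible class and over $L^\infty$ complex impedance, so that the leading coefficient is cleanly separated. The remainder must be controlled only by a priori parameters, despite the regularity loss of $u$ near corners adjacent to $\Gamma_h$. My first attempt would be to shrink $Q_h$ to a subpatch of $\Gamma_h$ at distance $\gtrsim h$ from its corners, where $u'$ is $C^{1,\alpha}$ up to the flat boundary by local elliptic regularity. The cost of this is a further power of $h$, which is absorbed into $p$.
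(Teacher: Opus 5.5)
Your architecture matches the paper's: an exterior-visible flat piece chosen via the modified distance, Isakov plus a three-sphere chain, a CGO identity on a test domain attached to that piece, then balancing. Anchoring on $\partial D'$ with $u$ as the analytic field, rather than on $\partial D$ with $u'$, is a harmless relabelling. The genuine gap is in Step~3. There you claim that every piece of $\partial Q_h$ other than $\Gamma_h$ contributes only $O(\|w\|)$, exponentially small tails, and $O(h^{p+\delta})$. That claim is false.

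\emph{The CGO must be localized at a point.} To get a coefficient determined by finitely many expansion coefficients of the analytic field at the test point, the CGO has to concentrate there. The paper takes the test domain to be a sector (a wedge in 3D) with vertex at the test point, and chooses $\mathbf d$ in the dual cone $\mathcal K_{\alpha^*}$ (resp.\ $\mathcal K_{\alpha'}$), so that $|v|$ decays like $e^{-c\tau|\mathbf x|}$ throughout. A phase that merely decays away from $\Gamma_h$ has constant modulus along $\Gamma_h$ and localizes nowhere.

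\emph{Once localized, the side faces are leading order.} The faces of $\partial Q_h$ through the vertex (the ray $I_2$ in 2D, the sectors $S_2,S_3$ in 3D) contribute at exactly the same order as the flat piece. With $u_N\sim r^N$ and $\partial_\nu v\sim\tau v$, both $\int u_N\partial_\nu v$ and $\int\partial_\nu u_N\,v$ along $\tilde I_2$ are of size $\tau^{-N}$. Only the far arc or cap, and the ray tails beyond radius $h$, are exponentially small. This is why Proposition~\ref{prop lower2D} extends both $I_1$ and $I_2$ to infinite rays and evaluates them together by the Laplace transform. The flat-ray term alone is proportional to $a_N+b_N$; only after adding the $\tilde I_2$ terms does the surviving coefficient become $b_N-a_N$. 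Likewise, the 3D coefficient in Proposition~\ref{prop low2} collects $S_1$, $S_2$ and $S_3$.

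Discarding the side faces as remainders therefore identifies the wrong $C_A$. The lower bound $|\cdot|\ge|C_A|\tau^{-N}$ on which Step~4 rests then fails, because the discarded same-order terms can cancel the $\Gamma_h$ term. If you instead apply Green's identity to $u$ as well, the side terms move onto $\Gamma_h$ and the density there becomes the Robin defect $\partial_\nu u+\eta' u$, not the trace of $u$. That is a consistent alternative, but the leading object is then different from the one you describe.

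A secondary problem is in Step~4. With $\tau\sim h^{-1}|\log h|$ tied to $h$, the $O(h^{p+\delta})$ remainder is absorbed only when $|C_A|\gtrsim h^{\delta}$. Otherwise you only get $|C_A|h^p\lesssim h^{p+\delta}$, which is not small in $\varepsilon$, so that alternative is not trivial.

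The paper avoids absorption altogether. Its boundary modulus $T(\varepsilon)$ is independent of $h$: the chain radius is chosen from $\varepsilon$, and $C^{1,\alpha}$ regularity up to the flat piece finishes the propagation. It then lets $\tau$ depend on $\varepsilon$, taking $\tau=T(\varepsilon)^{-1/(N+1)}h^{-(N+3)/(N+1)}$ in $|C_A|\le C(\tau^N T(\varepsilon)h+\tau^{-1}h^{-(N+2)})$, so the $O(\tau^{-N-1})$ terms are automatically subordinate. Your box-scale remark can be made to work only if you explicitly decouple the box size from $h$ and balance in it.
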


\begin{rem}\label{rem:CA}
We prove Theorem~\ref{th relation} by the ATD method, using the modified distance introduced in Subsection~\ref{subsec DRO}, which is quantitatively equivalent to the Hausdorff distance on the admissible class.
We select an exterior-visible boundary point $\mathbf{x}_0\in \partial D\setminus D'$ at which the modified distance is attained, and attach a local test domain near $\mathbf{x}_0$ inside $D\setminus D'$.
Accordingly, the local analysis is carried out for $u'$ rather than $u$.
Since this test region lies in the exterior of $D'$, the field $u'$ is analytic there, and its local expansion, combined with CGO solutions, yields \eqref{eq relation main}.
The coefficient $C_A$ in \eqref{eq relation main} is the leading ATD coefficient associated with the chosen exterior-visible flat portion near $\mathbf{x}_0$, namely, the first nontrivial coefficient extracted from the corresponding local ATD identity after inserting the local expansion of $u'$; see \eqref{eq 2Dexpansion}, \eqref{eq u expansion}, and Propositions~\ref{prop lower2D} and \ref{prop low2}.
Thus $C_A$ is determined by the local boundary behavior of the total field, the underlying boundary condition, and the actual geometric discrepancy near the chosen exterior-visible flat portion.
\end{rem}

\begin{rem}\label{rem:relation-framework}
Theorem~\ref{th relation} provides the quantitative part of the qualitative--quantitative principle.
It is established by a unified ATD argument in the generalized impedance setting and does not rely on any regime-dependent uniqueness input.
For clarity of presentation, the detailed derivation in Sections~\ref{sec 2D} and \ref{sec proof} is carried out in the bounded-impedance setting, while the remaining boundary regimes are treated in Appendix~\ref{app:classical-relation} through brief supplementary proofs.
The role of uniqueness enters only later, through the verification that the leading ATD coefficient does not vanish.
Accordingly, once the relevant uniqueness mechanism is available to exclude the corresponding exterior hyperplane, Theorem~\ref{th relation} closes into the sharp stability estimate.
\end{rem}

\medskip
\subsection{GIH exclusion mechanism and sharp stability}\label{subsec part2}

We now turn to the qualitative side of the argument.
Theorem~\ref{th relation} yields a quantitative estimate of the form
\[
|C_A|\,(\mathrm d_H(D,D'))^p \leq C\,(\log|\log(1/\varepsilon)|)^{-\kappa_0}.
\]
This already establishes the far-field--geometry relation.
To derive sharp stability, it remains to show that the leading ATD coefficient $C_A$ is nonzero.

This is precisely the issue addressed in Section~\ref{sec:nondeg}.
There we show that if $C_A=0$, then the total field must satisfy a Dirichlet, Neumann, or Robin relation on an exterior flat portion, or equivalently, the exterior domain must contain the corresponding exterior hyperplane.
Accordingly, the non-vanishing of $C_A$ is reduced to the exclusion of the corresponding exterior hyperplane.
This is the point where the corresponding uniqueness mechanism enters.
We now introduce the relevant exterior hyperplanes and the admissible regimes in which the corresponding GIH exclusion mechanism is available.

\begin{defn}\label{defn:ext-hyper}
Let $u$ denote the total field associated with the scattering problem \eqref{eq main system} in the exterior domain $\mathbf G$ defined by \eqref{eq exterior}.
\begin{enumerate}
\item
A \emph{Dirichlet hyperplane in $\mathbf G$} is an affine hyperplane $\Pi\subset \mathbf G$ such that
\[
u=0 \qquad \text{on }\Pi.
\]

\item
A \emph{Neumann hyperplane in $\mathbf G$} is an affine hyperplane $\Pi\subset \mathbf G$ such that
\[
\partial_\nu u=0 \qquad \text{on }\Pi,
\]
where $\nu$ denotes a unit normal to $\Pi$.

\item
A \emph{Robin hyperplane in $\mathbf G$} is an affine hyperplane $\Pi\subset \mathbf G$ such that
\[
\partial_\nu u+\eta u=0 \qquad \text{on }\Pi,
\]
where $\nu$ denotes a unit normal to $\Pi$.
\end{enumerate}
\end{defn}

The following admissible classes of generalized impedance parameters are used to formulate the qualitative hyperplane-exclusion principle and the corresponding sharp stability theorem.

\begin{defn}\label{defn eta}
Within the generalized impedance setting of Definition~\ref{defn:gen-imp}, we call the generalized impedance parameter $\eta$ \emph{admissible} if it belongs to one of the following classes.
\begin{itemize}
\item
The class $\Xi_1$ is the bounded-impedance class.
One has $\eta\in L^\infty(\partial D)$ and $\|\eta\|_{L^\infty(\partial D)}\leq M_0$ for some constant $M_0>0$, $\Im \eta\geq 0$ almost everywhere on $\partial D$, and $\eta\not\equiv 0$.
This class includes constant impedances.

\item
The class $\Xi_2$ is the nowhere-analytic impedance class.
One has $\eta\in\Xi_1$, and $\eta$ is nowhere real-analytic on $\partial D$ in the sense that for every $\mathbf{x}_0\in\partial D$ there is no open neighborhood of $\mathbf{x}_0$ on which $\eta$ coincides with the trace of a real-analytic function in $\mathbb R^n$.

\item
The class $\Xi_3$ is the mixed sound-soft/finite-impedance class.
One assumes that the boundary admits a Lipschitz dissection
\[
\partial D=\Gamma_D\cup \Gamma_I\cup \Gamma_0,
\]
where $\Gamma_D$ and $\Gamma_I$ are disjoint relatively open subsets of $\partial D$, and $\Gamma_0$ denotes their common boundary on $\partial D$.
The generalized impedance parameter satisfies
\[
\eta=+\infty \quad \text{on }\Gamma_D,
\qquad
\eta\in C(\Gamma_I), \quad \eta_0\leq \eta\leq M_0 \quad \text{on }\Gamma_I,
\]
where $M_0$ and $\eta_0$ are positive constants.

\item
The class $\Xi_4$ is the constant positive impedance class.
One has
\[
\eta\equiv \lambda>0 \qquad \text{on }\partial D
\]
for some positive constant $\lambda$.
\end{itemize}
\end{defn}

\begin{rem}\label{rem:eta-classes}
The generalized impedance setting covers the classical  sound-soft, sound-hard, and impedance cases.
Among the admissible classes above, $\Xi_1$ is the general bounded-impedance class and is mainly used to formulate the direct scattering framework for variable impedance, based on sharp a priori estimates and Mosco convergence; see Appendix~\ref{app:mosco} for further details.

By contrast, the classes $\Xi_2$, $\Xi_3$, and $\Xi_4$ are introduced because in these regimes the corresponding uniqueness mechanism needed to exclude the relevant exterior hyperplanes becomes available.
More precisely, they are the regimes in which the non-vanishing of the leading ATD coefficient can be verified, either by the analysis in the present paper or by the existing uniqueness literature.
For the general bounded-impedance class $\Xi_1$, the single-measurement uniqueness problem remains open.
Thus the role of the additional classes is to identify those regimes in which the corresponding GIH exclusion mechanism is already available, so that the far-field--geometry relation of Theorem~\ref{th relation} closes into the sharp stability estimate within the same framework.
For the class $\Xi_4$, the qualitative exclusion and the resulting stability conclusion used in this paper are available only in two dimensions.
\end{rem}

\medskip
The corresponding qualitative mechanism established in Theorem \ref{thm:qualitative-hyperplane-exclusion} will be referred to as the \emph{generalized impedance hyperplane-exclusion mechanism}.
It consists in excluding the exterior Dirichlet, Neumann, or Robin hyperplanes relevant to the boundary regime under consideration.
As will be shown later in Section~\ref{sec:nondeg}, this exclusion is precisely what guarantees the non-vanishing of the leading ATD coefficient and hence allows the far-field--geometry relation to close into sharp stability.

\medskip
We summarize the relevant single-measurement uniqueness results in the following qualitative form: the total field cannot admit the corresponding exterior Dirichlet, Neumann, or Robin hyperplane.
This qualitative hyperplane-exclusion principle is exactly the qualitative mechanism needed to close the far-field--geometry relation of Theorem~\ref{th relation} into the sharp stability estimate.

\begin{thm}\label{thm:qualitative-hyperplane-exclusion}
Let $n=2,3$, and let $D$ be a polygonal or polyhedral obstacle with connected exterior $\mathbf G$.
Assume that $(D,\eta)$ belongs to one of the following regimes.

\begin{enumerate}
\item[(i)]
If $D$ is sound-soft, then $\mathbf G$ contains no Dirichlet hyperplanes.

\item[(ii)]
If $D$ is sound-hard, then $\mathbf G$ contains no Neumann hyperplanes.

\item[(iii)]
If $\eta\in\Xi_2$, namely in the nowhere-analytic impedance regime, then $\mathbf G$ contains no Robin hyperplanes.

\item[(iv)]
If $\eta\in\Xi_3$, namely in the mixed sound-soft/finite-impedance regime, then $\mathbf G$ contains neither Dirichlet hyperplanes nor Robin hyperplanes.

\item[(v)]
If $n=2$ and $\eta\in\Xi_4$, namely in the two-dimensional constant-impedance regime, then $\mathbf G$ contains no Robin hyperplanes.
\end{enumerate}
\end{thm}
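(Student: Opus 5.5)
The plan is to handle the five regimes separately, but always through one common mechanism: a hyperplane $\Pi\subset\mathbf G$ carrying a homogeneous Dirichlet, Neumann or Robin relation is propagated by reflection until it hits $\partial D$ or, at infinity, contradicts the far-field behaviour. A Dirichlet (resp.\ Neumann) plane lets us extend $u$ by odd (resp.\ even) reflection $u(\mathbf x)=\mp u(\mathcal R_\Pi\mathbf x)$. A Robin plane is treated by the generalized reflection principle for the Helmholtz equation in the form used in \cite{liu2007unique,hu2020uniqueness}, and it is genuinely local, so we only ever need a flat piece.

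In each regime we argue by contradiction. Suppose such a $\Pi$ exists; since $\Pi$ is unbounded and lies in $\mathbf G$, it cannot be hidden inside the bounded obstacle. Regimes (i) and (ii) are classical. Reflecting $u$ across $\Pi$ and using the path argument of \cite{cheng2003uniqueness,elschner2006uniqueness,liu2006uniqueness}, the relation $u=0$ (resp.\ $\partial_\nu u=0$) propagates along a chain of hyperplanes generated by reflecting $\Pi$ in the faces of $D$. This yields a Dirichlet/Neumann cell unbounded in some direction or, by an angle-rotation argument, a full cone. Both are incompatible with the Rellich lemma together with the decay $u^s=O(r^{-(n-1)/2})$, because $u^i$ is a plane wave of modulus one that cannot vanish, or have vanishing normal derivative, on an unbounded set of infinitely many planes. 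Regime (iv) combines these two ideas. The Dirichlet case runs the sound-soft path argument, and a Robin plane meeting $\Gamma_I$ or $\Gamma_D$ is reduced to the Dirichlet case via \cite{liu2007unique}, using $\eta\geq\eta_0>0$ to exclude degenerate Neumann behaviour. Regime (v) is exactly the two-dimensional constant-impedance uniqueness of \cite{hu2020uniqueness}, whose first step produces and then rules out a Robin line. We will cite it and only check that its hypotheses match $\Xi_4$.

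Regime (iii) is the new part, and we expect it to be the main obstacle. Assume a Robin hyperplane $\Pi$ exists with coefficient $\eta$ on $\Pi$; in the GIH setting this coefficient arises by continuation from the boundary relation. Translate $\Pi$ in the normal direction until it first touches $\overline D$, at a face $\Gamma\subset\partial D$ or a lower-dimensional stratum. Near a point of $\Gamma\cap\Pi'$, where $\Pi'$ is the limiting parallel plane, $u$ is real-analytic in $\mathbf G$ up to the flat face, by boundary regularity of the Helmholtz equation near a flat portion. Hence on $\Gamma$ we have $\eta=-\partial_\nu u/u$ wherever $u\neq0$. Unique continuation excludes $u\equiv0$ on an open subset of $\Gamma$; otherwise the Robin relation forces $\partial_\nu u=0$ there too, and Holmgren gives $u\equiv0$ in $\mathbf G$, which is impossible. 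So $\eta$ coincides locally with the trace of the real-analytic function $-\partial_\nu u/u$, contradicting $\eta\in\Xi_2$.

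The delicate point is justifying that the exterior Robin plane actually transports an analytic Robin relation onto a face of $D$, rather than merely a parallel one. The route we would try first is to show that the degenerate identity $C_A=0$ from Section~\ref{sec:nondeg} already gives the Robin relation on an exterior-visible flat patch adjacent to $\partial D$. The analytic quotient $-\partial_\nu u/u$ then extends across that patch into $\partial D$. This yields the contradiction with nowhere-analyticity directly, without requiring a global reflection.
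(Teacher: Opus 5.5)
Like the paper, you essentially defer to \cite{liu2006uniqueness,liu2007unique,hu2020uniqueness} for (i), (ii), (iv) and (v). The closing step of your sketch for (ii) is false as stated, though. Since $\partial_\nu e^{\mathrm{i}k\mathbf x\cdot\mathbf p}=\mathrm{i}k(\mathbf p\cdot\nu)e^{\mathrm{i}k\mathbf x\cdot\mathbf p}$, the plane wave has vanishing normal derivative on every hyperplane whose normal is orthogonal to $\mathbf p$, so that observation cannot close the sound-hard case.

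The genuine gap is in (iii), which is the only case the paper actually proves (Part~1 of Theorem~\ref{thm:Xi2-excludes-degeneracy}). Your main route rests on the claim that $u$ is real-analytic in $\mathbf G$ up to a flat face $\Gamma$ of $D$. This is false. On $\Gamma$ the field satisfies $\partial_\nu u+\eta u=0$, so its regularity up to $\Gamma$ is limited by that of $\eta$. If $u$ were analytic up to $\Gamma$, then $\eta=-\partial_\nu u/u$ would be analytic wherever $u\neq 0$, which is exactly what $\eta\in\Xi_2$ forbids. Note also that the hyperplane $\Pi$ plays no role in your contradiction; as you yourself flag, translating $\Pi$ does not carry the Robin relation to parallel planes. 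The relation you use on $\Gamma$ is just the boundary condition of $D$. If the argument were valid, it would show that no $\Xi_2$ obstacle admits a scattering solution at all, contradicting Lemma~\ref{lem wellposs}. It proves too much.

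The missing idea is where the analyticity comes from. In the paper, the Robin relation is obtained on a non-trivial flat portion $\Gamma$ of the anchored face of $\partial D$ itself, but for the total field $u'$ of the \emph{competing} obstacle $D'$. Proposition~\ref{prop:ATD-degeneracy}, or $u=u'$ in the uniqueness argument of Part~2, gives $\partial_\nu u'+\eta u'=0$ on $\Gamma$, with $\eta$ the impedance of $D$. The argument then runs as follows:
\begin{itemize}
\item Since $\Gamma\subset\partial D$ lies in the exterior of $D'$, the field $u'$ is real-analytic in a full neighbourhood of $\Gamma$.
\item $u'\not\equiv 0$ on $\Gamma$, by uniqueness of the Cauchy problem.
\item On a subportion $\Gamma'$ where $u'\neq 0$, one has $\eta=-\partial_\nu u'/u'$, the trace of a real-analytic function, contradicting $\eta\in\Xi_2$.
\end{itemize}
No translation of hyperplanes and no boundary regularity is needed.

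Your fallback sentence points toward Proposition~\ref{prop:ATD-degeneracy}, but it uses the wrong field ($u$ instead of $u'$) and the wrong geometry. An analytic quotient on a patch \emph{adjacent to} $\partial D$ that ``extends into'' $\partial D$ tells you nothing about $\eta$. The relation has to hold on the piece of $\partial D$ where $\eta$ lives, for a field that is analytic across that piece.
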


\begin{rem}\label{rem:qualitative-sources}
The uniqueness results in \cite{liu2006uniqueness} yield the qualitative exclusions in cases (i) and (ii).
The qualitative exclusion and the corresponding uniqueness result in case (iii) are established later in Theorem~\ref{thm:Xi2-excludes-degeneracy}.
The uniqueness results in \cite{liu2007unique} and \cite{hu2020uniqueness} yield the qualitative exclusions in cases (iv) and (v), respectively.
For the sound-soft case, we also note that \cite{hu2014unique} proved single-measurement uniqueness for polyhedral scatterers with a single incident point source wave.

Thus, Theorem~\ref{thm:qualitative-hyperplane-exclusion} does not merely collect the relevant uniqueness results, but for the first time organizes them into a unified GIH exclusion mechanism.
The point is not simply that the corresponding uniqueness results are available, but that the present work identifies and clarifies the common structural mechanism underlying them.
More importantly, the present paper is the first to apply this unified GIH exclusion mechanism to the stability analysis in the generalized impedance setting.
In particular, it shows that this mechanism is precisely the qualitative factor needed to force the non-vanishing of the leading ATD coefficient and hence to close the far-field--geometry relation into the sharp stability estimate.
\end{rem}

\medskip
We can now state the sharp stability theorem.
It shows that, in the present framework, once the corresponding GIH exclusion mechanism is available, the far-field--geometry relation of Theorem~\ref{th relation} yields the sharp stability estimate.

\begin{thm}\label{cor stability}
Assume the setting of Theorem~\ref{th relation}.
Assume in addition that $(D,\eta)$ belongs to one of the following regimes:
\begin{enumerate}
\item[(i)]
the sound-soft regime;

\item[(ii)]
the sound-hard regime;

\item[(iii)]
the nowhere-analytic impedance regime;

\item[(iv)]
the mixed sound-soft/finite-impedance regime;

\item[(v)]
the two-dimensional constant-impedance regime.
\end{enumerate}
Then, in each of these regimes, Theorem~\ref{thm:qualitative-hyperplane-exclusion}, combined with the non-vanishing analysis in Section~\ref{sec:nondeg}, implies that the leading ATD coefficient is nonzero.
Consequently, the relation estimate \eqref{eq relation main} yields the sharp stability estimate
\begin{equation}\label{eq stability}
\mathrm d_H(D,D')\leq \mathbf{C}\,(\log|\log(1/\varepsilon)|)^{-\kappa},
\end{equation}
where the constants $\mathbf{C}>0$ and $\kappa>0$ depend only on the a priori parameters of the regime under consideration.
\end{thm}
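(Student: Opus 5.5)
The plan is to combine Theorem~\ref{th relation} with a uniform lower bound on $|C_A|$. The key point is that Theorem~\ref{th relation} only gives $|C_A|\,\mathrm d_H(D,D')^p\le C(\log|\log(1/\varepsilon)|)^{-\kappa_0}$. To reach \eqref{eq stability}, we need $|C_A|\ge c_0>0$ with $c_0$ depending only on the a priori parameters. We would then take $\mathbf C=(C/c_0)^{1/p}$ and $\kappa=\kappa_0/p$. If $\mathrm d_H(D,D')$ is already below a threshold forcing the ATD construction to degenerate, nothing needs to be proved. Likewise, for $\varepsilon$ near $\varepsilon_m$ the estimate is trivial, since $\mathrm d_H$ is a priori bounded.

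The main step is the non-vanishing of $C_A$, done qualitatively first. Recall from Remark~\ref{rem:CA} that $C_A$ is the first nontrivial coefficient in the local ATD identity built from the expansion of the total field near an exterior-visible flat portion of $\partial D$ lying outside $D'$. We would show that $C_A=0$ forces every Taylor coefficient of the boundary combination to vanish at the base point $\mathbf x_0$. In the sound-soft regime this combination is $u'$; in the sound-hard regime it is $\partial_\nu u'$; in the impedance regimes it is $\partial_\nu u'+\eta u'$. The vanishing would follow by induction on the order of the expansion: at each order the next coefficient becomes leading, and the same CGO argument applies.

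Real analyticity of $u'$ in the exterior of $D'$ then propagates this relation along the whole affine hyperplane $\Pi$ through the flat portion, via unique continuation in $\mathbf G'$. In the Robin case with nowhere-analytic $\eta$, the relation $\partial_\nu u'+\eta u'=0$ on an open piece would make $\eta=-\partial_\nu u'/u'$ analytic wherever $u'\neq0$. This contradicts $\eta\in\Xi_2$ unless $u'=\partial_\nu u'=0$ there, and that case is excluded by Holmgren's theorem. The conclusion is that $\mathbf G'$ (or $\mathbf G$, depending on which field is used) contains a Dirichlet, Neumann or Robin hyperplane. This contradicts Theorem~\ref{thm:qualitative-hyperplane-exclusion} in regimes (i)–(v), so $C_A\neq0$ for each fixed configuration.

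The hardest step is upgrading this to a uniform bound $|C_A|\ge c_0$. I would argue by contradiction and compactness. Suppose there are admissible configurations $(D_j,\eta_j),(D'_j,\eta'_j)$ with $C_{A,j}\to0$. The admissible polytopes form a compact family in the Hausdorff metric, so we may pass to a subsequence along which the obstacles converge and the chosen base points and flat portions converge. The Mosco-convergence framework of Appendix~\ref{app:mosco} gives convergence of the impedance parameters and of the total fields in $H^1_{\rm loc}$. Interior elliptic estimates upgrade this to $C^m$ convergence away from the obstacles, and hence to convergence of the local expansion coefficients. The limit coefficient then vanishes, and the qualitative argument above produces a forbidden hyperplane in the limit configuration.

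The delicate points are the following. The limit must remain in the same regime; this is clear for (i), (ii), (iv) and (v), but nowhere-analyticity in (iii) is not closed under limits. So for (iii) I would expect to need a quantitative form of non-analyticity built into the a priori parameters, or else a proof that exploits only the fixed $\eta$ of one obstacle. The ATD geometry must not degenerate in the limit; the equivalence of the modified distance with $\mathrm d_H$ from Subsection~\ref{subsec DRO} is what keeps the visible flat portion of size comparable to $\mathrm d_H$.
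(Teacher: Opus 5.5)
Your first two steps are the paper's proof. That proof only combines Theorem~\ref{th relation} with the non-vanishing results of Section~\ref{sec:nondeg}: Propositions~\ref{prop:ATD-degeneracy} and \ref{prop:classical-degeneracy}, Theorem~\ref{thm:Xi2-excludes-degeneracy} (same quotient $\eta_*=-\partial_\nu u'/u'$ and Cauchy-uniqueness argument), and Lemmas~\ref{lem:classical-ND}, \ref{lem:Xi4-Robin}, \ref{lem:Xi3-DI}. It then divides \eqref{eq relation main} by $|C_A|$. You are right that pointwise non-vanishing does not by itself give a $\mathbf C$ depending only on the a priori parameters, and the paper never addresses this. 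The gap is in your remedy. The uniform bound $|C_A|\ge c_0$ is not established: you leave (iii) open yourself. Worse, no such $c_0$ can exist, as explained below.

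The reduction ``if $\mathrm d_H(D,D')$ is below a threshold, nothing needs to be proved'' is false. \eqref{eq stability} must hold for all $\varepsilon\in(0,\varepsilon_m]$, and its right-hand side falls below any fixed threshold as $\varepsilon\to0$; only $\varepsilon$ bounded away from $0$ is trivial. So your contradiction sequence must allow $\mathfrak d_j=\tilde{\mathrm d}(D_j,D'_j)\to0$, and there the compactness argument collapses. $D_j$ and $D'_j$ converge to the same polytope, and $u'_j$ is only known to be analytic on $B_{\mathfrak d_j}(\mathbf x_{0,j})$, so interior estimates give no convergence of jets. Whatever relation survives in the limit is just the boundary condition of $u'_\infty$ on a face of $D'_\infty$. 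That face is not an exterior hyperplane, so Theorem~\ref{thm:qualitative-hyperplane-exclusion} gives no contradiction.

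The degeneration is real, not technical. Take the sound-hard pair $K=[0,1]^2$, $K'=[0,1]\times[0,1-\mathfrak d]$, with $\mathbf x_0=(1/2,1)$. The extracted coefficients are determined by the lowest-order Taylor data of $\partial_{x_2}u'$ along the top edge of $K$ (cf. \eqref{eq:2d-sh-first}). But $\partial_{x_2}u'=0$ on the top edge of $K'$, and $u'$ extends by even reflection with bounds uniform in $\mathfrak d$. Hence $|C_A|=\mathcal O(\mathfrak d)$. At best you could aim for a quantitative bound $|C_A|\gtrsim\mathfrak d^{q}$, which would change the exponent; neither you nor the paper supplies it.

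Even for $\mathfrak d_j\ge\delta>0$, the limit only inherits vanishing up to the stage at which $C_{A,j}$ was extracted, not at every stage. So Proposition~\ref{prop:ATD-degeneracy} does not apply to the limit unless you reformulate with uniform control of the stage and of the stage- and $N$-dependent exponents $p,\kappa_0$ of Section~\ref{sec 2D}. And, as you note, $\Xi_2$ is not closed under such limits.
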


\begin{rem}\label{rem:open-general-impedance}
To the best of our knowledge, the single-measurement uniqueness problem remains open for general bounded-impedance obstacles in the class $\Xi_1$ of Definition~\ref{defn eta}, allowing variable coefficients and non-convex polygonal or polyhedral geometries.
This shows that, within the present framework, the only missing ingredient for the sharp stability estimate in the full bounded-impedance regime is the corresponding hyperplane exclusion mechanism.

Indeed, Theorem~\ref{th relation} already provides the far-field--geometry relation in the full generalized impedance setting.
Accordingly, once the relevant uniqueness mechanism is available so as to exclude the corresponding exterior hyperplane and guarantee the non-vanishing of the leading ATD coefficient, the same relation yields the sharp stability estimate.
In this sense, the present work clarifies the relation between uniqueness and stability in the generalized impedance setting through the qualitative--quantitative principle.

For the sound-soft and sound-hard regimes, related stability results were previously established in \cite{rondi2008stable,liu2017stable} by different methods based on the reflection principle and path arguments.
Here these regimes are recovered within the same hyperplane-exclusion and ATD framework, while the stability results for cases (iii)--(v) are new.
\end{rem}

Theorem~\ref{th relation} is proved later by combining the two-dimensional ATD analysis in Section~\ref{sec 2D}, the three-dimensional ATD analysis in Section~\ref{sec proof}, and the supplementary treatment of the sound-soft and sound-hard cases in Appendix~\ref{app:classical-relation}.
Theorem~\ref{cor stability} is then obtained by combining Theorem~\ref{th relation}, Theorem~\ref{thm:qualitative-hyperplane-exclusion}, and the non-vanishing mechanism established in Section~\ref{sec:nondeg}.
\section{The direct scattering problem}\label{sec pre}
We begin this section by introducing  notation and conventions used throughout the paper. 

\subsection{Notations}
The integer $n = 2, 3$ denotes the spatial dimension.
Throughout the paper, points in $\mathbb{R}^n$ are denoted by $\mathbf{x}=(x_1,\ldots,x_n)$, and similarly for other variables such as $\mathbf{y}$ and $\mathbf{z}$.
When $n=2$, this means $\mathbf{x}=(x_1,x_2)$, and when $n=3$, $\mathbf{x}=(x_1,x_2,x_3)$.

For $r>0$ and $\mathbf{z}\in\mathbb{R}^n$ we write $B_r(\mathbf{z})$ for the \textit{open ball} in $\mathbb{R}^n$ of radius $r$ centered at $\mathbf{z}$.
When $n=2$, the set $B_r(\mathbf{z})$ is the \textit{open disk}.
We also set $B_r := B_r(0)$.

Let $\Pi$ be a plane in $\mathbb{R}^3$ and, after a rigid change of coordinates, let $\Pi =\{ \mathbf{x} \in \mathbb{R}^3 : x_3 = 0 \}$.
For $\mathbf{x} \in \Pi$ and $ r > 0 $ we denote by
\[
B_r^+(\mathbf{x}) = B_r(\mathbf{x}) \cap \{ x_3 > 0 \},
\qquad
B_r^-(\mathbf{x}) = B_r(\mathbf{x}) \cap \{ x_3 < 0 \}
\]
the upper and lower half-balls, respectively.

Let $\Sigma \subset \mathbb{R}^3$ be a bounded \textit{polyhedral obstacle}.
Its boundary decomposes as $\partial\Sigma=\bigcup_{m=1}^p\Pi_m$ with $p \in \mathbb{N}$ and $p \geq 4$, where each $\Pi_m$ is a planar polygonal face.
The set of edges is $\mathcal{E}(\Sigma)=\{\ell_1, \ldots, \ell_q \}$, where each $ \ell_j $ is a line segment of the form $ \Pi_m \cap \Pi_{m'} $ for some $m \neq m'$.
Given two adjacent faces $\Pi_1$ and $\Pi_2$ with common edge $\ell$, we denote by $ \mathcal{D}(\Pi_1, \Pi_2; \ell) $ the dihedral wedge and by $ \alpha_{12} \in (0, 2\pi) $ its opening angle.
Let $\mathbf{x}_0 $ be a vertex of $\Sigma$ with incident faces $\{\Pi_j\}_{j=1}^r$ and $r \geq 3$.
We fix a cyclic ordering so that $\ell_j = \Pi_j \cap \Pi_{j+1}$ with the convention $\Pi_{r+1} = \Pi_1$.
We then set $\mathcal{D}_j:= \mathcal{D}(\Pi_j, \Pi_{j+1}; \ell_j)$ for $ j = 1, \ldots, r$ and denote this vertex together with its incident dihedral wedges by $\mathcal{V}(\{ \Pi_j \}_{j=1}^r, \mathbf{x}_0)$.

Let $K\subset\mathbb{R}^2$ be a \textit{polygonal obstacle} consisting of a single polygon.
Its boundary is $\partial K=\bigcup_{j=1}^q\ell_j$ with $q\ge3$, where each $\ell_j$ is a line segment.

Fix $\delta>0$ and $\theta\in(0,\pi/2]$.
For $\mathbf{x}\in\mathbb{R}^n$ and $\boldsymbol\omega\in\mathbb{S}^{n-1}$ with $\|\boldsymbol\omega\|=1$, define the \textit{open cone}
\[
\mathcal{C}(\mathbf{x},\boldsymbol\omega,\delta,\theta)
=\Bigl\{\mathbf{y}\in\mathbb{R}^n:0<\|\mathbf{y}-\mathbf{x}\|<\delta,\ \frac{\mathbf{y}-\mathbf{x}}{\|\mathbf{y}-\mathbf{x}\|}\cdot\boldsymbol\omega>\cos\theta\Bigr\}.
\]
It consists of points whose direction from $\mathbf{x}$ makes angle $<\theta$ with $\boldsymbol\omega$ and whose distance to $\mathbf{x}$ is $<\delta$.
\subsection{The classes of admissible obstacles}\label{subsec admissible}
We now introduce the admissible classes of obstacles used in this paper.
These consist of the three-dimensional class \(\mathcal A\) of admissible polyhedral obstacles and the two-dimensional class \(\mathcal B\) of admissible polygonal obstacles.
The corresponding a~priori assumptions are grouped below according to geometry, boundary regularity, and exterior accessibility.

\begin{asm}\label{asm 1}
Fix $r_0>0$, $R_0>0$, and $\theta_0\in(0,\pi/2)$.
Let $D\subset\mathbb{R}^n$ with $n=2,3$, polygonal when $n=2$ and polyhedral when $n=3$.
\begin{enumerate}
  \item Edge size.
  Every edge $\ell_j$ of $\partial D$ satisfies $|\ell_j|\ge r_0$.
  \item Two-dimensional vertex angles.
  If $n=2$, every interior angle $\beta$ of $\partial D$ satisfies $\beta\in(\theta_0,\pi-\theta_0)\cup(\pi+\theta_0,2\pi-\theta_0)$.
  \item Three-dimensional face angles.
  If $n=3$ and $\Pi_i$ is a face of $\partial D$, every planar interior angle $\beta$ of $\Pi_i$ at its vertices satisfies $\beta\in(\theta_0,\pi-\theta_0)\cup(\pi+\theta_0,2\pi-\theta_0)$.
  \item Three-dimensional dihedral angles.
  If $n=3$, every dihedral angle $\alpha_{\ell_j}$ along each edge $\ell_j$ satisfies $\alpha_{\ell_j}\in(\theta_0,\pi-\theta_0)\cup(\pi+\theta_0,2\pi-\theta_0)$.
  \item Three-dimensional face thickness.
  If $n=3$ and $\Pi_i$ is a face of $\partial D$, there exists $\mathbf{x}_i\in\Pi_i$ with $\operatorname{dist}(\mathbf{x}_i,\partial\Pi_i)\ge r_0$.
  \item Boundedness.
  $D\subset B_{R_0}$.
\end{enumerate}
\end{asm}

\begin{asm}\label{asm 2}
Fix $L_0>0$ and $r_0>0$.
We say that $\partial D$ is \textit{Lipschitz} with constants $(L_0,r_0)$ if for every $\mathbf{x}\in\partial D$ there exists a rigid motion $\mathcal R$ of $\mathbb R^n$ with $\mathcal R(\mathbf{x})=0$ and a function $\varphi$ defined on $\{y\in\mathbb R^{n-1}: |y|<r_0\}$ such that
\[
|\varphi(y_1)-\varphi(y_2)|\leq L_0 |y_1-y_2|
\qquad\text{for all } y_1,y_2 \text{ with } |y_1|,|y_2|<r_0,
\]
and, writing $\mathcal R(\mathbf{z})=(y,t)$ with $y\in\mathbb R^{n-1}$ and $t\in\mathbb R$,
\[
\mathcal R(\partial D)\cap B_{r_0}=\{(y,\varphi(y)):\ |y|<r_0\},
\qquad
\mathcal R(D)\cap B_{r_0}=\{(y,t)\in B_{r_0}:\ t<\varphi(y)\}.
\]
\end{asm}

\begin{asm}\label{asm 3}
Fix $r_0>0$ and $\theta_0\in(0,\pi/2)$.
If $n=3$, for every $\mathbf{x}\in\partial D$ there exists $\boldsymbol\omega_{\mathbf{x}}\in\mathbb S^2$ such that for all $\mathbf{y}\in\partial D\cap B_{r_0}(\mathbf{x})$ one has
\[
\mathcal{C}(\mathbf{y},\boldsymbol\omega_{\mathbf{x}},r_0,\theta_0)\subset\mathbb R^3\setminus D.
\]
If $n=2$, for every $\mathbf{x}\in\partial D$ there exists $\boldsymbol\omega_{\mathbf{x}}\in\mathbb S^1$ such that for all $\mathbf{y}\in\partial D\cap B_{r_0}(\mathbf{x})$ one has
\[
\mathcal{C}(\mathbf{y},\boldsymbol\omega_{\mathbf{x}},r_0,\theta_0)\subset\mathbb R^2\setminus D.
\]
\end{asm}

With these assumptions in hand, we now define the admissible classes \(\mathcal A\) and \(\mathcal B\).

\begin{defn}\label{defn admissible class}
The admissible classes \(\mathcal A\) and \(\mathcal B\) are defined as follows.
\begin{itemize}
\item A polyhedral obstacle \(\Sigma\subset\mathbb R^3\), consisting of finitely many pairwise disjoint solid polyhedral components, belongs to the class \(\mathcal A\) with a~priori parameters \((R_0,r_0,L_0,\theta_0)\) if and only if Assumptions~\ref{asm 1}, \ref{asm 2}, and \ref{asm 3} hold.
\item A polygonal obstacle \(K\subset\mathbb R^2\), consisting of finitely many pairwise disjoint solid polygonal components, belongs to the class \(\mathcal B\) with a~priori parameters \((R_0,r_0,L_0,\theta_0)\) if and only if Assumptions~\ref{asm 2} and \ref{asm 3} hold, and Assumption~\ref{asm 1} holds in its two-dimensional form, namely items~(1), (2), and (6).
\end{itemize}
An obstacle is called \textit{admissible} if it belongs to \(\mathcal A\) or \(\mathcal B\).
\end{defn}

\subsection{Mathematical formulation}\label{subsec math}

This subsection collects several auxiliary ingredients used throughout the paper.
The first part concerns the well-posedness of the exterior impedance problem and a uniform $L^2$ bound.
The second part gives a local decomposition for analytic solutions near a point.
For completeness, we state the relevant results here, while the corresponding definitions and proofs are deferred to Appendix~\ref{app:mosco}.

Throughout the paper, we set
\[
\Omega:=B_{R_0}(0),
\]
where $R_0$ is the a~priori bound in Assumption~\ref{asm 1}.
The following well-posedness result is classical for exterior impedance problems with $\Im\eta\ge 0$; see \cite{colton2019inverse, kirsch2008factorization}.

\begin{lem}\label{lem wellposs}
Let $D$ be an admissible obstacle in the sense of Definition~\ref{defn admissible class}.
Let $\eta$ be an impedance parameter as in Definition~\ref{defn eta} with $\Im\eta\ge 0$.
Then there exists a unique radiating solution $u\in H^1_{\mathrm{loc}}(\mathbb{R}^n\setminus D)$ to \eqref{eq main system} satisfying \eqref{eq SRC}.
In particular,
\[
u\in H^1(\Omega\setminus D).
\]
\end{lem}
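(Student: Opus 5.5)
The plan is to prove Lemma~\ref{lem wellposs} in the standard way: uniqueness from Rellich's lemma, existence from Fredholm theory, and the bound $u\in H^1(\Omega\setminus D)$ read off from the variational formulation. Since $D$ is Lipschitz (Assumption~\ref{asm 2}), I work on the truncated domain $\Omega_R:=B_R\setminus\overline D$, with $R>R_0$ chosen so that $\overline{D}\subset B_R$. On $\partial B_R$ I use the exterior Dirichlet-to-Neumann map $T_R$. Writing $u=u^i+u^s$, the problem becomes: find $u^s\in H^1(\Omega_R)$ (or a suitable subspace) with
\[
\int_{\Omega_R}\bigl(\nabla u^s\cdot\nabla\bar v-k^2u^s\bar v\bigr)\,d\mathbf x+\int_{\partial D}\eta\,u^s\bar v\,ds-\langle T_Ru^s,v\rangle_{\partial B_R}=F(v),
\]
where $F$ collects the data $-(\partial_\nu u^i+\eta u^i)$ on $\partial D$. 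How the boundary condition enters depends on the regime in Definition~\ref{defn:gen-imp}:
\begin{itemize}
\item in the impedance case (1) it enters through the boundary integral above;
\item in the sound-hard case (2) we set $\eta=0$;
\item in the sound-soft case (3) the Dirichlet condition $u^s=-u^i$ is imposed on the relevant part $\Gamma_D$ of $\partial D$ by lifting and working in $\{v\in H^1(\Omega_R): v|_{\Gamma_D}=0\}$;
\item in the mixed case (4) I use the same constrained space, which is closed by Lipschitz trace theory.
\end{itemize}

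\textbf{Fredholm structure.} Split the sesquilinear form as $a=a_0+b$. The coercive part is $a_0(u,v)=\int\nabla u\cdot\nabla\bar v+\int u\bar v-\langle T_R^0u,v\rangle$, where $-T_R^0$ is the positive part of the DtN map, known to be coercive on $H^{1/2}(\partial B_R)$. The remaining part $b$ consists of the lower-order volume term $-(k^2+1)\int u\bar v$, the boundary term $\int_{\partial D}\eta u\bar v$, and the compact remainder $T_R-T_R^0$. The volume term is compact by Rellich's embedding $H^1\hookrightarrow L^2$. The boundary term is compact because the trace map $H^1(\Omega_R)\to L^2(\partial D)$ is compact on Lipschitz domains, via $H^1\to H^{1/2}(\partial D)\hookrightarrow\hookrightarrow L^2(\partial D)$, and $\eta\in L^\infty$. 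Hence the operator associated with $a$ is Fredholm of index zero, and existence follows from uniqueness.

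\textbf{Uniqueness.} For zero data, take $v=u^s$ and look at the imaginary part. We get $\Im\langle T_Ru^s,u^s\rangle=\Im\int_{\partial D}\eta|u^s|^2\ge0$, since $\Im\eta\ge0$, and this gives $\Im\int_{\partial B_R}\bar u^s\partial_r u^s\le0$. (On $\Gamma_D$ the trace vanishes, so it contributes nothing.) Rellich's lemma then forces $u^s=0$ outside $B_R$. Unique continuation, together with connectedness of $\mathbf G$, extends $u^s=0$ to all of $\Omega_R$.

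\textbf{Extension and regularity.} Extend the solution by the series representation outside $B_R$. The extended field satisfies \eqref{eq SRC} and lies in $H^1_{\mathrm{loc}}$. Since $\Omega\setminus D\subset\Omega_R$, we get $u=u^i+u^s\in H^1(\Omega\setminus D)$.

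The main obstacle I expect is the precise handling of the mixed and sound-soft parts on a merely Lipschitz partition. Specifically, one must check that the lifting of $-u^i|_{\Gamma_D}$ exists in $H^1$ and that the constrained space is well defined. I would settle this by using that $u^i$ is smooth, so it can itself serve as the lifting, multiplied by a cutoff. The remaining steps are classical, as in \cite{colton2019inverse, kirsch2008factorization}.
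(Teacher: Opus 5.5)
Your proposal is the standard variational proof: DtN truncation on $\Omega_R$, a coercive-plus-compact splitting that uses the compact trace $H^1(\Omega_R)\to L^2(\partial D)$, the Fredholm alternative, and Rellich's lemma with unique continuation. This is exactly the classical result that the paper does not prove but cites from \cite{colton2019inverse,kirsch2008factorization}; your variational form is the one that applies directly to Lipschitz polytopes and Dirichlet/impedance partitions, since the integral-equation treatment in \cite{colton2019inverse} assumes $C^2$ boundaries.

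One sign must be corrected. With $\nu$ the exterior normal of $D$ (pointing into $\Omega_R$), Green's formula gives the impedance term as $-\int_{\partial D}\eta\,u^s\bar v\,ds$, not $+\int_{\partial D}\eta\,u^s\bar v\,ds$. Then $\Im a(u^s,u^s)=0$ yields $\Im\int_{\partial B_R}\bar u^s\,\partial_r u^s\,ds=-\Im\int_{\partial D}\eta|u^s|^2\,ds\le 0$, which is the hypothesis Rellich's lemma needs. With the $+$ sign you wrote, you get $\Im\langle T_Ru^s,u^s\rangle\ge 0$ instead, which is the opposite inequality and does not give uniqueness.
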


\begin{lem}\cite{ramm1996existence}\label{lem RCP}
Let $D$ be an admissible obstacle.
Then the following compactness properties hold.
\begin{itemize}
\item
The embedding $H^1(\Omega\setminus D)\hookrightarrow L^2(\Omega\setminus D)$ is compact.
\item
The trace operator $\sigma:H^1(\Omega\setminus D)\to H^{1/2}(\partial D)$ is bounded, and the embedding $H^{1/2}(\partial D)\hookrightarrow L^2(\partial D)$ is compact.
\end{itemize}
\end{lem}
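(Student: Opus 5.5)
The plan is to reduce all three assertions to the classical Rellich--Kondrachov and trace theorems for bounded Lipschitz domains. Everything is localised using the uniform Lipschitz charts of Assumption~\ref{asm 2}.

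\emph{Geometric reduction.} Since $D\subset B_{R_0}$ and $\overline D$ is compact, I may assume $\operatorname{dist}(\overline D,\partial\Omega)>0$. If $\overline D$ touched $\partial\Omega$, I would replace $R_0$ by a slightly larger radius. This is harmless, because restricting from $\Omega\setminus D$ to a smaller set commutes with all the maps involved. Then $\Omega\setminus\overline D$ is a bounded open set whose boundary is $\partial D\cup\partial\Omega$, a disjoint union of two pieces:
\begin{itemize}
\item $\partial D$ is covered by the Lipschitz graph charts of radius $r_0$ and constant $L_0$ given in Assumption~\ref{asm 2};
\item $\partial\Omega$ is a smooth sphere.
\end{itemize}
Hence $\Omega\setminus\overline D$ is a bounded Lipschitz domain. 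By compactness of $\partial D$, finitely many charts suffice, and I fix a smooth partition of unity subordinate to these charts together with an interior patch.

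\emph{First item.} I would invoke the Stein (or Calder\'on) extension theorem for Lipschitz domains. It gives a bounded operator $E:H^1(\Omega\setminus D)\to H^1_0(B_{2R_0})$, after multiplying by a cutoff. For a bounded sequence in $H^1(\Omega\setminus D)$, the extensions form a bounded sequence in $H^1_0(B_{2R_0})$. The Rellich theorem on the ball yields an $L^2$-convergent subsequence, and restricting it back gives the compact embedding. Alternatively, one can argue chart by chart: flatten with the bi-Lipschitz map $(y,t)\mapsto(y,t-\varphi(y))$, reflect evenly across $t=0$, and apply Rellich.

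\emph{Second item.} For the trace, in each chart I would compose with the same bi-Lipschitz flattening map, which preserves $H^1$. I then use the half-space trace theorem $H^1(\mathbb R^n_+)\to H^{1/2}(\mathbb R^{n-1})$. I define $H^{1/2}(\partial D)$ through the Gagliardo seminorm with respect to surface measure. The key check is that under $y\mapsto(y,\varphi(y))$ this seminorm is equivalent to the flat one: the map is bi-Lipschitz with constants depending on $L_0$, and the surface measure density $\sqrt{1+|\nabla\varphi|^2}$ lies in $[1,\sqrt{1+L_0^2}]$. Summing over the partition of unity gives boundedness of $\sigma$.

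For compactness of $H^{1/2}(\partial D)\hookrightarrow L^2(\partial D)$, I would localise again. A bounded sequence in $H^{1/2}(\partial D)$ becomes, after multiplying by the cutoffs and pulling back, a bounded sequence in $H^{1/2}(\mathbb R^{n-1})$ supported in a fixed ball. I then apply the Kolmogorov--Riesz criterion: the Gagliardo seminorm gives
\[
\|f(\cdot+h)-f\|_{L^2}\lesssim|h|^{1/2}[f]_{H^{1/2}},
\]
which is uniform $L^2$-equicontinuity under translations, and the fixed support gives tightness. This yields precompactness in $L^2$, and pushing forward and summing gives the claim.

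\emph{Main obstacle.} The main difficulty is that $\partial D$ is only Lipschitz, with corners and edges, so smooth flattening is unavailable. Every step must use only bi-Lipschitz changes of variables. I expect the delicate point to be the invariance of the $H^{1/2}$ Gagliardo seminorm under the non-smooth chart maps. I would prove this directly from the two-sided bound $|y-y'|\le|\Phi(y)-\Phi(y')|\le\sqrt{1+L_0^2}\,|y-y'|$ for the graph map $\Phi(y)=(y,\varphi(y))$, combined with the bounds on the Jacobian of the surface measure.
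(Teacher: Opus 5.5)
The paper gives no argument for this lemma; it only cites the reference. Your proof is the standard self-contained one, and it is correct when $\operatorname{dist}(\overline D,\partial\Omega)>0$:
\begin{itemize}
\item Stein extension plus Rellich on a ball gives the first item.
\item Bi-Lipschitz flattening in the charts of Assumption~\ref{asm 2}, plus the half-space trace theorem, gives boundedness of $\sigma$. The two-sided bound for $\Phi(y)=(y,\varphi(y))$ and the density bound $1\le\sqrt{1+|\nabla\varphi|^2}\le\sqrt{1+L_0^2}$ are exactly what is needed.
\item Localisation plus Kolmogorov--Riesz gives the compact embedding. Your translation estimate follows on the Fourier side from $|e^{\mathrm i\xi\cdot h}-1|^2\le 2|\xi||h|$.
\end{itemize}
When you localise the global Gagliardo double integral on $\partial D\times\partial D$, you also need the routine off-diagonal bound: pairs of points at distance bounded below contribute only a multiple of the $L^2$ norm. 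Compared with the bare citation, your route has one real advantage. Every constant visibly depends only on $(R_0,r_0,L_0)$ and a lower bound for $\operatorname{dist}(\overline D,\partial\Omega)$. That is the uniformity over the admissible class that the compactness and Mosco arguments of Appendix~\ref{app:mosco} implicitly rely on.

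The one step that is not valid as justified is your reduction to $\operatorname{dist}(\overline D,\partial\Omega)>0$. Compactness of an embedding and boundedness of a trace do not pass from $B_{R_0+\delta}\setminus D$ down to $\Omega\setminus D$ by restriction. A bounded sequence in $H^1(\Omega\setminus D)$ need not be the restriction of a bounded sequence on the larger set, and producing such an extension is exactly the regularity issue at a contact point.

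The degenerate case is in fact genuinely bad. Assumption~\ref{asm 1} only requires the open set $D$ to lie in the open ball $B_{R_0}$, so in two dimensions a vertex $v$ of $D$ may lie on $\partial\Omega$. Near $v$, the set $\Omega\setminus\overline D$ then consists of two curvilinear wedges, one adjacent to each edge at $v$, which meet only at $v\notin\Omega$. Let $\chi$ be a cutoff equal to $1$ near $v$ and supported in a small ball around $v$. Set $u=\chi$ on one wedge and $u=0$ elsewhere. Then $u\in H^1(\Omega\setminus D)$, but its trace jumps from $1$ to $0$ at $v$. For points at arclength $s,t$ from $v$ on the two edges one has $|x-x'|\le s+t$, and $\int_0^\epsilon\!\int_0^\epsilon (s+t)^{-2}\,\mathrm ds\,\mathrm dt=\infty$. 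So this trace is not in $H^{1/2}(\partial D)$.

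Hence $\overline D\subset\Omega$ must be imposed as a hypothesis, which is evidently what the paper intends; it cannot be obtained by enlarging $R_0$. With that hypothesis your argument is complete.
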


We next recall a compactness statement for the admissible classes and then state a convergence result for solutions under Mosco and Hausdorff convergence of the domains.

\begin{lem}\cite{liu2017stable}\label{lem compact}
The admissible classes $\mathcal{A}$ and $\mathcal{B}$ are compact with respect to convergence in the Hausdorff distance.
\end{lem}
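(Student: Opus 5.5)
The plan is to prove Lemma~\ref{lem compact} by a Blaschke-selection argument combined with the stability of each a~priori condition under Hausdorff limits. Let $(D_j)\subset\mathcal A$ (or $\mathcal B$) with fixed parameters $(R_0,r_0,L_0,\theta_0)$. Since every $D_j\subset B_{R_0}$, the family of compact sets $\overline{D_j}$ and $\partial D_j$ lies in a compact region. By the Blaschke selection theorem we can therefore extract a subsequence with $\overline{D_j}\to K$ and $\partial D_j\to \Gamma$ in the Hausdorff distance. We then set $D:=\operatorname{int}K$ and show that $D$ belongs to the same class and that $\mathrm d_H(D_j,D)\to0$.

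The first step is to control the combinatorics uniformly. The edge-length bound $|\ell_j|\ge r_0$, the face-thickness condition, the angle bounds, and the bound $D_j\subset B_{R_0}$ together bound the number of vertices, edges, and faces, and also the number of components. Indeed, each face has area bounded below by roughly $r_0^2$ and the total boundary measure is bounded via the Lipschitz graph structure of Assumption~\ref{asm 2} on $B_{R_0}$. Passing to a further subsequence, the combinatorial type is fixed and the vertex coordinates converge. The limit is then a polygon or polyhedron whose vertices are the limit vertices. The angle conditions in Assumption~\ref{asm 1} use open intervals, so in the limit they hold in closed form. I would handle this by noting that the class should be read with closed bounds, or by shrinking $\theta_0$ slightly; this bookkeeping point should be flagged. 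The edge-length and thickness bounds $\ge r_0$ pass to the limit directly.

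The second step is to show that the Lipschitz and cone conditions, Assumptions~\ref{asm 2}--\ref{asm 3}, survive. These are uniform conditions with fixed constants. For each $\mathbf x\in\partial D$, take $\mathbf x_j\in\partial D_j$ with $\mathbf x_j\to\mathbf x$, together with the associated rigid motions $\mathcal R_j$, graphs $\varphi_j$, and directions $\boldsymbol\omega_{\mathbf x_j}$. By compactness of rigid motions and by Arzel\`a--Ascoli, which applies because the $\varphi_j$ are equi-Lipschitz with constant $L_0$, we pass to limits $\mathcal R$, $\varphi$, and $\boldsymbol\omega$. Hausdorff convergence of $\partial D_j$ and $\overline{D_j}$ then shows that the limit boundary is the graph of $\varphi$ locally and that the limit domain lies below it, which gives Assumption~\ref{asm 2}. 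It also shows that the open cones $\mathcal C(\mathbf y,\boldsymbol\omega,r_0,\theta_0)$ avoid $D$, since they are limits of cones avoiding $D_j$; this uses openness of cones and closedness of the limit of complements. The graph representation further shows that $\Gamma=\partial D$ and $\overline D=K$, so no degeneration such as collapsing components or thin slits occurs.

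The main obstacle is exactly this non-degeneration step. One must verify that distinct components stay a positive distance apart, which follows from the Lipschitz condition at scale $r_0$ together with the cone condition. One must also verify that the exterior remains connected and that each component remains simply connected. For connectedness of the exterior I would use the exterior cone condition to build paths to infinity that are stable under the convergence. Finally, the convergence $\mathrm d_H(\partial D_j,\partial D)\to0$ together with these facts gives convergence of the obstacles themselves, which proves compactness.
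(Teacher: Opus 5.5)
The paper gives no proof of its own here and simply defers to \cite{liu2017stable}. Your outline is the standard route to this kind of compactness result, and its overall structure is sound: uniform control of the combinatorics, a subsequence with fixed combinatorial type and convergent vertices, and closedness of each a~priori condition under Hausdorff limits (Arzel\`a--Ascoli for the Lipschitz graphs, openness for the cones).

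Your flag about the angle conditions is justified and is more than bookkeeping. With the open intervals of Assumption~\ref{asm 1}, the class is genuinely not closed. For suitable parameters (e.g.\ small $\theta_0$ and $r_0$), one can let one angle of a convex quadrilateral increase to $\pi-\theta_0$ while every other constraint holds uniformly. So the lemma is true only with the closed bounds $[\theta_0,\pi-\theta_0]\cup[\pi+\theta_0,2\pi-\theta_0]$. Shrinking $\theta_0$ does not prove the statement. It only places the limit in a larger class, which is all that the contradiction arguments in Proposition~\ref{prop Mosco} and Lemma~\ref{lem uniform bound} actually use.

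Two steps are not justified as written.

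First, the edge count in three dimensions. Your area argument bounds only the number of faces: each face contains a disk of radius $r_0$, and Assumption~\ref{asm 2} bounds the total boundary area in $B_{R_0}$. Face thickness does not prevent a face from having long thin parts, so a face's number of edges is not controlled by area. The missing observation is that two faces sharing an edge are not coplanar, since dihedral angles avoid $\pi$. Hence all edges common to a given pair of faces lie on the line where their planes meet. As non-overlapping segments of length at least $r_0$ inside $B_{R_0}$, there are at most $2R_0/r_0$ of them per pair. The number of vertices is then at most twice the number of edges.

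Second, the topological conditions. If connectedness of the exterior (and simple connectivity of components) is part of the class, the exterior cone condition cannot deliver it. That condition is purely local and cannot distinguish a bounded cavity from the unbounded exterior. Use the fixed combinatorial type instead:
\begin{itemize}
\item Suppose $U$ were a bounded component of $\mathbb R^n\setminus\overline D$. Then $\partial U$ is a union of limit faces (edges when $n=2$).
\item The faces of $\partial D_j$ with the same labels form a closed polyhedral mod-$2$ cycle.
\item For fixed $z\in U$, the parity of intersections of a generic ray from $z$ with these faces equals $1$ in the limit and stays $1$ for large $j$.
\item Hence $z\notin\overline{D_j}$ lies in a bounded component of $\mathbb R^n\setminus\overline{D_j}$, a contradiction.
\end{itemize}
Simple connectivity is handled in the same spirit, since for large $j$ the boundary $\partial D_j$ is a small piecewise-linear perturbation of $\partial D$ with the same combinatorics.
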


\begin{prop}\label{prop Mosco}
Let $\{D_n\}$ and $D$ be admissible obstacles in the sense of Definition~\ref{defn admissible class}.
Let $\{\eta_n\}\subset\Xi_1$ and $\eta\in\Xi_1$ with $\eta_n\to\eta$ in the sense of Definition~\ref{def eta moving}.
Then the following assertions hold.
\begin{enumerate}
\item
If $H^1(\Omega\setminus D_n)$ converges to $H^1(\Omega\setminus D)$ in the sense of Mosco, then the corresponding solutions $u_n$ to \eqref{eq main system} converge to $u$ in $L^2(\Omega)$ and $\nabla u_n\to\nabla u$ in $L^2(\Omega;\mathbb R^n)$, with the usual extensions by zero.
\item
If $D_n\to D$ in the sense of Hausdorff distance in $\overline{\Omega}$, then $H^1(\Omega\setminus D_n)$ converges to $H^1(\Omega\setminus D)$ in the sense of Mosco.
\end{enumerate}
\end{prop}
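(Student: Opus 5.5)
We prove the two assertions separately. Both follow the standard Mosco-convergence strategy for exterior problems (cf. \cite{liu2019mosco,menegatti2013stability}), adapted to the impedance term and to a perturbation of $\eta$ together with the domain.

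\emph{Assertion (2).}
We verify the two Mosco conditions for the closed subspaces $H^1(\Omega\setminus D_n)$ of $L^2(\Omega)\times L^2(\Omega;\mathbb R^n)$, obtained by identifying $v$ with $(v,\nabla v)$ extended by zero.

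For the weak-limit condition, let $v_{n_j}\in H^1(\Omega\setminus D_{n_j})$ with $(v_{n_j},\nabla v_{n_j})\rightharpoonup(v,\mathbf w)$. Any compact $K\subset\Omega\setminus\overline D$ lies in $\Omega\setminus\overline{D_n}$ for large $n$ by Hausdorff convergence. On such $K$ we get $\mathbf w=\nabla v$. Since $|D_n\triangle D|\to0$, which follows from the uniform Lipschitz graph description in Assumption~\ref{asm 2}, both $v$ and $\mathbf w$ vanish a.e. in $D$. Finally, $\partial D$ has measure zero, so $v\in H^1(\Omega\setminus D)$.

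For the strong-approximation condition, take $v\in H^1(\Omega\setminus D)$. The uniform Lipschitz constants $(L_0,r_0)$ give extension operators $E_n$ with norms bounded independently of $n$, and the same holds for $D$. Approximate $v$ by $E v$ restricted to $\Omega\setminus D_n$; the convergence of these restrictions to $v$ in $L^2\times L^2$ follows from $|D_n\triangle D|\to0$ and dominated convergence. The compactness of the class (Lemma~\ref{lem compact}) guarantees that the limit $D$ is admissible, so the extension operator for $D$ is available.

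\emph{Assertion (1).}
The first step is a uniform bound $\|u_n\|_{H^1(\Omega\setminus D_n)}\le C$. I would argue by contradiction. Suppose the bound fails and normalize $w_n=u_n^s/\|u_n^s\|$. The variational formulation on $\Omega\setminus D_n$ carries the DtN map on $\partial\Omega$ and the term $\int_{\partial D_n}\eta_n u_n\bar v$.

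Pass to a subsequence using Mosco convergence, the compact embeddings of Lemma~\ref{lem RCP} (with constants uniform in the class), and the convergence of $\eta_n$ in the sense of Definition~\ref{def eta moving}. The limit $w$ then solves the homogeneous problem for $(D,\eta)$, so $w=0$ by Lemma~\ref{lem wellposs}, since $\Im\eta\ge0$.

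To obtain the contradiction we must rule out loss of norm, i.e.\ show the convergence is strong. Gårding's inequality for the sesquilinear form shows that weak convergence plus convergence of the lower-order terms forces strong convergence, which contradicts $\|w_n\|=1$.

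\emph{Passage to the limit.}
With the uniform bound in hand, extract a weak limit $(\tilde u,\tilde{\mathbf w})$. The Mosco weak condition places $\tilde u$ in $H^1(\Omega\setminus D)$ with $\tilde{\mathbf w}=\nabla\tilde u$. For any test function $v$, use Mosco strong approximants $v_n\to v$ in the variational identity and let $n\to\infty$.

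The delicate term is the boundary integral $\int_{\partial D_n}\eta_n u_n\bar v_n$, because it lives on moving boundaries. I would rewrite it through the local graph charts of Assumption~\ref{asm 2} as an integral over fixed parameter domains. The traces converge strongly there by uniform trace compactness, and $\eta_n\to\eta$ in the transported sense is exactly what Definition~\ref{def eta moving} should provide. It follows that $\tilde u$ solves \eqref{eq main system} for $(D,\eta)$, hence $\tilde u=u$ by uniqueness, and the whole sequence converges.

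To upgrade to strong convergence of gradients, test the equations with $u_n$ themselves: the energies $\int|\nabla u_n|^2$ converge to $\int|\nabla u|^2$, and norm convergence plus weak convergence gives strong convergence.

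\emph{Main obstacle.}
The hardest step is the uniform $H^1$ bound, together with controlling the moving-boundary impedance integrals uniformly in $n$.
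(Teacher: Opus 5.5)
Your proposal is correct and follows essentially the same route as the paper. The paper also works with the sesquilinear forms $a_n$, a uniform G\aa rding inequality and Mosco convergence, defers the routine parts (including assertion (2)) to \cite{diao2024stable}, and singles out the moving-boundary impedance term as the only new ingredient; it transports that term with the maps $\Phi_n$ of Definition~\ref{def eta moving} (using $J_{\Phi_n}\to1$ and $\widetilde\eta_n\rightharpoonup^\ast\eta$) where you use graph charts, and your write-up supplies more detail than the paper's sketch (explicit verification of the Mosco conditions, a uniform $H^1$ bound, and the energy argument for strong convergence of the gradients).
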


As a consequence of the above compactness properties and standard energy estimates, we also have the following uniform bound.

\begin{lem}\label{lem uniform bound}
Let $D\in\mathcal A\cup\mathcal B$ be an admissible obstacle, and let $u\in H^1(\Omega\setminus D)$ be the solution to \eqref{eq main system}.
Then there exists a constant $\mathcal E>0$, depending only on the a~priori parameters and on $k$, such that
\begin{equation}\label{eq uniform bounded}
\|u\|_{L^2(\Omega\setminus D)}\leq \mathcal E.
\end{equation}
\end{lem}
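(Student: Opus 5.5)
We argue by contradiction, combining the compactness of the admissible class (Lemma~\ref{lem compact}), the Mosco convergence of Proposition~\ref{prop Mosco}, and the uniqueness of the exterior problem (Lemma~\ref{lem wellposs}).

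First we set up the energy identity. For fixed admissible $(D,\eta)$ with $\Im\eta\ge 0$, the solution $u=u^i+u^s$ satisfies an $L^2$ bound on $\Omega\setminus D$ from the variational formulation on $\Omega\setminus D$. We impose the Dirichlet-to-Neumann map $T_k$ on $\partial\Omega$ to encode the radiation condition. The sesquilinear form is
\[
a(u,v)=\int_{\Omega\setminus D}\bigl(\nabla u\cdot\nabla\bar v-k^2u\bar v\bigr)+\int_{\partial D}\eta u\bar v-\int_{\partial\Omega}T_k u\,\bar v .
\]
It satisfies a Gårding inequality, since $-\Re\int_{\partial\Omega}T_k w\bar w\ge 0$ up to a compact term, and the trace term is compact by Lemma~\ref{lem RCP}. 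The right-hand side involves only $u^i$ on $\partial\Omega$. Well-posedness for each fixed configuration follows from Fredholm theory plus uniqueness, and the latter comes from $\Im\eta\ge0$ and Rellich's lemma. The task is therefore only to make the bound uniform over the class.

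Next, the contradiction argument. Suppose there are admissible $D_j$ with parameters $\eta_j$ such that $\|u_j\|_{L^2(\Omega\setminus D_j)}\to\infty$, or more conveniently that $\|u_j\|_{H^1(\Omega\setminus D_j)}\to\infty$. We normalize $w_j=u_j/\|u_j\|_{H^1}$. By Lemma~\ref{lem compact} we pass to a subsequence with $D_j\to D$ in the Hausdorff distance, $D$ admissible. For the parameters, we use that $\|\eta_j\|_{L^\infty}\le M_0$ and extract a limit $\eta$ in the sense of Definition~\ref{def eta moving}, with $\Im\eta\ge0$ preserved. By Proposition~\ref{prop Mosco}(2), $H^1(\Omega\setminus D_j)$ Mosco-converges to $H^1(\Omega\setminus D)$. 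Extending by zero, the functions $w_j$ and $\nabla w_j$ are bounded in $L^2(\Omega)$. Mosco convergence (weak-limit condition) then gives a weak limit $w\in H^1(\Omega\setminus D)$. Testing the variational equations for $w_j$ against recovery sequences $v_j\to v$ (strong-limit condition) and passing to the limit shows that $w$ solves the homogeneous problem, because the data term $u^i/\|u_j\|\to0$. Here the trace terms $\int_{\partial D_j}\eta_j w_j\bar v_j$ must converge; this is exactly the content of the convergence notion for $\eta_j\to\eta$ together with the compact trace embedding. Uniqueness then forces $w=0$.

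Finally, the contradiction is obtained by a Gårding-type argument. We have
\[
1=\|w_j\|_{H^1}^2\le C\bigl(\Re a_j(w_j,w_j)+\|w_j\|_{L^2}^2+\text{compact boundary terms}\bigr),
\]
where $a_j(w_j,w_j)\to0$. The lower-order terms tend to zero by strong $L^2$ convergence, which is given by Proposition~\ref{prop Mosco}(1) applied in the compact form. This yields $1\le o(1)$, a contradiction, and hence a uniform $H^1$ bound, and a fortiori the $L^2$ bound $\mathcal E$, depending only on the a priori parameters, $M_0$ and $k$. For the sound-soft and sound-hard regimes the same argument applies with the boundary term dropped and with $H^1_0$-type Mosco convergence.

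The main obstacle is the strong convergence of $w_j$ and of its boundary traces on varying domains $\partial D_j$. Lemma~\ref{lem RCP} provides compactness only for a fixed domain, so we need uniform versions: uniform extension operators, or the uniform Lipschitz and cone conditions of Assumptions~\ref{asm 2}--\ref{asm 3}. These give equi-continuous traces, and that is what lets the terms $\int_{\partial D_j}\eta_j|w_j|^2$ be controlled along the sequence. We would handle this with local Lipschitz charts, which are uniform by Assumption~\ref{asm 2}.
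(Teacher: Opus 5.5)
Your proposal is correct and follows essentially the same route as the paper's proof: a contradiction argument combining compactness of the admissible class, Mosco convergence (with impedances compared via pullback as in Definition~\ref{def eta moving}), and uniqueness of the homogeneous exterior problem. The only difference is organizational---you normalize in $H^1$ and close with a uniform G\aa rding inequality, whereas the paper normalizes in $L^2$, uses G\aa rding only to obtain $H^1$ bounds, and gets the contradiction directly from strong $L^2$ convergence of the normalized fields to $0$.
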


The proofs of Proposition~\ref{prop Mosco} and Lemma~\ref{lem uniform bound}, together with the definitions of Mosco convergence used here, are given in Appendix~\ref{app:mosco}.

\medskip
In the second part, we begin with the classical unique continuation principle for second-order elliptic equations; see \cite{jerison1985unique}.
This principle is used to define the vanishing order and to derive a local decomposition in both two and three dimensions.

\begin{lem}\label{lem vanish}
If $u\in H^1(\Omega\setminus D)$ solves \eqref{eq main system} and, for some $\mathbf{x}_0\in\Omega\setminus D$, satisfies
\[
\lim_{r\to 0} r^{-N}\,\frac{1}{|B_r(\mathbf{x}_0)|}\int_{B_r(\mathbf{x}_0)} |u|\,\mathrm d \mathbf{x}=0
\]
for every $N\in\{0\}\cup\mathbb N$, then $u\equiv 0$ in $\Omega\setminus D$.
\end{lem}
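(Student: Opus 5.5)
The plan is to reduce the statement to the classical strong unique continuation property for solutions of the Helmholtz equation, which by Lemma~\ref{lem wellposs} and interior elliptic regularity applies to $u$ in the open set $\mathbf G\cap\Omega=\Omega\setminus\overline D$. Since $\Delta u+k^2u=0$ holds there in the distributional sense with $u\in H^1$, Weyl's lemma together with standard interior estimates shows that $u$ is real-analytic in $\Omega\setminus\overline D$; in particular $u$ is smooth near the interior point $\mathbf x_0$.

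\textbf{Step 1: infinite-order vanishing.} Choose $r_1>0$ with $B_{2r_1}(\mathbf x_0)\subset\Omega\setminus\overline D$, and expand $u$ in its Taylor series at $\mathbf x_0$, $u(\mathbf x)=\sum_\alpha a_\alpha(\mathbf x-\mathbf x_0)^\alpha$, convergent on $B_{r_1}(\mathbf x_0)$. Suppose some coefficient is nonzero, and let $m$ be the minimal degree with $P_m(\mathbf y):=\sum_{|\alpha|=m}a_\alpha \mathbf y^\alpha\not\equiv0$. Then
\[
\frac{1}{|B_r|}\int_{B_r(\mathbf x_0)}|u|\,\mathrm d\mathbf x = r^m\Bigl(\frac{1}{|B_1|}\int_{B_1}|P_m(\mathbf y)|\,\mathrm d\mathbf y+\mathcal O(r)\Bigr).
\]
The integral $\int_{B_1}|P_m|\,\mathrm d\mathbf y$ is strictly positive because $P_m$ is a nonzero polynomial. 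Taking $N=m$ in the hypothesis, the left-hand side divided by $r^m$ must tend to $0$, whereas the right-hand side divided by $r^m$ tends to this positive constant. This is a contradiction. Hence every Taylor coefficient vanishes, and $u\equiv0$ on $B_{r_1}(\mathbf x_0)$. Alternatively, one could cite the Jerison--Kenig strong unique continuation theorem \cite{jerison1985unique} directly, since the averaged-$L^1$ condition implies vanishing of infinite order in the $L^2$-averaged sense by local boundedness of $u$ from elliptic estimates. The analytic route above is more elementary.

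\textbf{Step 2: propagation.} The main point needing care is the connectedness of $\Omega\setminus\overline D$; note that $\Omega\setminus D$ agrees with it up to the boundary. Since $u$ is real-analytic on the open set $\mathbf G$ and vanishes on an open ball, the identity theorem for real-analytic functions gives $u\equiv0$ on the connected component of $\mathbf G$ containing $\mathbf x_0$. By the standing hypothesis, $\mathbf G=\mathbb R^n\setminus\overline D$ is connected, so $u\equiv 0$ in all of $\mathbf G$, and in particular in $\Omega\setminus D$. Working on $\mathbf G$ rather than on $\Omega\setminus\overline D$ sidesteps any question of whether the truncation by $B_{R_0}$ disconnects the domain, as it could if $D$ touches near $\partial\Omega$. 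If one insists on staying inside $\Omega$, note instead that $u$ extends analytically to a solution in all of $\mathbf G$ by Lemma~\ref{lem wellposs}, so the same conclusion follows.

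The only genuine obstacle is Step 1, namely converting the averaged $L^1$ smallness into vanishing of all Taylor coefficients. The leading-homogeneous-part argument handles this cleanly, and no boundary regularity of $D$ or information on $\eta$ is needed.
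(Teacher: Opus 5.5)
Your argument is correct, but it follows a different route from the paper. The paper gives no argument of its own and simply invokes the strong unique continuation theorem of Jerison--Kenig \cite{jerison1985unique}, which you mention only as an alternative.

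Your route uses the fact that the coefficients are constant, so $u$ is real-analytic in $\mathbf G$. Infinite-order vanishing of the averaged $L^1$ quantity is then equivalent to the vanishing of every homogeneous Taylor polynomial. Your scaling computation, which reduces this to $\int_{B_1}|P_m|>0$ for a nonzero polynomial, is exactly right. You then conclude by the identity theorem on the connected set $\mathbf G$.

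What your route buys is that it is elementary and self-contained. It also makes explicit the global propagation step that the paper's citation leaves implicit. Your observation that one should work on $\mathbf G$ rather than on $\Omega\setminus\overline D$ is correct: $\Omega\setminus\overline D$ can be disconnected when $\overline D$ touches $\partial\Omega$, even though $\mathbf G$ is connected.

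What the Jerison--Kenig theorem buys is that it does not rely on analyticity. It would survive rough, non-analytic lower-order terms, such as a variable potential. That generality is not needed here.

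One scope remark applies to both approaches. Read literally, $\Omega\setminus D$ contains $\partial D\cap\Omega$, but your Step 1 and the cited theorem are both interior results. So what is actually proved is the statement for $\mathbf x_0\in\Omega\setminus\overline D$. This costs nothing: the paper only ever applies the lemma at test points lying at positive distance from the obstacle whose field is being expanded.
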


For a nontrivial solution $u$, the vanishing order at $\mathbf{x}_0\in\Omega\setminus D$ is defined by
\begin{equation}\label{eq vanishing order}
\mathrm{Vani}(u;\mathbf{x}_0):=\max\left\{N\in\mathbb N\cup\{0\}:\ \lim_{r\to 0} r^{-N}\,\frac{1}{|B_r(\mathbf{x}_0)|}\int_{B_r(\mathbf{x}_0)} |u|\,\mathrm d \mathbf{x}=0\right\}.
\end{equation}
Equivalently, for every integer $m\leq \mathrm{Vani}(u;\mathbf{x}_0)$ one has
\[
\lim_{r\to 0} r^{-m}\,\frac{1}{|B_r(\mathbf{x}_0)|}\int_{B_r(\mathbf{x}_0)} |u|\,\mathrm d \mathbf{x}=0,
\]
whereas the same limit fails for $m=\mathrm{Vani}(u;\mathbf{x}_0)+1$.
By Lemma~\ref{lem vanish}, any nontrivial radiating solution to \eqref{eq main system} has finite vanishing order at each $\mathbf{x}\in\Omega\setminus D$.

Let $\mathbf{x}_0\in\Omega\setminus \overline D$.
Since $u$ solves $\Delta u+k^2 u=0$ in a neighborhood of $\mathbf{x}_0$, it is real-analytic there.
After a rigid motion, we may assume $\mathbf{x}_0=0$, set $N:=\mathrm{Vani}(u;0)$, and fix $\rho_0\in(0,R_0)$ such that $B_{\rho_0}\subset\Omega\setminus \overline D$.

\begin{prop}\label{prop decom}
If $u$ is analytic in $B_{\rho_0}$ and $\mathrm{Vani}(u;0)=N$, then there exist functions $u_N$ and $\delta u_{N+1}$ such that
\begin{equation}\label{eq decomposition}
u=u_N+\delta u_{N+1}\qquad\text{in }B_{\rho_0},
\end{equation}
and
\begin{equation}\label{eq decomposition estimation}
|u_N(\mathbf{x})|\leq C_N\,|\mathbf{x}|^{N},
\qquad
|\delta u_{N+1}(\mathbf{x})|\leq C_{N+1}\,|\mathbf{x}|^{N+1},
\end{equation}
where $C_N,C_{N+1}>0$ depend only on $k$, $N$, $\rho_0$, and $\mathcal E$.
\end{prop}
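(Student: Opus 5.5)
The plan is to use the Taylor expansion of $u$ at the origin and to control its coefficients by interior estimates that see only the uniform $L^2$ bound of Lemma~\ref{lem uniform bound}. Since $u$ solves $\Delta u+k^2u=0$ in $B_{\rho_0}$, it is real-analytic there. Write its Taylor series $u(\mathbf{x})=\sum_{m\ge0}P_m(\mathbf{x})$, where each $P_m$ is a homogeneous polynomial of degree $m$. Equivalently, expand in spherical harmonics with radial Bessel factors, $u=\sum_{\ell}\sum_{|j|}a_{\ell j}\,j_\ell(k r)Y_\ell^j(\hat{\mathbf{x}})$ in three dimensions, or $J_\ell(kr)e^{\mathrm{i}\ell\phi}$ in two dimensions.

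The first step is to relate $\mathrm{Vani}(u;0)=N$ to the expansion. Since $\frac{1}{|B_r|}\int_{B_r}|u|=o(r^m)$ for all $m\le N$, every homogeneous part $P_m$ with $m<N$ must vanish. To see this, take the lowest nonzero $P_{m_0}$. Rescaling gives $\frac{1}{|B_r|}\int_{B_r}|u|\sim c\,r^{m_0}$, where $c=\frac{1}{|B_1|}\int_{B_1}|P_{m_0}|>0$. Hence $m_0\ge N$; in fact $m_0=N+1$ if we read the definition literally, and either way $u=O(|\mathbf{x}|^{N})$. I would then define $u_N:=P_{N}$ (or the lowest nonvanishing homogeneous part), and let $\delta u_{N+1}:=u-u_N=\sum_{m\ge N+1}P_m$.

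The second step is the quantitative bound. Fix $\rho_1=\rho_0/2$. Interior elliptic estimates for the Helmholtz equation on $B_{\rho_0}$, which are Cauchy-type analytic estimates, give
\[
\sup_{B_{\rho_1}}|\partial^\alpha u|\le C\,\alpha!\,\rho_1^{-|\alpha|}A^{|\alpha|}\|u\|_{L^2(B_{\rho_0})}\le C\,\alpha!\,(A/\rho_1)^{|\alpha|}\mathcal E,
\]
with $A$ depending only on $k$ and $\rho_0$. Consequently $|P_m(\mathbf{x})|\le C\mathcal E\,(nA|\mathbf{x}|/\rho_1)^m$. This yields $|u_N(\mathbf{x})|\le C_N|\mathbf{x}|^N$ directly. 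Summing the geometric tail for $|\mathbf{x}|\le\rho_2:=\rho_1/(2nA)$ gives $|\delta u_{N+1}(\mathbf{x})|\le C_{N+1}|\mathbf{x}|^{N+1}$. On the remaining annulus $\rho_2\le|\mathbf{x}|<\rho_0$, the bound follows trivially from the $\sup$ bound, after enlarging the constant by $\rho_2^{-(N+1)}$. There is one caveat: a sup bound near $\partial B_{\rho_0}$ needs a slightly larger ball of analyticity. So I would either shrink $\rho_0$ at the outset, which is harmless since $\rho_0$ is fixed, or apply the estimate on $B_{\rho_0}$ with $u$ analytic on a neighbourhood of its closure, as the setting provides because $B_{\rho_0}\subset\Omega\setminus\overline D$.

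The main obstacle is making the constants depend only on $(k,N,\rho_0,\mathcal E)$, and not on $u$ itself. This is handled by the interior analytic estimates, whose constants depend only on $k$, $\rho_0$ and the dimension, applied with $\|u\|_{L^2}\le\mathcal E$ from Lemma~\ref{lem uniform bound}. The point is that no lower bound on $|P_N|$ is required for the stated upper estimates.
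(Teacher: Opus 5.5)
Your proof is correct, but it reaches the decomposition by a different route from the paper's.

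The paper expands $u$ in spherical waves $j_\ell(kr)Y_\ell^m$ (Fourier--Bessel modes $J_n(kr)e^{\pm\mathrm{i}n\theta}$ in two dimensions) and takes $u_N$ to be the $r^N$ part of the $\ell=N$ mode. It bounds the remainder using only $j_N(kr)-\frac{(kr)^N}{(2N+1)!!}=\mathcal O(r^{N+2})$ and $j_\ell(kr)=\mathcal O(r^\ell)$ for $\ell\ge N+1$. The claim that the constants depend only on $(k,N,\rho_0,\mathcal E)$ is asserted there rather than derived: no uniform control of the coefficients $a_\ell^m$ or of the summed tail is given. Your Cauchy-type analytic interior estimates supply exactly this. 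They bound every homogeneous part by $C\mathcal E(c|\mathbf x|)^m$, after which the tail sums geometrically.

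The two constructions produce the same $u_N$. Once $P_m=0$ for $m<N$, the degree-$(N-2)$ component of $\Delta u+k^2u=0$ reads $\Delta P_N=-k^2P_{N-2}=0$. So $P_N$ is a solid harmonic of degree $N$, which is precisely the paper's $\frac{4\pi k^N r^N}{(2N+1)!!}\sum_m \mathrm{i}^N a_N^m Y_N^m$, or its planar analogue. The paper's form buys the explicit coefficients $a_N^m$, resp.\ $a_N,b_N$, that the later ATD computations in Propositions~\ref{prop lower2D} and~\ref{prop low2} use. Yours buys an actual proof of the uniform constants.

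Your remark on \eqref{eq vanishing order} is also right. Read literally, it gives $\mathrm{Vani}=m_0-1$, whereas the paper uses the convention $m_0=N$; for instance, ``vanishing order zero'' means $u'(0)\neq 0$ in Section~\ref{sec proof}. The statement holds under either reading.

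One correction to your caveat: only the first fix, shrinking the ball, works. Knowing that $u$ is analytic on some neighbourhood of $\overline{B_{\rho_0}}$ gives no sup bound in terms of $\mathcal E$, because the size of that neighbourhood is not controlled. Moreover, $B_{\rho_0}\subset\Omega\setminus\overline D$ does not even keep $\overline{B_{\rho_0}}$ away from $\partial D$. An $L^2$ bound cannot control $|u|$ near $\partial B_{\rho_0}$ in general. For example, a fundamental solution centred at a point of $\partial B_{\rho_0}$ lies in $L^2(B_{\rho_0})$ but is unbounded. So the uniform estimate for $\delta u_{N+1}$ has to be stated on a fixed smaller ball such as $B_{\rho_0/2}$. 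The paper's own proof passes over this point as well.
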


\begin{proof}
\noindent\textit{Case I: Three dimensions.}

Since $u$ is analytic in $B_{\rho_0}\subset\mathbb R^3$, it admits the convergent spherical expansion in $B_{\rho_0}$; see \cite{nedelec2001acoustic}:
\begin{equation}\label{eq u expansion}
u(r,\theta,\phi)=4\pi\sum_{\ell=0}^{\infty}\sum_{m=-\ell}^{\ell}\mathrm{i}^{\,\ell} a_\ell^{\,m}\, j_\ell(kr)\, Y_\ell^{\,m}(\theta,\phi),
\end{equation}
where $j_\ell(t)=\frac{t^\ell}{(2\ell+1)!!}+\mathcal{O}(t^{\ell+2})$ as $t\to 0$ is the spherical Bessel function of the first kind.
The condition $\mathrm{Vani}(u;0)=N$ implies $a_\ell^{\,m}=0$ for all $\ell<N$.
Define the leading homogeneous term
\[
u_N(r,\theta,\phi):=\frac{4\pi k^{N} r^{N}}{(2N+1)!!}\sum_{m=-N}^{N}\mathrm{i}^{\,N} a_{N}^{\,m}\, Y_{N}^{\,m}(\theta,\phi),
\]
and the remainder
\[
\begin{aligned}
\delta u_{N+1}(r,\theta,\phi)
&:=4\pi\sum_{m=-N}^{N}\mathrm{i}^{\,N} a_{N}^{\,m}\Big(j_{N}(kr)-\frac{(kr)^{N}}{(2N+1)!!}\Big)Y_{N}^{\,m}(\theta,\phi)\\
&\quad +\,4\pi\sum_{\ell=N+1}^{\infty}\sum_{m=-\ell}^{\ell}\mathrm{i}^{\,\ell} a_\ell^{\,m}\, j_\ell(kr)\, Y_\ell^{\,m}(\theta,\phi).
\end{aligned}
\]
Then $u=u_N+\delta u_{N+1}$.
Using
\[
j_{N}(kr)-\frac{(kr)^{N}}{(2N+1)!!}=\mathcal{O}(r^{N+2})
\]
and
\[
j_\ell(kr)=\mathcal{O}(r^\ell)\qquad\text{for }\ell\ge N+1,
\]
we obtain
\[
|u_N(\mathbf{x})|\leq C_N\,r^{N},
\qquad
|\delta u_{N+1}(\mathbf{x})|\leq C_{N+1}\,r^{N+1},
\]
with constants depending only on $k$, $N$, the radius of analyticity, and the uniform bound $\mathcal E$ of $u$ given in Lemma~\ref{lem uniform bound}.

\medskip
\noindent\textit{Case II: Two dimensions.}

In two dimensions, write $u$ in polar coordinates $(r,\theta)$ as
\begin{equation}\label{eq 2Dexpansion}
u(r,\theta)
=
\sum_{n=0}^{\infty}\big(a_n\,\mathrm{i}^{\,n}e^{\mathrm{i}n\theta}
+
b_n\,\mathrm{i}^{\,n}e^{-\mathrm{i}n\theta}\big)\,J_n(kr),
\end{equation}
where $J_n(t)=\sum_{p=0}^{\infty}\frac{(-1)^p}{p!\,(n+p)!}\,(t/2)^{n+2p}$ is the Bessel function of the first kind.

Assume $\mathrm{Vani}(u;0)=N$.
Then $a_n=b_n=0$ for all $n<N$ in \eqref{eq 2Dexpansion}.
Define the leading term
\begin{equation}\label{eq 2D_uN}
u_N(r,\theta)
:=
\frac{(kr/2)^N}{N!}\,
\big(a_N\,\mathrm{i}^{\,N}e^{\mathrm{i}N\theta}
+
b_N\,\mathrm{i}^{\,N}e^{-\mathrm{i}N\theta}\big),
\end{equation}
and the remainder
\begin{equation}\label{eq 2D_delta}
\begin{aligned}
\delta u_{N+1}(r,\theta)
&:=
\big(a_N\,\mathrm{i}^{\,N}e^{\mathrm{i}N\theta}
+
b_N\,\mathrm{i}^{\,N}e^{-\mathrm{i}N\theta}\big)\,
\Big(J_N(kr)-\frac{(kr/2)^N}{N!}\Big)\\
&\quad
+
\sum_{n=N+1}^{\infty}\big(a_n\,\mathrm{i}^{\,n}e^{\mathrm{i}n\theta}
+
b_n\,\mathrm{i}^{\,n}e^{-\mathrm{i}n\theta}\big)\,J_n(kr).
\end{aligned}
\end{equation}
Then $u=u_N+\delta u_{N+1}$ in a neighborhood of $0$.
Since
\[
J_N(kr)-\frac{(kr/2)^N}{N!}=\mathcal{O}(r^{N+2})
\qquad\text{as }r\to0,
\]
and
\[
J_n(kr)=\mathcal{O}(r^{n})\qquad\text{for }n\ge N+1,
\]
there exist constants $C_N,C_{N+1}>0$ such that
\begin{equation}\label{eq 2D_bounds}
|u_N(r,\theta)|\leq C_N\,r^{N},
\qquad
|\delta u_{N+1}(r,\theta)|\leq C_{N+1}\,r^{N+1}.
\end{equation}
The constants $C_N$ and $C_{N+1}$ depend only on $k$, $N$, a radius $\rho_0>0$ such that $B_{\rho_0}(0)$ is contained in the domain of analyticity, and the upper bound $\mathcal E$ from Lemma~\ref{lem uniform bound}, and are independent of $r$ and $\theta$.

Although the numerical values of $C_N$ and $C_{N+1}$ in two and three dimensions need not coincide, we use the same symbols to denote positive constants with the dependence specified above.
This completes the proof.
\end{proof}

\subsection{Geometric metrics and conditions}\label{subsec DRO}

We recall the definition of the classical Hausdorff distance and explain why we introduce an exterior-visible modification adapted to the admissible classes in Subsection~\ref{subsec admissible}.

Let $D$ and $D'$ be admissible obstacles belonging to either $\mathcal A$ or $\mathcal B$.
The classical Hausdorff distance is defined by
\begin{equation}\label{eq Hausdorff}
\mathrm d_H(D,D')
=
\max\left\{
\sup_{\mathbf{x}\in D}\mathrm{dist}(\mathbf{x},D'),
\ \sup_{\mathbf{x}\in D'}\mathrm{dist}(\mathbf{x},D)
\right\}.
\end{equation}

Recall that
\[
\mathbf G=\mathbb{R}^n\setminus\overline{D}.
\]
We decompose the complement of the union as
\[
\mathbb R^n\setminus(D\cup D')=\mathbf G_1\cup\bigcup_j\mathcal C_j,
\]
where $\mathbf G_1$ is the unbounded connected component and $\{\mathcal C_j\}$ are the bounded connected components.
Define
\[
\mathbf G_2:=\mathbb{R}^n\setminus\mathbf G_1.
\]
Then $\partial\mathbf G_1=\partial\mathbf G_2$ is the common boundary separating the unbounded exterior from the interior region.

We measure the geometric discrepancy at boundary points visible from infinity.
Define the modified Hausdorff distance by
\begin{equation}\label{eq d_M}
\tilde{\mathrm d}(D,D')
:=
\max\Big\{
\max_{\mathbf{x}\in\partial D\cap\partial\mathbf G_2}\mathrm{dist}(\mathbf{x},D'),
\ \max_{\mathbf{x}\in\partial D'\cap\partial\mathbf G_2}\mathrm{dist}(\mathbf{x},D)
\Big\}.
\end{equation}
We next show that $\tilde{\mathrm d}$ is equivalent to the classical Hausdorff distance; see \cite{aspri2022lipschitz, alessandrini2014stable}.

\begin{lem}\label{lem dm}
There exists $\hat C_1>0$, depending only on the a~priori parameters of the admissible class, such that
\begin{equation}\label{eq equiv1}
\tilde{\mathrm d}(D,D')
\leq \mathrm d_H(\partial D,\partial D')
\leq \hat C_1\,\tilde{\mathrm d}(D,D').
\end{equation}
\end{lem}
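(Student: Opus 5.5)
The lower bound is essentially definitional. I will show the upper bound by contradiction, using compactness of the admissible class (Lemma~\ref{lem compact}).

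\emph{Lower bound.} Take $\mathbf{x}\in\partial D\cap\partial\mathbf G_2$ attaining the first maximum in \eqref{eq d_M}.
\begin{itemize}
\item If $\mathbf{x}\in D'$, then $\mathrm{dist}(\mathbf{x},D')=0$ and there is nothing to prove.
\item Otherwise $\mathbf{x}\notin\overline{D'}$. Then $\mathrm{dist}(\mathbf{x},D')=\mathrm{dist}(\mathbf{x},\partial D')\le \sup_{\mathbf{y}\in\partial D}\mathrm{dist}(\mathbf{y},\partial D')$, because the nearest point of $\overline{D'}$ to an exterior point lies on $\partial D'$.
\end{itemize}
The same argument applies with the roles of $D$ and $D'$ exchanged. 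Hence $\tilde{\mathrm d}(D,D')\le \mathrm d_H(\partial D,\partial D')$.

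\emph{Upper bound.} Suppose the upper bound fails. Then there are admissible pairs $(D_m,D_m')$ with $\mathrm d_H(\partial D_m,\partial D_m')> m\,\tilde{\mathrm d}(D_m,D_m')$. Since $\mathrm d_H(\partial D_m,\partial D'_m)\le 2R_0$, this forces $\tilde{\mathrm d}(D_m,D_m')\to0$. By Lemma~\ref{lem compact} we pass to a subsequence with $D_m\to D$ and $D_m'\to D'$ in the Hausdorff sense, with $D,D'$ admissible. By the uniform Lipschitz condition (Assumption~\ref{asm 2}) the boundaries converge as well.

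Two cases arise.
\begin{itemize}
\item \emph{Case 1: $\mathrm d_H(\partial D_m,\partial D_m')\to0$.} This is the genuinely quantitative regime, treated below.
\item \emph{Case 2: $\mathrm d_H(\partial D_m,\partial D_m')\not\to0$.} Then $\partial D\neq\partial D'$, but the limiting configuration has $\tilde{\mathrm d}(D,D')=0$. This says that every point of $\partial D\cap\partial\mathbf G_2$ lies in $\overline{D'}$, and conversely. In other words, the exterior-visible boundaries coincide: $\partial\mathbf G_1$ is contained in $\partial D\cap\partial D'$. Each component is simply connected, and Assumption~\ref{asm 3} gives cone accessibility from outside. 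From this I will argue that the unbounded component $\mathbf G_1$ determines $\overline{D}=\overline{D'}=\mathbb R^n\setminus\mathbf G_1$. If this holds, then $\partial D=\partial D'$, a contradiction. The point is that a component of $D$ hidden inside $D'$, or a cavity $\mathcal C_j$, cannot occur: a bounded component $\mathcal C_j$ of $\mathbb R^n\setminus(D\cup D')$ has boundary consisting of parts of $\partial D$ and $\partial D'$ that are both visible only from $\mathcal C_j$. Connectedness of $\mathbf G$ and of $\mathbb R^n\setminus\overline{D'}$ forces $\mathcal C_j$ to communicate with $\mathbf G_1$ through either complement, and this yields a point of $\partial\mathbf G_2$ at positive distance from the other obstacle. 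Passing this to the limit requires the uniform Lipschitz and cone bounds, so that the limits of visible points stay visible.
\end{itemize}

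\emph{Case 1 (main obstacle).} Here the approach is the local argument of \cite{alessandrini2014stable,aspri2022lipschitz}. Let $d=\mathrm d_H(\partial D,\partial D')$, attained, say, at $\mathbf{x}\in\partial D$ with $\mathrm{dist}(\mathbf{x},\partial D')=d$. If $\mathbf{x}$ is not on $\partial\mathbf G_2$, then $\mathbf{x}$ either lies in the interior of $D'$ or on the boundary of a cavity $\mathcal C_j$. Either way, for $d<r_0/2$, the Lipschitz graph representation of $\partial D'$ near $\mathbf{x}$ lets us follow the cone direction $\boldsymbol\omega_{\mathbf{x}}$ of Assumption~\ref{asm 3} out of $D$. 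Within distance $C(L_0,\theta_0)d$ this path reaches a point of $\partial D$ or $\partial D'$ lying on $\partial\mathbf G_2$, at which the distance to the other obstacle is at least $c\,d$. Indeed, a cavity of width $\sim d$ bounded by two $d$-close Lipschitz graphs must open to the outside within a distance comparable to $d$. This gives $\tilde{\mathrm d}\ge c\,d$, contradicting $d>m\tilde{\mathrm d}$.

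I expect the main difficulty to be making this geometric step quantitative, namely showing that a thin trapped region between the two boundaries reaches a visible point within a distance comparable to $d$, with constants depending only on $(r_0,L_0,\theta_0,R_0)$.
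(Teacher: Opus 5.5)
The paper does not prove this lemma; it only cites \cite{aspri2022lipschitz, alessandrini2014stable}, so your argument has to stand on its own.

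Your lower bound is fine: if $\mathbf x\in\overline{D'}$ the distance is $0$, and otherwise the nearest point of $\overline{D'}$ lies on $\partial D'$. Splitting the upper bound into a compactness regime and a small-$\mathrm d_H$ regime is also a sound strategy.

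In Case~2, however, you invoke the wrong semicontinuity. To pass from $\tilde{\mathrm d}(D_m,D_m')\to0$ to $\tilde{\mathrm d}(D,D')=0$, you need the following: each visible point of the \emph{limit} pair at positive distance from the other obstacle is a limit of visible points of $(D_m,D_m')$. This is true. Join a point of the exterior cone at $\mathbf x$ to infinity by a compact path at positive distance from $D\cup D'$; for large $m$ that path also avoids $D_m\cup D_m'$. The property you state, that limits of visible points stay visible, is false even under the uniform a priori bounds (a slit of width $1/m$ between $D_m$ and $D_m'$ can close in the limit and hide a region), and it is not needed.

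The qualitative step is also simpler than your cavity discussion. Suppose $\tilde{\mathrm d}(D,D')=0$, and let $\mathbf G_1^{o}$ be the unbounded component of $\mathbb R^n\setminus\overline{D\cup D'}$. If $\mathbf G_1^{o}$ were a proper subset of the connected set $\mathbf G$, any point of $\mathbf G\cap\partial\mathbf G_1^{o}$ would be a visible point of $\partial D'$ outside $\overline D$, which is excluded. Hence $\mathbf G_1^{o}=\mathbf G$, symmetrically $\mathbf G_1^{o}=\mathbb R^n\setminus\overline{D'}$, and so $\overline D=\overline{D'}$.

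The genuine gap is Case~1, which is the real content of the lemma. You assert, but do not prove, that following $\boldsymbol\omega_{\mathbf x}$ you reach within $Cd$ a point of $\partial\mathbf G_2$ at distance $\ge cd$ from the other obstacle. The reason you give (a thin cavity between two $d$-close graphs must open to the outside) is not the operative mechanism. Lying on $\partial\mathbf G_1$ is a global property: the exit point must be joined to infinity in $\mathbb R^n\setminus\overline{D\cup D'}$. No local consideration prevents $D'$ from occupying parts of $\mathbf G$ far from $\partial D$, or from sealing off the ray. Invoking the Lipschitz chart of $\partial D'$ near $\mathbf x$ also leaves its orientation relative to $\boldsymbol\omega_{\mathbf x}$ uncontrolled.

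What is missing is a uniform collar lemma. Writing $N_d$ for the $d$-neighbourhood, you need that for $d\le d_0(r_0,L_0,\theta_0)$ the set $E_d:=\mathbf G\setminus\overline{N_d(\partial D)}$ is connected. This is where Assumptions~\ref{asm 2}--\ref{asm 3} must be used globally, to rule out gaps and channels of width $\ll r_0$.

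Granting the collar lemma, the step closes directly:
\begin{enumerate}
\item $E_d$ is connected, unbounded and disjoint from $\partial D'\subset\overline{N_d(\partial D)}$, while $D'$ is bounded. Hence $E_d\cap\overline{D'}=\emptyset$, and so $E_d\subset\mathbf G_1$.
\item The truncated cone at $\mathbf x$ gives $\mathrm{dist}(\mathbf x+s\boldsymbol\omega_{\mathbf x},D)\ge\min\{s\sin\theta_0,r_0-s\}$. So for $d<\tfrac{r_0}{2}\sin\theta_0$ the ray lies in $E_d$ for $s\in(d/\sin\theta_0,r_0-d)$.
\item Also $B_d(\mathbf x)\cap\partial D'=\emptyset$.
\item If the ray never meets $\overline{D'}$, then $\mathbf x\in\partial\mathbf G_1$ with $\mathrm{dist}(\mathbf x,D')=d$.
\item Otherwise its last exit $s^*$ from $\overline{D'}$ lies in $[d,d/\sin\theta_0]$. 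Then $\mathbf z=\mathbf x+s^*\boldsymbol\omega_{\mathbf x}\in\partial D'\cap\partial\mathbf G_2$ and $\mathrm{dist}(\mathbf z,D)\ge d\sin\theta_0$.
\end{enumerate}
This gives $\tilde{\mathrm d}(D,D')\ge d\sin\theta_0$ in the small-$d$ regime, with no compactness needed there. Without the collar lemma, or an equivalent global input, your Case~1 does not go through.
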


We also record the boundary-set equivalence for the Hausdorff distance within the admissible class; see \cite{liu2017stable, rondi2008stable}.

\begin{lem}\label{lem bdry-vs-set}
Let $D$ and $D'$ be admissible obstacles in $\mathcal A$ or $\mathcal B$.
There exist constants $\hat C_2,\hat C_3>0$, depending only on the a~priori parameters, such that
\begin{equation}\label{eq equiv2}
\hat C_2\,\mathrm d_H(\partial D,\partial D')
\leq \mathrm d_H(D,D')
\leq \hat C_3\,\mathrm d_H(\partial D,\partial D').
\end{equation}
\end{lem}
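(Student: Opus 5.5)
The plan is to prove the two inequalities in \eqref{eq equiv2} separately, using only the uniform geometric information in Assumptions~\ref{asm 1}--\ref{asm 3}. In each inequality we first treat the regime where the relevant distance is below a threshold $\delta_0$ fixed by $(r_0,L_0,\theta_0)$. The large-distance regime is then trivial: all quantities are bounded by $2R_0$ because $D,D'\subset B_{R_0}$, so $a\le 2R_0\le (2R_0/\delta_0)\,b$ whenever $b\ge\delta_0$.

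\emph{Lower bound $\hat C_2\,\mathrm d_H(\partial D,\partial D')\le \mathrm d_H(D,D')$.} Let $\mathbf y\in\partial D$ and set $s:=\mathrm{dist}(\mathbf y,\partial D')$; the other half of $\mathrm d_H(\partial D,\partial D')$ is symmetric.
\begin{itemize}
\item If $\mathbf y\notin D'$, then $\mathrm{dist}(\mathbf y,D')=s$, so $s\le \mathrm d_H(D,D')$ directly.
\item If $\mathbf y\in D'$, then $B_s(\mathbf y)\subset \overline{D'}$. Assume $s\le r_0$. By Assumption~\ref{asm 3}, the cone $\mathcal C(\mathbf y,\boldsymbol\omega_{\mathbf y},r_0,\theta_0)$ lies in $\mathbb R^n\setminus D$. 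Take the point $\mathbf z=\mathbf y+\tfrac s2\boldsymbol\omega_{\mathbf y}$. Then $\mathbf z\in B_s(\mathbf y)\subset \overline{D'}$, and the ball $B_{c s}(\mathbf z)$ with $c=\tfrac12\sin\theta_0$ is contained in the cone, hence misses $D$. Therefore $\mathrm{dist}(\mathbf z,D)\ge cs$, which gives $\mathrm d_H(D,D')\ge cs$.
\end{itemize}
For $s>r_0$ we use the same construction with $s$ replaced by $r_0$, together with the boundedness remark. This yields $\hat C_2=\min\{c,\,c\,r_0/(2R_0)\}$.

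\emph{Upper bound $\mathrm d_H(D,D')\le \hat C_3\,\mathrm d_H(\partial D,\partial D')$.} Set $\delta:=\mathrm d_H(\partial D,\partial D')$. Let $\mathbf x\in D$ with $d:=\mathrm{dist}(\mathbf x,D')>0$, and suppose for contradiction that $d>C\delta$ for a large $C$.
\begin{enumerate}
\item Let $N_\delta$ denote the closed $\delta$-neighbourhood of $\partial D'$. Since $\partial D\subset N_\delta$, each connected component of $\mathbb R^n\setminus N_\delta$ lies entirely inside $D$ or entirely outside $\overline D$.
\item The point $\mathbf x$ lies in such a component $V$. Because $\mathrm{dist}(\mathbf x,\partial D')\ge d>\delta$ and $\mathbf x\notin D'$, we have $V\subset \mathbb R^n\setminus \overline{D'}$. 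By step 1 and $\mathbf x\in D$, we also have $V\subset D$, so $V$ is bounded.
\item Key geometric claim: for $\delta\le\delta_0$, every point of $\mathbb R^n\setminus\overline{D'}$ at distance $>C\delta$ from $\partial D'$ can be joined to infinity inside $\mathbb R^n\setminus N_\delta$. This contradicts the boundedness of $V$.
\end{enumerate}
To prove the claim, we use that the exterior of $D'$ is connected and that $\partial D'$ is uniformly Lipschitz (Assumption~\ref{asm 2}). Near each boundary point, the set $\{\mathbf z: \mathrm{dist}(\mathbf z,\partial D')>\delta\}\setminus D'$ is, in the local graph chart, the region lying above the graph translated by roughly $\sqrt{1+L_0^2}\,\delta$ in the $t$-direction. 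That region is connected within each $r_0$-chart. A finite covering argument, with the number of charts controlled by $R_0/r_0$, then shows that the thinned exterior remains connected as long as $\delta\le\delta_0(r_0,L_0)$. The uniform exterior cones of Assumption~\ref{asm 3} let us push any exterior point at distance $>C\delta$ from $\partial D'$ into this connected thinned region without entering $N_\delta$.

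I expect this uniform connectivity of the thinned exterior to be the main obstacle, specifically the bookkeeping of the constants $C$ and $\delta_0$ across overlapping charts. Once the claim holds, we get $d\le C\delta$ for $\delta\le\delta_0$, and the boundedness remark removes the restriction $\delta\le\delta_0$. Exchanging the roles of $D$ and $D'$ completes \eqref{eq equiv2}.
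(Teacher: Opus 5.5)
The paper gives no proof of this lemma; it only records it with a pointer to \cite{liu2017stable,rondi2008stable}. Your proposal is a correct, self-contained proof, so the comparison is between a citation and a direct argument.

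What your route buys is transparency about which hypotheses drive which inequality. The lower bound uses only the uniform exterior cone of Assumption~\ref{asm 3} together with $D,D'\subset B_{R_0}$. Your point $\mathbf z=\mathbf y+\tfrac s2\boldsymbol\omega_{\mathbf y}$, with the ball of radius $\tfrac s2\sin\theta_0$, is exactly right. The upper bound rests on two things: the uniform Lipschitz charts of Assumption~\ref{asm 2}, and the connectedness of the exterior. The latter is a standing hypothesis from Subsection~\ref{subsec Direct}, not one of Assumptions~\ref{asm 1}--\ref{asm 3}, so it is good that you invoke it explicitly. Uniform charts are genuinely needed: compare a full disk with a C-shaped band whose gap is much narrower than $r_0$, and the upper bound fails. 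The citation buys brevity, but it leaves implicit the check that the hypotheses of the cited works cover $\mathcal A$ and $\mathcal B$ as defined here.

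You correctly flag the connectivity claim as the crux. Your description of the thinned region is slightly off: in a chart it is not the graph translated by roughly $\sqrt{1+L_0^2}\,\delta$. It is only sandwiched between two translates, so local connectivity needs an extra observation. That observation is that inside a chart, $\mathrm{dist}(\cdot,\partial D')$ is nondecreasing along the upward chart direction $\mathbf e_t$. To see this, let $\mathbf q'$ be a nearest boundary point to $\mathbf z+s\mathbf e_t$. The segment from $\mathbf z$ to $\mathbf q'-s\mathbf e_t$ has length $|\mathbf z+s\mathbf e_t-\mathbf q'|$ and joins a point above the graph to one below it, so it meets $\partial D'$.

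Consequently, set $E_\delta:=\{\mathbf z\notin\overline{D'}:\ \mathrm{dist}(\mathbf z,\partial D')>\delta\}$. In each chart cylinder, $E_\delta$ is a supergraph $\{t>\tau(y)\}$ with $\varphi+\delta\le\tau\le\varphi+\sqrt{1+L_0^2}\,\delta$, hence connected there. Lifting an exterior path chart by chart then shows that $E_\delta$ is connected for $\delta\le\delta_0$.

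This already closes your contradiction with $C=1$. Every point of $D\setminus\overline{D'}$ at distance $>\delta$ from $\partial D'$ lies in the unbounded connected set $E_\delta\subset\mathbb R^n\setminus N_\delta$. So your final step, where exterior cones push points at distance $>C\delta$ into the thinned region, is redundant. You get $\mathrm d_H(D,D')\le\mathrm d_H(\partial D,\partial D')$ whenever the right-hand side is at most $\delta_0$, and the boundedness remark handles the rest.
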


We adopt
\[
\mathfrak d:=\tilde{\mathrm d}(D,D')
\]
as the working geometric distance.
If an inner maximum is taken over the empty set, it is understood to be $0$.
For $D\neq D'$, one has $\mathfrak d>0$.
A maximizer $\mathbf{x}_0$ of $\mathfrak d$ exists by the compactness of $\partial D\cap\partial\mathbf G_2$ and $\partial D'\cap\partial\mathbf G_2$.
Every such maximizer lies on $\partial\mathbf G_1=\partial\mathbf G_2$ and is reachable from infinity through $\mathbf G_1$.

\begin{rem}
For convex polyhedra, the classical Hausdorff distance is attained at boundary points, often at vertices.
In the nonconvex case, however, a maximizer of $\mathrm d_H$ may lie inside $\mathbf G_2$ and hence be hidden from exterior propagation.
By using the localization induced by $\tilde{\mathrm d}$ in \eqref{eq d_M}, the comparison is restricted to exterior-visible boundary points on $\partial\mathbf G_1$, where three-sphere inequalities and local analytic decompositions are applicable.
Lemma~\ref{lem dm} transfers the resulting estimates to the boundary Hausdorff distance on admissible classes.
Combining Lemma~\ref{lem dm} with Lemma~\ref{lem bdry-vs-set}, it follows from \eqref{eq equiv1} and \eqref{eq equiv2} that the same order of stability is obtained for $\mathrm d_H(D,D')$.
\end{rem}

We next introduce another key tool used in the analysis, namely the \textit{Complex Geometric Optics (CGO) solution}.

\begin{defn}\label{defn CGO}
Let $\tau\in\mathbb R_+$ and let $k\in\mathbb R_+$ denote the wavenumber.
Choose $\mathbf d,\mathbf d^\perp\in\mathbb S^{n-1}$ with $\mathbf d\perp\mathbf d^\perp$, where $n=2,3$.
Set
\begin{equation}\label{eq rho}
\rho=\rho(\tau,k):=\tau\,\mathbf d+\mathrm i\sqrt{k^2+\tau^2}\,\mathbf d^\perp.
\end{equation}
Define
\begin{equation}\label{eq CGO}
u_0:=\exp(\rho\cdot\mathbf x),\qquad \mathbf x\in\mathbb R^n.
\end{equation}
Then $\rho\cdot\rho=-k^2$, and hence $\Delta u_0+k^2 u_0=0$.
\end{defn}

The following elementary bounds will be used repeatedly; see also \cite{cakoni2020corner}.

\begin{lem}\label{lem Gamma}
Fix $h>0$ and $b>0$.
There exist $\mu_0=\mu_0(h,b)>1$ and $0<\alpha<1$ such that for every complex $\mu$ with $\Re\mu\ge\mu_0$ one has
\begin{equation}\label{eq gamma}
\left|\int_0^h r^{b-1}e^{-\mu r}\,\mathrm dr\right|
\leq \left|\frac{\Gamma(b)}{\mu^b}\right|+\frac{2\,e^{-\Re\mu\,\alpha h}}{\Re\mu},
\end{equation}
and
\begin{equation}\label{eq gamma2}
\left|\int_h^\infty r^{b-1}e^{-\mu r}\,\mathrm dr\right|
\leq \int_h^\infty r^{b-1}e^{-\Re\mu r}\,\mathrm dr
\leq \int_h^\infty e^{-\Re\mu\alpha r}\,\mathrm dr
=\frac{2\,e^{-\Re\mu \alpha h}}{\Re\mu}.
\end{equation}
\end{lem}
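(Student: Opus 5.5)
We prove the two bounds of Lemma~\ref{lem Gamma} separately; both rest on the fact that for $\Re\mu$ large the integrand is essentially $e^{-\Re\mu\, r}$ times a polynomial weight. First fix $\alpha\in(0,1)$, say $\alpha=1/2$. Then choose $\mu_0$ large enough, depending on $h$ and $b$, that for all $r\ge h$ and all $\Re\mu\ge\mu_0$
\[
r^{b-1}e^{-(1-\alpha)\Re\mu\, r}\le 1 .
\]
This choice is possible because $\sup_{r\ge h} r^{b-1}e^{-(1-\alpha)\mu_0 r}\to 0$ as $\mu_0\to\infty$. For $b\ge1$ this follows by maximizing $r^{b-1}e^{-cr}$ over $r\ge h$. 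For $b<1$ one has $r^{b-1}\le h^{b-1}$, so it suffices that $e^{-(1-\alpha)\mu_0 h}\le h^{1-b}$. We also take $\mu_0>1$.

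For \eqref{eq gamma2}, the first inequality is just $|e^{-\mu r}|=e^{-\Re\mu\,r}$ together with the triangle inequality for integrals. The second inequality is exactly the pointwise bound fixed above. The final step is the explicit integral
\[
\int_h^\infty e^{-\alpha\Re\mu\, r}\,\mathrm dr=\frac{e^{-\alpha\Re\mu\, h}}{\alpha\Re\mu}.
\]
With $\alpha=1/2$ this equals $2e^{-\Re\mu\,\alpha h}/\Re\mu$, which is precisely the stated constant. So choosing $\alpha=1/2$ is not merely convenient but matches the formula; any $\alpha\ge 1/2$ would also give the stated bound as an inequality.

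For \eqref{eq gamma}, split $\int_0^h=\int_0^\infty-\int_h^\infty$. Since $\Re\mu>0$, the classical identity
\[
\int_0^\infty r^{b-1}e^{-\mu r}\,\mathrm dr=\frac{\Gamma(b)}{\mu^b}
\]
holds with the principal branch of $\mu^b$. One proves it for real $\mu>0$ by substitution and extends it to $\Re\mu>0$ by analytic continuation, since both sides are holomorphic in the half-plane. The triangle inequality together with \eqref{eq gamma2} then gives the claim.

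The only step needing care is the choice of $\mu_0$ uniformly in $r\ge h$ when $b>1$, where $r^{b-1}$ grows. The maximum of $r^{b-1}e^{-cr}$ on $r\ge h$ is at most $\max\{h^{b-1},((b-1)/(ce))^{b-1}\}e^{0}$ in general, but once $c=(1-\alpha)\mu_0\ge (b-1)/h$ the maximum sits at $r=h$. This reduces the condition to $h^{b-1}e^{-(1-\alpha)\mu_0 h}\le1$, which holds for $\mu_0$ large.
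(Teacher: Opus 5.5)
Your proof is correct and matches the standard argument for this lemma: the paper gives no proof of its own and defers to the corner-scattering literature (\cite{cakoni2020corner}), where the bound is obtained exactly as you do, by splitting $\int_0^h=\int_0^\infty-\int_h^\infty$, using $\int_0^\infty r^{b-1}e^{-\mu r}\,\mathrm dr=\Gamma(b)/\mu^b$ for $\Re\mu>0$, and absorbing $r^{b-1}$ into part of the exponential on $[h,\infty)$. You are also right that the final equality in \eqref{eq gamma2} forces $\alpha=1/2$, and your choice of $\mu_0$ correctly covers both $b\ge 1$ and $b<1$.
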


For the CGO solution $u_0$, let $P_h\subset\mathbb R^3$ and $Q_h\subset\mathbb R^2$ be bounded test domains in three and two dimensions, respectively.
Assume there exist sets of directions $\mathcal K_{\alpha'}\subset\mathbb S^2$ and $\mathcal K_{\alpha^*}\subset\mathbb S^1$, and constants $\alpha'>0$ and $\alpha^*>0$, such that
\begin{equation}\label{eq ec condition}
-\mathbf d\cdot\hat{\mathbf x}>\alpha',
\quad \forall\,\mathbf d\in\mathcal K_{\alpha'},\ \mathbf x\in P_h\setminus\{0\};
\qquad
-\mathbf d\cdot\hat{\mathbf x}>\alpha^*,
\quad \forall\,\mathbf d\in\mathcal K_{\alpha^*},\ \mathbf x\in Q_h\setminus\{0\},
\end{equation}
where $\hat{\mathbf x}=\mathbf x/|\mathbf x|$.
We refer to $\mathcal K_{\alpha'}$ and $\mathcal K_{\alpha^*}$ as the corresponding dual direction sets.

Under \eqref{eq ec condition}, the CGO solution satisfies the uniform decay
\begin{equation}\label{eq alpha}
|u_0(\mathbf x)|=e^{\Re(\rho\cdot\mathbf x)}\leq e^{-\alpha'\tau|\mathbf x|}\leq 1,
\qquad \mathbf x\in P_h\setminus\{0\},
\end{equation}
in three dimensions, and
\begin{equation*}
|u_0(\mathbf x)|=e^{\Re(\rho\cdot\mathbf x)}\leq e^{-\alpha^*\tau|\mathbf x|}\leq 1,
\qquad \mathbf x\in Q_h\setminus\{0\},
\end{equation*}
in two dimensions.
It is therefore essential to choose $\mathbf d$ in \eqref{eq rho} from $\mathcal K_{\alpha'}$ in three dimensions and from $\mathcal K_{\alpha^*}$ in two dimensions.

\section{Propagation of Smallness from Far-Field to Boundary}\label{sec 4}

In this section, we establish a quantitative link between the far-field error and the boundary error on the obstacle.
The stability estimate is a quantitative manifestation of unique continuation.
The argument uses regularity and unique continuation tools that differ in three and two dimensions.
We first present the argument in three dimensions and then indicate the corresponding modifications in two dimensions.
We begin with the three-sphere inequality for the Helmholtz equation.

\begin{lem}\label{lem three spheres}
There exist constants $\tilde R>0$, $C>0$, and $c_1\in(0,1)$, depending only on $k$, such that the following holds in $\mathbb R^3$.
Let $0<r_1<r_2<r_3\leq \tilde R$ and let $B_{r_3}(\mathbf{x}_0)\subset U$.
If $u$ satisfies $\Delta u+k^2u=0$ in $U$, then for any $s$ with $r_2<s<r_3$,
\[
\|u\|_{L^\infty(B_{r_2}(\mathbf{x}_0))}
\leq
C\left(1-\frac{r_2}{s}\right)^{-3/2}
\|u\|_{L^\infty(B_{r_3}(\mathbf{x}_0))}^{1-\beta}
\|u\|_{L^\infty(B_{r_1}(\mathbf{x}_0))}^{\beta},
\]
where $\beta$ satisfies the same bounds as in \cite{rondi2008stable}.
\end{lem}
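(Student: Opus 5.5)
The three-sphere inequality for Helmholtz solutions is a quantitative unique continuation statement. I would prove it by removing the zeroth-order term $k^2u$ and then applying the classical Hadamard three-circle argument for harmonic functions.

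\textbf{Step 1: lift to a harmonic function.} Set $v(\mathbf{x},t):=e^{kt}u(\mathbf{x})$ on $U\times\mathbb R$. Then
\[
(\Delta_{\mathbf{x}}+\partial_t^2)v=e^{kt}(\Delta u+k^2u)=0,
\]
so $v$ is harmonic in four variables. On a four-dimensional ball $\widetilde B_r$ centred at $(\mathbf{x}_0,0)$, the factor $e^{kt}$ lies between $e^{-kr}$ and $e^{kr}$. Hence, for $r\leq\tilde R$,
\[
e^{-k\tilde R}\,\|u\|_{L^\infty(B_r(\mathbf{x}_0))}\leq \|v\|_{L^\infty(\widetilde B_r)}\leq e^{k\tilde R}\,\|u\|_{L^\infty(B_r(\mathbf{x}_0))}.
\]
This comparison is why $\tilde R$ and $C$ depend only on $k$.

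\textbf{Step 2: three-ball inequality for $v$.} For harmonic $v$, the function $\log\|v\|_{L^2(\partial\widetilde B_r)}$ is convex in $\log r$ (Agmon's log-convexity, or monotonicity of the Almgren frequency). This gives an $L^2$ three-sphere inequality with
\[
\beta=\frac{\log(r_3/s)}{\log(r_3/r_1)},
\]
up to the harmless replacement of $r_2$ by $s$; these are the bounds quoted from \cite{rondi2008stable}.

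\textbf{Step 3: pass to $L^\infty$ and descend.} Convert $L^2$ norms to $L^\infty$ norms using interior estimates for harmonic functions. The mean-value property on balls of radius $s-r_2$ centred in $\widetilde B_{r_2}$ gives a factor $(1-r_2/s)^{-d/2}$, with $d$ the relevant dimension; tracking this factor is the step I expect to need the most care in order to obtain exactly the exponent $-3/2$. The reverse direction is trivial: the $L^2$ norms on the spheres of radii $r_1$ and $r_3$ are bounded by the corresponding $L^\infty$ norms on the balls.

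If the lift produces the wrong dimensional exponent, I would instead run the frequency-function argument directly for $\Delta u+k^2u=0$ in $\mathbb R^3$, following Garofalo and Lin. Its almost-monotonicity holds for $r\leq\tilde R(k)$. The $L^2$–$L^\infty$ passage then uses the three-dimensional local bound $\|u\|_{L^\infty(B_{r_2})}\leq C(s-r_2)^{-3/2}\|u\|_{L^2(B_s)}$, valid with $C=C(k)$ for $s\leq\tilde R$, which gives precisely the factor $(1-r_2/s)^{-3/2}$ after normalising radii. Finally, Step 1 transfers the inequality from $v$ back to $u$.
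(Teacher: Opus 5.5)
The paper does not prove this lemma; it quotes it from \cite{rondi2008stable}, where it rests on a three-spheres theorem proved directly in $\mathbb R^3$. Your lift $v(\mathbf x,t)=e^{kt}u(\mathbf x)$ is therefore a genuinely different and more elementary route. Steps~1--2 are sound: log-convexity of the sphere averages of a harmonic function follows from the spherical-harmonic expansion, and it also holds for complex-valued $v$. The gap is in Step~3, exactly where you suspected. The mean-value property on balls of radius $s-r_2$ in $\mathbb R^4$ costs $(s/(s-r_2))^{4/2}$, so your main route yields the factor $(1-r_2/s)^{-2}$. That is strictly weaker than the stated $(1-r_2/s)^{-3/2}$ as $r_2/s\to1$, so as written you prove a weaker lemma. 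Your Garofalo--Lin fallback in $\mathbb R^3$ would give the right power, but it is only sketched. Its closing sentence about transferring from $v$ back to $u$ also does not apply there.

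You do not need the fallback. Step~2 already controls the normalized sphere averages $\mathcal M(\rho)^2:=|\partial\widetilde B_\rho|^{-1}\int_{\partial\widetilde B_\rho}|v|^2\,\mathrm d\sigma$ (balls centred at $(\mathbf x_0,0)$, translated to the origin). So pass to $L^\infty$ through the Poisson kernel of $\widetilde B_s\subset\mathbb R^d$ rather than through ball averages. With $P(\mathbf y,\xi)=\frac{s^2-|\mathbf y|^2}{|\mathbb S^{d-1}|\,s\,|\mathbf y-\xi|^d}$, one has $\int_{\partial\widetilde B_s}P^2\,\mathrm d\sigma\le\max_\xi P(\mathbf y,\xi)\le 2|\mathbb S^{d-1}|^{-1}(s-|\mathbf y|)^{1-d}$. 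Cauchy--Schwarz then gives
\[
|v(\mathbf y)|\le\sqrt2\Bigl(1-\frac{|\mathbf y|}{s}\Bigr)^{-\frac{d-1}{2}}\mathcal M(s),
\]
and for $d=4$ the exponent is exactly $-3/2$. Now combine four facts:
\begin{itemize}
\item $\mathcal M(s)\le\mathcal M(r_1)^{\beta_\rho}\mathcal M(\rho)^{1-\beta_\rho}$ with $\beta_\rho=\log(\rho/s)/\log(\rho/r_1)$ and $\rho<r_3$. Let $\rho\uparrow r_3$, since $\partial\widetilde B_{r_3}$ need not lie in $U\times\mathbb R$.
\item $\mathcal M(\rho)\le e^{k\rho}\sup_{\overline{B_\rho(\mathbf x_0)}}|u|$.
\item $\|u\|_{L^\infty(B_{r_2}(\mathbf x_0))}\le\|v\|_{L^\infty(\widetilde B_{r_2})}$.
\item The Poisson bound above.
\end{itemize}
This gives the lemma with $C=\sqrt2\,e^{k\tilde R}$ for any $\tilde R$. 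It also gives the explicit exponent $\beta=\log(r_3/s)/\log(r_3/r_1)$, which satisfies the two-sided bounds of \cite{rondi2008stable} for every $c_1\in(0,1)$.

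Compared with the cited three-dimensional argument, the lift uses the constant coefficients to get explicit constants with no smallness condition on $\tilde R$; the cited argument needs $\tilde R$ small in terms of $k$ and fixes $\beta$ only up to $c_1$. In the paper's only application (Lemma~\ref{lem three}, with $r_1=r/4$, $r_2=r/2$, $r_3=r$ and $s$ at a fixed ratio), the power of $1-r_2/s$ is just a constant. So even your $(1-r_2/s)^{-2}$ version would suffice there, but not for the lemma as stated.
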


A crucial geometric ingredient is the existence of an exterior path linking the near-field region to a point near the boundary, along which smallness can be propagated.
The following proposition provides the corresponding construction; we omit the proof and refer to \cite[Proposition~3.8]{aspri2022lipschitz}.

\begin{prop}\label{prop G1}
Let $\Sigma,\Sigma'\in\mathcal A$ be two admissible obstacles.
There exist constants $\hat C>0$ and $r>0$, depending only on the a~priori parameters, with the following property.
For any near-field point $\mathbf x_0\in B_{2R_0}\setminus\Omega$ and any point $\mathbf x\in\partial\Sigma\cap\partial\mathbf G_1$, one has
\[
\hat C\, \tilde{\mathrm d}(\Sigma,\Sigma')^{3} \leq \operatorname{dist}(\mathbf x,\Sigma').
\]
Moreover, there exists a curve $\gamma$ joining $\mathbf x_0$ to $\mathbf x+a\,\omega_{\mathbf x}$, where $\omega_{\mathbf x}$ is the direction given by Assumption~\ref{asm 3} at $\mathbf x$ and $a\in(0,1)$ depends only on the a~priori parameters, such that
\[
V_r(\gamma)\subset \mathbf G_1,
\qquad
V_r(\gamma):=\bigcup_{\mathbf y\in\gamma}B_r(\mathbf y).
\]
\end{prop}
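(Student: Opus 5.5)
The plan is to follow the route of \cite[Proposition~3.8]{aspri2022lipschitz}: first build the exterior tube $V_r(\gamma)\subset\mathbf G_1$, then read the quantitative lower bound off that construction. All constants $\hat C, r, a$ will be produced from the a~priori parameters $(R_0,r_0,L_0,\theta_0)$ only, using the uniform cone condition (Assumption~\ref{asm 3}), the Lipschitz graph representation (Assumption~\ref{asm 2}), and the edge, angle and thickness bounds of Assumption~\ref{asm 1}.

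\emph{Step 1: the curve and the tube.} Fix $\mathbf x\in\partial\Sigma\cap\partial\mathbf G_1$ and let $\boldsymbol\omega_{\mathbf x}$ be the direction from Assumption~\ref{asm 3}. The cone $\mathcal C(\mathbf x,\boldsymbol\omega_{\mathbf x},r_0,\theta_0)$ avoids $\Sigma$. Set $a=r_0/2$ (shrunk if necessary so that $a<1$). The ball $B_{2r}(\mathbf x+a\boldsymbol\omega_{\mathbf x})$, with $r\simeq a\sin\theta_0$, then lies in this cone, and hence outside $\Sigma$.

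To see that it also avoids $\Sigma'$, use $\mathbf x\in\partial\mathbf G_1$. This means $\mathbf x$ is a limit of points of the unbounded component of $\mathbb R^3\setminus(\Sigma\cup\Sigma')$, and we may take those points inside the cone. Suppose $\Sigma'$ met the small ball. By Assumption~\ref{asm 2} a Lipschitz piece of $\partial\Sigma'$ of size $r_0$ would then cross the cone. Comparing its aperture with $\theta_0$ and $L_0$, this would disconnect the cone tip near $\mathbf x$ from infinity, contradicting $\mathbf x\in\partial\mathbf G_1$, after shrinking $r$ depending only on $L_0,\theta_0$.

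Next, connect $\mathbf x+a\boldsymbol\omega_{\mathbf x}$ to $\mathbf x_0\in B_{2R_0}\setminus\Omega$. Since $\mathbf G_1$ is connected and unbounded, it contains a path between them. To upgrade this to a path with a uniform tube, cover the compact set $\overline{B_{2R_0}}\cap\mathbf G_1$ at scale $r$ and argue by contradiction with the Hausdorff compactness of $\mathcal A$ (Lemma~\ref{lem compact}). If the tube radius degenerated along a sequence of configurations, the limit pair would be admissible and would have a pinched exterior neck, which is impossible by the cone condition.

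\emph{Step 2: the lower bound $\hat C\,\tilde{\mathrm d}^3\leq\operatorname{dist}(\mathbf x,\Sigma')$.} Let $\mathbf y^*$ be a maximizer of $\tilde{\mathrm d}$; by symmetry, take it on $\partial\Sigma\cap\partial\mathbf G_2$. The argument then has three parts.
\begin{enumerate}
\item Around $\mathbf y^*$, the set $B_{\tilde{\mathrm d}}(\mathbf y^*)\cap\Sigma$ contains a face patch of $\partial\Sigma$ of diameter $\gtrsim\tilde{\mathrm d}$. This uses face thickness together with the edge and angle bounds.
\item Transport this patch along $\partial\Sigma\cap\partial\mathbf G_1$ to $\mathbf x$, using the tube of Step~1 as a chain of balls. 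The relevant quantity is the distance of the displaced boundary from $\Sigma'$.
\item Each change of face, edge or vertex costs a factor $\tilde{\mathrm d}$, through the angle lower bound $\theta_0$. In three dimensions at most three such transitions (face $\to$ edge $\to$ vertex) occur, which produces the power $\tilde{\mathrm d}^{3}$.
\end{enumerate}

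\emph{Main obstacle.} The step I expect to be hardest is making the lower bound hold at an arbitrary $\mathbf x\in\partial\Sigma\cap\partial\mathbf G_1$, rather than only near the maximizer. The difficulty is that $\partial\Sigma$ and $\partial\Sigma'$ may touch along $\partial\mathbf G_1$. I would first try the contradiction-plus-compactness argument: take a sequence with $\operatorname{dist}(\mathbf x_n,\Sigma'_n)/\tilde{\mathrm d}_n^3\to0$, pass to Hausdorff limits via Lemma~\ref{lem compact}, and rescale around $\mathbf x_n$ at scale $\tilde{\mathrm d}_n$ to reach a contradiction with the polyhedral structure. If coincident boundary portions defeat this, the fallback is to interpret $\operatorname{dist}(\mathbf x,\Sigma')$ for the points $\mathbf x$ that actually enter the subsequent argument, which are the exterior-visible points where $\tilde{\mathrm d}$ is attained up to a factor.
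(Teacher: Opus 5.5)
The paper gives no argument for this proposition; it only cites \cite[Proposition~3.8]{aspri2022lipschitz}. As printed, however, the proposition is false, and your plan breaks down exactly where it fails.

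\textbf{The lower bound.} It cannot hold at an arbitrary $\mathbf x\in\partial\Sigma\cap\partial\mathbf G_1$. Let $\Sigma'$ be $\Sigma$ with one corner cut off. Then $\tilde{\mathrm d}(\Sigma,\Sigma')>0$, yet every $\mathbf x$ on an untouched face lies in $\partial\Sigma\cap\partial\mathbf G_1$ and has $\operatorname{dist}(\mathbf x,\Sigma')=0$. So neither your Step~2 nor the compactness-plus-rescaling argument in your last paragraph can close. In Step~2, the face/edge/vertex count producing $\tilde{\mathrm d}^{3}$ is a heuristic, not an estimate. Your fallback is the right reading: the statement must be existential, with $\mathbf x$ a maximizer of $\tilde{\mathrm d}$ (after possibly exchanging $\Sigma$ and $\Sigma'$), which is how Proposition~\ref{prop w} uses it. But then the inequality is trivial. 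Indeed $\operatorname{dist}(\mathbf x,\Sigma')=\tilde{\mathrm d}\le 2R_0$, so $\tilde{\mathrm d}^{3}\le(2R_0)^2\operatorname{dist}(\mathbf x,\Sigma')$. All the real content lies in the tube.

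\textbf{The tube.} This is where your Step~1 fails, for three reasons.

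First, Assumption~\ref{asm 3} constrains $\Sigma$ alone. $\Sigma'$ may enter $\mathcal C(\mathbf x,\boldsymbol\omega_{\mathbf x},r_0,\theta_0)$ at distance $\delta\ll a$ from $\mathbf x$ without disconnecting $\mathbf x$ from infinity: a face of $\Sigma'$ parallel to the face of $\Sigma$ at height $\delta$ leaves an open slab. So your ``disconnection'' argument is wrong, and at a maximizer you only know $B_{\tilde{\mathrm d}}(\mathbf x)\cap\Sigma'=\emptyset$.

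Second, your compactness argument for a uniform $r$ yields no contradiction. Admissibility is imposed on $\Sigma$ and $\Sigma'$ separately, so limits of pairs whose union pinches the exterior are perfectly admissible.

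Third, a uniform $r$ is in fact false. Give $\Sigma$ and $\Sigma'$ the same thick-walled hollow box (same outer shape, same large chamber), but with laterally offset top openings that overlap only in a slit of width $w$. Add to one of them a cube component in the middle of the chamber. Then $\tilde{\mathrm d}$ is attained on that cube and is bounded below independently of $w$. The chamber lies in $\mathbf G_1$, and every curve from $B_{2R_0}\setminus\Omega$ into it crosses the slit, which forces $r\le w/2$.

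\textbf{What is missing.} You need a preliminary qualitative reduction, via Lemma~\ref{lem compact}, Proposition~\ref{prop Mosco} and a uniqueness theorem, to the regime $\mathrm d_H(\Sigma,\Sigma')\ll r_0\sin\theta_0$. In that regime $\Sigma'$ lies in a thin neighbourhood of $\Sigma$, so
$\operatorname{dist}(\mathbf x+a\boldsymbol\omega_{\mathbf x},\Sigma\cup\Sigma')\ge\min\{a\sin\theta_0,r_0-a\}-\mathrm d_H(\Sigma,\Sigma')$.
Uniform tubes can then be built from the exterior geometry of the single admissible obstacle $\Sigma$ and shrunk by $\mathrm d_H(\Sigma,\Sigma')$. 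Note that this reduction uses uniqueness, so it is available only in the regimes of Theorem~\ref{cor stability}.
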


With the curve $\gamma$ from Proposition~\ref{prop G1} in place, we construct a chain of overlapping balls to propagate smallness along $\gamma$ by means of Lemma~\ref{lem three spheres}.
Fix $r>0$.
We consider a chain of balls of radius $r_1=r/4$ whose centers lie on $\gamma$ and whose successive centers are separated by at most $r_2-r_1=r/4$.
Accordingly, Lemma~\ref{lem three spheres} will be applied with
\[
r_1=\frac r4,\qquad r_2=\frac r2,\qquad r_3=r.
\]

\begin{lem}\label{lem three}
Let $\mathbf G_1\subset\mathbb R^3$ be connected, and let $\gamma$ be a rectifiable curve joining two distinct points $\mathbf x,\mathbf y\in\mathbf G_1$ such that $V_r(\gamma)\subset\mathbf G_1$.
Assume that $u\in H^1_{\mathrm{loc}}(\mathbf G_1)$ satisfies $\Delta u+k^2u=0$ in $\mathbf G_1$.
Then
\begin{equation}\label{eq iteration of three sphere}
\|u\|_{L^\infty(B_{r_1}(\mathbf y))}
\leq C_t\,\mathcal E\,\|u\|_{L^\infty(B_{r_1}(\mathbf x))}^{\beta^{\,N}},
\end{equation}
where $\mathcal E$ is from Lemma~\ref{lem uniform bound}, $\beta\in(0,1)$ is from Lemma~\ref{lem three spheres}, $N=\big\lceil d_\gamma/(r_2-r_1)\big\rceil$ with $d_\gamma$ the length of $\gamma$, and $C_t>0$ depends only on the a~priori parameters.
\end{lem}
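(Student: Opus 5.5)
We prove Lemma~\ref{lem three} by iterating the three-sphere inequality of Lemma~\ref{lem three spheres} along a chain of balls with centers on $\gamma$.

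\emph{Step 1: the chain of balls.} Parametrize $\gamma$ by arclength, $\gamma:[0,d_\gamma]\to\mathbf G_1$, with $\gamma(0)=\mathbf x$ and $\gamma(d_\gamma)=\mathbf y$. Set $N=\lceil d_\gamma/(r_2-r_1)\rceil$ and define centers
\[
\mathbf x_j:=\gamma\bigl(j\,d_\gamma/N\bigr),\qquad j=0,\dots,N,
\]
so that $\mathbf x_0=\mathbf x$, $\mathbf x_N=\mathbf y$, and $|\mathbf x_{j+1}-\mathbf x_j|\le d_\gamma/N\le r_2-r_1$. Since $V_r(\gamma)\subset\mathbf G_1$, every ball $B_{r_3}(\mathbf x_j)=B_r(\mathbf x_j)$ lies in $\mathbf G_1$, where $u$ solves the Helmholtz equation. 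If $r>\tilde R$, we first shrink $r$ to $\min\{r,\tilde R\}$; this only changes $N$ by a factor depending on the a~priori parameters. Because $|\mathbf x_{j+1}-\mathbf x_j|\le r_2-r_1$, the triangle inequality gives $B_{r_1}(\mathbf x_{j+1})\subset B_{r_2}(\mathbf x_j)$.

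\emph{Step 2: one-step estimate.} Apply Lemma~\ref{lem three spheres} at $\mathbf x_j$ with the fixed choice $s=3r/4$, so that $(1-r_2/s)^{-3/2}=3^{3/2}$ is an absolute constant. Setting $m_j:=\|u\|_{L^\infty(B_{r_1}(\mathbf x_j))}$, this yields
\[
m_{j+1}\le C\,M^{1-\beta}m_j^{\beta},\qquad M:=\sup_j\|u\|_{L^\infty(B_r(\mathbf x_j))}.
\]

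\emph{Step 3: bounding $M$.} This is the step I expect to be the main technical point, because Lemma~\ref{lem uniform bound} only gives an $L^2$ bound by $\mathcal E$. The balls $B_r(\mathbf x_j)$ stay a fixed distance $r$ from $\partial\mathbf G_1$, so interior elliptic estimates for $\Delta+k^2$ (local boundedness, e.g. via the mean-value or Caccioppoli inequality on $B_{2r}$ after adjusting radii) give
\[
\|u\|_{L^\infty(B_r(\mathbf x_j))}\le C(k,r)\,\|u\|_{L^2}\le C\mathcal E.
\]
The $L^2$ norm here is taken over the portion inside $\Omega\setminus D$. For parts of the path outside $\Omega$, we use instead the uniform radiating bound on $B_{2R_0}$, which follows from the same well-posedness and compactness argument. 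Hence $M\le C\mathcal E$.

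\emph{Step 4: iteration.} Normalize $w=u/(C\mathcal E)$ so that $\|w\|_\infty\le1$ on all balls, and write $\tilde m_j:=m_j/(C\mathcal E)$. The one-step estimate becomes $\tilde m_{j+1}\le C'\tilde m_j^{\beta}$. Inducting,
\[
\tilde m_N\le C'^{\,1+\beta+\dots+\beta^{N-1}}\,\tilde m_0^{\beta^N}\le C'^{1/(1-\beta)}\,\tilde m_0^{\beta^N}.
\]
Undoing the normalization gives
\[
m_N\le C_t\,\mathcal E\,(C\mathcal E)^{-\beta^N}m_0^{\beta^N}.
\]
Since $\beta^N\in(0,1)$, the factor $(C\mathcal E)^{-\beta^N}$ is bounded above by $\max\{1,(C\mathcal E)^{-1}\}$, which depends only on the a~priori parameters and can be absorbed into $C_t$. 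This yields \eqref{eq iteration of three sphere}.
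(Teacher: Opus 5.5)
Your proposal is correct and follows the paper's proof. The ingredients match: a chain of balls centred on $\gamma$ with consecutive spacing at most $r_2-r_1$, the three-sphere inequality at each centre, a uniform bound of the outer-ball norms by $C_u\mathcal E$, and iteration using $1+\beta+\cdots+\beta^{N-1}\le(1-\beta)^{-1}$; your normalisation by $C\mathcal E$ only repackages the paper's direct handling of the factor $\mathcal E^{1-\beta^N}$.

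One small imprecision, also present in the paper's appeal to ``standard interior elliptic estimates'': $V_r(\gamma)\subset\mathbf G_1$ keeps only the centres at distance $\ge r$ from $\partial\mathbf G_1$, not the whole balls $B_r(\mathbf x_j)$. So Step~3 needs either a slightly smaller outer radius in the three-sphere step or a uniform $L^\infty$ bound up to the boundary.
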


\begin{proof}
Choose points $\mathbf z_1=\mathbf x,\mathbf z_2,\ldots,\mathbf z_{N+1}=\mathbf y$ on $\gamma$ such that
\[
|\mathbf z_{l+1}-\mathbf z_l|\leq r_2-r_1.
\]
Then
\[
B_{r_1}(\mathbf z_{l+1})\subset B_{r_2}(\mathbf z_l),
\qquad
B_{r_3}(\mathbf z_l)\subset V_r(\gamma)\subset\mathbf G_1.
\]
Applying Lemma~\ref{lem three spheres} at each $\mathbf z_l$, we obtain a constant $C>0$, depending only on the a~priori parameters and on $k$, such that
\[
\|u\|_{L^\infty(B_{r_1}(\mathbf z_{l+1}))}
\leq
C\,\|u\|_{L^\infty(B_{r_3}(\mathbf z_l))}^{1-\beta}
\|u\|_{L^\infty(B_{r_1}(\mathbf z_l))}^{\beta}.
\]

Since $B_{r_3}(\mathbf z_l)\subset B_{2R_0}$ for all $l$, standard interior elliptic estimates together with \eqref{eq uniform bounded} yield a constant $C_u>0$ such that
\[
\|u\|_{L^\infty(B_{r_3}(\mathbf z_l))}\leq C_u\,\mathcal E.
\]
Hence
\[
\|u\|_{L^\infty(B_{r_1}(\mathbf z_{l+1}))}
\leq
(C\,C_u^{\,1-\beta})\,\mathcal E^{\,1-\beta}
\|u\|_{L^\infty(B_{r_1}(\mathbf z_l))}^{\beta}.
\]
Iterating for $l=1,\ldots,N$ yields
\[
\|u\|_{L^\infty(B_{r_1}(\mathbf y))}
\leq
(C\,C_u^{\,1-\beta})^{1+\beta+\cdots+\beta^{N-1}}
\mathcal E^{\,1-\beta^{N}}
\|u\|_{L^\infty(B_{r_1}(\mathbf x))}^{\beta^{\,N}}.
\]
Since
\[
1+\beta+\cdots+\beta^{N-1}\leq (1-\beta)^{-1},
\]
the fixed factor can be absorbed into a constant $C_t>0$, and \eqref{eq iteration of three sphere} follows.

The proof is complete.
\end{proof}

When the incident direction $\mathbf p$ and the wavenumber $k$ in \eqref{eq incident} are fixed, the single far-field error is measured by
\[
\|u_\infty(\hat{\mathbf{x}})-u'_\infty(\hat{\mathbf{x}})\|_{L^2(\mathbb S^2)}.
\]
We aim to estimate $u-u'$ and $\nabla(u-u')$ on boundary portions of the obstacles.
Define the error field by
\begin{equation}\label{eq w}
w:=u-u'
\qquad\text{in }B_{2R_0}\setminus(\Sigma\cup \Sigma').
\end{equation}
Then $w$ satisfies
\[
\Delta w+k^2 w=0
\qquad\text{in }B_{2R_0}\setminus(\Sigma\cup \Sigma'),
\]
and represents the error propagated from the far field.

\subsection{Stability estimates: from far-field to near-field}

The bounded annulus $B_{2R_0}\setminus \Omega$ associated with the admissible class $\mathcal A$ is called the \textit{near-field region}.
The function $w$ defined in \eqref{eq w} is referred to as the \textit{near-field error}.

\begin{lem}\cite{isakov1992stability}\label{lem Isakov}
Assume that the far-field error satisfies
\begin{equation}\label{eq far-field error}
\|u_\infty(\hat{\mathbf x})-u'_\infty(\hat{\mathbf x})\|_{L^2(\mathbb S^{n-1})}\leq \varepsilon
\end{equation}
for some $\varepsilon>0$.
Then there exists $\varepsilon_m\in(0,e^{-1})$, depending only on the a~priori parameters, such that if $0<\varepsilon<\varepsilon_m$, then
\[
\|w\|_{L^\infty(B_{2R_0}\setminus \Omega)}
\leq
C_f\,\exp\!\Big[-\big(-\log \varepsilon\big)^{1/2}\Big],
\]
where $w$ is defined in \eqref{eq w}, and $C_f>0$ depends only on the a~priori parameters.

Moreover, there exists $\zeta>0$, depending only on the a~priori parameters, such that for every $\mathbf x_0$ with
\[
B_\zeta(\mathbf x_0)\subset B_{2R_0}\setminus \Omega,
\]
one has
\begin{equation}\label{eq near field error}
\|w\|_{L^\infty(B_\zeta(\mathbf x_0))}
\leq
\varepsilon_1,
\qquad
\varepsilon_1:=C_f\,\exp\!\Big[-\big(-\log \varepsilon\big)^{1/2}\Big].
\end{equation}
\end{lem}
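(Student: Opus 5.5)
The plan is to reduce the estimate to a mode-by-mode analysis of the radiating field $w$ outside a ball containing both obstacles. Outside $\Sigma\cup\Sigma'$ the incident waves cancel, so $w=u^s-u'^s$ is a radiating solution of the Helmholtz equation in $\mathbb R^n\setminus\overline{B_{R_0}}$, and its far-field pattern is $u_\infty-u'_\infty$. To have some room, I would first replace $R_0$ by a slightly smaller radius $R_1<R_0$ with $\Sigma\cup\Sigma'\subset B_{R_1}$. This only changes the a priori constants, and if necessary it is done by enlarging $R_0$ at the outset in Assumption~\ref{asm 1}. Then, for $|\mathbf x|>R_1$, I expand in outgoing wave functions:
\[
w(\mathbf x)=\sum_{\ell\ge0}\sum_{|m|\le\ell} a_\ell^m\, h^{(1)}_\ell(k|\mathbf x|)\,Y_\ell^m(\hat{\mathbf x})
\]
in three dimensions, with $H^{(1)}_\ell$ and $e^{\mathrm i\ell\theta}$ in two dimensions. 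The far-field pattern is then $u_\infty-u'_\infty=\sum a_\ell^m\,\mathrm i^{-\ell-1}k^{-1}Y_\ell^m$. By Parseval, $\sum|a_\ell^m|^2\le k^2\varepsilon^2$.

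Next I need an a priori bound on the coefficients. Lemma~\ref{lem uniform bound} controls $u$ and $u'$ in $L^2(\Omega\setminus D)$, and interior elliptic estimates on the annulus $R_1<|\mathbf x|<R_0$ give $\|w\|_{L^2(\partial B_{R_1'})}\le C\mathcal E$ for some fixed $R_1<R_1'<R_0$. Orthogonality of the $Y_\ell^m$ then yields $|a_\ell^m|\,|h^{(1)}_\ell(kR_1')|\le C\mathcal E$ for every $\ell,m$.

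On a sphere $|\mathbf x|=r\in[R_0,2R_0]$, I split the series at a level $L$.
\begin{itemize}
\item For the low modes $\ell\le L$, I use the Parseval bound together with the growth $|h^{(1)}_\ell(kr)|\le C\,(2\ell)!/(kr)^{\ell+1}\lesssim e^{C L\log L}$. This gives a contribution of at most $\varepsilon\, e^{CL\log L}$, after summing over the $O(L^2)$ indices.
\item For the high modes $\ell>L$, I use the coefficient bound and the ratio asymptotics $|h^{(1)}_\ell(kr)|/|h^{(1)}_\ell(kR_1')|\le C\,(R_1'/r)^{\ell}$ for $\ell$ large, which are uniform in $r\ge R_0$. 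Together with $|Y_\ell^m|\le C\ell^{1/2}$, this gives a contribution of at most $C\mathcal E\sum_{\ell>L}\ell^{2}\vartheta^{\ell}\le C\vartheta_1^{L}$, where $\vartheta=R_1'/R_0<\vartheta_1<1$.
\end{itemize}
These two bounds give $\|w\|_{L^\infty}$ on the region, since the estimate is uniform in $r$.

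It then remains to balance. Choosing $L=(-\log\varepsilon)^{1/2}$, the low-mode term is at most $\exp(-\log(1/\varepsilon)+C(-\log\varepsilon)^{1/2}\log(-\log\varepsilon))\le\exp(-\tfrac12\log(1/\varepsilon))$ once $\varepsilon<\varepsilon_m$ is small. The high-mode term is $\exp(-c(-\log\varepsilon)^{1/2})$. Absorbing $c$ into the constants, by taking $L$ a fixed multiple of $(-\log\varepsilon)^{1/2}$ or by weakening the exponent, gives $C_f\exp[-(-\log\varepsilon)^{1/2}]$.

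The second assertion is immediate, because $B_\zeta(\mathbf x_0)\subset B_{2R_0}\setminus\Omega$ for any fixed $\zeta$ such as $\zeta=R_0/4$.

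The main technical point is making the Hankel-function estimates uniform: the low-order growth bound and, above all, the high-order ratio bound $|h_\ell(kr)/h_\ell(kR_1')|\le C(R_1'/r)^\ell$ uniformly for all $\ell$ and all $r\ge R_0$. I would prove these from the Debye asymptotics, or from the series representation with the small-argument regime $\ell\gg kr$, treating finitely many $\ell$ separately. I would also make sure that the geometric gap $R_1'<R_0$ is fixed by the a priori parameters, since that gap is what produces the exponential decay of the tail.
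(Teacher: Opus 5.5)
The paper gives no argument for this lemma; it only cites \cite{isakov1992stability}. Your proposal is the standard self-contained proof of this kind of far-field-to-near-field estimate, and it is correct. You expand $w$ in outgoing waves, bound the modes $\ell\le L$ by Parseval against the far-field error, and bound the modes $\ell>L$ by the a~priori bound $\mathcal E$ on an inner sphere together with the Hankel ratio decay $(R_1'/r)^{\ell}$. You then truncate at $L\sim(-\log\varepsilon)^{1/2}$.

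The Hankel bounds you flag are routine. For $\ell\gtrsim t^2$, $|h^{(1)}_\ell(t)|$ and $|H^{(1)}_\ell(t)|$ are uniformly comparable to their small-argument leading terms $(2\ell-1)!!\,t^{-\ell-1}$ and $\tfrac{(\ell-1)!}{\pi}(2/t)^{\ell}$. Both moduli are decreasing in $t$, and the finitely many remaining $\ell$ only affect constants.

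Compared with the bare citation, your route makes two things explicit. First, it needs a fixed separation between $\Sigma\cup\Sigma'$ and the inner sphere of the annulus. The statement does not supply one: it only assumes $D\subset B_{R_0}$, yet asks for the bound down to $|\mathbf x|=R_0$. Your renormalisation of $R_0$ is therefore genuinely needed for any argument of this type, not just a convenience. It costs nothing downstream, since Proposition~\ref{prop w} only uses smallness of $w$ on some fixed near-field ball. Second, it shows that the rate $\exp[-(-\log\varepsilon)^{1/2}]$ is merely a convenient, non-optimal truncation choice. A proper balancing gives $\exp(-c\log(1/\varepsilon)/\log\log(1/\varepsilon))$, and any rate $\exp(-c|\log\varepsilon|^{\gamma})$ yields the same $\log|\log\varepsilon|$ scale after the three-sphere iteration.

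One small correction to your final step. To obtain exactly $C_f\exp[-(-\log\varepsilon)^{1/2}]$, take $L=c^{-1}(-\log\varepsilon)^{1/2}$ with $c=\log(1/\vartheta_1)$; the low modes are then still $O(\varepsilon^{1/2})$. Merely weakening the exponent does not reproduce the stated form when $c<1$.
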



\subsection{Stability estimates: from near-field to boundary}

The next lemma gives the local regularity of the error field $w$ near a boundary patch, namely its local $C^{1,\alpha}$ regularity.

\begin{lem}\label{lem local Holder}
Let $\Sigma,\Sigma' \in \mathcal A$, and let $u,u'$ be the solutions to \eqref{eq main system} associated with $(\Sigma,\eta)$ and $(\Sigma',\eta')$, respectively.
Assume that there exist a point $\mathbf y_1\in\partial\Sigma$ and a constant $\tilde h>0$ such that
\[
B_{\tilde h}(\mathbf y_1)\cap \Sigma'=\emptyset.
\]
Then there exist $h\in(0,\tilde h)$ and $\alpha\in(0,1)$, depending only on the a~priori parameters, such that
\[
w\in C^{1,\alpha}\bigl(\overline{B_h(\mathbf y_1)\setminus(\Sigma\cup\Sigma')}\bigr),
\qquad
w:=u-u'.
\]
\end{lem}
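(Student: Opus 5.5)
Since $B_{\tilde h}(\mathbf y_1)\cap\Sigma'=\emptyset$, the field $u'$ solves the Helmholtz equation throughout $B_{\tilde h}(\mathbf y_1)$. It is therefore real-analytic there, and interior estimates combined with Lemma~\ref{lem uniform bound} give
\[
\|u'\|_{C^{2}(\overline{B_{\tilde h/2}(\mathbf y_1)})}\le C(k,\tilde h,\mathcal E).
\]
So the only nontrivial task is the boundary regularity of $u$ near $\mathbf y_1$, in the set $B_h(\mathbf y_1)\setminus\Sigma$. That set coincides with $B_h(\mathbf y_1)\setminus(\Sigma\cup\Sigma')$ for $h<\tilde h$.

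I plan to localize to a flat portion of $\partial\Sigma$ and use a reflection argument.

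\emph{Choice of $h$.} Choose $h$ so small, depending only on $r_0$ and $\theta_0$ in Assumption~\ref{asm 1}, that $B_h(\mathbf y_1)\cap\partial\Sigma$ is one of the following:
\begin{itemize}
\item a piece of a single face (or edge in 2D);
\item a neighborhood of a single edge;
\item a neighborhood of a single vertex.
\end{itemize}
Because the ATD construction uses $\mathbf y_1$ near an exterior-visible flat patch, the main case is the flat one. There I rotate so that $\partial\Sigma\cap B_h(\mathbf y_1)\subset\{x_n=0\}$ and $\Sigma$ lies below.

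\emph{Regularity in the flat case, by regime.}
\begin{itemize}
\item Sound-soft: odd reflection extends $u$ to a Helmholtz solution across the plane. The result is smooth with bounds.
\item Sound-hard: even reflection does the same.
\item $\eta\in L^\infty$: I treat $u$ as a weak solution of $\Delta u+k^2u=0$ in $B_{2h}^+$ with Neumann datum $\partial_\nu u=-\eta u$.
\end{itemize}
For the impedance case I bootstrap:
\begin{enumerate}
\item $u\in H^1$ with the uniform bound from Lemma~\ref{lem uniform bound}.
\item Since the Neumann datum is bounded, De Giorgi--Nash--Moser up to the flat boundary gives $u\in C^{0,\gamma}$, hence $\eta u\in L^\infty$.
\item Standard $W^{2,p}$ estimates for Neumann problems in half-balls would need Sobolev-trace data, which $L^\infty$ data do not supply. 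I therefore use instead the $C^{1,\alpha}$ estimate for weak solutions with bounded conormal data on a flat (or Lipschitz-small-constant, $C^{1,\alpha}$) boundary, as in Lieberman's oblique derivative theory. This gives $u\in C^{1,\alpha}(\overline{B_h^+})$ for every $\alpha<1$.
\end{enumerate}
All constants depend only on $M_0,k,\mathcal E,h$.

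\emph{Edge/vertex points and mixed boundaries.} Near edges or vertices, a $C^{1,\alpha}$ estimate holds only when the opening angle is small. For reentrant angles the corner singularity $r^{\pi/\beta}$ destroys $C^1$ regularity. To handle this, I would choose $h$ so that the lemma is applied only away from edges, at distance $\gtrsim r_0$. Assumption~\ref{asm 1}(5) guarantees such points exist on every face. For mixed boundaries (class $\Xi_3$), $h$ is likewise taken so that the ball avoids $\Gamma_0$.

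\emph{Conclusion and main obstacle.} Subtracting the smooth $u'$ yields $w\in C^{1,\alpha}$ with the stated dependence. I expect the main obstacle to be this uniform treatment near edges and vertices, that is, securing a uniform choice of $h$ depending only on the a priori parameters that keeps the ball in a flat region. I would first try to build that choice into the selection of $\mathbf y_1$ rather than prove corner regularity.
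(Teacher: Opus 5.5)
\paragraph{The gap: step~3 of the impedance bootstrap.} The $C^{1,\alpha}$ estimates for conormal problems need a $C^{0,\alpha}$ conormal datum; this holds for Lieberman's estimates and for Schauder estimates after even reflection across the face. For a merely bounded datum the sharp conclusion is $u\in W^{1,p}$ for all $p<\infty$ with $\nabla u\in\mathrm{BMO}$. For example, $\mathrm{Re}(z\log z)$ in the upper half-plane has Neumann datum $-\pi\mathbf 1_{\{x_1<0\}}$ and $\partial_{x_1}$-derivative $\log|\mathbf x|+1$. Since $g=-\eta u$ is only $L^\infty$ for $\eta\in L^\infty$, however regular $u$ has become, your bootstrap stops at $C^{0,\gamma}$.

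No other route can do better. If $w$, and hence $u$, were $C^{1,\alpha}$ up to the face, then $\eta=-\partial_\nu u/u$ would be $C^{0,\alpha}$ on the set where $u\neq 0$. That set is nonempty in every open subpatch: if $u\equiv0$ there, then $\partial_\nu u=-\eta u=0$ as well, and unique continuation forces $u\equiv 0$. So the lemma needs an extra hypothesis such as $\eta\in C^{0,\beta}$ on the face. This is automatic for $\Xi_4$, but it is not implied by $\Xi_1$, $\Xi_2$, or the merely continuous $\Xi_3$. With this hypothesis, $g$ is H\"older, the conormal Schauder estimate closes, and you get $\alpha\le\beta$ rather than every $\alpha<1$. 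Your reflection arguments for sound-soft and sound-hard pieces and your treatment of $u'$ are fine.

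\paragraph{Comparison with the paper.} The paper argues in Sobolev spaces. It takes the trace $u\in H^{1/2}$ on the flat face and uses the bound $\|\partial_\nu u\|_{H^{1/2}}\le C\|\sigma u\|_{H^{1/2}}$ read off from the Robin condition. It then applies McLean's local regularity theorem to reach $H^2$, repeats once more to reach $H^3$, and concludes with $H^3\hookrightarrow C^{1,\alpha}$. The multiplier step, and its $H^{3/2}$ analogue in the second round, tacitly require $\eta$ to act boundedly on these trace spaces. An $L^\infty$ function with a jump does not. So the paper's proof carries the same hidden regularity assumption on $\eta$, and in fact a stronger one than the H\"older condition your route needs once repaired.

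\paragraph{Edges and the choice of $\mathbf y_1$.} Both arguments only work for $\mathbf y_1$ in the relative interior of a face. You are right that $C^{1,\alpha}$ fails near reentrant edges. However, Assumption~\ref{asm 1}(5) supplies only one deep point per face; it does not place the point where the lemma is applied (a maximizer of $\tilde{\mathrm d}$) away from edges. Building this restriction into the choice of $\mathbf y_1$ therefore restricts the statement rather than proving it as written.
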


\begin{proof}
After a rigid motion, we may assume that $\mathbf y_1=0$ and that the face of $\Sigma$ containing $\mathbf y_1$ is contained in the plane
\[
\Pi:=\{\mathbf{x}=(x_1,x_2,x_3)\in\mathbb R^3:\ x_3=0\}.
\]
Let $\Pi_1\subset\partial\Sigma$ denote the corresponding planar face near the origin.

Set $h_1:=\tilde h$.
Since $u\in H^1_{\mathrm{loc}}(\mathbb R^3\setminus\Sigma)$, one has
\[
u\in H^1(B_{h_1}(\mathbf y_1)\setminus\Sigma).
\]
By the trace theorem \cite[Theorem~3.37]{mclean2000strongly} and Lemma~\ref{lem RCP}, there exists a constant $C_1>0$ such that
\[
\|\sigma u\|_{H^{1/2}(\Pi_1\cap B_{h_1}(\mathbf y_1))}
\leq
C_1\|u\|_{H^1(B_{h_1}(\mathbf y_1)\setminus\Sigma)},
\]
where $\sigma$ denotes the trace operator.

Since $u$ satisfies the impedance boundary condition on $\Pi_1\cap B_{h_1}(\mathbf y_1)$, we have
\[
\partial_\nu u+\eta u=0
\qquad\text{on }\Pi_1\cap B_{h_1}(\mathbf y_1).
\]
Hence there exists a constant $C_2>0$ such that
\[
\left\|\partial_\nu u\right\|_{H^{1/2}(\Pi_1\cap B_{h_1}(\mathbf y_1))}
\leq
C_2\|\sigma u\|_{H^{1/2}(\Pi_1\cap B_{h_1}(\mathbf y_1))}.
\]
Therefore, the assumptions of \cite[Theorem~4.18]{mclean2000strongly} are satisfied.
It follows that for some $h_2\in(0,h_1)$ there exists $C_3>0$ such that
\[
\|u\|_{H^2(B_{h_2}(\mathbf y_1)\setminus\Sigma)}
\leq
C_3\|u\|_{H^1(B_{h_1}(\mathbf y_1)\setminus\Sigma)}
+
C_3\left\|\partial_\nu u\right\|_{H^{1/2}(\Pi_1\cap B_{h_1}(\mathbf y_1))}.
\]
Combining the previous estimates, we obtain
\[
\|u\|_{H^2(B_{h_2}(\mathbf y_1)\setminus\Sigma)}
\leq
C_4\|u\|_{H^1(B_{h_1}(\mathbf y_1)\setminus\Sigma)}
\]
for some constant $C_4>0$.

Since $\Pi_1$ is planar, we may repeat the same argument on a smaller ball.
Thus, for some $h_3\in(0,h_2)$ there exists $C_5>0$ such that
\[
\|u\|_{H^3(B_{h_3}(\mathbf y_1)\setminus\Sigma)}
\leq
C_5\|u\|_{H^2(B_{h_2}(\mathbf y_1)\setminus\Sigma)}.
\]
Hence
\[
u\in H^3(B_{h_3}(\mathbf y_1)\setminus\Sigma).
\]

By the Sobolev embedding theorem \cite[Theorem~5.4]{adams1975sobolev}, together with the local Lipschitz regularity of admissible obstacles, there exist $\alpha\in(0,1)$ and $C_6>0$, depending only on the a~priori parameters, such that
\[
\|u\|_{C^{1,\alpha}(\overline{B_{h_3}(\mathbf y_1)\setminus\Sigma})}
\leq
C_6\|u\|_{H^3(B_{h_3}(\mathbf y_1)\setminus\Sigma)}.
\]

On the other hand, since $u'$ is analytic in $B_{\tilde h}(\mathbf y_1)$ and $h_3<\tilde h$, one has
\[
u'\in C^{1,\alpha}(\overline{B_{h_3}(\mathbf y_1)\setminus\Sigma'}).
\]
Taking $h:=h_3$, it follows that
\[
w=u-u'\in C^{1,\alpha}\bigl(\overline{B_h(\mathbf y_1)\setminus(\Sigma\cup\Sigma')}\bigr).
\]

The proof is complete.
\end{proof}

\begin{prop}\label{prop w}
Let $\Sigma,\Sigma'\in\mathcal A$ be two admissible obstacles.
Assume that the far-field error satisfies $0<\varepsilon\leq \varepsilon_m$.
Then there exist a point $\mathbf x_1\in\partial\Sigma\cap\partial\mathbf G_2$ attaining the maximum in $\tilde{\mathrm d}(\Sigma,\Sigma')$ and a radius
\[
h\in(0,\tilde{\mathrm d}(\Sigma,\Sigma'))
\]
such that
\[
S_h:=\partial\Sigma\cap B_h(\mathbf x_1)
\qquad\text{satisfies}\qquad
S_h\cap\Sigma'=\emptyset.
\]
Moreover,
\begin{equation}\label{eq T_1}
\max_{\mathbf{x}\in S_h}\bigl\{|\nabla w(\mathbf{x})|,\ |w(\mathbf{x})|\bigr\}
\leq
C_w(\log|\log(1/\varepsilon)|)^{-\alpha}
=:T(\varepsilon),
\end{equation}
where $\alpha\in(0,1)$ and $C_w>0$ depend only on the a~priori parameters.
\end{prop}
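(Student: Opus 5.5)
We follow the standard chain: far field, then near field, then a point off the boundary, then the boundary patch. First we fix the geometry. By compactness of $\partial\Sigma\cap\partial\mathbf G_2$ we choose a maximizer $\mathbf x_1$ of $\tilde{\mathrm d}(\Sigma,\Sigma')$; after possibly swapping the roles of $\Sigma$ and $\Sigma'$ we may assume it lies on $\partial\Sigma$. Then $\operatorname{dist}(\mathbf x_1,\Sigma')=\tilde{\mathrm d}(\Sigma,\Sigma')=:\mathfrak d>0$. Hence any $h<\mathfrak d$ gives $B_h(\mathbf x_1)\cap\Sigma'=\emptyset$, and in particular $S_h\cap\Sigma'=\emptyset$. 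We will take $h$ to be a fixed fraction of $\mathfrak d$, also capped by the radius from Lemma~\ref{lem local Holder} with $\tilde h=\mathfrak d$. Since $\mathbf x_1$ is exterior-visible, Proposition~\ref{prop G1} provides a curve $\gamma$ from a near-field point $\mathbf x_0\in B_{2R_0}\setminus\Omega$ to $\mathbf x_1+a\,\omega_{\mathbf x_1}$ with $V_r(\gamma)\subset\mathbf G_1$. Its length is bounded in terms of the a~priori parameters, so the number $N$ of balls in the chain of Lemma~\ref{lem three} is uniformly bounded.

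Second, we propagate smallness. Lemma~\ref{lem Isakov} gives $\|w\|_{L^\infty(B_\zeta(\mathbf x_0))}\le\varepsilon_1$, with $\varepsilon_1=C_f\exp[-(\log(1/\varepsilon))^{1/2}]$. Lemma~\ref{lem three} then yields $\|w\|_{L^\infty(B_{r_1}(\mathbf y))}\le C\varepsilon_1^{\beta^N}$ at $\mathbf y=\mathbf x_1+a\omega_{\mathbf x_1}$. This bound is still of order $\exp[-c(\log 1/\varepsilon)^{1/2}]$.

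Third, we cross the remaining gap between $\mathbf y$ and the boundary point. This step is the main obstacle: near $\partial\Sigma$ the balls shrink, and the standard device is a chain of three-sphere inequalities along the cone $\mathcal C(\mathbf x_1,\omega_{\mathbf x_1},r_0,\theta_0)$ of Assumption~\ref{asm 3}. We would use balls with geometrically decreasing radii, centered at $\mathbf x_1+t\omega_{\mathbf x_1}$, and stop at distance $s$ from $\mathbf x_1$. About $|\log s|$ steps are needed, which gives $|w(\mathbf x_1+s\omega)|\le C\varepsilon_1^{\beta^{C|\log s|}}=C\varepsilon_1^{s^{c}}$. Near $\mathbf x_1$ we then use the $C^{1,\alpha}$ bound from Lemma~\ref{lem local Holder}, with a norm controlled uniformly by $\mathcal E$ via Lemma~\ref{lem uniform bound}, to write
\[
|w(\mathbf x)|\le C\,s^{1+\alpha}+C\varepsilon_1^{s^{c}},\qquad \mathbf x\in S_h .
\]
The gradient is handled the same way: interpolating between the $L^\infty$ smallness and the $C^{1,\alpha}$ bound gives a Hölder-type power of the same quantity. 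The same cone argument applies at every $\mathbf x\in S_h$, because Assumption~\ref{asm 3} provides one direction valid for all boundary points in $B_{r_0}(\mathbf x_1)$.

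Finally, we optimize in $s$. Choosing $s=(\log|\log\varepsilon_1|)^{-1/c'}$ makes $\varepsilon_1^{s^c}=\exp(-s^c|\log\varepsilon_1|)$ smaller than any power of $s$. Since $|\log\varepsilon_1|\sim(\log1/\varepsilon)^{1/2}$, this produces $C_w(\log|\log(1/\varepsilon)|)^{-\alpha}=T(\varepsilon)$, after adjusting $\alpha$ and using $\varepsilon\le\varepsilon_m<e^{-1}$ to absorb constants. The delicate point is ensuring that all constants, including $h$, the $C^{1,\alpha}$ norm, and the chain length, are uniform over the admissible class. We would secure this through the compactness in Lemma~\ref{lem compact} together with Proposition~\ref{prop Mosco}.
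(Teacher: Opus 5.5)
Your argument is correct at the paper's level of rigor, but the final leg is organised differently.

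\textbf{The paper's route.} It runs a single chain of balls of \emph{uniform} radius $r$ along the whole curve. The curve comes from Proposition~\ref{prop G1}, extended by a short cone segment inside $B_h(\mathbf x_1)$, where $\Sigma'$ is absent. The radius is tied to $\varepsilon$, namely $r\sim(\log|\log\varepsilon_1|)^{-1}$, so the chain needs $\sim d_\gamma/r$ three-sphere steps. This gives the bound $C\varepsilon_1^{\beta^{d_\gamma/r+1}}$ on a ball $B_r(\mathbf z_3)$ with $|\mathbf x-\mathbf z_3|\le 2r$. The bridging term $(2r)^\alpha$ from Lemma~\ref{lem local Holder}, together with an interior gradient estimate at $\mathbf z_3$, then dictates the $\log\log$ modulus.

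\textbf{Your route.} You use boundedly many fixed-size balls and then descend along the cone of Assumption~\ref{asm 3} with geometrically shrinking balls. Reaching distance $s$ then costs only $\sim|\log s|$ steps, and the exponent is $s^{c}$. In the paper the exponent is $\beta^{d_\gamma/r}$, which is exponentially small in $1/r$. This cone iteration is strictly more efficient. Taking $s$ to be a negative power of $|\log\varepsilon_1|$ would give a single-logarithmic bound $C(\log(1/\varepsilon))^{-\mu}$, which implies \eqref{eq T_1}. Your choice $s=(\log|\log\varepsilon_1|)^{-1/c'}$ is wasteful but lands exactly on the stated form. The paper's route is simpler: one radius and one application of Lemma~\ref{lem three}.

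Two points in your version need more than you give them.

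First, Proposition~\ref{prop G1} as stated only brings you to the fixed height $a$. Your shrinking balls around $\mathbf x+t\omega_{\mathbf x_1}$ must therefore also avoid $\Sigma'$ for $\mathfrak d\lesssim t\le a$. Assumption~\ref{asm 3} for $\Sigma$ alone does not give this when $\mathfrak d\ll a$. You need an extra geometric argument. For instance, for small $\mathfrak d$, admissibility and Hausdorff closeness force $\Sigma'$ to be a Lipschitz subgraph in direction $\omega_{\mathbf x_1}$ near $\mathbf x_1$, whence $\operatorname{dist}(\mathbf x_1+t\omega_{\mathbf x_1},\Sigma')\gtrsim\mathfrak d+t$. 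The paper sidesteps this by asserting that the curve already ends inside $B_h(\mathbf x_1)$.

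Second, compactness and Mosco convergence control $u,u'$ only in $L^2$ and $H^1$, not in $C^{1,\alpha}$. They will not by themselves make the H\"older seminorm uniform. The paper simply treats that seminorm from Lemma~\ref{lem local Holder} as a~priori controlled, and that is the actual status of this step in both arguments.
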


\begin{proof}
We propagate smallness from the near-field region to a boundary patch of $\partial\Sigma$.
By the definition of $\tilde{\mathrm d}$ in \eqref{eq d_M}, choose
\[
\mathbf x_1\in\partial\Sigma\cap\partial\mathbf G_2
\]
such that
\[
\mathfrak d:=\tilde{\mathrm d}(\Sigma,\Sigma')>0
\]
is attained at $\mathbf x_1$.
Fix
\[
h=c_h\mathfrak d,
\qquad 0<c_h<1,
\]
so that
\[
S_h:=\partial\Sigma\cap B_h(\mathbf x_1)
\]
satisfies $S_h\cap\Sigma'=\emptyset$.
After a rigid motion, we may assume that $\mathbf x_1=0$ and that $0\in\Pi_1\subset\partial\Sigma$.

By Proposition~\ref{prop G1}, there exists a rectifiable curve $\gamma_0$ connecting a near-field point
\[
\mathbf x_0\in B_{2R_0}\setminus B_{R_0},
\]
for which \eqref{eq near field error} holds, to a point
\[
\mathbf z_1\in B_h(0)\setminus\overline\Sigma,
\]
together with a radius $r>0$ such that
\[
V_r(\gamma_0)\subset\mathbf G_1.
\]
Using the exterior cone condition from Assumption~\ref{asm 3}, we may extend $\gamma_0$ by a short segment inside the exterior cone at the origin and thereby obtain a curve $\gamma$ whose tubular neighbourhood still satisfies
\[
V_r(\gamma)\subset\mathbf G_1.
\]
Choosing $r>0$ sufficiently small, we may also ensure that
\[
B_r(\mathbf z_1)\subset B_h(0)\setminus\overline\Sigma.
\]

Fix $r\in(0,h/4)$ and define
\[
Q_1:=\{\mathbf y\in B_h(0): \mathrm{dist}(\mathbf y,\partial\Sigma)\geq r\},
\qquad
Q_2:=B_h(0)\setminus Q_1.
\]
Since $B_h(0)\cap\Sigma'=\emptyset$, every point $\mathbf y\in Q_2$ can be connected to a point $\mathbf z_3\in Q_1$ with
\[
|\mathbf y-\mathbf z_3|<r,
\qquad
B_r(\mathbf z_3)\cap(\Sigma\cup\Sigma')=\emptyset.
\]

Construct a chain of overlapping balls of radius $r$ with centers on $\gamma$, beginning at $B_r(\mathbf z_0)$, where $\mathbf z_0=\mathbf x_0$, and ending at $B_r(\mathbf z_3)$.
Let $d_\gamma$ be the length of $\gamma$.
If $N$ denotes the number of links in the chain, then
\[
N\leq \frac{d_\gamma}{r}+1.
\]

By Lemma~\ref{lem Isakov}, the near-field error satisfies
\[
\varepsilon_1:=\|w\|_{L^\infty(B_\zeta(\mathbf x_0))}
\leq
C_f\,\exp\!\left(-(-\log\varepsilon)^{1/2}\right).
\]
Applying Lemma~\ref{lem three} along the chain gives
\[
\|w\|_{L^\infty(B_r(\mathbf z_3))}
\leq
C_t\,\mathcal E\,\varepsilon_1^{\,\beta^{\,N}}
\leq
C_t\,\mathcal E\,\varepsilon_1^{\,\beta^{\,d_\gamma/r+1}},
\]
where $\beta\in(0,1)$ is the constant from Lemma~\ref{lem three spheres}.

We now choose
\[
r:=\frac{d_\gamma\,|\log\beta|}{(1-\alpha)\,\log|\log\varepsilon_1|}.
\]
For sufficiently small $\varepsilon$, this choice ensures
\[
r\leq \min\{h/4,\zeta\}.
\]
Moreover, there exist constants $C_1,C_2>0$, depending only on the a~priori parameters, such that
\[
\varepsilon_1^{\,\beta^{\,d_\gamma/r+1}}
=
\exp\!\Big(-|\log\varepsilon_1|\,\beta^{\,\log_\beta(|\log\varepsilon_1|^{\,\alpha-1})+1}\Big)
\leq
C_1(\log|\log(1/\varepsilon)|)^{-\alpha},
\]
and
\[
(2r)^\alpha
=
\Big(\frac{2d_\gamma\,|\log\beta|}{1-\alpha}\Big)^\alpha
(\log|\log\varepsilon_1|)^{-\alpha}
\leq
C_2(\log|\log(1/\varepsilon)|)^{-\alpha}.
\]

Let $\mathbf x\in S_h$.
Choose $\mathbf y\in Q_2$ with
\[
|\mathbf x-\mathbf y|=r,
\]
and let $\mathbf z_3\in Q_1$ be the corresponding point above, so that
\[
|\mathbf x-\mathbf z_3|\leq 2r
\qquad\text{and}\qquad
B_r(\mathbf z_3)\cap(\Sigma\cup\Sigma')=\emptyset.
\]
By Lemma~\ref{lem local Holder}, we have
\[
|w(\mathbf x)|
\leq
\mathcal T\,|\mathbf x-\mathbf z_3|^\alpha
+\|w\|_{L^\infty(B_r(\mathbf z_3))}
\leq
C_w(\log|\log(1/\varepsilon)|)^{-\alpha},
\]
where $\mathcal T$ denotes the corresponding H\"older seminorm and $C_w>0$ depends only on the a~priori parameters.

For the gradient, Lemma~\ref{lem local Holder} gives
\[
|\nabla w(\mathbf x)-\nabla w(\mathbf z_3)|
\leq
\mathcal T\,|\mathbf x-\mathbf z_3|^\alpha
\leq
(2r)^\alpha\mathcal T.
\]
On the other hand, interior estimates for $\Delta w+k^2w=0$ imply
\[
|\nabla w(\mathbf z_3)|
\leq
C\,r^{-1}\,\|w\|_{L^\infty(B_r(\mathbf z_3))},
\qquad
r^{-1}
=
\frac{(1-\alpha)\log|\log\varepsilon_1|}{d_\gamma\,|\log\beta|}.
\]
Combining the above bounds for $(2r)^\alpha$ and for $\|w\|_{L^\infty(B_r(\mathbf z_3))}$ yields
\[
|\nabla w(\mathbf x)|
\leq
C_w(\log|\log(1/\varepsilon)|)^{-\alpha},
\qquad
\mathbf x\in S_h.
\]
Together with the estimate for $|w(\mathbf x)|$, this proves \eqref{eq T_1}.

The proof is complete.
\end{proof}

\section{The Two-Dimensional Proof of Theorem~\ref{th relation}}\label{sec 2D}

We begin with the two-dimensional configuration near a flat boundary segment of a non-convex polygonal obstacle.
For such polygons, the geometric discrepancy may be localized in the symmetric difference $K\triangle K'$, which need not contain any corner.
To capture the geometric discrepancy in this corner-free regime, we introduce a two-dimensional ATD attached to a flat boundary segment.
\subsection{The ATD construction in the two-dimensional case}\label{subsec ATD2D}
We denote by 
\[
\mathfrak d := \tilde{\mathrm d}(K,K')
\]
the modified Hausdorff distance defined in \eqref{eq d_M}, and we restrict attention to the case
\begin{equation}\label{eq contr 2}
	\mathfrak d>0 .
\end{equation}

By the definition of $\tilde{\mathrm d}$ and the discussion in Subsection~\ref{subsec DRO}, there exists an exterior-visible \emph{ATD test point} $\mathbf{p}_0\in\partial K\cap\partial\mathbf G_2$ such that 
\[
B_{\mathfrak d}(\mathbf{p}_0)\cap K'=\emptyset .
\]
Since no strict convexity is assumed for $K$, the ball $B_{\mathfrak d}(\mathbf{p}_0)$ need not contain any corner of $K$. 
We therefore work in a neighborhood of a line segment contained in $\partial K$.
After a rigid motion, we may assume that $\mathbf{p}_0=0$ and that 
\[
I_1\subset B_{\mathfrak d}(\mathbf{p}_0)\cap\partial K
\]
lies on the $x_1$-axis. 

We fix a constant $c_2\in(0,1)$ and set $h=c_2\mathfrak d$. 
We also fix an opening angle $\theta_0\in(0,\pi)$. 
We then define the \emph{ATD test domain} $Q_h\subset\mathbb R^2$ by
\[
\partial Q_h = I_1\cup I_2\cup I_3,
\]
where the boundary components are parametrized by
\begin{equation}\label{eq I123}
\begin{aligned}
I_1 &= \{\mathbf{x}=(x_1,x_2)\in\mathbb{R}^2 \mid x_1\in(0,h),\ x_2=0\}, \\
I_2 &= \{\mathbf{x}=(x_1,x_2)\in\mathbb{R}^2 \mid (x_1,x_2)=(r\cos\theta_0,r\sin\theta_0),\ r\in(0,h)\}, \\
I_3 &= \{\mathbf{x}=(x_1,x_2)\in\mathbb{R}^2 \mid (x_1,x_2)=(h\cos\theta,h\sin\theta),\ \theta\in(0,\theta_0)\}.
\end{aligned}
\end{equation}

Two rays obtained by extending $I_1$ and $I_2$ from the origin are denoted by $\tilde{I_1}$ and $\tilde{I_2}$, respectively, and are parametrized by
\begin{equation}\label{eq II'}
\begin{aligned}
\tilde{I_1} &= \{\mathbf{x}= (x_1(t),x_2(t)) = (t,0) \in \mathbb{R}^2 \mid t \geq 0 \}, \\
\tilde{I_2} &= \{\mathbf{x}= (x_1(t),x_2(t)) = (t\cos\theta_0, t\sin\theta_0) \in \mathbb{R}^2 \mid t \geq 0 \}.
\end{aligned}
\end{equation}
We define the exterior parts of the rays beyond $Q_h$ by $I_1' := \tilde{I_1} \setminus I_1$ and $I_2' := \tilde{I_2} \setminus I_2$.
The parameter $\theta_0$ is kept free so that the opening angle of $Q_h$ can be adjusted and geometric degeneracy of the test region can be avoided.
For illustration, Figure~\ref{fig:quarter_circle} shows the special case $\theta_0 = \frac{\pi}{2}$.
In the picture, the region above the boundary arc represents the interior of the obstacle.
\begin{figure}[htbp]
    \centering
    \includegraphics[width=0.6\textwidth, keepaspectratio]{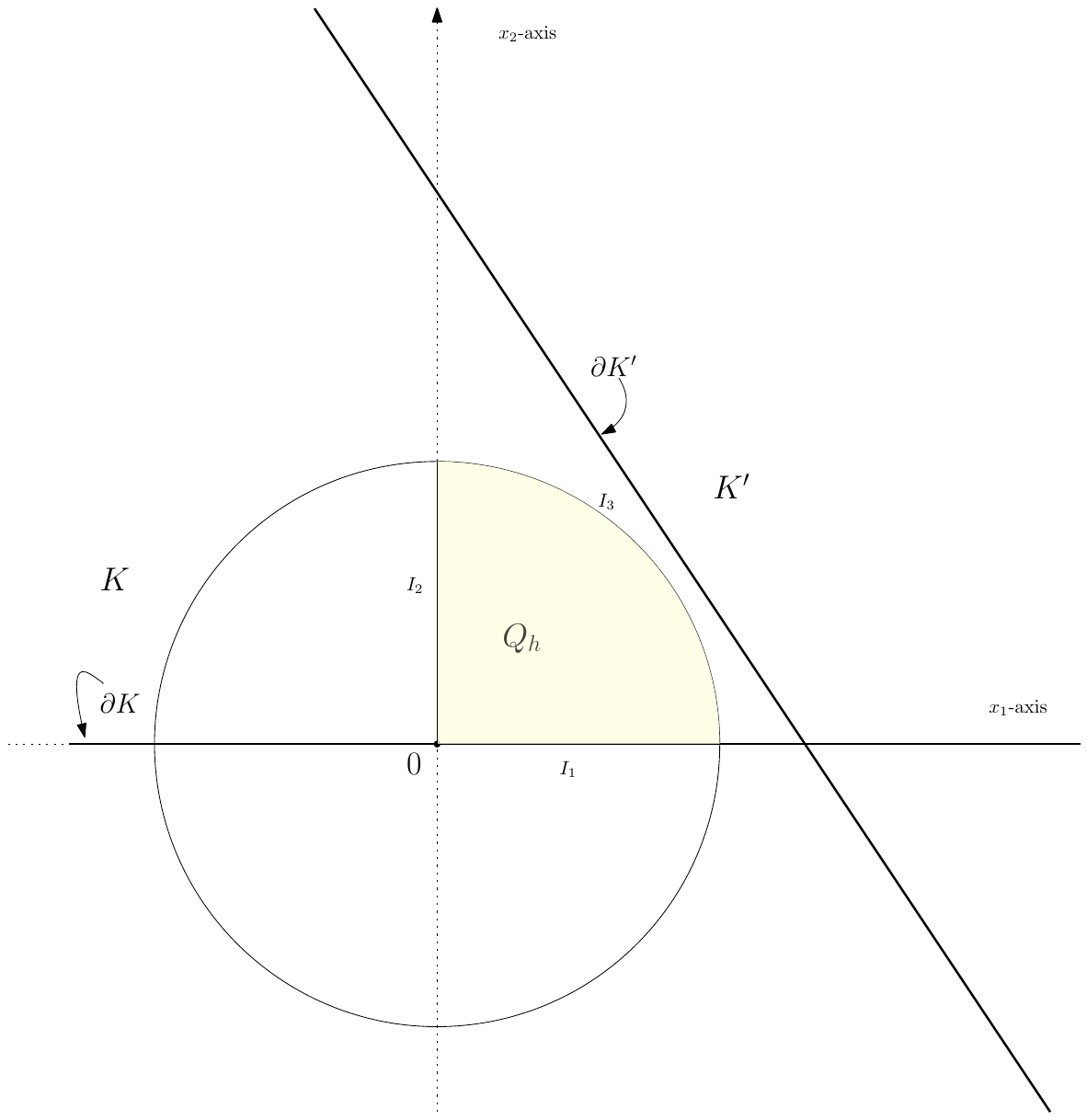}
    \caption{The two-dimensional test domain $Q_h$ attached to a boundary segment.}
    \label{fig:quarter_circle}
\end{figure}

The following remarks explain why the ATD is constructed along a flat boundary segment and why the later non-degeneracy analysis is formulated in terms of hyperplane-type configurations.

\begin{rem}\label{rem:2D-corner-free}
Within the admissible class of Definition~\ref{defn admissible class}, the discrepancy region $K\triangle K'$ need not contain any corner of the polygonal obstacle.
If such a corner is present, then the corner geometry already provides an additional singular feature, and the later flat-boundary non-degeneracy issue is no longer the essential difficulty.
The ATD construction introduced here is therefore designed for the genuinely corner-free case, where the geometric discrepancy is localized near a flat boundary segment.
\end{rem}

\begin{rem}\label{rem:2D-segment-hyperplane}
The ATD test point is chosen on an exterior-visible flat boundary segment.
Accordingly, the later non-degeneracy analysis is attached to the flat boundary piece containing that point, rather than to a single distinguished point.
This is consistent with the hyperplane language introduced in Definition~\ref{defn:ext-hyper}; see also Section~\ref{sec:nondeg}.
\end{rem}

As explained in Remark~\ref{rem:relation-framework}, we carry out the detailed derivation first in the bounded-impedance setting.
Accordingly, throughout this subsection we assume that $\eta\in\Xi_1$.

Fix an incident wave $u^i$ with direction $\mathbf{p}\in\mathbb{S}^1$ and wavenumber $k\in\mathbb{R}_+$.
Let $u \in H^1(\mathbb{R}^2 \setminus K)$ and $u' \in H^1(\mathbb{R}^2 \setminus K')$ denote the solutions to \eqref{eq main system} associated with $(K,\eta)$ and $(K',\eta')$, respectively.
Since $\mathbf{p}_0=0$ after the rigid motion and $h=c_2\mathfrak d$ with $0<c_2<1$, we have $B_h\subset B_{\mathfrak d}(\mathbf{p}_0)$.
In particular, $u'$ is analytic in the test domain $Q_h$ and admits a local expansion at the origin as stated in Proposition~\ref{prop decom}.
By Lemma~\ref{lem vanish}, the analytic function $u'$ has finite vanishing order at the test point $0$, which we denote by $N$.
We use the decomposition $u' = u'_N + \delta u'_{N+1}$ given by \eqref{eq 2D_uN} and \eqref{eq 2D_delta}, which satisfies the bounds in \eqref{eq 2D_bounds}.
Since $I_1\subset \partial K$, the total field $u$ satisfies the impedance boundary condition on $I_1$.
In general, $u'$ does not satisfy this boundary condition on $I_1$.
In what follows, we repeatedly use the boundary condition for $u$ on $I_1$ when simplifying the boundary terms arising from Green's identity.

Applying Green's second identity on $Q_h$, we obtain the following integral identity.

\begin{prop}
Under Assumption~\eqref{eq contr 2} and the geometric configuration \eqref{eq I123}--\eqref{eq II'}, the following integral identity holds:
\begin{equation}\label{eq II2d}
\begin{aligned}
&\int_{\tilde{I_1} \cup \tilde{I_2}} u'_N \partial_{\nu} u_{0} \, \mathrm{d}\sigma 
  - \int_{\tilde{I_2}} \partial_\nu u'_N u_0 \, \mathrm{d}\sigma \\
&= \int_{I_1} u_{0} \partial_{\nu}(u^{\prime}-u) \, \mathrm{d}\sigma 
  + \int_{I_1} \eta(u^{\prime}-u)u_{0} \, \mathrm{d}\sigma 
  - \int_{I_1} \eta u^{\prime}_N u_{0} \, \mathrm{d}\sigma \\
&\quad  - \int_{I_1} \eta \delta{u'}_{N+1} u_{0} \, \mathrm{d}\sigma 
  - \int_{I_1\cup I_2} \delta u'_{N+1} \partial_{\nu} u_{0} \, \mathrm{d}\sigma \\
&\quad  + \int_{I_1'\cup I_2'} u'_{N} \partial_\nu u_0 \, \mathrm{d}\sigma
  - \int_{I'_2} \partial_\nu u'_N u_0 \, \mathrm{d}\sigma
  + \int_{I_2} \partial_\nu \delta u'_{N+1} u_0 \, \mathrm{d}\sigma \\
&\quad  + \int_{I_3} \left( u_{0} \partial_{\nu} u^{\prime} - u^{\prime} \partial_{\nu} u_{0} \right) \, \mathrm{d}\sigma ,
\end{aligned}
\end{equation}
where $\nu$ denotes the exterior unit normal to $Q_h$ on $\partial Q_h$.
\end{prop}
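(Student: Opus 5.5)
Since $Q_h\subset B_{\mathfrak d}(\mathbf p_0)$ lies outside $K'$, both $u'$ and the CGO solution $u_0$ of Definition~\ref{defn CGO} satisfy $\Delta v+k^2v=0$ in $Q_h$. Both are smooth up to $\partial Q_h$ away from $\partial K$. On $I_1$, $u'$ is analytic, and $u$ has the $C^{1,\alpha}$ regularity of Lemma~\ref{lem local Holder}. The identity is therefore obtained by applying Green's second identity to the pair $(u',u_0)$ on $Q_h$,
\[
0=\int_{\partial Q_h}\big(u_0\,\partial_\nu u'-u'\,\partial_\nu u_0\big)\,\mathrm d\sigma,
\]
and then rearranging the boundary contributions on $I_1$, $I_2$, $I_3$.

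I would take the three pieces of $\partial Q_h$ in turn. The $I_3$ contribution is left untouched; it is exactly the last term of \eqref{eq II2d}.

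On $I_1\cup I_2$, I substitute $u'=u'_N+\delta u'_{N+1}$ in the term $-u'\partial_\nu u_0$. On $I_2$, I also substitute it in $u_0\partial_\nu u'$. This produces $\int u'_N\partial_\nu u_0$ over $I_1\cup I_2$ and $-\int_{I_2}\partial_\nu u'_N\,u_0$. The $\delta u'_{N+1}$ remainders are left on $I_1\cup I_2$, and one checks their signs against the statement.

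On $I_1$, the normal-derivative term is handled with the impedance condition for $u$, namely $\partial_\nu u=-\eta u$, where $\nu$ is the exterior normal of $Q_h$ and so coincides (up to the orientation convention) with that of $K$ on $I_1$. I write
\[
u_0\,\partial_\nu u'=u_0\,\partial_\nu(u'-u)+u_0\,\partial_\nu u
=u_0\,\partial_\nu(u'-u)-\eta u\,u_0 .
\]
Then I use $-\eta u=\eta(u'-u)-\eta u'$ together with $u'=u'_N+\delta u'_{N+1}$. This yields precisely the terms $\int_{I_1}u_0\partial_\nu(u'-u)+\int_{I_1}\eta(u'-u)u_0-\int_{I_1}\eta u'_Nu_0-\int_{I_1}\eta\,\delta u'_{N+1}u_0$.

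Next I complete the finite segments $I_1,I_2$ to the rays $\tilde I_1,\tilde I_2$ by adding and subtracting $\int_{I_1'\cup I_2'}u'_N\partial_\nu u_0$ and $\int_{I_2'}\partial_\nu u'_N\,u_0$. Here $u'_N$ is the homogeneous polynomial from \eqref{eq 2D_uN}, so it is defined on all of $\mathbb R^2$. The ray integrals converge absolutely thanks to the exponential decay \eqref{eq alpha} of $u_0$, provided $\mathbf d\in\mathcal K_{\alpha^*}$ is chosen so that \eqref{eq ec condition} also holds on the rays. Moving the ray integrals to the left-hand side gives \eqref{eq II2d}.

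The main obstacle is bookkeeping rather than analysis. One must fix a consistent normal orientation on the unbounded rays; on $I_1$ that orientation must be matched with the normal used in the impedance condition. With that settled, every sign has to be verified, especially for the $\partial_\nu\delta u'_{N+1}u_0$ term on $I_2$. One must also confirm that no $\partial_\nu u'_N$ term survives on $I_1$; it does not, because there the $u_0\partial_\nu u'$ part was absorbed through the boundary condition. A secondary point is justifying Green's identity up to the corner at the origin and along $I_1$, where $u$ is only $C^{1,\alpha}$. This follows from Lemma~\ref{lem local Holder} and the boundedness of $u_0$ on $Q_h$.
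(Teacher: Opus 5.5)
Your proposal is correct and is essentially the paper's argument: Green's second identity for $(u',u_0)$ on $Q_h$, the splitting $u'=u'_N+\delta u'_{N+1}$, the rewriting of $\int_{I_1}u_0\partial_\nu u'$ through $u'-u$ and the impedance condition for $u$, and completion of $I_1,I_2$ to the rays $\tilde I_1,\tilde I_2$; your term-by-term bookkeeping reproduces \eqref{eq II2d} exactly. To remove your hedges: the stated signs of the three $\eta$-terms require $\partial_\nu u=-\eta u$ with $\nu$ the exterior normal of $Q_h$, so $Q_h$ must lie on the obstacle side of $I_1$, as in Remark~\ref{rem:CA}. Also, Green's identity itself involves only $u'$ and $u_0$, which are analytic on $\overline{Q_h}$, so the regularity of $u$ is needed only to make sense of its traces on $I_1$.
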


\begin{proof}
By Assumption~\eqref{eq contr 2}, there exists a test domain $Q_h$ as described above.
In the Lipschitz domain $Q_h$ we apply Green's second identity
\begin{equation}\label{eq green}
\int_{Q_h} \left(g\Delta f-f\Delta g\right)\, \mathrm{d}\mathbf{x}
=
\int_{\partial Q_h}\left(g\partial_{\nu}f-f\partial_{\nu}g \right)\, \mathrm{d}\sigma .
\end{equation}
Let $u_0$ be the CGO solution from Definition~\ref{defn CGO} with the form \eqref{eq CGO}.
Set $f=u'$ and $g=u_0$ in \eqref{eq green}.
Since $Q_h\cap K'=\emptyset$, we have $(\Delta+k^2)u'=0$ in $Q_h$, and by definition $(\Delta+k^2)u_0=0$.
Hence, the volume integral in \eqref{eq green} vanishes, and \eqref{eq green} reduces to a boundary identity on $\partial Q_h$.

In $Q_h$ we decompose 
\begin{equation}\label{eq deltauN}
	u'=u'_N+\delta u'_{N+1}
\end{equation} 
using \eqref{eq 2D_uN} and \eqref{eq 2D_delta}, and we split the boundary integral over $\partial Q_h=I_1\cup I_2\cup I_3$.
On $I_1\subset\partial K$ the total field $u$ satisfies the impedance boundary condition, and we introduce $w:=u-u'$ to rewrite the $I_1$ contributions in terms of $w$, $u'_N$, and $\delta u'_{N+1}$.
The remaining contributions on $I_2$, $I_3$, and on the rays $I_1', I_2'$ are grouped according to the decomposition of $u'$.
Collecting all terms gives \eqref{eq II2d}.

The proof is complete.
\end{proof}

We do not repeat here the full smallness propagation argument from the far field to the boundary, since the proof developed in Section~\ref{sec 4} for the three-dimensional case carries over to the present two-dimensional configuration with only minor changes in the constants.
Under Assumption~\eqref{eq contr 2} and the definition of the modified Hausdorff distance \eqref{eq d_M}, the test domain $Q_h$ lies in the exterior of $K'$ and can be connected to the far-field region by the same exterior-visibility geometry as in Section~\ref{sec 4}, through the two-dimensional analogue of Proposition~\ref{prop G1}.
Along this connecting curve, we first transfer the far-field error to a near-field bound by Lemma~\ref{lem Isakov}, thereby obtaining quantitative smallness of $u-u'$ on a ball intersecting the observation region.
We then propagate this smallness step by step towards $Q_h$ by an iteration scheme based on the two-dimensional counterparts of Lemmas~\ref{lem three spheres} and~\ref{lem three}, exactly as in the three-dimensional case but with dimension-dependent constants.
The only genuinely two-dimensional input is the local H\"older regularity of the total field near the boundary, which is ensured by \cite[Lemma~3.2]{diao2024stable}.
With this regularity at hand, the estimate \eqref{eq T_1} in Proposition~\ref{prop w} remains valid in two dimensions up to a change of constants.
Therefore,
\begin{equation}\label{eq error I}
	\max_{\mathbf{x} \in I_1} \left\{ |\nabla w(\mathbf{x})|, |w(\mathbf{x})| \right\} \leq T(\varepsilon),
\end{equation}
where $T(\varepsilon)$ has the same functional form as in \eqref{eq T_1}, but possibly with different a~priori constants.


\subsection{Two-sided estimates for the leading ATD term}

We first derive an upper bound for the left-hand side of \eqref{eq II2d}.

\begin{prop}\label{prop up2d}
Assume that the parameter $\tau$ in the CGO solution satisfies
\begin{equation}\label{eq 639}
\tau > \max(1,k).
\end{equation}
Then there exist constants $C'>0$ and $C''>0$, depending only on the a~priori parameters, such that
\begin{equation}\label{eq up7}
\begin{aligned}
&\left| \int_{\tilde{I_1} \cup \tilde{I_2}} u'_N \partial_{\nu} u_{0} \, \mathrm{d}\sigma
 - \int_{\tilde{I_2}} \partial_\nu u'_N u_0 \, \mathrm{d}\sigma \right|\\
&\leq C' T(\varepsilon)\,h 
 + C'' \left( \frac{1}{\tau^{N+1}} + \frac{1}{\tau^{N+2}}
 + e^{-\alpha^* \tau h} + \frac{e^{-\alpha^* \tau h}}{\tau}
 + \tau h^{1/2} e^{-\alpha^* \tau h} \right).
\end{aligned}
\end{equation}
\end{prop}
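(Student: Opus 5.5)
The plan is to bound the left-hand side of \eqref{eq up7} through the identity \eqref{eq II2d}: I estimate each of its nine right-hand terms in turn, using the CGO decay \eqref{eq alpha} on $Q_h$ and on the rays, the boundary smallness \eqref{eq error I}, the expansion bounds \eqref{eq 2D_bounds}, and the Gamma-type integrals of Lemma~\ref{lem Gamma}.

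Throughout I use two elementary facts. On $\overline{Q_h}\setminus\{0\}$ and on the rays $\tilde I_1,\tilde I_2$ one has $|u_0|\le e^{-\alpha^*\tau|\mathbf x|}$, which holds once $\mathbf d\in\mathcal K_{\alpha^*}$ and the condition \eqref{eq ec condition} is extended to the closed sector. In addition, $|\partial_\nu u_0|\le |\rho|\,|u_0|\le \sqrt{2\tau^2+k^2}\,|u_0|\le C\tau |u_0|$ by \eqref{eq 639}. All constants depend only on the a priori parameters.

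The terms fall into four groups.

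\emph{Terms containing $w$ on $I_1$.} These are the first two terms. Since $|\partial_\nu w|,|w|\le T(\varepsilon)$ on $I_1$, $|\eta|\le M_0$, and $|u_0|\le1$, integrating over $I_1$, which has length $h$, gives at most $(1+M_0)T(\varepsilon)h=:C'T(\varepsilon)h$.

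\emph{Polynomial terms on $I_1$ and $I_2$.} For the term $\int_{I_1}\eta u'_N u_0$ I bound the integrand by $M_0C_N r^N e^{-\alpha^*\tau r}$. Lemma~\ref{lem Gamma} then gives $\Gamma(N+1)(\alpha^*\tau)^{-(N+1)}$ plus an error of size $e^{-\alpha^*\tau h}/\tau$. The term with $\eta\,\delta u'_{N+1}$ is handled the same way and produces $\tau^{-(N+2)}$. The term $\int_{I_1\cup I_2}\delta u'_{N+1}\partial_\nu u_0$ has the extra factor $\tau$, so it gives $\tau\cdot\tau^{-(N+2)}=\tau^{-(N+1)}$, again plus exponential remainders. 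For the term $\int_{I_2}\partial_\nu\delta u'_{N+1}\,u_0$ I need $|\nabla\delta u'_{N+1}|\le Cr^{N}$. This follows from the uniformly convergent Bessel series \eqref{eq 2D_delta} differentiated term by term, with the coefficients controlled by $\mathcal E$ through Cauchy-type estimates on $B_{\rho_0}$. It contributes $\tau^{-(N+1)}$.

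\emph{Ray tails on $I_1'$ and $I_2'$.} Here $u'_N$ is the homogeneous polynomial of degree $N$ from \eqref{eq 2D_uN}, which is defined on all of $\mathbb R^2$. The bound \eqref{eq gamma2} gives, for the term with $\partial_\nu u_0$, the estimate $\tau\int_h^\infty r^N e^{-\alpha^*\tau r}\,\mathrm dr\lesssim e^{-\alpha^*\tau h}$. The term with $\partial_\nu u'_N\,u_0$ is bounded by $e^{-\alpha^*\tau h}/\tau$.

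\emph{The arc $I_3$.} This is the step I expect to be the main obstacle, because on $I_3$ there is no boundary condition and no smallness. I control $\|u'\|_{L^2(I_3)}$ and $\|\partial_\nu u'\|_{L^2(I_3)}$ by $\mathcal E$, using Lemma~\ref{lem uniform bound} together with interior elliptic estimates. These apply because $I_3$ lies in the analyticity region of $u'$; if needed, $h$ is shrunk slightly so that the arc stays at a fixed fraction of $\mathfrak d$ from $K'$. Cauchy--Schwarz on $I_3$, whose length is $\theta_0 h$, with $|u_0|\le e^{-\alpha^*\tau h}$ and $|\partial_\nu u_0|\le C\tau e^{-\alpha^*\tau h}$ there, then gives at most $C\tau h^{1/2}e^{-\alpha^*\tau h}$. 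The delicate points are making the interior trace bound uniform, with constants independent of $h$ as $h\to0$ or at worst absorbed, and the gradient bound on $\delta u'_{N+1}$. If uniformity of the trace bound fails, I would instead use analyticity of $u'$ on $B_{\mathfrak d}$ to obtain pointwise bounds and accept the same form.

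Summing the four groups yields \eqref{eq up7}.
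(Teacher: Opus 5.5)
Your proposal is correct and follows essentially the same route as the paper. Both arguments estimate the nine right-hand terms of \eqref{eq II2d} one by one: \eqref{eq error I} handles the two $w$-terms on $I_1$; the CGO decay, the expansion bounds \eqref{eq 2D_bounds}, and Gamma/Laplace integrals handle the polynomial and ray-tail terms; and the exponential smallness of $u_0$ on the arc handles $I_3$. The uniformity issue you flag on $I_3$, together with the dependence of $C_N, C_{N+1}$ on the analyticity radius $\rho_0\le\mathfrak d$, is also left implicit in the paper, which simply asserts \eqref{eq 617} with a constant depending only on the a~priori parameters.
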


\begin{proof}
We estimate the right-hand side of \eqref{eq II2d} term by term.
We first treat the error terms involving $w:=u-u'$ on $I_1$, and then bound the remaining terms involving $u'_N$, $\delta u'_{N+1}$, and the CGO solution $u_0$.

Since $|u_0(\mathbf{x})|\leq 1$ on $I_1$, the error terms on $I_1$ can be controlled by \eqref{eq error I}.
By the Cauchy--Schwarz inequality and the fact that $\sigma(I_1)=h$, we obtain
\begin{equation}\label{eq 632}
\begin{aligned}
\left| \int_{I_1} u_{0} \partial_{\nu} (u' - u) \, \mathrm{d} \sigma \right|
&\leq \sqrt{\sigma(I_1)} \left( \int_{I_1} |u_0 \partial_\nu (u-u')|^2 \, \mathrm{d} \sigma \right)^{1/2} \\
&\leq \sqrt{h}\,\|u_0\|_{L^\infty(I_1)} \left( \int_{I_1} |\partial_\nu w|^2 \, \mathrm{d}\sigma \right)^{1/2} \\
&\leq \sqrt{h}\,\|u_0\|_{L^\infty(I_1)} \sqrt{h}\,\|\nabla w\|_{L^\infty(I_1)} \\
&\leq h \|\nabla w\|_{L^\infty(I_1)} \\
&\leq C_1 T(\varepsilon)\,h .
\end{aligned}
\end{equation}
Similarly,
\begin{equation}\label{eq 633}
\begin{aligned}
\left| \int_{I_1} \eta (u' - u) u_{0} \, \mathrm{d} \sigma \right|
&\leq \|\eta\|_{L^\infty(I_1)} \sqrt{\sigma(I_1)} \left( \int_{I_1} |u_0 (u-u')|^2 \, \mathrm{d} \sigma \right)^{1/2} \\
&\leq \|\eta\|_{L^\infty(I_1)} \sqrt{h}\,\|u_0\|_{L^\infty(I_1)} \left( \int_{I_1} |w|^2 \, \mathrm{d}\sigma \right)^{1/2} \\
&\leq \|\eta\|_{L^\infty(I_1)} \sqrt{h}\,\|u_0\|_{L^\infty(I_1)} \sqrt{h}\,\|w\|_{L^\infty(I_1)} \\
&\leq M_0 h \|w\|_{L^\infty(I_1)} \\
&\leq C_2 T(\varepsilon)\,h .
\end{aligned}
\end{equation}

We now turn to the remaining terms in \eqref{eq II2d}, which involve $u'_N$, $\delta u'_{N+1}$, and $u_0$.
We work in the planar test domain $Q_h$ and fix $\theta_0=\frac{\pi}{2}$.
We choose directions $\mathbf d\in\mathcal K_{\alpha^*}$ so that \eqref{eq ec condition} and \eqref{eq alpha} hold.
Then
\[
|u_0(\mathbf{x})|\leq e^{-\alpha^*\tau|\mathbf{x}|}
\qquad \text{for } \mathbf{x}\in Q_h,
\]
and this decay will be used throughout the following estimates.

Using the explicit form of $u'_N$ in \eqref{eq 2D_uN} together with the decay of $u_0$, we obtain
\begin{equation}\label{eq 610}
\left| \int_{I_1}\eta u'_N u_{0} \, \mathrm{d} \sigma \right|
\leq C_3 \left( \frac{1}{\tau^{N+1}} + \frac{1}{\tau} e^{-\alpha^*\tau h} \right).
\end{equation}
Similarly, using \eqref{eq 2D_delta}, we have
\begin{equation}\label{eq 611}
\left| \int_{I_1} \eta \delta u'_{N+1} u_{0} \, \mathrm{d} \sigma \right|
\leq C_4 \left( \frac{1}{\tau^{N+2}} + \frac{1}{\tau} e^{-\alpha^*\tau h} \right).
\end{equation}

For the term without the impedance parameter, we use \eqref{eq 2D_delta} together with \eqref{eq 639} to get
\begin{equation}\label{eq 612}
\left| \int_{I_1\cup I_2} \delta u'_{N+1} \partial_{\nu} u_{0} \, \mathrm{d} \sigma \right|
\leq C_5 \left( \frac{1}{\tau^{N+1}} + \frac{1}{\tau} e^{-\alpha^*\tau h} \right).
\end{equation}

For the integrals over the rays $I_1'$ and $I_2'$, a direct Laplace transform computation yields
\begin{equation}\label{eq 613}
\left| \int_{I_1'\cup I_2'} u'_N \partial_\nu u_0 \, \mathrm{d}\sigma \right|
\leq C_6 e^{-\alpha^* \tau h}.
\end{equation}
We also have
\begin{equation}\label{eq 614}
\left| \int_{I'_2} \partial_\nu u'_N u_0 \, \mathrm{d}\sigma \right|
\leq C_7 \frac{1}{\tau} e^{-\alpha^*\tau h}.
\end{equation}

Using the two-dimensional representation
\[
\nabla(\delta u'_{N+1})
=
\frac{\partial(\delta u'_{N+1})}{\partial r}\,\boldsymbol e_r
+
\frac{1}{r}\frac{\partial(\delta u'_{N+1})}{\partial \theta}\,\boldsymbol e_\theta
\]
together with \eqref{eq 2Dexpansion}, we obtain
\begin{equation}\label{eq 615}
\left| \int_{I_2} \partial_\nu(\delta u'_{N+1}) u_0 \, \mathrm{d}\sigma \right|
\leq C_8 \left( \frac{1}{\tau^{N+1}} + \frac{1}{\tau} e^{-\alpha^*\tau h} \right).
\end{equation}

Finally, for the boundary term over $I_3$, we obtain
\begin{equation}\label{eq 617}
\left| \int_{I_3} \left( u_{0} \partial_{\nu} u^{\prime} - u^{\prime} \partial_{\nu} u_{0} \right) \, \mathrm{d} \sigma \right|
\leq C_9 \tau h^{1/2} e^{-\alpha^* \tau h}.
\end{equation}

All constants $C_i>0$ above depend only on the a~priori parameters.
Substituting the above estimates into \eqref{eq II2d} and absorbing the finitely many constants into new constants $C'>0$ and $C''>0$, we obtain \eqref{eq up7}.
This completes the proof.
\end{proof}

We next estimate the left-hand side of \eqref{eq II2d} in order to identify the leading contribution for sufficiently large $\tau$.

\begin{prop}\label{prop lower2D}
Suppose that $u'$ is the solution to \eqref{eq main system} associated with $(K',\eta')$.
Let $\tilde{I_1}$ and $\tilde{I_2}$ denote the corresponding rays extending $I_1$ and $I_2$, respectively.
Assume that $u'$ is analytic in $Q_h$ and has vanishing order $N$ at the origin.
If the parameter $\tau$ of $u_0$ satisfies
\begin{equation}\label{eq tau3}
\tau \geq \max(1,k,\tau_1),
\end{equation}
then the leading contribution is of order $\tau^{-N}$.
More precisely,
\begin{equation}\label{eq lower bound2}
\left|
\int_{\tilde{I_1}\cup\tilde{I_2}} u'_N \partial_{\nu}u_{0}\, \mathrm{d}\sigma
-
\int_{\tilde{I_2}} \partial_\nu u'_N u_0 \,\mathrm{d}\sigma
\right|
\geq C_* \frac{1}{\tau^{N}},
\end{equation}
where one may take
\[
C_*
=
\frac{k^N}{2^{N+1} N!}\,|b_N-a_N|.
\]
In particular, $C_*$ depends on $k$ and on the leading coefficients $a_N,b_N$ in the local expansion of $u'$ at the test point.
\end{prop}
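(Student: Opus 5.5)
The plan is to compute the two ray integrals explicitly by Laplace transforms along the rays, after fixing $\theta_0=\pi/2$ as in Proposition~\ref{prop up2d}.

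\emph{Setting up the rays.} With $\theta_0=\pi/2$, the ray $\tilde{I_1}$ is the positive $x_1$-axis with exterior normal $\nu=-\mathbf e_2$, and $\tilde{I_2}$ is the positive $x_2$-axis with $\nu=-\mathbf e_1$. On $\tilde{I_1}$ we have $\theta=0$, so by \eqref{eq 2D_uN}
\[
u'_N=\frac{(kt/2)^N}{N!}\,\mathrm{i}^N(a_N+b_N).
\]
On $\tilde{I_2}$ we have $\theta=\pi/2$, so $e^{\pm\mathrm iN\pi/2}=(\pm\mathrm i)^N$, which gives
\[
u'_N=\frac{(kt/2)^N}{N!}\,\mathrm i^N\bigl(\mathrm i^Na_N+(-\mathrm i)^Nb_N\bigr).
\]
The normal derivative there is $\partial_\nu u'_N=-\partial_{x_1}u'_N=-r^{-1}\partial_\theta u'_N$ at $\theta=\pi/2$. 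This equals $-\frac{(k/2)^N t^{N-1}}{N!}\,\mathrm i^N\,\mathrm iN\bigl(\mathrm i^Na_N-(-\mathrm i)^Nb_N\bigr)$.

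\emph{Computing the integrals.} On each ray $u_0=e^{\mu t}$ with $\mu=\rho\cdot\mathbf e_j$, and $\partial_\nu u_0=-(\rho\cdot\mathbf e_{j'})u_0$. Choosing $\mathbf d\in\mathcal K_{\alpha^*}$ gives $\Re\mu\le-\alpha^*\tau$ on both rays. Each integral is therefore of the form $\int_0^\infty t^{m}e^{\mu t}\,dt=m!/(-\mu)^{m+1}$, which is exact on the full rays. The $u'_N\partial_\nu u_0$ terms give factors $|\rho|\,N!/|\mu|^{N+1}$. The $\partial_\nu u'_N\,u_0$ term gives $(N-1)!/|\mu|^N$ times $N$. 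All of these are of order $\tau^{-N}$, because $|\rho\cdot\mathbf e_j|\sim\tau$ when $\rho=\tau\mathbf d+\mathrm i\sqrt{k^2+\tau^2}\,\mathbf d^\perp$. I would then pick $\mathbf d$ concretely, for instance $\mathbf d=-(\mathbf e_1+\mathbf e_2)/\sqrt2$ with $\mathbf d^\perp=(\mathbf e_1-\mathbf e_2)/\sqrt2$. This makes $\rho\cdot\mathbf e_1$ and $\rho\cdot\mathbf e_2$ complex conjugates up to sign, approximately $\frac{\tau}{\sqrt2}(-1\pm\mathrm i)$ for $\tau\gg k$. The leading term then becomes $\tau^{-N}$ times an explicit combination $c_1(a_N+b_N)+c_2(\mathrm i^Na_N+(-\mathrm i)^Nb_N)+c_3N(\mathrm i^Na_N-(-\mathrm i)^Nb_N)$. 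The $\sqrt{k^2+\tau^2}-\tau=O(k^2/\tau)$ corrections produce $O(\tau^{-N-1})$ terms, which are absorbed once $\tau\ge\tau_1$.

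\emph{Main obstacle.} The hard step is showing that this explicit combination has modulus at least $\frac{k^N}{2^{N}N!}|b_N-a_N|$ up to a factor $1/2$, which matches the stated $C_*$. The powers of $(-1\pm\mathrm i)$ rotate the contributions of $a_N$ and $b_N$ by opposite phases, and I expect them to combine into a multiple of $(b_N-a_N)$, with the $a_N+b_N$ parts cancelling. If an exact cancellation does not occur for this choice of $\mathbf d$, I would keep $\mathbf d$ as a free direction in $\mathcal K_{\alpha^*}$. I would then use the freedom in $\mathbf d$ (or in $\theta_0$, which the paper keeps adjustable) to arrange the phase so that the coefficient is bounded below by a constant times $|b_N-a_N|$. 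Finally, $\tau_1$ is chosen large enough that the $O(\tau^{-N-1})$ remainder is at most half of the leading term; this yields \eqref{eq lower bound2}.
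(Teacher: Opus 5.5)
Your route is the paper's route. Laplace transforms on the full rays make the left-hand side of \eqref{eq lower bound2} exactly $\tau^{-N}$ times a coefficient that depends continuously on $\vartheta=\sqrt{1+k^2/\tau^2}$. One evaluates that coefficient at $\vartheta=1$ and uses continuity to choose $\tau_1$; you only specialise to $\theta_0=\pi/2$ and $\psi=5\pi/4$. The gap is the step that carries all the content: that the $\vartheta=1$ coefficient is a unimodular multiple of $\frac{k^N}{2^N}(b_N-a_N)$. You leave this as an expectation, and with the formulas you wrote down it is false.

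On $\tilde{I_2}$ you have $\nu=-\mathbf e_1=\mathbf e_\theta$, and at $\theta=\pi/2$ one has $\partial_\theta=-r\,\partial_{x_1}$. Hence $\partial_\nu u'_N=-\partial_{x_1}u'_N=+r^{-1}\partial_\theta u'_N$, and your minus sign is wrong. Carried through, your sign makes the three contributions sum to a unimodular factor times $\frac{k^N}{2^N}\tau^{-N}\bigl(a_N+b_N-2(-1)^Nb_N\bigr)$. For odd $N$ this is $a_N+3b_N$, which is not bounded below by any multiple of $|b_N-a_N|$. Your fallback cannot repair this: at $\vartheta=1$ every term carries the same factor $(\rho\cdot\mathbf e_1)^{-N}=\tau^{-N}e^{\mathrm iN\psi}$, so varying $\mathbf d$ never changes the relative weights of $a_N$ and $b_N$.

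The fix is to correct the sign and use the relation behind the paper's identities $G=\mathrm iL$ and $J=\mathrm iH$. At $\vartheta=1$ one has $\mu_1:=\rho\cdot\mathbf e_1=\tau e^{-\mathrm i\psi}$ and $\rho\cdot\mathbf e_2=\mathrm i\mu_1$; this is more useful than the conjugacy you noted. Set $c:=\mathrm i^Nk^N/2^N$. Then each of the following equals $\mathrm ic(-1)^N\mu_1^{-N}$ times the stated factor:
the $\tilde{I_1}$ term gives $a_N+b_N$;
the $u'_N\partial_\nu u_0$ term on $\tilde{I_2}$ gives $-(a_N+(-1)^Nb_N)$;
the subtracted term $\int_{\tilde{I_2}}\partial_\nu u'_N\,u_0\,\mathrm d\sigma$ gives $a_N-(-1)^Nb_N$.

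So at $\vartheta=1$ the left-hand side of \eqref{eq lower bound2}, before taking absolute values, is $\mathrm ic(-1)^N\tau^{-N}e^{\mathrm iN\psi}(b_N-a_N)$. This holds for every admissible $\psi$, and in the paper's version for every $\theta_0$. Its modulus $\frac{k^N}{2^N}|b_N-a_N|\,\tau^{-N}$ is at least $2C_*\tau^{-N}$, because the paper's $1/N!$ is superfluous. Continuity in $\vartheta$ then gives $\tau_1$.

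Finally, both your argument and the paper's need $N\ge1$. For $N=0$ the derivative term is absent and the $\tau^0$ coefficient is $\mathrm i(a_0+b_0)-\mathrm i(a_0+b_0)=0$. The true size is $\mathcal O(\tau^{-2})$, so a lower bound in terms of $|b_0-a_0|$ is not meaningful there.
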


\begin{proof}
To establish \eqref{eq lower bound2}, we evaluate the ray integrals in \eqref{eq II'} for the two-dimensional test domain, with $\theta_0\in(0,\frac{\pi}{2}]$ fixed.
Denote the exterior unit normals by $\nu_4=(0,-1)$ on $\tilde{I_1}$ and $\nu_5=(-\sin\theta_0,\cos\theta_0)$ on $\tilde{I_2}$.

We use the CGO solution \eqref{eq CGO} with $\mathbf d,\mathbf d^\perp\in\mathbb S^1$ given by
\begin{equation}\label{eq d}
\mathbf d=(\cos\psi,\sin\psi)^\intercal,\quad
\mathbf d^\perp=(-\sin\psi,\cos\psi)^\intercal,\quad
\psi\in\bigl(\theta_0+\tfrac{\pi}{2},\tfrac{3\pi}{2}\bigr),
\end{equation}
which satisfies the geometric condition \eqref{eq ec condition} in two dimensions.
A direct computation yields
\begin{equation}\label{eq nu45}
\partial_{\nu_4}u_0=\tau G(\psi)u_0,\qquad
\partial_{\nu_5}u_0=\tau H(\psi,\theta_0)u_0,
\end{equation}
where
\begin{equation}\label{eq GH}
G(\psi)=-\sin\psi-\mathrm{i}\vartheta\cos\psi,\qquad
H(\psi,\theta_0)=\sin(\psi-\theta_0)+\mathrm{i}\vartheta\cos(\psi-\theta_0),
\end{equation}
and
\[
\vartheta=\sqrt{1+\frac{k^2}{\tau^2}}=1+\mathcal O(\tau^{-2})
\qquad\text{as }\tau\to\infty.
\]

Since $u'$ has vanishing order $N$ at the origin, its first nonzero homogeneous term is given by \eqref{eq 2D_uN}, namely
\[
u'_N(r,\theta)
=
\left(a_N e^{\mathrm{i}N\theta}+b_N e^{-\mathrm{i}N\theta}\right)
\frac{\mathrm{i}^N k^N}{2^N N!}\,r^N,
\]
with constants $a_N,b_N$ satisfying $|a_N|+|b_N|\neq 0$.
Using
\[
\nabla u'_N
=
\frac{\partial u'_N}{\partial r}\,\mathbf e_r
+
\frac{1}{r}\frac{\partial u'_N}{\partial\theta}\,\mathbf e_\theta,
\]
one finds the exterior normal derivative on $\tilde{I_2}$:
\begin{equation}\label{eq nu5}
\partial_{\nu_5}u'_N(r,\theta_0)
=
p_3\,r^{N-1},\qquad
p_3
=
\frac{\mathrm{i}^N k^N}{2^N (N-1)!}
\left(\mathrm{i}a_N e^{\mathrm{i}N\theta_0}-\mathrm{i}b_N e^{-\mathrm{i}N\theta_0}\right).
\end{equation}
Similarly, we set
\[
p_1
=
(a_N+b_N)\frac{\mathrm{i}^N k^N}{2^N N!},\qquad
p_2
=
\left(a_N e^{\mathrm{i}N\theta_0}+b_N e^{-\mathrm{i}N\theta_0}\right)\frac{\mathrm{i}^N k^N}{2^N N!}.
\]

By \eqref{eq nu45}--\eqref{eq nu5} and a direct Laplace transform computation, the leading contribution to the left-hand side of \eqref{eq lower bound2} is of the form
\[
\frac{1}{\tau^{N}}
\left(
\frac{G q_1}{L^{N+1}}
+
\frac{H q_2}{J^{N+1}}
-
\frac{q_3}{J^{N}}
\right),
\]
where we use that $\Re(\rho\cdot\hat{\mathbf{x}}_j)<0$ along the rays by \eqref{eq ec condition}.
We set $q_i=N!p_i$ for $i=1,2$ and $q_3=(N-1)!\,p_3$, that is,
\[
(q_1,q_2,q_3)
=
\frac{\mathrm{i}^N k^N}{2^N}
\left(
a_N+b_N,\ 
a_N e^{\mathrm{i}N\theta_0}+b_N e^{-\mathrm{i}N\theta_0},\ 
\mathrm{i}a_N e^{\mathrm{i}N\theta_0}-\mathrm{i}b_N e^{-\mathrm{i}N\theta_0}
\right),
\]
and
\[
J(\psi,\theta_0)
:=
-\cos(\theta_0-\psi)-\mathrm{i}\vartheta\sin(\theta_0-\psi),\qquad
L(\psi)
:=
-\cos\psi+\mathrm{i}\vartheta\sin\psi.
\]
For $\psi\in(\theta_0+\frac{\pi}{2},\frac{3\pi}{2})$ we define
\begin{equation}\label{eq G}
\mathcal G(\psi)
:=
\frac{G q_1}{L^{N+1}}
+
\frac{H q_2}{J^{N+1}}
-
\frac{q_3}{J^{N}}.
\end{equation}
Since $G(\psi)\neq 0$ and $H(\psi,\theta_0)\neq 0$ on this interval, the denominators in \eqref{eq G} are well defined.

Let $\mathcal G_\infty(\psi)$ denote the expression in \eqref{eq G} with $\vartheta=1$.
As $\tau \to \infty$ and $\vartheta \to 1$, the parameters in \eqref{eq G} satisfy
\[
G=\mathrm{i}L,\qquad
J=\mathrm{i}H.
\]
Hence
\[
\mathcal G_\infty(\psi)
=
\frac{\mathrm{i}q_1}{L^N}
+
\frac{q_2}{\mathrm{i}^{N+1}H^N}
-
\frac{q_3}{\mathrm{i}^N H^N}.
\]
Using Euler's formula, one has $L=-e^{-\mathrm{i}\psi}$ and $H=\mathrm{i}e^{-\mathrm{i}(\psi-\theta_0)}$.
A direct calculation then gives
\begin{equation}\label{eq finG}
\mathcal G_\infty(\psi)
=
\frac{(-1)^N k^N \mathrm{i} e^{\mathrm{i}N\psi}}{2^N N!}\,(b_N-a_N).
\end{equation}
Since $\vartheta=\sqrt{1+k^2/\tau^2}\to1$ as $\tau\to\infty$, the quantity $\mathcal G(\psi)$ depends continuously on $\vartheta$ and $\psi$.
Hence $\mathcal G(\psi)\to\mathcal G_\infty(\psi)$ uniformly for $\psi\in(\theta_0+\frac{\pi}{2},\frac{3\pi}{2})$.
Therefore, there exists $\tau_1$ such that for all $\tau\ge\tau_1$ one has
\[
|\mathcal G(\psi)|\ge \frac12|\mathcal G_\infty(\psi)|
\]
uniformly in $\psi$.
Since $|\mathrm{e}^{\mathrm{i}N\psi}|=1$, \eqref{eq finG} shows that $|\mathcal G_\infty(\psi)|$ is independent of $\psi$.
We may therefore take
\begin{equation}\label{eq nonde2d}
C_*
:=
\frac{k^N}{2^{N+1} N!}\,|b_N-a_N|,
\end{equation}
which yields \eqref{eq lower bound2}.
This completes the proof.
\end{proof}

The following remark explains the role of Proposition~\ref{prop lower2D} in the later non-degeneracy analysis.

\begin{rem}\label{rem:2D-first-obstruction}
Proposition~\ref{prop lower2D} shows that, in the planar ATD construction, the first leading quantity is governed by the coefficient $C_*$ in \eqref{eq nonde2d}, namely by the combination $b_N-a_N$.
If $b_N-a_N\neq0$, then the left-hand side of \eqref{eq lower bound2} is genuinely of order $\tau^{-N}$, and the ATD extraction is already non-degenerate at the first step.
If $b_N-a_N=0$, then this first leading quantity vanishes, but this does not imply full degeneracy of the ATD extraction.
In that case, the extraction must be continued to higher orders.

The constant $C_*$ in \eqref{eq lower bound2} is the concrete realization, in the present two-dimensional bounded-impedance setting, of the theorem-level leading ATD coefficient $C_A$ in Theorem~\ref{th relation}.
Its explicit form depends on the regime.
Under different boundary conditions, the corresponding leading ATD coefficient is replaced by a different regime-dependent combination, as explained in Remark~\ref{rem:relation-framework}.
The passage from the vanishing of the first leading quantity to successive higher-order vanishings is formulated abstractly later in Proposition~\ref{prop:ATD-degeneracy}.
\end{rem}

\begin{proof}[Proof of Theorem~\ref{th relation} in the two-dimensional case]
We work in the planar setting $n=2$ under Assumption~\eqref{eq contr 2}, so that the modified Hausdorff distance $\mathfrak d=\tilde{\mathrm d}(K,K')$ is strictly positive and the test domain $Q_h$ is constructed as in Section~\ref{subsec ATD2D}, with $h=c_2\mathfrak d$ for some fixed $c_2\in(0,1)$.

By Proposition~\ref{prop decom} and Lemma~\ref{lem vanish}, the total field $u'$ associated with $(K',\eta')$ is analytic in $Q_h$ and has finite vanishing order $N$ at the test point $0$.
We decompose
\[
u'=u'_N+\delta u'_{N+1}
\]
in $Q_h$ as in \eqref{eq 2D_uN}--\eqref{eq 2D_delta}.
The corresponding integral identity \eqref{eq II2d} holds for this decomposition.

The propagation-of-smallness argument from Section~\ref{sec 4} applies in the present two-dimensional configuration, with the only genuinely two-dimensional input given by the local H\"older regularity in \cite[Lemma~3.2]{diao2024stable}.
As in Proposition~\ref{prop w}, this yields
\begin{equation}\label{eq error II}
\max_{\mathbf{x}\in I_1}\bigl\{|\nabla w(\mathbf{x})|,\ |w(\mathbf{x})|\bigr\}\le T(\varepsilon),
\end{equation}
where $w:=u-u'$ and $T(\varepsilon)$ has the same functional form as in \eqref{eq T_1}, up to a different multiplicative constant depending only on the a~priori parameters.

Using \eqref{eq error II} in the boundary identity \eqref{eq II2d}, Proposition~\ref{prop up2d} gives, for all
\[
\tau>\max(1,k),
\]
the upper bound
\begin{equation}\label{eq up7-2D}
\begin{aligned}
&\left| \int_{\tilde{I_1} \cup \tilde{I_2}} u'_N \partial_{\nu} u_{0} \, \mathrm{d} \sigma
       - \int_{\tilde{I_2}} \partial_\nu u'_N u_0 \, \mathrm{d} \sigma \right|\\
&\leq C_1 T(\varepsilon) h
 + C_2 \left( \frac{1}{\tau^{N+1}} + \frac{1}{\tau^{N+2}}
           + e^{-\alpha^*\tau h} + \frac{e^{-\alpha^*\tau h}}{\tau}
           + \tau h^{1/2} e^{-\alpha^* \tau h} \right),
\end{aligned}
\end{equation}
where $C_1,C_2>0$ depend only on the a~priori parameters, once the auxiliary ATD geometry has been fixed.

On the other hand, Proposition~\ref{prop lower2D} identifies the first extracted leading contribution of the ray integrals.
In the present two-dimensional bounded-impedance setting, the theorem-level leading ATD coefficient $C_A$ in Theorem~\ref{th relation} is realized by the planar coefficient $C_*$ in \eqref{eq nonde2d}.
Accordingly, Proposition~\ref{prop lower2D} yields
\begin{equation}\label{eq lower2D-relation}
\left|\int_{\tilde{I_1}\cup\tilde{I_2}} u'_N \partial_{\nu}u_{0}\, \mathrm{d}\sigma
      - \int_{\tilde{I_2}} \partial_\nu u'_N u_0 \,\mathrm{d}\sigma\right|
\geq \frac{|C_A|}{\tau^{N}},
\end{equation}
for all
\[
\tau\geq \max(1,k,\tau_1).
\]
Here $C_A$ is independent of $\tau$, depends only on the local jet of $u'$ at the test point, and may vanish in degenerate configurations.

Comparing \eqref{eq up7-2D} with \eqref{eq lower2D-relation}, we obtain
\begin{equation}\label{eq 634}
\frac{|C_A|}{\tau^{N}}
\leq C_1' T(\varepsilon) h
  + C_2' \left( \frac{1}{\tau^{N+1}} + \frac{1}{\tau^{N+2}}
           + e^{-\alpha^*\tau h} + \frac{e^{-\alpha^*\tau h}}{\tau}
           + \tau h^{1/2} e^{-\alpha^* \tau h} \right),
\end{equation}
for suitable constants $C_1',C_2'>0$ depending only on the a~priori parameters.

We now estimate the exponentially decaying terms using
\begin{equation}\label{eq exp decay}
e^{-t}\leq \frac{1}{t},
\qquad
e^{-t}\leq \frac{m!}{t^{m}}
\quad\text{for all } t>0,\ m\in\mathbb{N}.
\end{equation}
Applying \eqref{eq exp decay} with $t=\alpha^*\tau h$, and using in addition $\tau>1$ and $h<1$, the exponential terms in \eqref{eq 634} can be absorbed into the algebraic remainder terms.
Thus,
\begin{equation}\label{eq 635}
|C_A|
\leq C \Bigl( \tau^{N} T(\varepsilon) h
         + \tau^{-1} h^{-(N+2)} \Bigr),
\end{equation}
for some constant $C>0$ depending only on the a~priori parameters.

We now choose $\tau$ by balancing the two terms on the right-hand side of \eqref{eq 635}.
Solving
\[
\tau^{N} T(\varepsilon) h \sim \tau^{-1} h^{-(N+2)}
\]
gives
\begin{equation}\label{eq tau-balance}
\tau=\tau_e:=T(\varepsilon)^{-\frac{1}{N+1}}\,h^{-\frac{N+3}{N+1}}.
\end{equation}
Substituting $\tau=\tau_e$ into \eqref{eq 635}, and using that the two contributions are then of the same order, we obtain
\[
|C_A|
\leq C\,T(\varepsilon)^{\frac{1}{N+1}}\,
        h^{-\frac{N^2+2N-1}{N+1}}.
\]
Equivalently,
\begin{equation}\label{eq relation-2D-local}
|C_A|\,h^{p}
\leq C\,T(\varepsilon)^{\frac{1}{N+1}},
\qquad
p:=\frac{N^2+2N-1}{N+1}.
\end{equation}

By the three-sphere iteration in Section~\ref{sec 4}, the boundary error modulus $T(\varepsilon)$ admits a double-logarithmic upper bound of the form
\[
T(\varepsilon)\leq C_T\,(\log|\log(1/\varepsilon)|)^{-\alpha},
\]
for some $\alpha>0$ depending only on the a~priori parameters.
Inserting this into \eqref{eq relation-2D-local} yields
\[
|C_A|\,h^{p}
\leq C\,(\log|\log(1/\varepsilon)|)^{-\kappa_0},
\qquad
\kappa_0:=\frac{\alpha}{N+1}>0.
\]

Finally, recalling that $h=c_2\mathfrak d$ with $c_2\in(0,1)$, and using Lemmas~\ref{lem dm} and~\ref{lem bdry-vs-set} to identify $\mathfrak d$ with the boundary Hausdorff distance $\mathrm d_H(K,K')$ up to multiplicative constants, we obtain
\[
|C_A|\,(\mathrm d_H(K,K'))^{p}
\leq C\,(\log|\log(1/\varepsilon)|)^{-\kappa_0}.
\]
Since $D=K$ and $D'=K'$ in two dimensions, this is precisely the far-field--geometry relation \eqref{eq relation main} in the planar case.

This completes the proof of Theorem~\ref{th relation} for $n=2$.
\end{proof}

\section{The Three-Dimensional Proof of Theorem~\ref{th relation}}\label{sec proof}

In this section, we introduce the ATD in the three-dimensional case in order to construct a parameterized integral domain suited to microlocal analysis.

\subsection{The ATD construction in the three-dimensional case}\label{subsec ATD3D}

Let $\Sigma$ and $\Sigma'$ be admissible polyhedral obstacles as in Definition~\ref{defn admissible class}, with corresponding admissible impedance parameters $\eta$ and $\eta'$.
For a fixed incident direction $\mathbf{p}\in\mathbb{S}^2$ and wavenumber $k>0$, let $u\in H^1(\Omega\setminus\Sigma)$ and $u'\in H^1(\Omega\setminus\Sigma')$ denote the solutions to \eqref{eq main system} associated with $(\Sigma,\eta)$ and $(\Sigma',\eta')$, respectively.
Let
\[
\mathfrak{d}=\tilde{\mathrm{d}}(\Sigma,\Sigma')
\]
be the modified Hausdorff distance defined in \eqref{eq d_M}, which vanishes if and only if $\Sigma=\Sigma'$ as subsets of $\mathbb{R}^3$.
Under the far-field error $\varepsilon>0$, we assume
\begin{equation}\label{eq contradiction}
\mathfrak{d}>0.
\end{equation}

By the definition of $\tilde{\mathrm d}$ and the discussion in Subsection~\ref{subsec DRO}, after possibly exchanging $\Sigma$ and $\Sigma'$, there exists a point $\mathbf{y}_0\in \partial\Sigma\cap\partial\mathbf{G}_2$ contained in the relative interior of a planar face of $\Sigma$ such that
\[
B_{\mathfrak{d}}(\mathbf{y}_0)\cap\Sigma'=\emptyset.
\]
Let $\Pi$ denote that face.
Choose $h\in(0,\mathfrak{d})$ so that
\[
(B_h(\mathbf{y}_0)\cap\partial\Sigma)\subset \Pi,
\]
equivalently $h=c_1\,\mathfrak{d}$ with some $c_1\in(0,1)$.
By Lemma~\ref{lem vanish}, the field $u'$ is real-analytic in $B_h(\mathbf{y}_0)$ and has finite vanishing order at $\mathbf{y}_0$.
We construct in $B_h(\mathbf{y}_0)\setminus\Sigma$ a parameterized test domain that serves as the localization device for the microlocal estimates below.

Apply a rigid motion that sends $\mathbf{y}_0$ to the origin and aligns $\Pi$ with the plane $\{x_3=0\}$.
Set
\[
B_h:=\{\mathbf{x}\in\mathbb{R}^3:\ |\mathbf{x}|\leq h\},
\qquad
\mathbb{B}_h^{+}:=B_h\cap\{x_1\ge0,\ x_2\ge0,\ x_3\ge0\}.
\]
Define
\[
S_h:=B_h\cap\partial\Sigma\subset\Pi,
\]
where $u$ satisfies the impedance boundary condition.
We define a domain $P_h\subset\mathbb{B}_h^{+}$ with $S_h\subset\partial P_h$ and boundary decomposition
\[
\partial P_h=S_1\cup S_2\cup S_3\cup S_4.
\]
We take $S_1\subset\{x_3=0\}$, $S_2\subset\{x_2=0\}$, and $S_4\subset\partial B_h$.
The remaining planar face $S_3$ forms an angle
\begin{equation}\label{eq phi_0}
\phi_0\in(0,\tfrac{\pi}{2}]
\end{equation}
with the $x_1x_3$-plane.
The patches $S_i$ for $i=1,2,3$ are fan-shaped, with opening angle $\phi_0$ for $S_1$ and opening angle $\frac{\pi}{2}$ for $S_2$ and $S_3$.
Let $\tilde S_i$ denote the full sectors swept by the boundary rays of $S_i$ for $i=1,2,3$, and define
\[
S_i':=\tilde S_i\setminus S_i,
\qquad i=1,2,3.
\]
This parameterized geometry fixes the notation and the angle $\phi_0$ for the local estimates on $P_h$ and its boundary components.
This geometry is illustrated in Figure~\ref{fig:quarter_circle3D}, where \eqref{eq S} is shown in the representative case $\phi_0=\frac{\pi}{2}$.

\begin{figure}[htbp]
    \centering
    \includegraphics[width=0.6\textwidth, keepaspectratio]{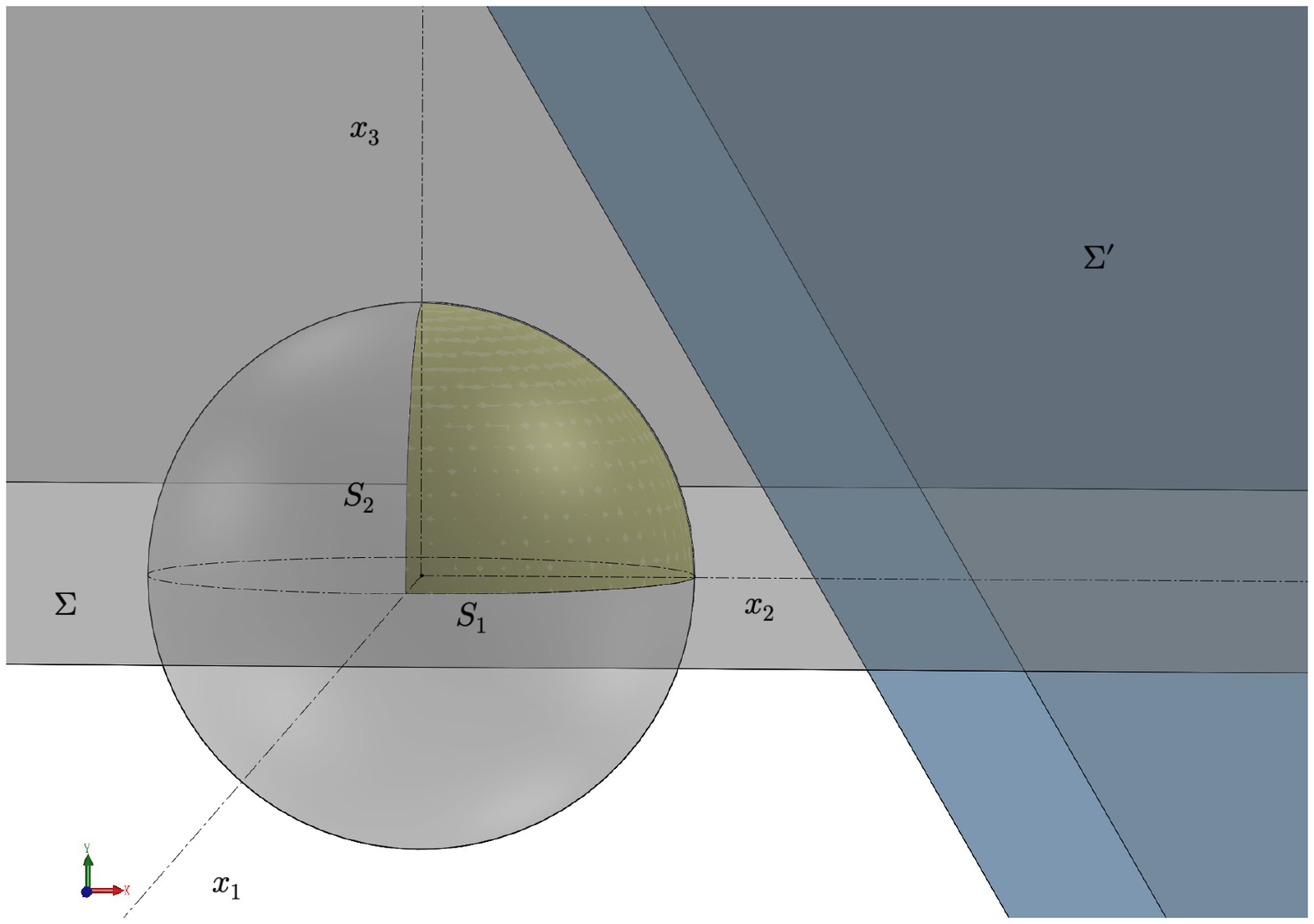}
    \caption{The three-dimensional ATD geometry in the representative case $\phi_0=\frac{\pi}{2}$.}
    \label{fig:quarter_circle3D}
\end{figure}

The test domain $P_h$, characterized by the parameter $\phi_0$ and the test point $\mathbf{y}_0$, is flexible enough to adapt to the microlocal estimates required in our analysis.
This flexibility is used later, in particular in the proof of Proposition~\ref{prop low2}, to avoid degenerate configurations by adjusting the local test geometry around $\mathbf{y}_0$.

We next present the integral identity in the test domain $P_h$.
This identity is the starting point of the three-dimensional ATD analysis.
It links the far-field discrepancy to a localized boundary integral and is built on the CGO solution defined in \eqref{eq CGO}.

We now explain a dimension-dependent point in the local analysis.
In the two-dimensional case treated in Section~\ref{sec 2D}, the ATD localization can be carried out at a test point where the total field $u'$ has arbitrary finite vanishing order.
In the three-dimensional case, by contrast, we present the detailed argument under the condition
\[
u'(\mathbf{y}_0)\neq 0.
\]
Equivalently, the vanishing order of $u'$ at $\mathbf{y}_0$ is zero.
Under this condition, the local expansion of $u'$ starts at order zero, and the leading term can be extracted explicitly from the corresponding integral identity and CGO solutions.

The three-dimensional ATD argument is not restricted to vanishing order zero.
After suitable modifications of the local expansion and the coefficient extraction, analogous results can also be obtained for general vanishing order.
However, the corresponding analysis is substantially more technical.
For this reason, we restrict the detailed presentation in the present paper to the case \(u'(\mathbf{y}_0)\neq 0\).

This condition is also natural in certain physically relevant situations.
For instance, when \(k\,\operatorname{diam}(D')\ll 1\), the scattered field is expected to remain small compared with the incident field.
In that case, the total field is dominated by the incident wave and is therefore non-vanishing at points outside \(D'\).
We do not use such sufficient conditions in the analysis below; rather, the condition \(u'(\mathbf{y}_0)\neq 0\) is taken as part of the present three-dimensional setup.

We first present the bounded-impedance case $\eta\in\Xi_1$, which serves as the main model derivation for the ATD argument.

\begin{prop}\label{prop II2}
Let $u \in H^1(\Omega \setminus \Sigma)$ and $u' \in H^1(\Omega \setminus \Sigma')$ be the solutions to \eqref{eq main system} associated with $(\Sigma,\eta)$ and $(\Sigma',\eta')$, respectively.
Assume that $u'$ is real-analytic in $P_h$ and satisfies
\[
u'(0)\neq 0,
\]
so that, in the case $N=0$, we may write
\begin{equation}\label{eq 3D-decomp-N0}
u'(\mathbf{x})=u'(0)+\delta u'_1(\mathbf{x})
\qquad\text{in }P_h,
\end{equation}
where $\delta u'_1$ is given by Proposition~\ref{prop decom}.
Then, under \eqref{eq contradiction}, the following integral identity holds in $P_h$:
\begin{equation}\label{eq integral identity2}
\begin{aligned}
0
&=-\int_{S_1\cup S_2\cup S_3} u'(0)\,\partial_{\nu} u_0 \,\mathrm{d}\sigma \\
&\quad + \int_{S_1} u_0 \,\partial_{\nu}(u' - u) \,\mathrm{d}\sigma 
  + \int_{S_1} \eta (u' - u)\,u_0 \,\mathrm{d}\sigma 
  - \int_{S_1} \eta\,u'(0)\,u_0 \,\mathrm{d}\sigma \\
&\quad - \int_{S_1} \eta\,\delta u'_1\,u_0 \,\mathrm{d}\sigma 
  - \int_{S_1 \cup S_2 \cup S_3} \delta u'_1\,\partial_{\nu} u_0 \,\mathrm{d}\sigma \\
&\quad + \int_{S_2 \cup S_3} \partial_\nu\delta u'_1\,u_0 \,\mathrm{d}\sigma 
  + \int_{S_4} \bigl( u_0 \,\partial_{\nu} u' - u' \,\partial_{\nu} u_0 \bigr) \,\mathrm{d}\sigma.
\end{aligned}
\end{equation}
Here and in what follows, $\nu$ denotes the exterior unit normal to $P_h$ on $\partial P_h$.
\end{prop}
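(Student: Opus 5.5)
The identity \eqref{eq integral identity2} will follow from Green's second identity \eqref{eq green} on the Lipschitz domain $P_h$, with $f=u'$ and $g=u_0$, where $u_0$ is the CGO solution \eqref{eq CGO}. The structure is exactly that of the two-dimensional identity \eqref{eq II2d}, except that no ray extensions are introduced here, since the leading term is the constant $u'(0)$.

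\emph{Step 1: vanishing of the volume term.} Since $h<\mathfrak d$ and $B_{\mathfrak d}(\mathbf y_0)\cap\Sigma'=\emptyset$, we have $P_h\subset\mathbb R^3\setminus\overline{\Sigma'}$. Hence $(\Delta+k^2)u'=0$ in $P_h$, and by Definition~\ref{defn CGO} also $(\Delta+k^2)u_0=0$. The integrand $u_0\Delta u'-u'\Delta u_0$ therefore vanishes, and Green's identity gives
\[
0=\int_{\partial P_h}\bigl(u_0\,\partial_\nu u'-u'\,\partial_\nu u_0\bigr)\,\mathrm d\sigma .
\]
Regularity is not an issue: $u'$ is real-analytic on a neighbourhood of $\overline{P_h}$, and $u_0$ is entire.

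\emph{Step 2: splitting $\partial P_h=S_1\cup S_2\cup S_3\cup S_4$ and inserting \eqref{eq 3D-decomp-N0}.} On $S_4$ we keep the term $\int_{S_4}(u_0\partial_\nu u'-u'\partial_\nu u_0)$ unchanged. On $S_1\cup S_2\cup S_3$ we write $u'=u'(0)+\delta u'_1$ in the term $u'\partial_\nu u_0$. This produces $-\int_{S_1\cup S_2\cup S_3}u'(0)\partial_\nu u_0$ and $-\int_{S_1\cup S_2\cup S_3}\delta u'_1\partial_\nu u_0$. In the term $u_0\partial_\nu u'$ on $S_2\cup S_3$, we use $\nabla u'(0)\cdot$ only through the decomposition: $\partial_\nu u'=\partial_\nu\delta u'_1$, because the constant has zero gradient. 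This yields $\int_{S_2\cup S_3}\partial_\nu\delta u'_1\,u_0$.

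\emph{Step 3: the flat face $S_1$.} This is the only place where the boundary condition enters, and it is the point needing care. We have $S_1\subset S_h\subset\Pi\subset\partial\Sigma$. The exterior normal of $P_h$ on $S_1$ is $-e_3$, which coincides with the exterior normal of $\Sigma$ there up to the sign convention, so that $\partial_\nu u+\eta u=0$ holds on $S_1$ in the orientation used in \eqref{eq II2d}; the sign convention should be checked against the 2D case.

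Then on $S_1$ we write
\[
\partial_\nu u'=\partial_\nu(u'-u)+\partial_\nu u=\partial_\nu(u'-u)-\eta u
=\partial_\nu(u'-u)+\eta(u'-u)-\eta u'(0)-\eta\,\delta u'_1 .
\]
Multiplying by $u_0$ and integrating over $S_1$ gives precisely the four $S_1$ terms in \eqref{eq integral identity2}. The traces of $u$ and $\partial_\nu u$ are meaningful on $S_1$ by Lemma~\ref{lem local Holder}, since $u\in C^{1,\alpha}$ up to $S_h$.

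Collecting the contributions from $S_1,S_2,S_3,S_4$ gives \eqref{eq integral identity2}. The main obstacle is only bookkeeping: the orientation and sign of $\nu$ on $S_1$ relative to the impedance condition, and the justification of traces, both of which are handled as in the two-dimensional identity and by Lemma~\ref{lem local Holder}.
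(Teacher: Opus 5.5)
Your argument follows the paper's route. The paper writes out no separate proof of \eqref{eq integral identity2}, but it is the three-dimensional copy of the proof of \eqref{eq II2d}: Green's second identity on the test domain with $f=u'$ and $g=u_0$, the split $u'=u'(0)+\delta u'_1$, and the boundary condition of $u$ on the flat face. Your Steps 1--2 and the algebra in Step 3 are correct.

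However, you leave open the one step that carries content, and it is not a matter of convention. In writing $\partial_\nu u'=\partial_\nu(u'-u)-\eta u$, you need $\partial_\nu u+\eta u=0$ on $S_1$ with $\nu=\nu_1=(0,0,-1)$, the outward normal of $P_h$. The condition in \eqref{eq main system} is stated for the outward normal of $\Sigma$. The two normals coincide if and only if $P_h$ lies on the interior side of the face $\Pi$, i.e.\ $P_h\subset\Sigma\setminus\overline{\Sigma'}$, so that after the rigid motion $\Sigma\cap B_h=\{x_3>0\}\cap B_h$.

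This is the placement described in Remark~\ref{rem:CA}, where the test domain is attached inside $D\setminus D'$. That is exactly why the identity is written for $u'$ alone, with $u$ entering only through its exterior traces on $S_1$, which Lemma~\ref{lem local Holder} justifies as you say.

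If instead $P_h$ were taken in $B_h(\mathbf y_0)\setminus\Sigma$, as the wording of Subsection~\ref{subsec ATD3D} literally says, then $\Sigma\cap B_h=\{x_3<0\}\cap B_h$ and the condition reads $\partial_{\nu_1}u=\eta u$. In that case the three $\eta$-terms on $S_1$ in \eqref{eq integral identity2} all change sign, and the identity as stated fails. Checking against the two-dimensional case does not settle this, because \eqref{eq II2d} is derived under the same unstated orientation.

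To close the gap, state explicitly that $P_h\subset\Sigma\cap B_h(\mathbf y_0)$. This is permissible because $u'$ remains analytic there: $h<\mathfrak d$ and $B_{\mathfrak d}(\mathbf y_0)\cap\Sigma'=\emptyset$. With that, your proof is complete.
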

\subsection{Two-sided estimates for the leading ATD term}

Let $\eta,\eta'\in\Xi_1$ be admissible impedance parameters in the sense of Definition~\ref{defn eta}, and let $u,u'$ be the solutions to \eqref{eq main system} associated with $(\Sigma,\eta)$ and $(\Sigma',\eta')$, respectively.  
We assume that $\Sigma,\Sigma'$ satisfy \eqref{eq contradiction}.  
Fix an incident plane wave $u^i$ with direction $\mathbf p\in\mathbb S^2$ and wavenumber $k>0$.  
Suppose that for some sufficiently small $0<\varepsilon<\varepsilon_m$ the far-field discrepancy satisfies \eqref{eq far-field error}.

The smallness propagation argument from the far-field to the boundary of the obstacle, developed in Section~\ref{sec 4} under a uniform framework, then yields a near-field control on $S_1$ as stated in Proposition~\ref{prop w}.  
More precisely, if $w$ is defined by \eqref{eq w}, we have
\begin{equation}\label{eq S_1}
	\max_{\mathbf x\in S_1}\{\,|\nabla w(\mathbf x)|,\ |w(\mathbf x)|\,\}\leq T(\varepsilon).
\end{equation}

To derive the stability estimate, we use \eqref{eq S_1} as input in the integral identity \eqref{eq integral identity2}.  
This produces an inequality that couples the error term $T(\varepsilon)$ with the geometric parameter $h$ and the CGO parameter $\tau$, and forms the starting point for the quantitative ATD relation in three dimensions.

We first derive the upper bound for the three-dimensional ATD boundary integral in \eqref{eq integral identity2}.
The matching lower bound will be established afterwards.

\begin{prop}\label{prop up1}
Assume that \eqref{eq contradiction} and the far-field smallness \eqref{eq far-field error} hold with $0<\varepsilon<\varepsilon_m$, and that $\tau>\max(1,k)$.
Then the ATD boundary integral in \eqref{eq integral identity2} satisfies the estimate
\begin{equation}\label{eq upper bound}
\begin{aligned}
&\left|\int_{\tilde{S_1}}\eta u'(0)u_0\,\mathrm{d}\sigma
+\int_{\tilde{S_1}\cup \tilde{S_2}\cup \tilde{S_3}} u'_1 \partial_{\nu} u_{0} \, \mathrm{d} \sigma
-\int_{\tilde{S_2}\cup \tilde{S_3}}\partial_\nu u'_1 u_0\, \mathrm{d}\sigma\right| \\
&\leq C^{*} T(\varepsilon)\, h^2
 + C^{**} \left( \frac{1}{\tau^{3}}
 + e^{-\alpha'\tau h} + \frac{1}{\tau}e^{-\alpha'\tau h}
 + \tau h \, e^{-\alpha'\tau h} \right),
\end{aligned}
\end{equation}
where $C^{*}, C^{**}>0$ depend only on the a~priori parameters.
\end{prop}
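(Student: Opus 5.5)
The plan mirrors the two-dimensional argument of Proposition~\ref{prop up2d}. Starting from the identity \eqref{eq integral identity2}, we move the terms involving $u'(0)$ and $\delta u'_1=u'_1$ from the finite patches $S_i$ to the full sectors $\tilde S_i=S_i\cup S_i'$. This is done by adding and subtracting the contributions over the exterior parts $S_i'$. The left-hand side of \eqref{eq upper bound} is then expressed as a sum of three groups: the error terms on $S_1$ involving $w=u-u'$; the remainder terms involving $\delta u'_1$ on $S_1\cup S_2\cup S_3$ and the boundary term on $S_4$; and the tail integrals over $S_1'\cup S_2'\cup S_3'$. Throughout, we choose $\mathbf d\in\mathcal K_{\alpha'}$, so that by \eqref{eq alpha} $|u_0(\mathbf x)|\le e^{-\alpha'\tau|\mathbf x|}\le1$ on $\overline{P_h}$ and on the sectors $\tilde S_i$. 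We also use $|\nabla u_0|\le C\tau|u_0|$, which follows from $|\rho|\le C\tau$ since $\tau>\max(1,k)$.

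For the first group we use \eqref{eq S_1}. Since $\sigma(S_1)\le Ch^2$, $|u_0|\le1$ and $\|\eta\|_\infty\le M_0$, the argument of \eqref{eq 632}--\eqref{eq 633} gives
\[
\Bigl|\int_{S_1}u_0\,\partial_\nu(u'-u)\,\mathrm d\sigma\Bigr|+\Bigl|\int_{S_1}\eta(u'-u)u_0\,\mathrm d\sigma\Bigr|\le C^*T(\varepsilon)h^2 .
\]
For the remainder terms we use $|\delta u'_1|\le C|\mathbf x|$ from Proposition~\ref{prop decom}, and $|\nabla\delta u'_1|\le C$ by analyticity on $B_h$. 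Passing to polar coordinates on each planar sector, where $\mathrm d\sigma=r\,\mathrm dr\,\mathrm d\theta$, every such term is bounded by an integral of the form $\int_0^h r^{b-1}\tau^{j}e^{-\alpha'\tau r}\,\mathrm dr$. Lemma~\ref{lem Gamma} bounds these by $\Gamma(b)\tau^{j-b}$ plus an exponentially small term. Specifically:
\begin{itemize}
\item $\int\delta u'_1\,\partial_\nu u_0$ has $b=3$, $j=1$, giving $O(\tau^{-2})$;
\item $\int\eta\,\delta u'_1u_0$ has $b=3$, $j=0$, giving $O(\tau^{-3})$;
\item $\int\partial_\nu\delta u'_1\,u_0$ has $b=2$, $j=0$, giving $O(\tau^{-2})$.
\end{itemize}
To reach the stated $\tau^{-3}$, the first and third terms must be treated with more care. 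On the flat sectors $S_2$ and $S_3$, $\partial_\nu u_0=(\rho\cdot\nu)u_0$, and the linear part of $\delta u'_1$ pairs against it. We therefore refine $\delta u'_1$ as its linear part plus an $O(|\mathbf x|^2)$ remainder. The remainder contributes $O(\tau^{-3})$. The linear parts are exactly homogeneous, so their sector integrals can be kept inside the full-sector expressions, and we include them in the left-hand side via $u'_1$ (this is the meaning of $u'_1$ there). The $S_4$ term is bounded using $\sigma(S_4)\le Ch^2$, $|u_0|\le e^{-\alpha'\tau h}$ on $\partial B_h$, and the uniform bounds on $u',\nabla u'$ from Lemma~\ref{lem uniform bound} and interior estimates. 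This gives $C\tau h\,e^{-\alpha'\tau h}$, after absorbing one power of $h$, since $h<1$.

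For the tail group, on $S_i'$ we have $|\mathbf x|\ge h$. The integrals $\int_h^\infty r^{b-1}\tau^je^{-\alpha'\tau r}\,\mathrm dr$ with $b\le 2$ are controlled by \eqref{eq gamma2}. This produces the terms $e^{-\alpha'\tau h}$ and $\tau^{-1}e^{-\alpha'\tau h}$, including the term $\int_{S_1'}\eta\,u'(0)u_0$; here $\eta$ is extended by a bounded function on the plane $\{x_3=0\}$, or equivalently the tail is just estimated with $M_0$.

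Summing all the bounds gives \eqref{eq upper bound}. The main obstacle is bookkeeping the order in $\tau$: a crude $|\delta u'_1|\le C|\mathbf x|$ bound only yields $\tau^{-2}$. The first thing to try is to split off the homogeneous degree-one part of $\delta u'_1$ as above, which restores $\tau^{-3}$ for the genuine remainder.
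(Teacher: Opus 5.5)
There is a genuine gap: one term of \eqref{eq integral identity2} is never estimated. The identity begins with
\[
-\int_{S_1\cup S_2\cup S_3}u'(0)\,\partial_\nu u_0\,\mathrm d\sigma,
\]
and this term does \emph{not} appear on the left-hand side of \eqref{eq upper bound}. Only $\int_{\tilde S_1}\eta\,u'(0)u_0\,\mathrm d\sigma$ does. So your claim is false that, after moving the $u'(0)$- and $u'_1$-terms to the full sectors, the left-hand side of \eqref{eq upper bound} equals the sum of your three groups. There is a fourth piece, $\int_{\tilde S_1\cup\tilde S_2\cup\tilde S_3}u'(0)\,\partial_\nu u_0\,\mathrm d\sigma$ (or its finite-patch version), that must be bounded. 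With the crude absolute-value bounds you use elsewhere it is only $O(\tau^{-1})$, since each face gives $|u'(0)|\,\tau\int_0^h r\,e^{-\alpha'\tau r}\,\mathrm dr\sim\tau^{-1}$. That is worse than the $\tau^{-2}$ lower bound of Proposition~\ref{prop low2} and would break the later balancing in $\tau$. Splitting off the linear part of $\delta u'_1$, which you identify as the main obstacle, does nothing for this zeroth-order term.

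The missing idea is a cancellation among the three faces that comes from the Helmholtz equation for $u_0$. The paper records it as \eqref{eq 3D-u0-const}, via the explicit Laplace-transform evaluation \eqref{eq 622}. There the face contributions sum to a multiple of $A^2+B^2+C^2=\rho\cdot\rho/\tau^2=-k^2/\tau^2$. Equivalently, you can apply the divergence theorem to $u_0$ on $P_h$:
\[
\int_{S_1\cup S_2\cup S_3}\partial_\nu u_0\,\mathrm d\sigma=-k^2\int_{P_h}u_0\,\mathrm d\mathbf x-\int_{S_4}\partial_\nu u_0\,\mathrm d\sigma .
\]
Using $|u_0|\le e^{-\alpha'\tau|\mathbf x|}$ and $|\nabla u_0|\le C\tau|u_0|$, the right-hand side is bounded by $C(\tau^{-3}+\tau h^2e^{-\alpha'\tau h})$. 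After multiplying by $|u'(0)|\le C$, this fits into the right-hand side of \eqref{eq upper bound}. Once you add this step, the rest of your argument follows the paper's proof essentially line by line: the $T(\varepsilon)h^2$ bound on $S_1$, the split $\delta u'_1=u'_1+\delta u'_2$ with Lemma~\ref{lem Gamma}, the tails over $S_i'$, and the $S_4$ term.
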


\begin{proof}
We estimate the right-hand side of \eqref{eq integral identity2} term by term.
We first control the error terms on $S_1$ by the boundary estimate \eqref{eq S_1}, and then estimate the remaining terms involving $u'(0)$, $\delta u'_1$, and the CGO solution $u_0$.

By the geometry of the test domain $P_h$, there exist $\alpha'>0$ and a conic set $\mathcal{K}_{\alpha'}\subset\mathbb{S}^2$ such that every direction $\mathbf{d}\in\mathcal{K}_{\alpha'}$ satisfies \eqref{eq ec condition}.
We parametrize $\mathbf{d}$ by spherical angles $(\psi_1,\psi_2)$:
\begin{equation*}
\mathbf{d}=\bigl(\sin\psi_1\cos\psi_2,\ \sin\psi_1\sin\psi_2,\ \cos\psi_1\bigr).
\end{equation*}
We define
\begin{equation}\label{eq Ka}
\mathcal{K}_{\alpha'}:=\{\mathbf{d}(\psi_1,\psi_2):\ \psi_1\in(\tfrac{\pi}{2},\pi),\ \psi_2\in(\phi_0+\tfrac{\pi}{2},\tfrac{3\pi}{2})\}.
\end{equation}
Here $\phi_0$ is the parameter of $P_h$ specified in \eqref{eq phi_0}, and the range of $\psi_2$ is chosen so that \eqref{eq ec condition} holds for every $\mathbf{d}\in\mathcal{K}_{\alpha'}$.
In particular, for $\mathbf{x}$ on the relevant boundary pieces, the CGO solution enjoys the exponential decay induced by \eqref{eq ec condition}; in particular, $|u_0(\mathbf{x})|\leq 1$ there.

By \eqref{eq far-field error} and the propagation-of-smallness argument in Section~\ref{sec 4}, we obtain the boundary estimate \eqref{eq S_1}.
We first estimate the error terms on $S_1$.
By the Cauchy--Schwarz inequality, one has
\begin{equation}\label{eq 532}
\begin{aligned}
\left|\int_{S_1} u_{0}\,\partial_{\nu}(u^{\prime}-u) \, \mathrm{d}\sigma \right|
&\leq \sqrt{\sigma(S_1)} \left( \int_{S_1} |u_0\,\partial_\nu (u-u')|^2 \, \mathrm{d}\sigma \right)^{1/2} \\
&\leq \sigma(S_1)\|\nabla w\|_{L^\infty(S_1)} \\
&\leq \frac{\pi h^2}{4}\,\|\nabla w\|_{L^\infty(S_1)} \\
&\leq C_1\,T(\varepsilon)\,h^2 .
\end{aligned}
\end{equation}
Similarly,
\begin{equation}\label{eq 533}
\begin{aligned}
\left|\int_{S_1} \eta\,(u^{\prime}-u)\,u_{0} \, \mathrm{d}\sigma \right|
&\leq \|\eta\|_{L^\infty(S_1)} \sqrt{\sigma(S_1)} \left( \int_{S_1} |u_0\,(u-u')|^2 \, \mathrm{d}\sigma \right)^{1/2} \\
&\leq M_0\,\sigma(S_1)\| w\|_{L^\infty(S_1)} \\
&\leq M_0\,\frac{\pi h^2}{4}\,\|w\|_{L^\infty(S_1)} \\
&\leq C_2\,T(\varepsilon)\,h^2 .
\end{aligned}
\end{equation}

We next estimate the remaining terms in \eqref{eq integral identity2}.
Since $u'$ is analytic in $P_h$ and satisfies $u'(0)\neq0$, it admits the decomposition
\[
u'(\mathbf{x})=u'(0)+\delta u'_1(\mathbf{x}),
\]
where, by Proposition~\ref{prop decom}, the remainder $\delta u'_1$ can be further decomposed as
\[
\delta u'_1=u'_1+\delta u'_2.
\]
In particular, using \eqref{eq decomposition estimation}, we have
\[
|\delta u'_1(\mathbf{x})|\le C_{\mathrm{dec}}\,r,
\qquad
|\delta u'_2(\mathbf{x})|\le C_{\mathrm{dec}}\,r^2,
\]
for $r\le h$.

A direct computation using \eqref{eq CGO} gives
\begin{equation}\label{eq 3D-u0-const}
\left|\int_{S_1 \cup S_2 \cup S_3} u'(0)\,\partial_{\nu} u_0 \, \mathrm{d}\sigma\right|
\leq C_3\left(\frac{1}{\tau^3}+e^{-\alpha'\tau h}\right).
\end{equation}

For the contribution on $S_1$, using \eqref{eq decomposition estimation}, \eqref{eq gamma}, \eqref{eq gamma2}, and $0<\phi_0\leq\frac{\pi}{2}$, we obtain
\begin{equation}\label{eq 68}
\begin{aligned}
\left| \int_{S_1} \eta \,\delta u'_1\,u_0 \,\mathrm d\sigma \right|
&\leq M_0 C_{\mathrm{dec}} \int_0^{\phi_0} \int_0^h r^{2} e^{-\alpha'\tau r}\,\mathrm dr\,\mathrm d\phi \\
&\le C_4\left(\frac{1}{\tau^{3}}+\frac{1}{\tau}e^{-\alpha'\tau h}\right).
\end{aligned}
\end{equation}

We now bound the terms involving the normal derivative of the CGO solution.
Assume $\tau>k$.
By \eqref{eq decomposition estimation} and the explicit form of $\partial_\nu u_0$, we have
\begin{equation}\label{eq 70}
\begin{aligned}
\left|\int_{S_1\cup S_2\cup S_3} \delta u'_2 \,\partial_{\nu} u_{0} \, \mathrm{d} \sigma\right|
&\leq 3C_{\mathrm{dec}}\frac{\pi}{2}\sqrt{k^2+2\tau^2}\int_0^h r^{3}e^{-\alpha'\tau r}\,\mathrm d r \\
&\leq 3C_{\mathrm{dec}}\frac{\pi}{2}\sqrt{k^2+2\tau^2}
\left( \frac{\Gamma(4)}{(\alpha'\tau)^{4}}+\frac{2}{\tau}e^{-\alpha' \tau h}\right) \\
&\leq C_5\left(\frac{1}{\tau^3}+e^{-\alpha' \tau h}\right).
\end{aligned}
\end{equation}
Likewise, using polar coordinates for $\partial_\nu \delta u'_2$ and \eqref{eq u expansion}, one has
\begin{equation}\label{eq 71}
\begin{aligned}
\left|\int_{S_2\cup S_3}\partial_\nu \delta u'_2\, u_0\, \mathrm{d}\sigma\right|
&\leq 2C_{\mathrm{dec}}\frac{\pi}{2}\int_0^h r^{2}e^{-\alpha'\tau r}\, \mathrm{d}r \\
&\leq 2C_{\mathrm{dec}}\frac{\pi}{2}
\left(\frac{\Gamma(3)}{(\alpha'\tau)^{3}}+\frac{2}{\tau}e^{-\alpha' \tau h} \right) \\
&\leq C_6\left(\frac{1}{\tau^3}+\frac{1}{\tau}e^{-\alpha' \tau h} \right).
\end{aligned}
\end{equation}

For the complementary regions $S'_1$, $S'_2$, and $S'_3$, the exponential damping from \eqref{eq gamma2} yields
\begin{equation}\label{eq 72}
\begin{aligned}
\left|\int_{S_1'\cup S_2'\cup S_3'}u'_1\,\partial_\nu u_0\,\mathrm d\sigma \right|
&\leq 3C_{\mathrm{dec}}\sqrt{k^2+2\tau^2}\frac{2}{\tau}e^{-\alpha' \tau h} \\
&\leq C_7 e^{-\alpha' \tau h}.
\end{aligned}
\end{equation}
Moreover, by direct computation there exists a constant $C_8>0$ such that
\begin{equation}\label{eq 73}
\left|\int_{S'_2\cup S'_3}\partial_\nu u'_1\, u_0\, \mathrm{d}\sigma \right|
\leq C_8 \frac{1}{\tau}e^{-\alpha' \tau h}.
\end{equation}

Finally, we estimate the spherical part $S_4\subset \partial B_h$.
By direct computation, we have
\[
\|u_0\|_{H^1(S_4)}=\sqrt{1+2\tau^2+k^2}\,\|u_0\|_{L^2(S_4)},
\qquad
\|\partial_\nu u_0\|_{L^2(S_4)}\leq \sqrt{2\tau^2+k^2}\,\|u_0\|_{L^2(S_4)}.
\]
Hence
\begin{equation}\label{eq 719}
\begin{aligned}
\left|\int_{S_4} \left( u_{0}\partial_\nu u^{\prime} - u^{\prime}\partial_{\nu} u_{0} \right) \, \mathrm{d} \sigma\right|
&\leq \|u_0\|_{H^{1/2}(S_4)}\|\partial_\nu u'\|_{H^{-1/2}(S_4)}
 + \|u'\|_{L^2(S_4)}\|\partial_\nu u_0\|_{L^2(S_4)} \\
&\leq \|u_0\|_{H^1(S_4)}\|u'\|_{H^1(P_h)}
 + \|u'\|_{H^1(P_h)}\|\partial_\nu u_0\|_{L^2(S_4)} \\
&\leq \|u'\|_{H^1(P_h)}
\left(\|u_0\|_{H^1(S_4)}+\|\partial_\nu u_0\|_{L^2(S_4)}\right) \\
&\leq C\mathcal{E}\sqrt{1+2\tau^2+k^2}\sqrt{\frac{\pi}{2}}\, h\, e^{-\alpha' \tau h} \\
&\leq C_9\,\tau h\, e^{-\alpha' \tau h},
\end{aligned}
\end{equation}
where $\mathcal{E}$ is the uniform bound from \eqref{eq uniform bounded} in Lemma~\ref{lem uniform bound}.

Substituting the above estimates into \eqref{eq integral identity2} and absorbing the finitely many constants into new constants $C^{*}>0$ and $C^{**}>0$, we obtain \eqref{eq upper bound}.
\end{proof}

\begin{prop}\label{prop low2}
Let $u'$ be the solution to \eqref{eq main system} associated with $(\Sigma', \eta')\in(\mathcal{A}, \Xi_1)$.
Assume that the test domain satisfies the geometric conditions
\[
S_1 \subset \partial \Sigma,
\qquad
S_2,S_3 \subset \partial P_h,
\]
that $u'$ is analytic in $P_h$, has vanishing order $0$ at the origin, and that the parameter $\tau$ of $u_0$ satisfies
\begin{equation}\label{eq tau2}
\tau \geq \max(1, k, \tau_0).
\end{equation}
Then the leading contribution to the three-dimensional ATD boundary integral appears at order $\tau^{-2}$.
More precisely, there exists a complex coefficient $C_A$, depending only on the local coefficients $a_0^0,a_1^0,a_1^{\pm1}$ of $u'$ at the test point, on $\eta(0)$, and on the chosen ATD parameters, such that
\begin{equation}\label{eq lower bound1}
\left|\int_{\tilde{S_1}}\eta u'(0)u_0\,\mathrm{d}\sigma
+\int_{\tilde{S_1}\cup \tilde{S_2}\cup \tilde{S_3}} u'_1 \partial_{\nu} u_{0} \, \mathrm{d} \sigma
-\int_{\tilde{S_2}\cup \tilde{S_3}}\partial_\nu u'_1 u_0\, \mathrm{d}\sigma\right|
\geq |C_A|\frac{1}{\tau^{2}}.
\end{equation}
\end{prop}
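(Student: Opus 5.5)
The plan mirrors Proposition~\ref{prop lower2D}: compute the three ray/sector integrals on the left of \eqref{eq lower bound1} exactly by Laplace transforms in polar coordinates on each full sector, and isolate the $\tau^{-2}$ term. The CGO direction is $\mathbf d\in\mathcal K_{\alpha'}$ from \eqref{eq Ka}, and we set $\vartheta=\sqrt{1+k^2/\tau^2}$.

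On a sector $\tilde S_i$ with planar polar coordinates $(r,\phi)$ and unit direction $\hat{\mathbf x}(\phi)$, one has $u_0=e^{\tau r\,\Phi_i(\phi)}$, where
\[
\Phi_i(\phi):=\mathbf d\cdot\hat{\mathbf x}+\mathrm i\vartheta\,\mathbf d^\perp\cdot\hat{\mathbf x},
\]
and $\Re\Phi_i<-\alpha'$ by \eqref{eq ec condition}. Also $\partial_\nu u_0=\tau\,\Lambda_i\,u_0$ with the constant $\Lambda_i:=\nu_i\cdot(\mathbf d+\mathrm i\vartheta\mathbf d^\perp)$. This step needs care because $\tilde S_1$ lies in $\{x_3=0\}$ and plays the role of the face of $\Sigma$.

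\emph{First term.} For constant $\eta(0)$ (replacing $\eta$ by $\eta(0)$ costs $O(\tau^{-3})$ as in \eqref{eq 68}),
\[
\int_{\tilde S_1}\eta(0)u'(0)u_0\,\mathrm d\sigma=\eta(0)u'(0)\int_0^{\phi_0}\frac{\mathrm d\phi}{\tau^2\Phi_1(\phi)^2}.
\]

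\emph{Second term.} Write $u'_1=r\,(\mathbf c\cdot\hat{\mathbf x})$ with $\mathbf c=\nabla u'(0)$, which is determined by $a_1^0,a_1^{\pm1}$. Then $\int_0^\infty r^2\tau\Lambda_i e^{\tau r\Phi_i}\,\mathrm dr=2\Lambda_i/(\tau^2(-\Phi_i)^3)$.

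\emph{Third term.} $\partial_\nu u'_1=\mathbf c\cdot\nu_i$ is constant, which gives $(\mathbf c\cdot\nu_i)/(\tau^2\Phi_i^2)$.

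All three contributions are therefore exactly $\tau^{-2}$ times explicit angular integrals. Collecting them defines
\[
C_A(\tau)=\eta(0)u'(0)\,\mathcal I_0+\mathbf c\cdot\mathbf V(\psi_1,\psi_2,\phi_0,\vartheta),
\]
where $\mathcal I_0$ and $\mathbf V$ are the angular integrals above. The intended theorem-level coefficient is the limit $C_A:=C_A^\infty$ obtained at $\vartheta=1$. As in the 2D proof, continuity in $\vartheta$ uniformly over a compact subset of the parameter set gives $\tau_0$ with $|C_A(\tau)|\ge\tfrac12|C_A^\infty|$ for $\tau\ge\tau_0$. This yields \eqref{eq lower bound1} with $C_A$ replaced by $C_A^\infty/2$, i.e.\ after absorbing the factor $\tfrac12$ into the definition of $C_A$.

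\emph{Simplifying at $\vartheta=1$.} Here $\mathbf d+\mathrm i\mathbf d^\perp=:\boldsymbol\zeta$ is an isotropic vector, $\boldsymbol\zeta\cdot\boldsymbol\zeta=0$. Choosing coordinates in each sector plane so that $\Phi_i=\zeta_{i,1}\cos\phi+\zeta_{i,2}\sin\phi$, the integrals $\int\Phi_i^{-2}\,\mathrm d\phi$ and $\int(\cos\phi,\sin\phi)\Phi_i^{-3}\,\mathrm d\phi$ have closed forms via $\tan$-substitution. I expect the gradient contributions on $\tilde S_2,\tilde S_3$ to telescope, in analogy with the cancellation in \eqref{eq finG}, leaving a combination of $\eta(0)u'(0)$ and the components of $\nabla u'(0)$ only along the face; the limit at $\vartheta=1$ then gives a closed form for $C_A$ in terms of these. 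The statement only asserts existence of $C_A$ depending on these data, so it suffices to exhibit the formula. Where the algebra is unpleasant, I will keep $C_A$ as the explicit finite sum of angular integrals rather than simplify fully.

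The main obstacle is the uniformity step. Since $\Phi_i(\phi)$ vanishes nowhere on the closed sectors (real part $\le-\alpha'$), the integrands are continuous in $(\phi,\vartheta)$. To avoid the boundary of $\mathcal K_{\alpha'}$, where $\alpha'\to0$, I will fix $(\psi_1,\psi_2)$ in a compact subset, which is legitimate since $C_A$ is allowed to depend on the ATD parameters. The remaining issue is verifying the closed form at $\vartheta=1$; the freedom in $\phi_0$ and $\mathbf d$ is reserved to make $C_A^\infty\neq0$ generically, but the proposition itself only requires the lower bound with whatever $C_A$ results.
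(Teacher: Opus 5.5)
Your lower-bound argument follows the paper's route: exact Laplace-transform evaluation on $\tilde S_1,\tilde S_2,\tilde S_3$, so that each term is exactly $\tau^{-2}$ times an angular integral continuous in $\vartheta$, followed by passage to $\vartheta=1$. It does prove \eqref{eq lower bound1}. Your closing step is more careful than the paper's direct claim $|\mathcal F(\tau)|\ge|C_A|\tau^{-2}$, which does not follow from $\mathcal F=C_A\tau^{-2}+\mathcal O(\tau^{-3})$. You settle for $\tfrac12|C_A^\infty|$ and keep $\mathbf d$ in a compact subset of $\mathcal K_{\alpha'}$, and the latter really is needed, since $Q_2(0)=Q_3(0)=A=-\cos\psi_1\to0$ as $\psi_1\to\pi/2$.

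One justification is wrong, however. In \eqref{eq 68} the extra power of $r$ comes from $\delta u_1'=\mathcal O(r)$, not from $\eta$. For $\eta\in L^\infty$ nothing makes $\eta-\eta(0)$ small, and $\eta(0)$ is not even defined. You need $\eta$ continuous at the test point, as the statement tacitly assumes. The freezing error is then $o(\tau^{-2})$, which suffices after enlarging $\tau_0$; an $\mathcal O(\tau^{-3})$ bound would require a Lipschitz bound on $\eta$ at $0$.

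The part of your plan that would fail is the predicted simplification: it is backwards. Let $\mathcal W$ be the infinite wedge bounded by $\tilde S_1,\tilde S_2,\tilde S_3$. Since $\Delta u_1'=0$, $\Delta u_0=-k^2u_0$, and $u_0$ decays exponentially in $\mathcal W$, Green's formula gives
\[
\int_{\partial\mathcal W}\bigl(u_1'\,\partial_\nu u_0-u_0\,\partial_\nu u_1'\bigr)\,\mathrm d\sigma=-k^2\int_{\mathcal W}u_1'\,u_0\,\mathrm d\mathbf x=\mathcal O(\tau^{-4}).
\]
Hence the two gradient terms in \eqref{eq lower bound1} collapse to $\int_{\tilde S_1}u_0\,\partial_{\nu_1}u_1'\,\mathrm d\sigma+\mathcal O(\tau^{-4})$, with $\partial_{\nu_1}u_1'\equiv\partial_{\nu_1}u'(0)$. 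Using $\int_0^{\phi_0}\Phi_1^{-2}\,\mathrm d\phi=\sin\phi_0/(\Phi_1(0)\Phi_1(\phi_0))$, the left side $\mathcal F(\tau)$ of \eqref{eq lower bound1}, with $\eta$ frozen at $\eta(0)$, satisfies
\[
\tau^2\mathcal F(\tau)=\bigl(\partial_{\nu_1}u'(0)+\eta(0)\,u'(0)\bigr)\frac{\sin\phi_0}{\Phi_1(0)\,\Phi_1(\phi_0)}+\mathcal O(\tau^{-2}).
\]
So the tangential components of $\nabla u'(0)$ (the $a_1^{\pm1}$), which are the only ones present in the $\tilde S_1$ integrand, cancel against the $\tilde S_2,\tilde S_3$ terms. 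What survives is the normal component, which enters only through $\tilde S_2,\tilde S_3$. The $a_0^0$ and $a_1^0$ parts agree with \eqref{eq 624}--\eqref{eq 625}, while the $a_1^{\pm1}$ terms cancel at order $\tau^{-2}$.

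It follows that the freedom in $\phi_0$ and $\mathbf d$ cannot make $C_A^\infty$ nonzero. Indeed, $C_A^\infty\neq0$ exactly when $u'$ violates the Robin condition at the test point, which is the link to exterior Robin hyperplanes used in Section~\ref{sec:nondeg}. This identity is also the painless way to carry out the algebra you were planning to avoid.
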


\begin{proof}
We prove \eqref{eq lower bound1} in two steps.
In Step~I, we parameterize the boundary integrals on the swept planes $\tilde S_1,\tilde S_2,\tilde S_3$.
In Step~II, we identify the leading contribution of order $\tau^{-2}$ and the corresponding coefficient $C_A$.

\medskip
\noindent\textit{Step I: Parameterization of the integrals.}

In the test domain $P_h\subset \mathbb R^3$, we use spherical coordinates $(r,\theta,\phi)$ and fix an arbitrary meridian angle $\phi_0\in(0,\frac{\pi}{2}]$.
We parameterize the three boundary faces and their swept planes as follows:
\begin{equation}\label{eq S}
\begin{aligned}
S_1&=\{\mathbf x=r(\cos\phi,\sin\phi,0):\ 0\leq r\leq h,\ 0\le\phi\le\phi_0\},\\
S_2&=\{\mathbf x=r(\sin\theta,0,\cos\theta):\ 0\leq r\leq h,\ 0\le\theta\le\frac{\pi}{2}\},\\
S_3&=\{\mathbf x=r(\sin\theta\cos\phi_0,\sin\theta\sin\phi_0,\cos\theta):\ 0\leq r\leq h,\ 0\le\theta\le\frac{\pi}{2}\},
\end{aligned}
\end{equation}
and we write $\tilde S_j$ for the corresponding swept planes with $0\leq r<\infty$.
The corresponding outward unit normals are
\begin{equation}\label{eq nu}
\nu_1=(0,0,-1),\quad \nu_2=(0,-1,0),\quad \nu_3=(-\sin\phi_0,\ \cos\phi_0,\ 0).
\end{equation}

We choose unit vectors $\mathbf d,\mathbf d^\perp\in\mathbb S^2$ entering the CGO phase $\rho$ defined by \eqref{eq rho} for the solution $u_0$ as follows:
\begin{equation}\label{eq d dperp}
\mathbf d = (\sin\psi_1\cos\psi_2, \sin\psi_1\sin\psi_2, \cos\psi_1)^\intercal,
\qquad
\mathbf d^\perp = (-\sin\psi_2, \cos\psi_2, 0)^\intercal,
\end{equation}
where the angles $(\psi_1,\psi_2)$ are taken from the admissible set $\mathcal{K}_{\alpha'}$ defined by \eqref{eq Ka}, so that \eqref{eq ec condition} holds for the fixed value $\phi_0$.
We set
\[
\vartheta:=\sqrt{1+\frac{k^2}{\tau^2}}=1+\mathcal{O}(\tau^{-2})\geq 1.
\]
The exterior normal derivatives on $S_1,S_2,S_3$ corresponding to \eqref{eq nu} are given by
\begin{equation}\label{eq 11}
\partial_{\nu_1}u_0 = \tau A(\psi_1,\psi_2)u_0,\qquad
\partial_{\nu_2}u_0 = \tau B(\psi_1,\psi_2)u_0,\qquad
\partial_{\nu_3}u_0 = \tau Z(\phi_0,\psi_1,\psi_2)u_0,
\end{equation}
where
\begin{equation}\label{eq ABC}
\begin{aligned}
A(\psi_1,\psi_2)&=-\cos\psi_1,\\
B(\psi_1,\psi_2)&=-\sin\psi_1\sin\psi_2-\mathrm{i}\vartheta\cos\psi_2,\\
C(\psi_1,\psi_2)&=-\sin\psi_1\cos\psi_2+\mathrm{i}\vartheta\sin\psi_2,
\end{aligned}
\end{equation}
and
\begin{equation}\label{eq Z}
Z(\phi_0,\psi_1,\psi_2):= C(\psi_1,\psi_2)\,\sin\phi_0 - B(\psi_1,\psi_2)\,\cos\phi_0.
\end{equation}

Since $u'$ is analytic in $P_h$ and has vanishing order $0$ at the test point $0$, we may write the first two terms in the decomposition \eqref{eq decomposition} as
\begin{equation}\label{eq Un'}
u'(0)=2\sqrt{\pi}\,a_0^0,
\end{equation}
\begin{equation}\label{eq U1'}
u'_1
=\frac{4\pi \mathrm{i} k r}{3}\left(
a_1^0\sqrt{\frac{3}{4\pi}}\cos\theta
-a_1^1\sqrt{\frac{3}{8\pi}}\sin\theta\,e^{\mathrm{i}\phi}
+a_1^{-1}\sqrt{\frac{3}{8\pi}}\sin\theta\,e^{-\mathrm{i}\phi}
\right).
\end{equation}
Under the definition of vanishing order in Lemma~\ref{lem vanish}, we have $a_0^0\neq 0$.

For future reference, we define the restrictions of $u'_1$ to the faces $S_i$, $i=1,2,3$, by
\begin{equation}\label{eq C123}
\begin{aligned}
f_1(\phi)&=\frac{4\pi \mathrm{i}k}{3}\left(
a_1^{-1}\sqrt{\frac{3}{8\pi}}e^{-\mathrm{i}\phi}
-a_1^{1}\sqrt{\frac{3}{8\pi}}e^{\mathrm{i}\phi}
\right),\\
f_2(\theta)&=\frac{4\pi \mathrm{i}k}{3}\left(
a_1^{0}\sqrt{\frac{3}{4\pi}}\cos\theta
+\sqrt{\frac{3}{8\pi}}\sin\theta\,(a_1^{-1}-a_1^{1})
\right),\\
f_3(\theta)&=\frac{4\pi \mathrm{i}k}{3}\left(
a_1^{0}\sqrt{\frac{3}{4\pi}}\cos\theta
+\sqrt{\frac{3}{8\pi}}\sin\theta\,(a_1^{-1}e^{-\mathrm{i}\phi_0}-a_1^{1}e^{\mathrm{i}\phi_0})
\right).
\end{aligned}
\end{equation}
Moreover, the normal derivatives of the degree-$1$ term $u'_1$ associated with $\nu_2$ and $\nu_3$ in \eqref{eq nu} can be written as
\begin{equation}\label{eq 12}
\partial_{\nu_2}u'_1 = f_4(\theta),
\qquad
\partial_{\nu_3}u'_1 = f_5(\theta),
\end{equation}
where
\begin{equation}\label{eq C45}
\begin{aligned}
f_4(\theta) &= \frac{4\pi k}{3}\sqrt{\frac{3}{8\pi}}\bigl(a_1^{-1}+a_1^{1}\bigr),\\
f_5(\theta) &= \frac{4\pi k}{3}\sqrt{\frac{3}{8\pi}}\bigl(a_1^{-1}e^{-\mathrm{i}\phi_0}+a_1^{1}e^{\mathrm{i}\phi_0}\bigr).
\end{aligned}
\end{equation}

On each swept plane $\tilde S_i$, we integrate along rays $\mathbf x=r\hat{\mathbf x}$.
Using the Laplace transform identity, valid for $\Re(\rho\cdot\hat{\mathbf x})<0$, we have
\begin{equation}\label{eq Laplace}
\int_0^\infty r^m e^{\rho\cdot\hat{\mathbf x}\,r}\, \mathrm{d}r=\frac{m!}{\bigl(-\rho\cdot\hat{\mathbf x}\bigr)^{m+1}},
\end{equation}
where $m\in\mathbb N\cup\{0\}$.
We introduce the denominators
\begin{equation}\label{eq Q def}
Q_j:=-\frac{1}{\tau}\,\rho\cdot\hat{\mathbf x}_j,
\end{equation}
so that $-\rho\cdot\hat{\mathbf x}_j=\tau Q_j$, and the unit directions on the swept planes are
\begin{equation}\label{eq unit vectors}
\begin{aligned}
\hat{\mathbf x}_1&=(\cos\phi,\sin\phi,0)^\intercal,\\
\hat{\mathbf x}_2&=(\sin\theta,0,\cos\theta)^\intercal,\\
\hat{\mathbf x}_3&=(\sin\theta\cos\phi_0,\sin\theta\sin\phi_0,\cos\theta)^\intercal.
\end{aligned}
\end{equation}
A direct computation gives
\begin{equation}\label{eq denominator}
\begin{aligned}
Q_1(\phi)&=C\cos\phi+B\sin\phi,\\
Q_2(\theta)&=C\sin\theta+A\cos\theta,\\
Q_3(\theta)&=A\cos\theta+D\sin\theta,
\end{aligned}
\end{equation}
where
\[
D(\phi_0,\psi_1,\psi_2):=C(\psi_1,\psi_2)\,\cos\phi_0+B(\psi_1,\psi_2)\,\sin\phi_0.
\]
On the admissible rectangle specified in \eqref{eq Ka}, one has $\Re Q_j>0$ for $j=1,2,3$ on the corresponding integration intervals.

Combining \eqref{eq 11}, \eqref{eq C123}, \eqref{eq 12}, \eqref{eq C45}, and \eqref{eq Laplace}, we can write the contribution of the degree-$1$ term to the lowest-order approximation as a function of the CGO parameters $(\psi_1,\psi_2)$:
\begin{equation}\label{eq F}
\begin{aligned}
\mathcal F_1(\psi_1,\psi_2)
&=\int_0^{\phi_0}\frac{A g_1(\phi)}{Q_1(\phi)^{3}}\, \mathrm{d}\phi
+\int_0^{\frac{\pi}{2}}\frac{B g_2(\theta)-g_4(\theta)Q_2(\theta)}{Q_2(\theta)^{3}}\, \mathrm{d}\theta\\
&\quad +\int_0^{\frac{\pi}{2}}\frac{Z g_3(\theta)-g_5(\theta)Q_3(\theta)}{Q_3(\theta)^{3}}\, \mathrm{d}\theta,
\end{aligned}
\end{equation}
where $g_i:=2! f_i$ for $i=1,2,3$ and $g_j:=1! f_j$ for $j=4,5$, so that the factorial factors from \eqref{eq Laplace} are absorbed into $g_1,\dots,g_5$.

\medskip
\noindent\textit{Step II: Identification of the leading coefficient.}

We now explain why the contribution coming solely from the constant term $u'(0)$ does not yield a usable non-degenerate coefficient, and why one must incorporate the next term $u'_1$ in the local expansion.

Since $u'$ has vanishing order $0$ at $0$, we have $u'(0)\neq 0$, and the decomposition \eqref{eq decomposition} starts with the constant term $u'(0)$ and the degree-$1$ term $u'_1$ given in \eqref{eq Un'} and \eqref{eq U1'}.
If we keep only the constant contribution $u'(0)$ in the ATD integral identity \eqref{eq integral identity2}, then the corresponding term takes the form
\begin{equation}\label{eq u0}
\begin{aligned}
\mathcal F_0(\psi_1,\psi_2)
&:=\tau \int_{\tilde{S}_1 \cup \tilde{S}_2 \cup \tilde{S}_3} u'(0)\,\partial_{\nu} u_0 \, \mathrm{d}\sigma\\
&=2\sqrt{\pi}a_0^0\left(\int_0^{\phi_0}\frac{A}{Q_1(\phi)^{2}}\, \mathrm{d}\phi
+\int_0^{\frac{\pi}{2}}\frac{B}{Q_2(\theta)^{2}}\, \mathrm{d}\theta
+\int_0^{\frac{\pi}{2}}\frac{Z}{Q_3(\theta)^{2}}\, \mathrm{d}\theta\right).
\end{aligned}
\end{equation}
A direct computation yields
\begin{equation}\label{eq 622}
\mathcal F_0(\psi_1,\psi_2)
=
2\sqrt{\pi}a_0^0\frac{\sin\phi_0(A^2+B^2+C^2)}{ACD}.
\end{equation}
In particular, since $\vartheta^2=1+\frac{k^2}{\tau^2}$, we have the exact identity
\[
A^2+B^2+C^2=1-\vartheta^2=-\frac{k^2}{\tau^2}.
\]
Hence $\mathcal F_0(\psi_1,\psi_2)$ is of order $\tau^{-2}$.
However, the prefactor of $\tau^{-2}$ in \eqref{eq 622} is proportional to $a_0^0=u'(0)/(2\sqrt{\pi})$.
The assumption that the vanishing order is $0$ guarantees only $a_0^0\neq 0$, but does not provide a quantitative lower bound for $|a_0^0|$ in terms of the a~priori parameters.
Therefore, the constant-term contribution $\mathcal F_0$ alone does not yield a usable non-degenerate leading coefficient.

For this reason, we incorporate the next term $u'_1$ and consider the combined expression
\begin{equation}\label{eq F1}
\mathcal F(\tau):=
\int_{\tilde{S_1}}\eta u'(0)u_0\,\mathrm{d}\sigma
+\int_{\tilde{S_1}\cup \tilde{S_2}\cup \tilde{S_3}} u'_1 \partial_{\nu} u_{0} \, \mathrm{d} \sigma
-\int_{\tilde{S_2}\cup \tilde{S_3}}\partial_\nu u'_1 u_0\, \mathrm{d}\sigma,
\end{equation}
which is precisely the term appearing on the left-hand side of \eqref{eq lower bound1}.
We now collect all contributions of order $\tau^{-2}$ in \eqref{eq F1}.
These contributions define a complex coefficient depending on the local coefficients $a_0^0,a_1^0,a_1^{\pm1}$, on $\eta(0)$, and on the chosen ATD parameters.
We denote this coefficient by $C_A$.

In \eqref{eq C123} and \eqref{eq C45}, the quantity \eqref{eq F1} involves only the coefficients $a_0^0$, $a_1^0$, and $a_1^{\pm 1}$.
By direct calculation and taking $\tau\to\infty$ after factoring out the common prefactor $\tau^{-2}$, one has
\begin{equation}\label{eq 624}
\int_{\tilde{S_1}}\eta u'(0)u_0\,\mathrm{d}\sigma
=
\frac{1}{\tau^2}\left(-2\sqrt{\pi}\eta(0)\frac{1}{A^2}\left(\frac{Z}{D}+\frac{B}{C} \right)\, a_0^0\right)+\mathcal{O}(\tau^{-3}),
\end{equation}
and the only contribution involving $a_1^0$ in the remaining terms of \eqref{eq F1} is
\begin{equation}\label{eq 625}
\frac{1}{\tau^2}\left(\frac{2\sqrt{3\pi}\mathrm{i}k}{3}\frac{1}{A^2}\left(\frac{Z}{D}+\frac{B}{C} \right)\, a_1^0\right)+\mathcal{O}(\tau^{-3}).
\end{equation}
The coefficients $a_1^{1}$ and $a_1^{-1}$ enter \eqref{eq F1} through additional definite integrals, and these contributions are not proportional to the factor appearing in \eqref{eq 624}--\eqref{eq 625}.

Thus the expression $\mathcal F(\tau)$ in \eqref{eq F1} admits an asymptotic expansion of the form
\begin{equation}\label{eq 3D-CA-expansion}
\mathcal F(\tau)
=
\frac{C_A}{\tau^2}+\mathcal O(\tau^{-3}),
\qquad \tau\to\infty,
\end{equation}
where $C_A$ depends only on the local coefficients $a_0^0,a_1^0,a_1^{\pm1}$, on $\eta(0)$, and on the chosen ATD parameters.
Since the coefficients in the above expansion depend continuously on $\vartheta=\sqrt{1+k^2/\tau^2}$ and $\vartheta\to1$ as $\tau\to\infty$, there exists $\tau_0$ such that for all $\tau\geq \max(1,k,\tau_0)$,
\[
|\mathcal F(\tau)|\ge |C_A|\frac{1}{\tau^2}.
\]
This yields \eqref{eq lower bound1}.
\end{proof}

The following remark explains the role of Proposition~\ref{prop low2} in the later non-degeneracy analysis.

\begin{rem}\label{rem:3D-first-obstruction}
Proposition~\ref{prop low2} identifies the leading ATD quantity in the present three-dimensional construction.
In the non-vanishing-point regime considered here, the theorem-level leading coefficient $C_A$ is determined by the local coefficients $a_0^0,a_1^0,a_1^{\pm1}$ and, in particular, by the combination
\begin{equation}\label{eq non-deg}
\left|a_1^0+\frac{\sqrt{3}\mathrm{i}\eta(0)}{k}\,a_0^0\right|+|a_1^{1}|+|a_1^{-1}|.
\end{equation}
If this quantity is nonzero, then the ATD extraction is non-degenerate at the first step.
If it vanishes, then one is led to the higher-order degeneracy discussed later in Proposition~\ref{prop:ATD-degeneracy}.
\end{rem}   

\begin{proof}[Proof of Theorem~\ref{th relation} in the three-dimensional case]
Let $n=3$ and let $\mathfrak d=\tilde{\mathrm d}(\Sigma,\Sigma')$ be the modified Hausdorff distance defined in \eqref{eq d_M}.
We work under the standing assumptions \eqref{eq contradiction} and \eqref{eq far-field error} with $0<\varepsilon<\varepsilon_m$, and we use the ATD construction described above with parameter $h=c_1\mathfrak d$.

In the present three-dimensional bounded-impedance setting, we identify the theorem-level leading ATD coefficient $C_A$ in Theorem~\ref{th relation} with the leading coefficient extracted in Proposition~\ref{prop low2}.

We estimate the exponentially decaying terms using
\begin{equation}\label{eq exp decay2}
e^{-t}\leq \frac{1}{t},
\qquad
e^{-t}\leq \frac{m!}{t^{m}}
\quad
\text{for all } t>0,\ m\in\mathbb{N},
\end{equation}
where we will take $t=\alpha'\tau h$.

More precisely, Proposition~\ref{prop low2} yields the lower bound
\[
\left|\int_{\tilde{S_1}}\eta u'(0)u_0\,\mathrm{d}\sigma
+\int_{\tilde{S_1}\cup \tilde{S_2}\cup \tilde{S_3}} u'_1 \partial_{\nu} u_{0} \, \mathrm{d} \sigma
-\int_{\tilde{S_2}\cup \tilde{S_3}}\partial_\nu u'_1 u_0\, \mathrm{d}\sigma\right|
\ge |C_A|\frac{1}{\tau^2},
\]
while Proposition~\ref{prop up1} gives the corresponding upper bound \eqref{eq upper bound}.
Combining the two estimates and absorbing all universal constants into a single constant (still denoted by $C>0$), we obtain
\[
\frac{|C_A|}{\tau^2}
\leq C\,T(\varepsilon)\, h^2
 + C \left( \frac{1}{\tau^{3}}
 + e^{-\alpha'\tau h} + \frac{1}{\tau}e^{-\alpha'\tau h}
 + \tau h\, e^{-\alpha'\tau h} \right).
\]

Assume $\tau>1$ and $0<h<1$.
Multiplying both sides by $\tau^2$ and setting $t=\alpha'\tau h$, we estimate the exponential terms by \eqref{eq exp decay2} with suitable choices of $m$.
In particular, taking $m=4$ yields
\[
\tau^3 h\,e^{-t}
\leq \tau^3 h\,\frac{4!}{t^4}
=\frac{4!}{\alpha'^4}\,\frac{1}{\tau h^3}.
\]
The remaining exponential terms are treated in the same way and are absorbed into the same polynomial bound after adjusting constants.
Consequently, after absorbing all absolute constants into a single constant $C>0$, we arrive at
\begin{equation}\label{eq fin2}
|C_A| \leq C\Big(\tau^{2} T(\varepsilon)\,h^2 + \tau^{-1} h^{-3}\Big).
\end{equation}

We now choose $\tau$ to balance the two terms on the right-hand side of \eqref{eq fin2}, namely
\[
\tau^{2} T(\varepsilon)\,h^2 \sim \tau^{-1} h^{-3},
\]
which gives
\begin{equation}\label{eq fin3}
\tau=\tau_a:=T(\varepsilon)^{-\frac{1}{3}}\,h^{-\frac{5}{3}}.
\end{equation}
For sufficiently small $\varepsilon$, the quantity $T(\varepsilon)$ is small and hence $\tau_a$ is large, so that $\tau_a$ is admissible in the sense of \eqref{eq tau2}.
Substituting \eqref{eq fin3} into \eqref{eq fin2}, we obtain
\[
|C_A| \leq C_0\,T(\varepsilon)^{\frac{1}{3}}\,h^{-\frac{4}{3}},
\]
for some constant $C_0>0$ depending only on the a~priori parameters.
Equivalently,
\begin{equation}\label{eq relation-local}
|C_A|\,h^{\frac{4}{3}}\leq C_0\,T(\varepsilon)^{\frac{1}{3}}.
\end{equation}

By Proposition~\ref{prop w}, the near-field error function satisfies
\[
T(\varepsilon)\leq C_w\bigl(\log|\log(1/\varepsilon)|\bigr)^{-\alpha},
\]
for some constants $C_w>0$ and $\alpha>0$ depending only on the a~priori parameters.
Inserting this bound into \eqref{eq relation-local} yields
\begin{equation}\label{eq relation-local2}
|C_A|\,h^{\frac{4}{3}}\leq C_1\bigl(\log|\log(1/\varepsilon)|\bigr)^{-\frac{\alpha}{3}},
\end{equation}
with a constant $C_1>0$ depending only on the a~priori parameters.

Recalling that $h=c_1\mathfrak d$ with $0<c_1<1$, we obtain
\[
|C_A|\,\mathfrak d^{\frac{4}{3}}\leq C_2\bigl(\log|\log(1/\varepsilon)|\bigr)^{-\frac{\alpha}{3}}.
\]
Finally, by Lemma~\ref{lem dm} and Lemma~\ref{lem bdry-vs-set}, $\mathfrak d$ is equivalent, up to multiplicative constants, to the boundary Hausdorff distance $\mathrm d_H(\partial\Sigma,\partial\Sigma')$ and to the classical Hausdorff distance $\mathrm d_H(\Sigma,\Sigma')$ on the admissible class.
After adjusting the constant on the right-hand side, we arrive at
\[
|C_A|\,\bigl(\mathrm d_H(\Sigma,\Sigma')\bigr)^{p}
\leq C\bigl(\log|\log(1/\varepsilon)|\bigr)^{-\kappa_0},
\]
with $p=\frac{4}{3}$ and $\kappa_0=\frac{\alpha}{3}$, as claimed in Theorem~\ref{th relation} for the three-dimensional case.

The proof is complete.
\end{proof}
\section{Non-vanishing mechanism and completion of the stability proof}\label{sec:nondeg}

In this section, we complete the proof of Theorem~\ref{cor stability}.
By Theorem~\ref{th relation}, the remaining step toward the sharp stability estimate is to show that the leading ATD coefficient is nonzero.
The key point is therefore to establish a non-vanishing mechanism for this coefficient.
Our strategy is to show first that its vanishing forces an exterior Dirichlet, Neumann, or Robin hyperplane, and then to exclude such a possibility through the corresponding generalized impedance hyperplane-exclusion mechanism.
In this way, the far-field--geometry relation closes into the sharp stability estimate of Theorem~\ref{cor stability}.

\subsection{Leading ATD vanishing and GIH exclusion}

\subsubsection{The bounded-impedance case}

In the bounded-impedance setting, the ATD extraction developed in Sections~\ref{sec 2D} and~\ref{sec proof} yields a recursive structure for the leading ATD coefficient.
We show that if this coefficient vanishes at every stage of the recursive extraction, then the total field necessarily satisfies a Robin relation on a non-trivial exterior flat portion.
Equivalently, an exterior Robin hyperplane follows.

\begin{prop}\label{prop:ATD-degeneracy}
Let $(D,\eta)$ and $(D',\eta')$ be two scattering configurations in the bounded-impedance class $\Xi_1$, and let $u'$ be the total field associated with $(D',\eta')$.
Let $\mathbf{x}_0$ be an ATD test point in the relative interior of the chosen exterior-visible flat piece.
Assume that the leading ATD coefficient vanishes at every stage of the recursive ATD extraction at $\mathbf{x}_0$.
Then the total field satisfies a Robin relation on a non-trivial flat portion of the chosen exterior-visible piece.
Equivalently, an exterior Robin hyperplane in the sense of Definition~\ref{defn:ext-hyper} follows.
\end{prop}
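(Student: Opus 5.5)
The plan is to read the vanishing of the leading ATD coefficient at every stage as the vanishing of every Taylor coefficient of a single analytic function along the flat piece. Work in local coordinates with $\mathbf{x}_0=0$ and the chosen flat piece contained in $\{x_n=0\}$. Recall from the analysis around \eqref{eq 2D_uN}--\eqref{eq nonde2d} and \eqref{eq 624}--\eqref{eq non-deg} that the ATD identity on the swept rays or sectors is computed through the Laplace transform \eqref{eq Laplace}. Consequently, the coefficient of $\tau^{-m}$ in the ray-integral expression is a linear functional of the degree-$(m-1)$ and degree-$m$ homogeneous parts of $u'$ at $0$, weighted by $\eta$ on the flat piece. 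The precise combinations are $b_N-a_N$ in two dimensions, and $a_1^0+\sqrt3\mathrm{i}\eta(0)a_0^0/k$ together with $a_1^{\pm1}$ in three dimensions.

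The first step is to show that these functionals are exactly the Taylor coefficients at $0$ of the Robin trace $\mathcal R u':=\partial_{x_n}u'-\eta u'$ restricted to $\{x_n=0\}$ (with the orientation of the exterior normal used in \eqref{eq II2d}). In two dimensions this is a direct check. The degree-$N$ harmonic $r^N(a_Ne^{\mathrm{i}N\theta}+b_Ne^{-\mathrm{i}N\theta})$ has normal derivative on $\theta=0$ proportional to $(a_N-b_N)x_1^{N-1}$. So $C_*=0$ means that the lowest-order Neumann contribution vanishes, and the impedance term $\eta u'_N$ then enters at the next order. Iterating this recursion as in Remark~\ref{rem:2D-first-obstruction}, I will prove by induction on $m$ that vanishing at stage $m$ is equivalent to $\partial_{x_1}^{m}\bigl(\partial_{x_n}u'-\eta u'\bigr)(0)=0$. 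The induction hypothesis is that all lower coefficients already vanish, so only the new top-order contribution survives.

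For this induction to make sense, $\eta$ must be expanded in the same jet. I therefore interpret "vanishes at every stage" as including the Taylor data of $\eta$ along the flat piece, which is how $\eta(0)$ enters $C_A$. In three dimensions I use the freedom in $\phi_0$ and in the CGO angles $(\psi_1,\psi_2)$ from \eqref{eq Ka}. Because the leading coefficient vanishes for an open set of these parameters, and the dependence on them is analytic with non-proportional kernels (as noted after \eqref{eq 625}), each vanishing forces all tangential directional derivatives of the Robin trace of a given order to vanish, not just one combination.

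The second step is to pass from the jet to an identity on an open piece. Once all coefficients vanish, $u'$ is real-analytic in $B_h$, so the function $g(x'):=\partial_{x_n}u'(x',0)+\eta\,u'(x',0)$ has vanishing Taylor series at $0$. If $\eta$ is the trace of an analytic function near $0$ (for instance constant), then $g$ is analytic and hence $g\equiv0$ on a flat neighborhood $\Gamma\subset\{x_n=0\}\cap B_h$. This is exactly a Robin relation on a non-trivial flat portion. To get the hyperplane formulation of Definition~\ref{defn:ext-hyper}, I then continue this relation analytically along the exterior-visible hyperplane through $\Gamma$, using the real-analyticity of $u'$ in $\mathbf G'$.

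The main obstacle is the non-analytic $\eta$: then $g$ is not analytic and a vanishing Taylor series gives nothing. My first attempt is to avoid pointwise jets altogether. I would use the exact identity \eqref{eq II2d}, which holds for every $\tau$ and every admissible direction $\mathbf d$, together with vanishing at all stages, to conclude that the Laplace-type transform $\int_{\Gamma}g\,u_0\,\mathrm{d}\sigma$ vanishes up to errors of order $e^{-c\tau h}$, uniformly for $\mathbf d$ in an open cone. Then I would invoke the injectivity of the complex exponential family $\{e^{\rho\cdot\mathbf x}\}$ restricted to a hyperplane, which is a Paley--Wiener type argument, to obtain $g=0$ a.e. on $\Gamma$. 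If this fails, I would state the conclusion as a Robin relation with the analytic extension of $\eta$ and defer the non-analytic case to Theorem~\ref{thm:Xi2-excludes-degeneracy}.
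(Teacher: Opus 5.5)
Your reading of the stagewise coefficients is sound. Green's formula turns the ATD identity into a statement about $\int_{I_1} g\,u_0\,\mathrm d\sigma$ with $g:=\partial_\nu u'+\eta u'$, which equals $\int_{I_1}(\partial_\nu(u'-u)+\eta(u'-u))u_0\,\mathrm d\sigma$ because $\partial_\nu u+\eta u=0$ there. Watson's lemma then identifies the successive coefficients with the Taylor coefficients of $g$ at $\mathbf{x}_0$. This is a more transparent version of the paper's appeal to the Robin-line and Robin-plane recursions of \cite{cao2020nodal,cao2021novel}, and for real-analytic $\eta$ your step from a flat jet to $g\equiv 0$ near $\mathbf{x}_0$ is correct.

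The gap is everything outside that case. The proposition is stated for $\Xi_1$, and the paper uses it in Theorem~\ref{thm:Xi2-excludes-degeneracy} (nowhere-analytic $\eta$) and Lemma~\ref{lem:Xi3-DI} (merely continuous $\eta$). There your induction does not even start, because $\eta$ has no Taylor jet at $\mathbf{x}_0$. Even for smooth $\eta$, single-point data only say that $g$ is flat at $\mathbf{x}_0$, and nothing excludes a nonzero flat $g$ such as $e^{-1/x_1^2}$ times a nonvanishing factor. Your Paley--Wiener patch fails for the same reason. Vanishing at every finite stage gives $\int g\,u_0\,\mathrm d\sigma=O(\tau^{-M})$ for each $M$, not $O(e^{-c\tau h})$; for $g=e^{-1/x_1^2}$ the transform decays only like $e^{-c\tau^{2/3}}$, so no support theorem applies. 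Your fallback is circular: Theorem~\ref{thm:Xi2-excludes-degeneracy} obtains its Robin relation on $\Gamma$ precisely from this proposition, and a version restricted to analytic $\eta$ is vacuous for $\eta\in\Xi_2$.

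The idea you are missing is the one the paper uses to pass from a point to a portion: the test point is not fixed. The ATD construction can be anchored at any point of the relative interior of the same exterior-visible flat piece. In the stability argument, any point of the piece close enough to the maximizer $\mathbf{p}_0$ can serve, since $B_{\mathfrak d}(\mathbf{p}_0)\cap D'=\emptyset$. The paper's proof takes the degeneracy at all these points (``since $\mathbf{x}_0$ may be chosen arbitrarily \dots\ this vanishing cannot remain purely pointwise''). With that, no analytic continuation and no jet of $\eta$ are needed.

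At a test point with $u'(\mathbf{x}_0)\neq 0$, the first nontrivial stage is already a nonzero multiple of $g(\mathbf{x}_0)$. In three dimensions this is the combination $a_1^0+\sqrt3\mathrm{i}\eta(0)a_0^0/k$, which is a nonzero constant times $\partial_{\nu}u'(0)+\eta(0)u'(0)$ with $\nu=\nu_1$ as in \eqref{eq nu}. For $\eta\in L^\infty$, read $\eta(\mathbf{x}_0)$ at Lebesgue points. Hence $g=0$ at almost every point of the portion, because the zero set of $u'$ there is null. The exception is when $u'$ vanishes on a sub-portion; then $g=\partial_\nu u'$ is analytic and your jet argument applies. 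You should replace your second step by this pointwise argument. With a single fixed test point, as your proposal is set up, the implication cannot be obtained for non-analytic $\eta$.
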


\begin{proof}
We first describe the common recursive structure under the above vanishing hypothesis, and then treat the two cases separately.

At each step, one inserts the current local expansion of $u'$ into the relevant integral identity, namely \eqref{eq II2d} in two dimensions and \eqref{eq integral identity2} in three dimensions, and extracts the coefficient of the lowest-order term in $\tau$, exactly as in Propositions~\ref{prop lower2D} and \ref{prop low2}.
If this extracted coefficient is nonzero, then the leading ATD coefficient is already non-vanishing at $\mathbf{x}_0$.
If it vanishes, one expands $u'$ one order further and repeats the same procedure.
Thus the assumption means that the extracted leading coefficient vanishes at every finite stage of the recursion.
Since $\mathbf{x}_0$ is arbitrary on the same exterior-visible flat piece, this vanishing cannot remain purely pointwise.

\smallskip
\noindent\textit{Step 1: the two-dimensional case.}
After a rigid motion, we assume that $\mathbf{x}_0=0$ and that the chosen flat segment is $I_1\subset\{x_2=0\}$; see Subsection~\ref{subsec ATD2D}.
Since $u'$ is analytic near $0$, it admits the Fourier--Bessel expansion \eqref{eq 2Dexpansion}.

In the planar ATD extraction, the first leading condition is
\begin{equation}\label{eq:2d-obs-1}
a_N-b_N=0,
\end{equation}
as follows from Proposition~\ref{prop lower2D}.
If \eqref{eq:2d-obs-1} does not hold, then the lowest-order term in $\tau$ is already nonzero.
Assume now that \eqref{eq:2d-obs-1} holds.
Expanding $u'$ one order further in \eqref{eq II2d}, the next condition is
\begin{equation}\label{eq:2d-obs-2}
2\eta(0)(a_N+b_N)-\mathrm{i}k(a_{N+1}-b_{N+1})=0.
\end{equation}
If \eqref{eq:2d-obs-2} does not hold, then the new lowest-order term in $\tau$ is nonzero.
Continuing in the same way, the vanishing of the leading ATD coefficient at every stage means that, besides \eqref{eq:2d-obs-1} and \eqref{eq:2d-obs-2}, the recursive procedure yields the corresponding coefficient relation at every order above $N$.

For a Robin hyperplane, the Fourier--Bessel coefficients satisfy exactly this order-by-order recursion; see \cite[Lemma~6.3]{cao2020nodal}.
In particular, \eqref{eq:2d-obs-1}, \eqref{eq:2d-obs-2}, and the coefficient relations at all higher orders are precisely the coefficient relations of a Robin hyperplane.
Hence the vanishing of the leading ATD coefficient at every stage yields exactly the recursion of a Robin hyperplane.

Since $\mathbf{x}_0$ may be chosen arbitrarily in the relative interior of the same flat segment, we conclude that
\[
\partial_\nu u'+\eta u'=0
\]
holds on a non-trivial flat portion of $I_1$.
This yields an exterior Robin hyperplane in the sense of Definition~\ref{defn:ext-hyper}.

\smallskip
\noindent\textit{Step 2: the three-dimensional case.}
After a rigid motion, we assume that $\mathbf{x}_0=0$ and that the chosen flat patch is $S_1\subset\{x_3=0\}$; see Subsection~\ref{subsec ATD3D}.
The local expansion of $u'$ is given by \eqref{eq 3D-decomp-N0} and \eqref{eq u expansion}.

The first extracted condition is
\begin{equation}\label{eq:3d-leading-combination-proof}
\left|a_1^0+\frac{\sqrt{3}\mathrm{i}\eta(0)}{k}\,a_0^0\right|+|a_1^1|+|a_1^{-1}|,
\end{equation}
as follows from Proposition~\ref{prop low2} and Remark~\ref{rem:3D-first-obstruction}.
If \eqref{eq:3d-leading-combination-proof} does not vanish, then the leading ATD coefficient is nonzero.
Assume now that \eqref{eq:3d-leading-combination-proof} vanishes.
One then continues the same recursive procedure in \eqref{eq integral identity2}; we do not repeat the higher-order extraction here.
Accordingly, under the present hypothesis, the recursively extracted leading coefficient vanishes at every subsequent stage.

For a Robin hyperplane, the local spherical-wave coefficients satisfy exactly the same recursion; see the proof of \cite[Theorem~3.1]{cao2021novel}.
Hence the vanishing of the leading ATD coefficient at every stage yields the same recursion.

Since $\mathbf{x}_0$ can be chosen arbitrarily in the relative interior of the same flat patch, we conclude that
\[
\partial_\nu u'+\eta u'=0
\]
holds on a non-trivial flat portion of $S_1$.
This yields an exterior Robin hyperplane in the sense of Definition~\ref{defn:ext-hyper}.
\end{proof}

\subsubsection{The sound-soft and sound-hard cases}

The relation analysis for the sound-soft and sound-hard regimes is supplemented in Appendix~\ref{app:classical-relation}.
We now show that, in these regimes, the vanishing of the leading ATD coefficient forces an exterior Dirichlet or Neumann hyperplane.

\begin{prop}\label{prop:classical-degeneracy}
Assume the setting of Theorem~\ref{th relation}, and let $\mathbf{x}_0$ be an ATD test point chosen in the relative interior of the anchored exterior-visible flat piece.
Assume that the leading ATD coefficient $C_A$ vanishes at every stage of the corresponding recursive ATD extraction.

\begin{enumerate}
\item[(i)]
If the anchored flat piece is of sound-soft type, then an exterior Dirichlet hyperplane in the sense of Definition~\ref{defn:ext-hyper} follows.

\item[(ii)]
If the anchored flat piece is of sound-hard type, then an exterior Neumann hyperplane in the sense of Definition~\ref{defn:ext-hyper} follows.
\end{enumerate}
\end{prop}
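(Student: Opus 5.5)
The plan is to follow exactly the recursive template of Proposition~\ref{prop:ATD-degeneracy}, replacing the Robin boundary relation on the anchored flat piece by the Dirichlet or Neumann one. In the sound-soft and sound-hard regimes the integral identities \eqref{eq II2d} and \eqref{eq integral identity2} are modified as in Appendix~\ref{app:classical-relation}. On $I_1$ (resp.\ $S_1$) the boundary condition $u=0$ (resp.\ $\partial_\nu u=0$) is now used in place of $\partial_\nu u+\eta u=0$. Formally this amounts to $\eta=+\infty$, where one divides by $\eta$ and only the term $u'$ survives on the flat piece, or to $\eta=0$, where all $\eta$-weighted terms drop out. Hence the leading ATD coefficient at each stage is obtained from the same Laplace-transform computation along the swept rays, with the impedance contributions either removed or, in the Dirichlet case, with the roles of $u'$ and $\partial_\nu u'$ on the flat piece interchanged.

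\emph{Two-dimensional case.} First, after the rigid motion placing $\mathbf{x}_0=0$ and $I_1\subset\{x_2=0\}$, I expand $u'$ by \eqref{eq 2Dexpansion}. I then compute the first extracted condition. In the sound-hard case, setting $\eta=0$ in \eqref{eq:2d-obs-1}--\eqref{eq:2d-obs-2} yields $a_N-b_N=0$, $a_{N+1}-b_{N+1}=0$, and in general $a_n=b_n$ for all $n\ge N$. These are exactly the relations forcing $u'$ to be even in $x_2$, i.e.\ $\partial_{x_2}u'=0$ on the line. In the sound-soft case the analogous computation, with the Dirichlet trace as the free datum on $I_1$, should give $a_n+b_n=0$ at every order, i.e.\ $u'$ odd in $x_2$ and $u'=0$ on the line. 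I would verify the first two orders explicitly, mirroring Proposition~\ref{prop lower2D} with $\mathcal G_\infty$ recomputed, and then argue the induction, citing \cite[Lemma~6.3]{cao2020nodal} for the coefficient characterization of Dirichlet and Neumann lines.

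\emph{Three-dimensional case.} Here I expand $u'$ by \eqref{eq u expansion}. The vanishing of the extracted coefficient at every order should give the parity relations $a_\ell^m=\pm(-1)^{\ell+m}a_\ell^m$ up to the sign convention, i.e.\ $u'(x_1,x_2,-x_3)=\mp u'(x_1,x_2,x_3)$ at the level of all Taylor coefficients. By \cite[Theorem~3.1]{cao2021novel}, this characterizes $u'=0$ or $\partial_{x_3}u'=0$ on $\{x_3=0\}$ near $0$. Since $u'$ is real-analytic in $B_h(\mathbf{x}_0)\subset\mathbb R^n\setminus\overline{D'}$, a full vanishing of all jets gives the relation on a neighborhood in the plane. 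Analytic continuation along the plane inside the connected exterior of $D'$ then extends it to the whole component of the hyperplane lying in $\mathbf G'$, producing the exterior Dirichlet or Neumann hyperplane of Definition~\ref{defn:ext-hyper}.

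The main obstacle is the middle step: showing that the recursively extracted coefficients are, order by order, nonzero multiples of exactly the odd/even parity combinations, with no accidental cancellation between different $m$-modes. In three dimensions the paper itself does not carry out the higher-order extraction. I would therefore first try to avoid it by exploiting the freedom in the CGO angles $(\psi_1,\psi_2)$ and in $\phi_0$. Vanishing of the coefficient for all admissible angles gives an identity between analytic functions of $(\psi_1,\psi_2)$, and separating the modes $e^{\mathrm{i}m\psi_2}$ should isolate each parity combination individually. This in turn reduces the claim to the known Dirichlet/Neumann characterization.
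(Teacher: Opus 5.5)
Your outline follows the paper's skeleton: the Dirichlet/Neumann identities of Appendix~\ref{app:classical-relation}, the same lowest-order-in-$\tau$ recursion as in Proposition~\ref{prop:ATD-degeneracy}, and identification of the order-by-order relations with those of a nodal or Neumann flat piece via \cite{cao2020nodal,cao2021novel}. Your parity reformulation is correct: $a_n+b_n=0$, resp.\ $a_n-b_n=0$, for all $n$ in 2D, and vanishing of all $a_\ell^m$ with $\ell+m$ of one fixed parity in 3D.

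The genuine difference is how you pass from ``vanishing at every stage'' to a relation on an open flat portion. The paper lets the test point $\mathbf x_0$ range over the relative interior of the anchored piece. You stay at one point, require the full jet to vanish, and conclude by real-analyticity, using the CGO angles and $\phi_0$ as extra freedom in 3D. Your route has the merit of exposing a counting problem. For one fixed ATD construction each stage yields a single scalar, whereas the degree-$j$ part of the planar trace of $\partial_{x_3}u'$ (resp.\ $u'$) has $j+1$ coefficients. So in 3D one construction at one point cannot force the relation; in 2D there is one coefficient per order and your single-point recursion suffices. The paper's pointwise recursion does not address this. What actually rescues it is the appeal to arbitrary $\mathbf x_0$, which is also cheaper: at a generic point ($u'(\mathbf x_0)\neq 0$), the first nontrivial coefficient is already a nonzero geometric factor times $\partial_\nu u'(\mathbf x_0)$, resp.\ $u'(\mathbf x_0)$. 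Both devices tacitly strengthen the hypothesis, to vanishing for all test points, resp.\ for all admissible angles.

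The 3D mechanism you sketch, however, does not do what you say. Since $u_0$ solves the Helmholtz equation, Green's identity on the infinite wedge bounded by $\tilde S_1,\tilde S_2,\tilde S_3$ reduces the swept-plane bracket, modulo higher powers of $\tau^{-1}$, to a single integral. Here $u'$ is replaced by a long enough Taylor polynomial. The integral is $\int_{\tilde S_1}u_0\,\partial_\nu u'\,\mathrm d\sigma$, resp.\ $-\int_{\tilde S_1}u'\,\partial_\nu u_0\,\mathrm d\sigma$ with your interchange of roles. Hence, up to a nonzero factor, the stage-$j$ coefficient is $\int_0^{\phi_0}P_j(\phi)\,Q_1(\phi)^{-(j+2)}\,\mathrm d\phi$ plus feedback from $P_{j'}$, $j'<j$, where $r^jP_j(\phi)$ is the degree-$j$ part of that trace. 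At $\vartheta=1$ one has $Q_1=-\sin\psi_1\cos(\phi-\psi_2)-\mathrm i\sin(\phi-\psi_2)$. Because the $\phi$-integral runs only over $[0,\phi_0]$, separating the modes $e^{\mathrm i m\psi_2}$ does not isolate individual parity combinations. It only yields the moment conditions $\int_0^{\phi_0}P_j(\phi)e^{-\mathrm i(j+2+2l)\phi}\,\mathrm d\phi=0$, $l\ge 0$. Each of these involves all coefficients of $P_j$, so a completeness argument on the arc is still needed.

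The clean fix is to vary $\phi_0$ with the CGO direction fixed. Differentiating in $\phi_0$ gives $P_j(\phi_0)Q_1(\phi_0)^{-(j+2)}=0$, so $P_j\equiv 0$ by induction on $j$.

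Finally, analytic continuation gives the relation only on the component of $\Pi\cap(\mathbb R^n\setminus\overline{D'})$ containing the portion. That is not a full hyperplane $\Pi\subset\mathbf G$ as Definition~\ref{defn:ext-hyper} literally demands. Closing that gap is the job of the reflection/path argument of \cite{liu2006uniqueness}, which the paper also leaves implicit.
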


\begin{proof}
This follows from the classical ATD integral identities \eqref{eq 3dsh}--\eqref{eq 2dss} recorded in Appendix~\ref{app:classical-relation}, together with the same recursive lowest-order extraction used in the bounded-impedance case in Proposition~\ref{prop:ATD-degeneracy}.
In the two-dimensional case, the first extracted conditions are given by \eqref{eq:2d-ss-first} and \eqref{eq:2d-sh-first}; if they vanish, the same extraction continues recursively to all higher orders.
The same recursive structure applies in three dimensions, starting from \eqref{eq 3dss} and \eqref{eq 3dsh}.

Since the ATD test point $\mathbf{x}_0$ may be chosen arbitrarily in the relative interior of the same anchored exterior-visible flat piece, the vanishing of the leading ATD coefficient at every stage cannot remain purely pointwise.
Instead, it yields the same order-by-order coefficient relations on a non-trivial flat portion of that piece.
These are exactly the coefficient relations associated with nodal and singular flat configurations in \cite{cao2020nodal, cao2021novel}.
Hence one obtains the corresponding exterior Dirichlet or Neumann hyperplane in the sense of Definition~\ref{defn:ext-hyper}.
\end{proof}

\begin{rem}\label{rem:three-hyperplane-cases}
Although the global boundary regime may be mixed, the ATD analysis is always performed relative to a fixed anchored exterior-visible flat piece.
The localized ATD identity is therefore determined only by the type of that anchored flat piece, namely, whether it is of bounded-impedance, sound-soft, or sound-hard type.
Accordingly, in all regimes considered in this paper, the vanishing of the leading ATD coefficient can only force one of the three exterior flat configurations in Definition~\ref{defn:ext-hyper}, namely, an exterior Robin, Dirichlet, or Neumann hyperplane.
These three cases exhaust all possibilities relevant to the subsequent qualitative exclusion argument.
\end{rem}

\subsection{Qualitative exclusion and non-vanishing in the admissible regimes}

\subsubsection{The nowhere-analytic impedance regime}

We first consider the nowhere-analytic impedance class $\Xi_2$.
By Proposition~\ref{prop:ATD-degeneracy}, the vanishing of the leading ATD coefficient would force an exterior Robin hyperplane.
In the present regime, this possibility is excluded by the nowhere-analyticity of the impedance itself.
Accordingly, the corresponding qualitative exclusion is established within the present paper, in agreement with case (iii) of Theorem~\ref{thm:qualitative-hyperplane-exclusion}.

\begin{thm}\label{thm:Xi2-excludes-degeneracy}
Assume the setting of Proposition~\ref{prop:ATD-degeneracy}, and suppose that $\eta\in\Xi_2$ in the sense of Definition~\ref{defn eta}.
Then the leading ATD coefficient cannot vanish.
Equivalently, the corresponding ATD extraction cannot be degenerate to arbitrary order.
In particular, the corresponding exterior Robin hyperplane is excluded, so that case (iii) of Theorem~\ref{thm:qualitative-hyperplane-exclusion} holds.

Moreover, let $(D,\eta)$ and $(D',\eta')$ be two purely impedance polygonal or polyhedral obstacles in the class $\Xi_2$ satisfying
\[
u_\infty(\hat{\mathbf{x}};D,\eta,u^i)=u_\infty(\hat{\mathbf{x}};D',\eta',u^i)
\qquad \text{for all } \hat{\mathbf{x}}\in\mathbb S^{n-1},
\]
for the same incident wave $u^i$ with fixed $k\in\mathbb{R}_+$ and $\mathbf{p}\in\mathbb{S}^{n-1}$.
Then
\[
(D,\eta)=(D',\eta').
\]
That is, the corresponding single-measurement uniqueness result holds in the nowhere-analytic impedance regime.
\end{thm}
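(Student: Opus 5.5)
The plan is to argue by contradiction via Proposition~\ref{prop:ATD-degeneracy}. Suppose the leading ATD coefficient vanishes at every stage of the recursive extraction at an ATD test point $\mathbf x_0$ in the relative interior of the exterior-visible flat piece $\Gamma\subset\partial D$ (an edge $I_1$ in two dimensions, a face patch $S_1$ in three dimensions). That proposition then gives
\[
\partial_\nu u'+\eta u'=0 \quad\text{on a non-trivial relatively open flat portion } \Gamma_0\subset\Gamma .
\]
Here $\Gamma_0$ lies in the exterior of $D'$, since $B_{\mathfrak d}(\mathbf x_0)\cap D'=\emptyset$. So $u'$ is real-analytic in a full neighbourhood $U$ of $\Gamma_0$.

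The key step is to solve this Robin relation for $\eta$. Because $u'$ solves the Helmholtz equation in $U$ and is not identically zero, its restriction to the hyperplane containing $\Gamma_0$ is real-analytic and not identically zero on $\Gamma_0$. Otherwise $\partial_\nu u'$ would also vanish there by the Robin relation, and Holmgren's theorem or unique continuation, together with Lemma~\ref{lem vanish}, would force $u'\equiv0$. This is impossible, since $u'=u^i+u'^s$ with $u'^s$ radiating, so $u'$ has no compact support behaviour. Hence the zero set $Z$ of $u'|_{\Gamma_0}$ is a proper real-analytic subset with empty interior. On $\Gamma_0\setminus Z$ we obtain
\[
\eta=-\frac{\partial_\nu u'}{u'},
\]
which is the trace of the function $-\nu\cdot\nabla u'/u'$, real-analytic in a neighbourhood of any $\mathbf y\in\Gamma_0\setminus Z$ in $\mathbb R^n$. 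Picking such a point $\mathbf y$ contradicts $\eta\in\Xi_2$, which forbids analyticity near every boundary point. Hence the leading ATD coefficient cannot vanish to all orders, and no exterior Robin hyperplane can occur. This is case (iii) of Theorem~\ref{thm:qualitative-hyperplane-exclusion}.

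A subtlety is which field and which impedance appear in the Robin relation. The identity \eqref{eq II2d} involves $\eta$ on $\Gamma\subset\partial D$ (the boundary condition of $u$) while the expansion is of $u'$. The relation produced by Proposition~\ref{prop:ATD-degeneracy} is therefore $\partial_\nu u'+\eta u'=0$ with the impedance $\eta$ of $D$. I would check carefully that this identification is what the recursion gives. It is exactly what makes the nowhere-analyticity of $\eta$ (not $\eta'$) usable, so I would not need any hypothesis on the analyticity of $\eta'$ here, only the symmetric roles of $D,D'$ when the maximiser lies on $\partial D'$.

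For uniqueness, assume the far fields coincide, so $\varepsilon$ can be taken arbitrarily small. Suppose $D\neq D'$, so $\mathfrak d>0$. Then Theorem~\ref{th relation} gives $|C_A|\,\mathrm d_H(D,D')^p\le C(\log|\log(1/\varepsilon)|)^{-\kappa_0}$ for all small $\varepsilon$, hence $C_A=0$, and the same holds for every higher-stage coefficient by the same argument applied after the recursive step. Alternatively and more directly, Rellich's lemma gives $u=u'$ in the unbounded component $\mathbf G_1$, so on $\Gamma_0\subset\partial D\cap\partial\mathbf G_1$ the boundary condition of $u$ transfers to $u'$. Either route yields the Robin relation for $u'$ on $\Gamma_0$, which the previous step excludes; hence $D=D'$. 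With $D=D'$ and $u=u'$ in $\mathbf G$, the identity $(\eta-\eta')u=0$ holds a.e.\ on $\partial D$. Since $u|_{\partial D}$ cannot vanish on an open boundary set (its Cauchy data would then vanish and $u\equiv0$), $\eta=\eta'$ a.e. The main obstacle I expect is rigorously justifying that vanishing at every recursive stage yields the Robin relation on an open piece rather than only formal jet identities at a point. I would lean on Proposition~\ref{prop:ATD-degeneracy} as stated, and use Rellich for the uniqueness part to avoid that issue.
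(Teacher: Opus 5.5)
Your proposal follows the paper's proof essentially step for step. In Part~1 you use Proposition~\ref{prop:ATD-degeneracy} to get the Robin relation for $u'$ on a flat portion outside $D'$, rule out $u'\equiv0$ there by Cauchy uniqueness, and read off $\eta=-\partial_\nu u'/u'$ as a real-analytic trace near a point with $u'\neq0$, contradicting $\Xi_2$. In Part~2 you use Rellich's lemma to transfer the Robin condition to an exterior-visible flat piece of one obstacle lying outside the other, and then conclude from $(\eta-\eta')u=0$ on $\partial D$. Your extra route through Theorem~\ref{th relation} is unnecessary, and its extension to higher-stage coefficients is not established in the paper.

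One caveat, shared with the paper's ``non-trivial boundary portion'' wording, concerns the last step. Knowing that $u|_{\partial D}$ cannot vanish on an \emph{open} set only places $\{\eta\neq\eta'\}$ inside a zero set with empty interior, which could a priori still have positive measure. So to conclude $\eta=\eta'$ a.e.\ you need that $u$ cannot vanish on a boundary set of positive measure. That requires a boundary unique-continuation (doubling) result for the Robin problem, not plain Cauchy uniqueness.
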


\begin{proof}
We divide the proof into two parts.

\smallskip
\noindent\textit{Part 1. Exclusion of the exterior Robin hyperplane.}
Assume to the contrary that, for some anchored ATD test point $\mathbf{x}_0$, the corresponding ATD extraction is degenerate to arbitrary order.
Equivalently, the leading ATD coefficient vanishes at every stage of the recursive extraction.
By Proposition~\ref{prop:ATD-degeneracy}, the Robin relation in Definition~\ref{defn:ext-hyper} holds on a non-trivial flat portion $\Gamma$ of the anchored exterior-visible piece.
Equivalently, writing
\[
R:=\partial_\nu u'+\eta u',
\]
one has
\[
R=0 \qquad \text{on }\Gamma.
\]

Since $u'$ solves the Helmholtz equation in a neighborhood of $\mathbf{x}_0$, it is real-analytic there.
Because the anchored piece is flat, its unit normal is constant, and therefore $\partial_\nu u'$ is also real-analytic along $\Gamma$.
If $u'$ vanished identically on $\Gamma$, then $R=0$ on $\Gamma$ would imply $\partial_\nu u'=0$ on $\Gamma$ as well.
By the local uniqueness of the Cauchy problem for real-analytic solutions of the Helmholtz equation, this would force $u'$ to vanish identically in a neighborhood of $\Gamma$, which is impossible.
Hence $u'$ does not vanish identically on $\Gamma$.

Choose $\mathbf{y}_0\in\Gamma$ such that $u'(\mathbf{y}_0)\neq 0$.
By continuity, there exists a relatively open subportion $\Gamma'\subset\Gamma$ containing $\mathbf{y}_0$ on which $u'\neq 0$.
On $\Gamma'$, define
\[
\eta_*(\mathbf{x}):=-\frac{\partial_\nu u'(\mathbf{x})}{u'(\mathbf{x})}.
\]
Then $\eta_*$ is real-analytic in the tangential variables on $\Gamma'$.
Since $\Gamma'$ lies in a flat boundary piece, $\eta_*$ extends locally to a real-analytic function in a neighborhood of $\Gamma'$ in $\mathbb R^n$.
Moreover, since $R=0$ on $\Gamma'$, we have
\[
\eta(\mathbf{x})=\eta_*(\mathbf{x}), \qquad \mathbf{x}\in\Gamma'.
\]
Thus $\eta$ coincides on $\Gamma'$ with the trace of a real-analytic function in $\mathbb R^n$.

This contradicts the assumption $\eta\in\Xi_2$ in Definition~\ref{defn eta}, which excludes such a representation near every boundary point.
Therefore the leading ATD coefficient cannot vanish.
In particular, the corresponding exterior Robin hyperplane is excluded, and case (iii) of Theorem~\ref{thm:qualitative-hyperplane-exclusion} follows.

\smallskip
\noindent\textit{Part 2. Single-measurement uniqueness.}
Let $(D,\eta)$ and $(D',\eta')$ be two purely impedance polygonal or polyhedral obstacles in the class $\Xi_2$ satisfying
\[
u_\infty(\hat{\mathbf{x}};D,\eta,u^i)=u_\infty(\hat{\mathbf{x}};D',\eta',u^i)
\qquad \text{for all } \hat{\mathbf{x}}\in\mathbb S^{n-1},
\]
for the same incident wave $u^i$ with fixed $k\in\mathbb{R}_+$ and $\mathbf{p}\in\mathbb{S}^{n-1}$.
By Rellich's lemma and unique continuation, the corresponding total fields coincide in the unbounded connected component of $\mathbb R^n\setminus \overline{(D\cup D')}$.
Assume that $D\neq D'$.
Then one can choose an anchored exterior-visible flat piece of one obstacle lying in the exterior of the other.
On that flat piece, the common total field satisfies the corresponding Robin boundary condition, and hence yields an exterior Robin hyperplane in the sense of Definition~\ref{defn:ext-hyper}.
This contradicts the exclusion established in Part 1.
Therefore $D=D'$.

It remains to show that $\eta=\eta'$ on $\partial D$.
Since $\partial D=\partial D'$, one has
\[
\partial_\nu u+\eta u=0,
\qquad
\partial_\nu u+\eta' u=0
\qquad \text{on }\partial D.
\]
Subtracting gives
\[
(\eta-\eta')u=0
\qquad \text{on }\partial D.
\]
If $\eta\neq \eta'$ on a non-trivial boundary portion, then $u=0$ there, and the Robin boundary condition further gives $\partial_\nu u=0$ there.
By the unique continuation principle for the Helmholtz equation, this implies that $u$ vanishes identically in the exterior domain, a contradiction.
Hence $\eta=\eta'$ on $\partial D$.
This proves the corresponding single-measurement uniqueness result in the nowhere-analytic impedance regime.
\end{proof}

\subsubsection{The two-dimensional constant-impedance regime}

We next consider the two-dimensional constant-impedance regime $\Xi_4$.
By Proposition~\ref{prop:ATD-degeneracy}, the vanishing of the leading ATD coefficient would force an exterior Robin hyperplane.
In the present regime, this possibility is excluded by the single-measurement uniqueness result of \cite{hu2020uniqueness}, as summarized in case (v) of Theorem~\ref{thm:qualitative-hyperplane-exclusion}.

\begin{lem}\label{lem:Xi4-Robin}
Assume the setting of Theorem~\ref{th relation}, and suppose that $n=2$ and that the reference configuration $(D,\eta)$ belongs to the constant positive impedance class $\Xi_4$.
Then the leading ATD coefficient cannot vanish.
\end{lem}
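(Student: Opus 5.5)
The argument will be by contradiction, following the same pattern as Theorem~\ref{thm:Xi2-excludes-degeneracy}. The only difference is where the hyperplane exclusion comes from: here it is imported from the single-measurement uniqueness result of \cite{hu2020uniqueness}, as recorded in case (v) of Theorem~\ref{thm:qualitative-hyperplane-exclusion}. Suppose the leading ATD coefficient vanishes at every stage of the recursive extraction at the anchored test point $\mathbf{x}_0$. Since $\eta\equiv\lambda>0$ is in particular bounded with $\Im\eta=0$, we have $\Xi_4\subset\Xi_1$. The anchored flat piece is therefore of bounded-impedance type in the sense of Remark~\ref{rem:three-hyperplane-cases}, and Proposition~\ref{prop:ATD-degeneracy} (Step~1, two-dimensional case) applies.

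The recursion consists of \eqref{eq:2d-obs-1}, \eqref{eq:2d-obs-2} and their higher-order analogues, all with $\eta(0)=\lambda$. Proposition~\ref{prop:ATD-degeneracy} shows that this recursion coincides with the Fourier--Bessel recursion of a Robin line in the sense of \cite[Lemma~6.3]{cao2020nodal}. Because $\lambda$ is constant, these relations hold at every point of the relative interior of $I_1$ with the same coefficient. Consequently $\partial_\nu u'+\lambda u'=0$ holds on a non-trivial segment $\Gamma\subset I_1$, and $\Gamma$ lies in the exterior of $K'$ because $B_{\mathfrak d}(\mathbf{p}_0)\cap K'=\emptyset$.

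Next I will promote $\Gamma$ to a full Robin line. The function $R:=\partial_\nu u'+\lambda u'$, with $\nu$ the constant normal, again solves the Helmholtz equation wherever $u'$ does. Both $u'$ and $R$ are real-analytic in $\mathbf G'=\mathbb R^2\setminus\overline{K'}$, so $R|_L$ is real-analytic along the whole line $L\supset\Gamma$ on each connected portion of $L\cap\mathbf G'$. Since $R|_\Gamma=0$, analytic continuation gives $R=0$ on the connected component of $L\cap\mathbf G'$ containing $\Gamma$. This is exactly the configuration that the reflection/path argument of \cite{hu2020uniqueness} rules out for a constant-impedance polygon with connected exterior. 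Applying that exclusion (case (v) of Theorem~\ref{thm:qualitative-hyperplane-exclusion}) to $(K',\lambda')$ produces the contradiction, so the leading ATD coefficient is nonzero.

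I expect the main obstacle to be the mismatch between the local output and what \cite{hu2020uniqueness} actually excludes. Proposition~\ref{prop:ATD-degeneracy} yields only a Robin segment, or at best a Robin ray or line piece inside $\mathbf G'$, whereas Definition~\ref{defn:ext-hyper} asks for an entire hyperplane in $\mathbf G$. A related subtlety is that the Robin coefficient produced comes from $\eta$ on $\partial K$ while the field is $u'$. If the two constants differ, I will first use the Robin relation for $u$ on $I_1$ together with the smallness of $w=u-u'$, or in the uniqueness limit $u=u'$, to identify the constant as $\lambda$. If that identification fails, I will instead invoke the proof of \cite{hu2020uniqueness} directly: its reflection-and-path argument starts from an arbitrary Robin line segment in the exterior, extends it by reflection, and reaches a contradiction at a corner or at infinity via the Sommerfeld condition. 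I would verify explicitly that the starting hypothesis of that argument is met by the segment $\Gamma$ produced above.
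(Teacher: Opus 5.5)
Your proposal follows the paper's proof. Assume the leading ATD coefficient vanishes, apply Proposition~\ref{prop:ATD-degeneracy} (legitimate since $\Xi_4\subset\Xi_1$) to get an exterior Robin hyperplane, and exclude it by case~(v) of Theorem~\ref{thm:qualitative-hyperplane-exclusion}, i.e.\ by \cite{hu2020uniqueness}.

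The paper takes the output of Proposition~\ref{prop:ATD-degeneracy} directly as a Robin hyperplane in the sense of Definition~\ref{defn:ext-hyper}, and it does not address the segment-versus-line or $\lambda$-versus-$\lambda'$ issues you raise. Your analytic-continuation step and fallback plans are therefore extra caution beyond the paper. Note, however, that smallness of $w$ on $I_1$ could not force $\lambda'=\lambda$ in any case, since $I_1$ is not part of $\partial K'$.
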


\begin{proof}
Suppose to the contrary that the leading ATD coefficient vanishes.
Then, by Proposition~\ref{prop:ATD-degeneracy}, an exterior Robin hyperplane follows.
This is excluded by Theorem~\ref{thm:qualitative-hyperplane-exclusion} in the two-dimensional constant-impedance regime.
Therefore the leading ATD coefficient cannot vanish.
\end{proof}

\subsubsection{The sound-soft and sound-hard regimes}

We now turn to the sound-soft and sound-hard regimes.
By Proposition~\ref{prop:classical-degeneracy}, the vanishing of the leading ATD coefficient would force an exterior Dirichlet or Neumann hyperplane.
For the plane-wave setting considered in this paper, these possibilities are excluded by the single-measurement uniqueness results of \cite{liu2006uniqueness}, as summarized in cases (i) and (ii) of Theorem~\ref{thm:qualitative-hyperplane-exclusion}.
We also note that, for sound-soft polyhedral scatterers, single-measurement uniqueness under a single incident point source wave was proved in \cite{hu2014unique}. 

\begin{lem}\label{lem:classical-ND}
Assume the setting of Theorem~\ref{th relation}.
In the sound-soft and sound-hard regimes, the leading ATD coefficient cannot vanish.
\end{lem}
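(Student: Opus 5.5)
The argument is by contradiction, following the same template as Lemma~\ref{lem:Xi4-Robin}. Suppose the leading ATD coefficient $C_A$ vanishes at every stage of the recursive extraction at an ATD test point $\mathbf{x}_0$. This point lies in the relative interior of the anchored exterior-visible flat piece $\Gamma$ of $\partial D$, with $D$ in the sound-soft or sound-hard regime. The anchored piece is then of sound-soft or sound-hard type, respectively. By Remark~\ref{rem:three-hyperplane-cases}, the localized ATD identity is the classical one from Appendix~\ref{app:classical-relation}, namely \eqref{eq 2dss} and its companions in two dimensions, and \eqref{eq 3dss}, \eqref{eq 3dsh} in three dimensions. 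So Proposition~\ref{prop:classical-degeneracy} applies.

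\textbf{Step 1 (from vanishing to a hyperplane).} Proposition~\ref{prop:classical-degeneracy}(i) covers the sound-soft case: order-by-order vanishing forces $u'=0$ on a non-trivial flat portion of $\Gamma$ lying in the exterior of $D'$. Proposition~\ref{prop:classical-degeneracy}(ii) covers the sound-hard case and forces $\partial_\nu u'=0$ there. Since $u'$ is real-analytic near $\Gamma\subset\mathbb R^n\setminus\overline{D'}$, the relation propagates along the affine hyperplane $\Pi\supset\Gamma$. The reflection-based argument of \cite{liu2006uniqueness} extends it within the connected exterior of $D'$, since the Dirichlet and Neumann reflections $u'(\mathbf{x})=\mp u'(R_\Pi\mathbf{x})$ are local analytic identities. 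This produces a Dirichlet, respectively Neumann, hyperplane in the sense of Definition~\ref{defn:ext-hyper} for the configuration generating $u'$.

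\textbf{Step 2 (exclusion).} Since $\mathbf{x}_0\in\partial D$ lies in $\mathbf G_1$ and $u=u'$ there up to the far-field-controlled error, we can take $(D',\eta')$ in the same regime. Theorem~\ref{thm:qualitative-hyperplane-exclusion}(i) or (ii) then says that the exterior of a sound-soft or sound-hard polytope contains no Dirichlet or Neumann hyperplane, respectively. This contradicts Step~1, so $C_A\neq 0$.

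\textbf{Main obstacle.} The hardest point is the passage in Step~1 from a relation on a flat portion to a genuine hyperplane contained in the exterior domain. A local flat piece only gives a ``partial'' hyperplane, so one must continue it through the exterior by the path and reflection argument. This uses the admissibility assumptions (edge size, angles, exterior cone). I would invoke the path construction of Proposition~\ref{prop G1} together with the reflection principle exactly as in \cite{liu2006uniqueness}, whose uniqueness proof is built on precisely this step.

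A second delicate point is matching regimes between $u$ and $u'$. To handle it, I would use the symmetric role of $D$ and $D'$ in the choice of the maximizer of $\mathfrak d$.
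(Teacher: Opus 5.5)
Your argument is essentially the paper's: assume the leading ATD coefficient vanishes, use Proposition~\ref{prop:classical-degeneracy} to obtain an exterior Dirichlet (sound-soft) or Neumann (sound-hard) hyperplane, and contradict Theorem~\ref{thm:qualitative-hyperplane-exclusion}(i)--(ii). Two of your additions should be dropped or rephrased. First, the path/reflection continuation in Step~1 is not needed, because Proposition~\ref{prop:classical-degeneracy} already delivers the hyperplane; that continuation is in fact the content of the uniqueness result of \cite{liu2006uniqueness} you invoke in Step~2. Second, the fact that $D'$ lies in the same regime cannot be inferred from $u\approx u'$ near $\mathbf{x}_0$, nor from swapping the roles of $D$ and $D'$; it is simply part of what ``the sound-soft/sound-hard regime'' means here, and it is needed because the hyperplane obtained is one for $u'$ in the exterior of $D'$.
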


\begin{proof}
If the leading ATD coefficient vanished, then Proposition~\ref{prop:classical-degeneracy} would yield the corresponding exterior Dirichlet or Neumann hyperplane.
This is excluded by Theorem~\ref{thm:qualitative-hyperplane-exclusion}.
Therefore the leading ATD coefficient cannot vanish.
\end{proof}

\subsubsection{The mixed sound-soft/finite-impedance regime}

We finally consider the mixed sound-soft/finite-impedance regime $\Xi_3$.
In this regime, the vanishing of the leading ATD coefficient yields either an exterior Dirichlet hyperplane or an exterior Robin hyperplane, depending on the type of the anchored exterior-visible flat piece.
Both possibilities are excluded by the single-measurement uniqueness result of \cite{liu2007unique}, as summarized in case (iv) of Theorem~\ref{thm:qualitative-hyperplane-exclusion}.

\begin{lem}\label{lem:Xi3-DI}
Assume the setting of Theorem~\ref{th relation}, and suppose that the reference configuration $(D,\eta)$ belongs to the mixed sound-soft/finite-impedance class $\Xi_3$.
Then the leading ATD coefficient cannot vanish.
\end{lem}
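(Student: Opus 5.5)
We argue by contradiction, following the pattern of Lemmas~\ref{lem:Xi4-Robin} and~\ref{lem:classical-ND}. Suppose the leading ATD coefficient $C_A$ vanishes at every stage of the recursive extraction at the ATD test point $\mathbf{x}_0$. By Remark~\ref{rem:three-hyperplane-cases}, the localized ATD identity is determined only by the type of the anchored exterior-visible flat piece containing $\mathbf{x}_0$. We first fix $h$ small enough, depending on the Lipschitz dissection $\partial D=\Gamma_D\cup\Gamma_I\cup\Gamma_0$, so that this piece lies entirely in one component. This is possible whenever $\mathbf{x}_0\notin\Gamma_0$. If $\mathbf{x}_0\in\Gamma_0$, we instead shift the test point slightly along the same flat piece into $\Gamma_D$ or $\Gamma_I$. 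This is legitimate because, as in the proof of Proposition~\ref{prop:ATD-degeneracy}, the test point may be taken anywhere in the relative interior of the flat piece, and $\Gamma_0$ has empty relative interior there.

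We then split into two cases. \emph{Case 1: the anchored piece lies in $\Gamma_D$.} The piece is of sound-soft type, so Proposition~\ref{prop:classical-degeneracy}(i) gives an exterior Dirichlet hyperplane, on which $u'=0$ along a non-trivial flat portion. \emph{Case 2: the anchored piece lies in $\Gamma_I$.} Here $\eta\in C(\Gamma_I)$ with $\eta_0\le\eta\le M_0$, so in particular $\eta\in L^\infty$ and $\Im\eta=0\geq 0$ on this piece. The local bounded-impedance analysis of Sections~\ref{sec 2D}--\ref{sec proof} therefore applies, and Proposition~\ref{prop:ATD-degeneracy} yields an exterior Robin hyperplane, on which $\partial_\nu u'+\eta u'=0$.

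In either case, Theorem~\ref{thm:qualitative-hyperplane-exclusion}(iv), which rests on \cite{liu2007unique}, states that $\mathbf G$ contains neither Dirichlet nor Robin hyperplanes for $(D,\eta)\in\Xi_3$. This is a contradiction, so $C_A\neq 0$.

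The main obstacle is Case 2. Proposition~\ref{prop:ATD-degeneracy} was stated for configurations in $\Xi_1$, but $\Xi_3$ is not contained in $\Xi_1$ because $\eta=+\infty$ on $\Gamma_D$. We handle this by observing that the ATD identity \eqref{eq II2d} or \eqref{eq integral identity2}, and its recursive extraction, use only the boundary condition on the anchored patch $I_1$ or $S_1$, together with the uniform bound of Lemma~\ref{lem uniform bound}. The local argument therefore carries over verbatim once the patch sits inside $\Gamma_I$. A second point to check is that the Robin coefficient in the resulting hyperplane is the restriction of the $\Xi_3$-impedance. This is what case (iv) of Theorem~\ref{thm:qualitative-hyperplane-exclusion} excludes, so the contradiction applies.
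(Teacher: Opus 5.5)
Your proposal is correct and follows the paper's own proof essentially step for step. Like the paper, you argue by contradiction and split according to whether the anchored exterior-visible flat piece is of sound-soft or finite-impedance type. You then invoke Proposition~\ref{prop:classical-degeneracy} (Dirichlet hyperplane) or Proposition~\ref{prop:ATD-degeneracy} (Robin hyperplane), and rule out both via case~(iv) of Theorem~\ref{thm:qualitative-hyperplane-exclusion}. Your extra remarks make explicit two points the paper leaves implicit: keeping the test point off $\Gamma_0$, and checking that $\eta|_{\Gamma_I}$ meets the local $\Xi_1$ hypotheses even though $\Xi_3\not\subset\Xi_1$. They do not change the argument.
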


\begin{proof}
Suppose to the contrary that the leading ATD coefficient vanishes.
If the anchored exterior-visible flat piece is of sound-soft type, then Proposition~\ref{prop:classical-degeneracy} yields an exterior Dirichlet hyperplane.
If the anchored exterior-visible flat piece is of finite-impedance type, then Proposition~\ref{prop:ATD-degeneracy} yields an exterior Robin hyperplane.
Both possibilities are excluded by Theorem~\ref{thm:qualitative-hyperplane-exclusion} in the mixed sound-soft/finite-impedance regime.
Therefore the leading ATD coefficient cannot vanish.
\end{proof}

\subsection{Completion of the proof of Theorem~\ref{cor stability}}\label{subsec completion-stability}

\begin{proof}[Proof of Theorem~\ref{cor stability}]
By Theorem~\ref{th relation}, one has the quantitative estimate
\[
|C_A|\,(\mathrm d_H(D,D'))^p \leq C\,(\log|\log(1/\varepsilon)|)^{-\kappa_0}.
\]
Thus, to derive the sharp stability estimate, it remains to show that the leading ATD coefficient is nonzero.

The preceding results provide exactly this non-vanishing information in the admissible regimes covered by Theorem~\ref{cor stability}.
More precisely, in the sound-soft and sound-hard regimes, Lemma~\ref{lem:classical-ND} shows that the leading ATD coefficient cannot vanish.
In the nowhere-analytic impedance regime, this is established by Theorem~\ref{thm:Xi2-excludes-degeneracy}.
In the two-dimensional constant-impedance regime, it is established by Lemma~\ref{lem:Xi4-Robin}.
In the mixed sound-soft/finite-impedance regime, it is established by Lemma~\ref{lem:Xi3-DI}.

In each case, the logic is the same: the vanishing of the leading ATD coefficient would force the corresponding exterior hyperplane, while the relevant qualitative input excludes such a possibility.
Hence the leading ATD coefficient is nonzero in every regime covered by Theorem~\ref{cor stability}.

Substituting this non-vanishing information into the estimate of Theorem~\ref{th relation}, we obtain
\[
\mathrm d_H(D,D')
\le
\mathbf C\,(\log|\log(1/\varepsilon)|)^{-\kappa},
\]
where the constants $\mathbf C>0$ and $\kappa>0$ depend only on the a priori parameters of the regime under consideration.
This is exactly \eqref{eq stability}.

\end{proof}



\appendix

\section{Mosco convergence and uniform bounds}\label{app:mosco}

This appendix records the Mosco-convergence framework and the uniform boundedness argument used in the paper.
The corresponding strategy for impedance obstacles was developed in detail in \cite[Section~2]{diao2024stable}, and we only indicate here the additional point needed in the present setting, namely the convergence of the impedance coefficients on moving boundaries.
For simplicity, we present the variational convergence argument in the three-dimensional setting, since the two-dimensional case follows by the same reasoning with only minor notational modifications.

The well-posedness statement in Lemma~\ref{lem wellposs} is classical for exterior impedance problems with $\Im \eta\geq 0$; see \cite[Theorem~3.16]{colton2019inverse}.

\begin{defn}\label{def:mosco-set}
Let $\{S_n\}_{n\in\mathbb N}$ be closed subsets of a reflexive Banach space $X$.
Set
\[
S'=\{x\in X:\ \exists\,x_{n_k}\in S_{n_k}\ \text{with}\ x_{n_k}\rightharpoonup x\ \text{in}\ X\},
\]
and
\[
S''=\{x\in X:\ \exists\,x_n\in S_n\ \text{with}\ x_n\to x\ \text{in}\ X\}.
\]
We say that $S_n$ converges to $S$ in the sense of Mosco if $S=S'=S''$.
\end{defn}

\begin{defn}\label{def:mosco-h1}
We say that $H^1(\Omega\setminus\Sigma_n)$ converges to $H^1(\Omega\setminus\Sigma)$ in the sense of Mosco if the following hold.
\begin{enumerate}
\item
If $u_{n_k}\in H^1(\Omega\setminus\Sigma_{n_k})$ and $u_{n_k}\rightharpoonup u$ in $L^2(\Omega)$ with $\nabla u_{n_k}\rightharpoonup\nabla u$ in $L^2(\Omega;\mathbb R^n)$ for some subsequence, then $u\in H^1(\Omega\setminus\Sigma)$.
\item
For every $u\in H^1(\Omega\setminus\Sigma)$ there exists $u_n\in H^1(\Omega\setminus\Sigma_n)$ such that $u_n\to u$ in $L^2(\Omega)$ and $\nabla u_n\to\nabla u$ in $L^2(\Omega;\mathbb R^n)$.
\end{enumerate}
Here functions are regarded as elements of $L^2(\Omega)$ by zero extension inside the obstacle whenever needed.
\end{defn}

We compare impedances on moving boundaries by pulling them back to a fixed reference boundary via bi-Lipschitz parametrizations, as in \cite{dancer1997robin,daners2003varying}.

\begin{defn}\label{def eta moving}
Let $\Sigma_n,\Sigma$ be admissible obstacles in $\mathcal A$.
Assume that there exists a sequence of bi-Lipschitz homeomorphisms $\Phi_n:\partial\Sigma\to\partial\Sigma_n$ such that $\Phi_n\to \mathrm{Id}$ uniformly on $\partial\Sigma$ and the surface Jacobians $J_{\Phi_n}$ satisfy $J_{\Phi_n}\to 1$ in $L^\infty(\partial\Sigma)$.
Given $\eta_n\in L^\infty(\partial\Sigma_n)$, define the pullback impedance $\widetilde\eta_n$ on the fixed boundary $\partial\Sigma$ by
\[
\widetilde\eta_n:=\eta_n\circ\Phi_n.
\]
We say that $\eta_n$ converges to $\eta$ in the sense of $\Xi_1$ if $\sup_n\|\eta_n\|_{L^\infty(\partial\Sigma_n)}<\infty$, $\Im \eta_n\geq 0$ on $\partial\Sigma_n$, and
\[
\widetilde\eta_n \rightharpoonup^\ast \eta
\qquad \text{in }L^\infty(\partial\Sigma).
\]
If, in addition, $\widetilde\eta_n\to \eta$ in $L^2(\partial\Sigma)$, we say that $\eta_n\to\eta$ strongly in the sense of $\Xi_1$.
\end{defn}

\begin{proof}[Proof of Proposition~\ref{prop Mosco}]
The proof follows the same scheme as \cite[Section~2, Proposition~2.2]{diao2024stable}, and we only indicate the additional point arising from the convergence of the impedance coefficients on moving boundaries.

For the variational formulation on $H^1(\Omega\setminus\Sigma_n)$, consider the sesquilinear forms
\[
a_n(u,v)
=
\int_{\Omega\setminus\Sigma_n}\bigl(\nabla u\cdot\nabla\overline v-k^2u\,\overline v\bigr)\,\mathrm d\mathbf x
+
\int_{\partial\Sigma_n}\eta_n\,u\,\overline v\,\mathrm d\sigma,
\]
together with the corresponding anti-linear functionals induced by the incident wave.
As in \cite{diao2024stable}, the admissible class $\mathcal A$ yields uniform geometric control, and the forms satisfy a uniform G\aa rding inequality.
The Mosco convergence of the spaces $H^1(\Omega\setminus\Sigma_n)$ is handled exactly as in the proof of \cite[Proposition~2.2]{diao2024stable}.

The only additional ingredient is the convergence of the Robin boundary term.
Using the pullback maps $\Phi_n$ from Definition~\ref{def eta moving}, we rewrite
\[
\int_{\partial\Sigma_n}\eta_n\,u\,\overline v\,\mathrm d\sigma
=
\int_{\partial\Sigma}(\eta_n\circ\Phi_n)\,(u\circ\Phi_n)\,\overline{(v\circ\Phi_n)}\,J_{\Phi_n}\,\mathrm d\sigma.
\]
By the uniform bi-Lipschitz control of $\Phi_n$, the convergence $J_{\Phi_n}\to 1$ in $L^\infty(\partial\Sigma)$, the strong convergence of the pulled-back traces furnished by the Mosco convergence of the spaces, and the pullback convergence $\widetilde\eta_n\rightharpoonup^\ast\eta$ from Definition~\ref{def eta moving}, the boundary contribution converges to the corresponding limit term on $\partial\Sigma$.
Therefore, the same variational limit argument as in \cite[Proposition~2.2]{diao2024stable} applies and yields convergence of the solutions to the limit impedance problem.

The second part, namely the implication from geometric convergence of the admissible obstacles to Mosco convergence of the Sobolev spaces, is the same as in \cite[Remark~2.2 and the discussion following Proposition~2.2]{diao2024stable}.
Hence the conclusion of Proposition~\ref{prop Mosco} follows.
\end{proof}

\begin{proof}[Proof of Lemma~\ref{lem uniform bound}]
We give the contradiction argument for the three-dimensional class $\mathcal A$; the two-dimensional case is identical up to notational modifications.

The proof follows the contradiction argument of \cite[Proposition~2.3]{diao2024stable}, combined with Proposition~\ref{prop Mosco} above.

Assume by contradiction that there exist $(\Sigma_n,\eta_n)\in\mathcal A\times\Xi_1$ and corresponding solutions $u_n$ to \eqref{eq main system} such that
\[
a_n:=\|u_n\|_{L^2(\Omega\setminus\Sigma_n)}\to+\infty.
\]
Set
\[
v_n:=u_n/a_n.
\]
Then $\|v_n\|_{L^2(\Omega\setminus\Sigma_n)}=1$, and $v_n$ solves the same scattering problem with incident wave $u^i/a_n$.
Since $a_n\to+\infty$, the incident term tends to zero.

By the compactness of the admissible class and, after passing to a subsequence, we may assume that $\Sigma_n\to\Sigma\in\mathcal A$ and that $\eta_n\to\eta$ in the sense of Definition~\ref{def eta moving}.
Uniform G\aa rding inequalities yield local $H^1$ bounds for $v_n$, and Proposition~\ref{prop Mosco} gives convergence, up to a subsequence, to a limit field $v$ solving the homogeneous limit problem associated with $(\Sigma,\eta)$ and zero incident wave.
By uniqueness of the exterior impedance problem with zero incident wave, it follows that $v\equiv 0$.

On the other hand, by the normalization of $v_n$ and the strong $L^2$ convergence furnished by Proposition~\ref{prop Mosco}, we obtain a contradiction.
Therefore there exists $\mathcal E>0$, depending only on the a~priori parameters of $\mathcal A$ and on $k$, such that
\[
\|u\|_{L^2(\Omega\setminus\Sigma)}\leq \mathcal E
\]
for every admissible obstacle $\Sigma\in\mathcal A$ and every corresponding impedance parameter $\eta\in\Xi_1$.
The proof for the two-dimensional class $\mathcal B$ is the same.
\end{proof}

\section{A supplementary proof of Theorem~\ref{th relation}}\label{app:classical-relation}

We provide here a supplementary proof for the classical sound-soft and sound-hard cases needed to complete the proof of Theorem~\ref{th relation} for all generalized impedance regimes considered in this paper.
The ATD construction itself is not tied to a single type of anchored flat piece.
In Sections~\ref{sec 2D} and~\ref{sec proof}, the quantitative relation is derived from three ingredients: a localized ATD integral identity, the propagation-of-smallness estimate from Section~\ref{sec 4}, and the extraction of the lowest-order ATD term with respect to $\tau$.

To prove Theorem~\ref{th relation}, the ATD analysis is always carried out relative to a fixed reference obstacle and a chosen exterior-visible flat piece of its boundary.
Accordingly, the localized integral identity is determined solely by the type of the anchored flat piece, namely whether it is of bounded-impedance, sound-soft, or sound-hard type.
For the generalized impedance regimes considered in this paper, this leads to the corresponding Robin, Dirichlet, and Neumann model identities.
The bounded-impedance case is treated in detail in Sections~\ref{sec 2D} and~\ref{sec proof}.
This choice is made in order to present the Mosco-based direct scattering framework in its full form, especially for variable impedance, whose detailed treatment is given here for the first time; see Appendix~\ref{app:mosco}.
For the classical sound-soft and sound-hard cases, the corresponding Mosco-type arguments already appear in the stability proofs of \cite{rondi2008stable, liu2017stable}, and the ATD identities for these cases are supplemented below.
Together, these cases complete the relation analysis for all generalized impedance regimes considered in this paper.

The propagation-of-smallness argument from Section~\ref{sec 4} applies with only minor changes, since it depends only on the exterior Helmholtz equation and the same geometric configuration.
Thus, it remains to write down the corresponding localized ATD integral identities and the first extracted terms.
Once these are identified, the upper-bound and lower-bound arguments proceed exactly as in the bounded-impedance case.
For this reason, we present only the parts specific to the sound-soft and sound-hard cases.

\medskip
\noindent
\textbf{Three-dimensional sound-hard case} ($\partial_\nu u = 0$ on $S_1$):
\begin{equation}\label{eq 3dsh}
\begin{aligned}
&\int_{\tilde{S_1} \cup \tilde{S_2} \cup \tilde{S_3}} u'_N \partial_{\nu} u_0 \,\mathrm{d}\sigma
  - \int_{\tilde{S_2} \cup \tilde{S_3}} \partial_\nu u'_N u_0 \,\mathrm{d}\sigma \\
&= \int_{S_1} u_0 \partial_{\nu}(u' - u) \,\mathrm{d}\sigma
  - \int_{S_1 \cup S_2 \cup S_3} \delta u'_{N+1} \partial_{\nu} u_0 \,\mathrm{d}\sigma \\
&\quad + \int_{S_1' \cup S_2' \cup S_3'} u'_N \partial_\nu u_0 \,\mathrm{d}\sigma
  - \int_{S_2' \cup S_3'} \partial_\nu u'_N u_0 \,\mathrm{d}\sigma
  + \int_{S_2 \cup S_3} \partial_\nu(\delta u'_{N+1}) u_0 \,\mathrm{d}\sigma \\
&\quad + \int_{S_4} \left( u_0 \partial_{\nu} u' - u' \partial_{\nu} u_0 \right) \,\mathrm{d}\sigma .
\end{aligned}
\end{equation}

\noindent
\textbf{Three-dimensional sound-soft case} ($u = 0$ on $S_1$):
\begin{equation}\label{eq 3dss}
\begin{aligned}
&\int_{\tilde{S_1} \cup \tilde{S_2} \cup \tilde{S_3}} u'_N \partial_{\nu} u_0 \,\mathrm{d}\sigma
  - \int_{\tilde{S_2} \cup \tilde{S_3}} \partial_\nu u'_N u_0 \,\mathrm{d}\sigma \\
&= \int_{S_1} (u' - u)\partial_\nu u_0 \,\mathrm{d}\sigma
  - \int_{S_1} u_0 \partial_\nu u' \,\mathrm{d}\sigma
  - \int_{S_1 \cup S_2 \cup S_3} \delta u'_{N+1} \partial_{\nu} u_0 \,\mathrm{d}\sigma \\
&\quad + \int_{S_1' \cup S_2' \cup S_3'} u'_N \partial_\nu u_0 \,\mathrm{d}\sigma
  - \int_{S_2' \cup S_3'} \partial_\nu u'_N u_0 \,\mathrm{d}\sigma
  + \int_{S_2 \cup S_3} \partial_\nu(\delta u'_{N+1}) u_0 \,\mathrm{d}\sigma \\
&\quad + \int_{S_4} \left( u_0 \partial_{\nu} u' - u' \partial_{\nu} u_0 \right) \,\mathrm{d}\sigma .
\end{aligned}
\end{equation}

\noindent
\textbf{Two-dimensional sound-hard case} ($\partial_\nu u = 0$ on $I_1$):
\begin{equation}\label{eq 2dsh}
\begin{aligned}
&\int_{\tilde{I_1} \cup \tilde{I_2}} u'_N \partial_{\nu} u_{0} \,\mathrm{d}\sigma
  - \int_{\tilde{I_2}} \partial_\nu u'_N u_0 \,\mathrm{d}\sigma \\
&= \int_{I_1} u_{0} \partial_{\nu}(u^{\prime}-u) \,\mathrm{d}\sigma
  - \int_{I_1 \cup I_2} \delta u'_{N+1} \partial_{\nu} u_{0} \,\mathrm{d}\sigma \\
&\quad + \int_{I_1' \cup I_2'} u'_{N} \partial_\nu u_0 \,\mathrm{d}\sigma
  - \int_{I_2'} \partial_\nu u'_N u_0 \,\mathrm{d}\sigma
  + \int_{I_2} \partial_\nu(\delta u'_{N+1}) u_0 \,\mathrm{d}\sigma \\
&\quad + \int_{I_3} \left( u_{0} \partial_{\nu} u^{\prime} - u^{\prime} \partial_{\nu} u_{0} \right) \,\mathrm{d}\sigma .
\end{aligned}
\end{equation}

\noindent
\textbf{Two-dimensional sound-soft case} ($u = 0$ on $I_1$):
\begin{equation}\label{eq 2dss}
\begin{aligned}
&\int_{\tilde{I_1} \cup \tilde{I_2}} u'_N \partial_{\nu} u_{0} \,\mathrm{d}\sigma
  - \int_{\tilde{I_2}} \partial_\nu u'_N u_0 \,\mathrm{d}\sigma \\
&= \int_{I_1} (u' - u)\partial_\nu u_0 \,\mathrm{d}\sigma
  - \int_{I_1} u_0 \partial_\nu u' \,\mathrm{d}\sigma
  - \int_{I_1 \cup I_2} \delta u'_{N+1} \partial_{\nu} u_{0} \,\mathrm{d}\sigma \\
&\quad + \int_{I_1' \cup I_2'} u'_{N} \partial_\nu u_0 \,\mathrm{d}\sigma
  - \int_{I_2'} \partial_\nu u'_N u_0 \,\mathrm{d}\sigma
  + \int_{I_2} \partial_\nu(\delta u'_{N+1}) u_0 \,\mathrm{d}\sigma \\
&\quad + \int_{I_3} \left( u_{0} \partial_{\nu} u^{\prime} - u^{\prime} \partial_{\nu} u_{0} \right) \,\mathrm{d}\sigma .
\end{aligned}
\end{equation}

The identities \eqref{eq 3dsh}--\eqref{eq 2dss} are the localized ATD integral identities for the sound-soft and sound-hard cases.
The propagation-of-smallness argument is the same as in Section~\ref{sec 4}, and the subsequent ATD extraction follows the same scheme as in Sections~\ref{sec 2D} and~\ref{sec proof}.
The difference lies in the form of the first extracted term.

In the two-dimensional case, a direct calculation using the same approach as in Proposition~\ref{prop lower2D} shows that the first extracted term is determined by
\begin{equation}\label{eq:2d-ss-first}
a_N+b_N=0
\end{equation}
in the sound-soft case, and by
\begin{equation}\label{eq:2d-sh-first}
a_N-b_N=0
\end{equation}
in the sound-hard case.
If the corresponding lowest-order term does not vanish, then the lower-bound argument closes at the first step.
If it vanishes, one continues the same ATD extraction to the next lowest-order term in $\tau$, and then recursively to all higher orders.
The three-dimensional sound-soft and sound-hard cases are treated in the same way, starting from \eqref{eq 3dss} and \eqref{eq 3dsh}, respectively.

Therefore, the same quantitative relation is obtained in the classical sound-soft and sound-hard regimes.
Combined with the bounded-impedance analysis in Sections~\ref{sec 2D} and~\ref{sec proof}, this completes the proof of Theorem~\ref{th relation} for all generalized impedance regimes considered in this paper.

\section*{Acknowledgments}

The work of H. Diao is supported by National Natural Science Foundation of China  (No. 12371422),  2025 National Foreign Experts Program (Grant No. D20250157), and the Fundamental Research Funds for the Central Universities, JLU. 
The work of H. Liu was supported by NSFC/RGC Joint Research Scheme, N\_CityU101/21, ANR/RGC Joint Research Scheme, A-CityU203/19, and the Hong Kong RGC General Research Funds (projects 11311122,  11304224, and 11303125).

\bibliographystyle{siam}
\bibliography{ref}
	
\end{document}